\newtheorem{introtheorem}{Theorem}
\newtheorem{introconjecture}[introtheorem]{Conjecture}
\newtheorem*{theorem*}{Theorem}
\newtheorem{theorem}{Theorem}[section]
\newtheorem{proposition}[theorem]{Proposition}
\newtheorem{lemma}[theorem]{Lemma}
\newtheorem{claim}[theorem]{Claim}
\newtheorem{corollary}[theorem]{Corollary}
\theoremstyle{definition}
\newtheorem{definition}[theorem]{Definition}
\theoremstyle{definition}
\newtheorem{remark}[theorem]{Remark}
\theoremstyle{definition}
\newtheorem{example}[theorem]{Example}
\theoremstyle{definition}
\DeclareMathOperator{\stab}{stab}
\DeclareMathOperator{\Ker}{Ker}
\DeclareMathOperator{\Spec}{Spec}
\DeclareMathOperator{\NP}{NP}
\DeclareMathOperator{\val}{val}
\DeclareMathOperator{\ord}{ord}
\DeclareMathOperator{\codim}{codim}
\DeclareMathOperator{\supp}{supp}
\DeclareMathOperator{\vol}{vol}
\DeclareMathOperator{\MV}{MV}
\DeclareMathOperator{\MI}{MI}
\DeclareMathOperator{\Gal}{Gal}
\DeclareMathOperator{\h}{h}
\DeclareMathOperator{\Frob}{Frob}
\DeclareMathOperator{\m}{m}
\DeclareMathOperator{\Ron}{Ron}
\DeclareMathOperator{\dist}{dist}
\DeclareMathOperator{\interior}{int}
\DeclareMathOperator{\rank}{rank}
\newcommand{\longhookrightarrow}{\lhook\joinrel\longrightarrow}
\renewcommand{\and}{\quad \text{and} \quad}
\renewcommand{\div}{\operatorname{div}}
\newcommand{\Gm}{\mathbb{G}_{\mathrm{m}}}
\newcommand{\tors}{\mathrm{tors}}
\newcommand{\an}{\mathrm{an}}
\newcommand{\red}{\mathrm{red}}
\newcommand{\chern}{\mathrm{c}_{1}}
\newcommand{\wA}{{\mathscr{A}\hspace{-2.5mm}\widetilde{\hphantom{ii}\vphantom{H}}}}
\newcommand{\wAV}{{\mathscr{A}_{V}\hspace{-4.5mm}\widetilde{\hphantom{ii}\vphantom{H}}}\hspace{1.8mm}}
\newcommand{\wHt}{{\mathscr{H}(t)\hspace{-8mm}\widetilde{\hphantom{HH}\vphantom{H}}}\hspace{1mm}}
\def\subsection{\@startsection{subsection}{2}
  \z@{.5\linespacing\@plus.7\linespacing}{-.5em}
   {\bfseries}}
\begin{document}

\title[Heights of complete intersections]{Heights of
  complete intersections\\ in toric varieties}

\author[Gualdi]{Roberto~Gualdi}
\address{\hspace*{-6.3mm} Departament de Matem\`atiques, Universitat Polit\`ecnica de Catalunya, 08034 Barcelona, Spain \vspace*{-2.8mm}}
\address{\hspace*{-6.3mm} {\it Email address:} {\tt roberto.gualdi@upc.edu}}

\author[Sombra]{Mart{\'\i}n~Sombra}
\address{\hspace*{-6.3mm} ICREA, 
  08010 Barcelona, Spain \vspace*{-2.8mm}}
\address{ 
\hspace*{-6.3mm}  Departament de Matem\`atiques i
  Inform\`atica, Universitat de Barcelona,  08007
  Bar\-ce\-lo\-na, Spain \vspace*{-2.8mm}}
\address{\hspace*{-6.3mm} Centre de Recerca Matem\`atica, 08193 Bellaterra, Spain \vspace*{-2.8mm}}
\address{\hspace*{-6.3mm} {\it Email address:} {\tt martin.sombra@icrea.cat}}

\keywords{Toric variety, height of a variety, strict sequence of
  torsion points, Ronkin function, mixed integral}
\subjclass[2020]{Primary 14G40; Secondary 11G50, 14M25, 52A39}

\date{\today}

\begin{abstract}
  The height of a toric variety and that of its hypersurfaces can be
  expressed in convex-analytic terms as an adelic sum of mixed
  integrals of their roof functions and duals of their Ronkin
  functions.

  Here we extend these results to the $2$-codimensional situation by
  presenting a limit formula predicting the typical height of the
  intersection of two hypersurfaces on a toric variety.  More
  precisely, we prove that the height of the intersection cycle of two
  effective divisors translated by a strict sequence of torsion points
  converges to an adelic sum of mixed integrals of roof and duals of
  Ronkin functions.
  
  This partially confirms a previous conjecture of the authors about the average
  height of families of complete intersections in toric varieties.
\end{abstract}

\maketitle

\setcounter{tocdepth}{1}
\tableofcontents

\section*{Introduction}

The study of the Arakelov geometry of toric varieties was inaugurated
by Maillot in~\cite{Maillot}, where among other results he obtained
upper bounds for the canonical height of a complete
intersection. These can be viewed as an arithmetic analogue of the
inequality part in the classical Bernstein--Kushnirenko--Khovanskii
theorem~\cite{Bernst}.

Finding an arithmetic version of the equality part in this theorem is
a more challenging problem. A first step towards this goal was taken
by Burgos Gil, Philippon and the second author in the monograph
\cite{BPS}, where they studied semipositive metrized line bundles on a
toric variety which are invariant under the action of the compact
torus and gave a convex-analytic formula for the corresponding height
of this ambient variety.  Subsequently, their formula was extended by the
first author to compute the height of a
hypersurface in similar terms~\cite{Gualdi}.

For higher-codimensional complete intersections in toric varieties, one can not expect a
general convex-analytic formula only depending on the arithmetic size
of the defining Laurent polynomials, as shown in
\cite[Example~5.1.1]{Guathesis} and \cite[Example~2.1]{GualdiSombra}.
However, these two references have suggested that the latter
information could be enough to recover the \emph{limit} height of a
certain family of complete intersection cycles associated with the
Laurent polynomials.

To explain this precisely, let us fix a number field~$\mathbb{K}$ with
set of places~$\mathfrak{M}$, and a split algebraic torus $\mathbb{T}$
over $\mathbb{K}$ of dimension $n$ with character lattice $M$.  For a
Laurent polynomial $f=\sum_m \alpha_m\chi^m\in\mathbb{K}[M]$ and a
point~$t\in\mathbb{T}(\overline{\mathbb{K}})$, we can define the
\emph{twist} of $f$ by~$t$ as
\[
  t^*f=\sum_m\alpha_m\chi^m(t)\, \chi^m \in \overline{\mathbb{K}}[M].
\]

Now, for~$k\leq n$ let
\begin{displaymath}
\bm{f}= (f_1,\ldots,f_k) \and   \bm{t}=(t_1,\ldots,t_k) 
\end{displaymath}
be a family of nonzero Laurent polynomials in~$\mathbb{K}[M]$ and a
family of points of~$\mathbb{T}(\overline{\mathbb{K}})$, respectively.
Whenever the divisors of the twisted Laurent polynomials
$t_1^*f_1,\ldots,t_k^*f_k$ meet properly on the base
change~$\mathbb{T}_{\overline{\mathbb{K}}}$, intersection theory
allows to consider the $(n-k)$-dimensional cycle
\[
Z_{\mathbb{T}}(\bm{t}^*\bm{f})=\div(t_{1}^*f_1)\cdots\div(t_{k}^*f_k)\cdot \mathbb{T}_{\overline{\mathbb{K}}}.
\]
Geometrically, this is the cycle obtained by intersecting the
translations of the hypersufaces of each $f_{i}$ by the corresponding
point~$t_{i}^{-1}$.

To be able to consider the height of such cycles we need to fix a
compactification of the torus. Let then $X$ be a complete toric
variety with torus~$\mathbb{T}$ and
$\overline{D}_0,\ldots,\overline{D}_{n-k}$ a family of semipositive
metrized divisors on it. The corresponding height of
$Z_{\mathbb{T}}(\bm{t}^*\bm{f})$ is defined as the height of the
closure of this cycle in~$X_{\overline{\mathbb{K}}}$.

With this convention, the following statement is a strengthening of the guesses formulated
  in \cite[Conjecture~6.4.4]{Guathesis} and in
  \cite[Conjecture~11.8]{GualdiSombra}.

\begin{introconjecture}\label{conj: main conjecture}
  For~$k\leq n$ let $\bm{f}= (f_1,\ldots,f_k)$ be a family of nonzero
  Laurent polynomials in~$\mathbb{K}[M]$, and $X$ a complete
  toric variety with torus $\mathbb{T}$ equipped with a family
  $\overline{D}_0,\ldots,\overline{D}_{n-k}$ of semipositive toric
  metrized divisors.  Then for each sequence $(\bm{\omega}_\ell)_\ell$
  of torsion points in $\mathbb{T}(\overline{\mathbb{K}})^k$ whose
  projection to $(\mathbb{T}^k/\mathbb{T})(\overline{\mathbb{K}})$ via
  the diagonal action is strict we have
\[
\lim_{\ell\to\infty}\h_{\overline{D}_0,\ldots,\overline{D}_{n-k}}(Z_{\mathbb{T}}(\bm{\omega}_\ell^*\bm{f}))
=
\sum_{v\in\mathfrak{M}}n_v\MI_M(\vartheta_{\overline{D}_0,v},\ldots,\vartheta_{\overline{D}_{n-k},v},\rho_{f_1,v}^\vee,\ldots,\rho_{f_k,v}^\vee),
\]
where for each $v\in \mathfrak{M}$ we denote by $n_{v}$ the weight of
this place, by $\vartheta_{\overline{D}_i,v}$ the $v$-adic roof
function of $\overline{D}_{i}$, and by $\rho_{f_j,v}^{\vee}$ the
Legendre--Fenchel dual of the $v$-adic Ronkin function of~$f_j$.
\end{introconjecture}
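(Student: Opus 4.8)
The strategy I would follow proceeds as follows. First, by multilinearity of the height in the metrized divisors $\overline{D}_0,\ldots,\overline{D}_{n-k}$ and of the mixed integral in the roof functions $\vartheta_{\overline{D}_i,v}$, I would reduce to the case where the metrized divisors are suitably generic—ideally nef and "big enough"—so that the relevant intersection products represent the geometric objects one expects. Then, since the conjecture for $k=1$ is precisely the content of the results of \cite{BPS} and \cite{Gualdi} cited in the introduction, the key is to bootstrap from $k=1$ to $k=2$. The natural move is to intersect first with $\div(\omega_{\ell,1}^*f_1)$: this produces a hypersurface $Y_\ell$ in $X_{\overline{\mathbb{K}}}$, and one wants to understand the height of $\div(\omega_{\ell,2}^*f_2)\cdot Y_\ell$ by applying a codimension-1 height formula \emph{on $Y_\ell$}. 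The obstruction is that $Y_\ell$ is not itself a toric variety, so the toric convex-analytic machinery does not apply directly to it; instead one must work with the induced metrized structure on $Y_\ell$ and its pushforward to $X$.

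The heart of the argument is therefore an \emph{arithmetic Bézout / height-of-intersection} estimate combined with an \emph{equidistribution} input. Writing $\bm{\omega}_\ell = (\omega_{\ell,1},\omega_{\ell,2})$ with strict diagonal projection, one expects the Galois orbits of the differences $\omega_{\ell,1}\omega_{\ell,2}^{-1}$ (or the relevant quotient coordinates) to equidistribute with respect to the canonical (Haar/Monge–Ampère) measures at each place. The plan is: (i) express $\h(Z_{\mathbb{T}}(\bm{\omega}_\ell^*\bm{f}))$ as a sum of local contributions using the arithmetic intersection-theoretic definition of the height, together with a recursive formula expressing the $\nu$-adic local height of a product $\div(g_1)\cdots\div(g_k)\cdot\mathbb{T}$ in terms of integration of local Green/Ronkin-type functions against currents; (ii) for each place $v$, identify the limit of the $v$-adic local contribution as $\ell\to\infty$ with the mixed integral $\MI_M(\vartheta_{\overline{D}_0,v},\ldots,\vartheta_{\overline{D}_{n-k},v},\rho_{f_1,v}^\vee,\rho_{f_2,v}^\vee)$, using that twisting $f_j$ by $\omega_{\ell,j}$ does not change the Ronkin function (the Ronkin function is invariant under twisting by a point on the compact torus, and the strictness hypothesis forces the relevant coordinates toward the compact torus in the limit); (iii) control the finitely-many "bad" places and the archimedean tail uniformly, so that the adelic sum of limits equals the limit of the adelic sum. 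The telescoping/recursive structure here mirrors the passage from the height of $X$ to the height of a hypersurface in \cite{Gualdi}, now applied one further step.

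\textbf{The main obstacle} I anticipate is step (ii) at the non-archimedean places where $f_1$ or $f_2$ has bad reduction, together with the interchange of limit and adelic sum in step (iii). Geometrically, the difficulty is that the intersection cycle $Z_{\mathbb{T}}(\bm{\omega}_\ell^*\bm{f})$ can degenerate—components can escape to the boundary or collide—and one must show such degenerations contribute negligibly in the limit; the convex-analytic shadow of this is that the Legendre–Fenchel dual $\rho_{f_j,v}^\vee$ is supported on the Newton polytope $\NP(f_j)$ and the mixed-integral functional is continuous for the relevant topology on concave functions, but translating the arithmetic degeneration into this continuity statement requires care. A secondary technical point is establishing the equidistribution with \emph{effective enough} error terms (or at least uniform integrability) to justify dominated convergence across all places simultaneously; this is where the strictness hypothesis on $(\bm{\omega}_\ell)_\ell$ must be used in full force, presumably via a height-lower-bound (Bogomolov-type) input ensuring the torsion points do not concentrate on any proper subtorus coset.
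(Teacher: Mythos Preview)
Your broad architecture matches the paper's: apply arithmetic B\'ezout to write $\h(Z_{\mathbb{T}}(\bm{\omega}_\ell^*\bm{f}))$ as the height of the hypersurface $Z_{\mathbb{T}}(\omega_{\ell,1}^*f_1)$ with respect to $\overline{D}_0,\ldots,\overline{D}_{n-2},\overline{D}^{\Ron}_{f_2}$ (handled by the known $k=1$ theorem, already yielding the full sum of mixed integrals) plus an adelic sum of local error integrals $I_v$; then show this error sum tends to zero by combining a place-by-place equidistribution argument with adelic control. However, you significantly underestimate the difficulty in your step~(ii). The integrand in $I_v$ is $\log\|t^*s_{f_2}\|_{\Ron,v}$ against a Monge--Amp\`ere measure supported on the hypersurface $Z_X(s_{f_1})$, and this function takes the value $-\infty$ on a nonempty locus $\Upsilon_v$ (namely where $t^*f_2$ and $f_1$ agree up to a monomial). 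Ordinary equidistribution theorems (Bilu, Chambert-Loir) apply only to continuous test functions, so they say nothing directly. The paper's main technical contribution is to show that the auxiliary function $F_v(t)=\int\log|t^*f_2|\,d\mu$ extends to all of $\mathbb{T}_v^{\an}$ with at most \emph{logarithmic} singularities along $\Upsilon_v^{\an}$: this requires Stoll's continuity-of-fiber-integral theorem at archimedean places, genuine non-archimedean formal/analytic geometry (completed tensor products, distinguished affinoid algebras) at the others, and a sparse-resultant/Poisson-formula argument for the lower bound. Only once this regularity is in hand can one invoke the \emph{logarithmic} equidistribution results of Dimitrov--Habegger (archimedean) and Tate--Voloch (non-archimedean), which are considerably sharper than Bilu's theorem. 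Your proposal treats equidistribution as a black box and does not anticipate this singularity issue; your remark that twisting leaves the Ronkin function unchanged is correct but does not explain why the error vanishes.

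For your step~(iii), neither dominated convergence nor uniform integrability across all places is available, and a Bogomolov-type height lower bound plays no role. What the paper does instead is isolate a finite set $\mathfrak{S}$ of bad places and, for $v\notin\mathfrak{S}$, compute $I_v(\eta)$ \emph{explicitly} as a Gauss-norm quotient on the affinoid algebra $\mathbb{C}_v\langle M\rangle/(f_1)$; the resulting adelic sum is then bounded by elementary expressions in $\ord(\omega_\ell)$ (via the product formula and the combinatorics of which primes divide the torsion order) and shown to tend to zero by direct estimate. Your concern about components escaping to the boundary is legitimate but is handled separately and earlier, by a geometric incidence argument showing that for quasi-strict torsion twists the closure of the intersection cycle in $X$ coincides with the intersection cycle of the global sections.
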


The definitions for the $v$-adic roof functions, the Legendre--Fenchel
dual of the $v$-adic Ronkin functions and the mixed integral operator
$\MI_{M}$ appearing in this conjecture can be found in
\cite[Definitions~5.1.4 and 2.7.16]{BPS} and
\cite[Definition~2.7]{Gualdi}, their underlying ideas being briefly
recalled in \cref{sec:height-inters-cycl}.  The strictness requirement
on the sequence $(\bm{\omega}_\ell)_\ell$ ensures that the cycle
$Z_{\mathbb{T}}(\bm{\omega}_\ell^*\bm{f})$ is well-defined for $\ell$
sufficiently large thanks to \cref{cor: proper intersection for strict
  sequences}, and so the limit in the left hand side of the
conjectural equality makes sense.

In spirit, \cref{conj: main conjecture} affirms that the typical
height of the complete intersection defined by a family of Laurent
polynomials twisted by torsion points may be predicted in
convex-analytic terms from the knowledge of the arithmetic complexity
of these Laurent polynomials.  In this way it
can be considered as an arithmetic analogue of a
weak version of the Bernstein--Kushnirenko--Khovanskii theorem
computing the typical degree of
$Z_{\mathbb{T}}(\boldsymbol{t}^{*} \boldsymbol{f})$ for
$\boldsymbol{t}\in \mathbb{T}(\overline{\mathbb{K}})^{k}$ (\cref{cor:
  geometric degree of cycle}).

More precisely, \cref{conj: main conjecture} can be interpreted as the
statement that for all $\varepsilon >0$ most twists of $\bm{f}$ by
torsion points define complete intersections whose heights are closer
than~$\varepsilon$ to the proposed adelic sum of mixed integrals. In
particular, the average of these heights coincides with this
quantity. We refer to~\cite[Corollary~7.8]{GualdiSombra} for a
specific situation that can be easily adapted to this more general
setting.  On the other hand, it is not expected that the sequence of
heights in the conjecture is eventually constant, see for instance
\cite[Question 9.4]{GualdiSombra}.

\vspace{\baselineskip}

Till now, the conjecture was only known for
$f_1=f_2=x_1+x_2+1 \in \mathbb{Q}[x_{1}^{\pm1},x_{2}^{\pm1}]$ and the
canonical height on the projective plane~\cite[Corollary
5]{GualdiSombra}.  In this particular case, the limit height could
also be expressed as a quotient of special values of the Riemann zeta
function.

Our main result in this paper is the following. 

\begin{introtheorem}\label{thm: main theorem}
\cref{conj: main conjecture} holds for~$k=2$.
\end{introtheorem}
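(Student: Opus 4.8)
The plan is to reduce, by an inductive use of the recursive formula for heights, to the convex-analytic formula of \cite{BPS} for the height of a toric variety, plus an error term that is made to vanish in the limit by an equidistribution argument. I would first replace $X$ by a complete toric variety $X'$ with torus $\mathbb{T}$, proper and birational over $X$, whose fan refines that of $X$ and on which the toric Cartier divisors $D_{\Delta_{f_1}}$ and $D_{\Delta_{f_2}}$ attached to the Newton polytopes of $f_1,f_2$ are defined; heights are insensitive to this. Next, since a torsion point of $\mathbb{T}(\overline{\mathbb{K}})$ lies in the compact torus of $\mathbb{T}_v^{\an}$ for every place $v$, and a toric metric is invariant under that compact torus, the action of such a point on $X'$ fixes every semipositive toric metrized divisor; hence translating $Z_{\mathbb{T}}(\bm{\omega}_\ell^*\bm{f})$ by $\omega_{1,\ell}$ leaves its height with respect to $\overline{D}_0,\dots,\overline{D}_{n-2}$ unchanged, so one may assume $\bm{\omega}_\ell=(e,\omega_\ell)$, with $f_1$ untwisted and $(\omega_\ell)_\ell$ essentially the diagonal projection of the original pair, hence a strict sequence of torsion points of $\mathbb{T}(\overline{\mathbb{K}})$. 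The right-hand side of \cref{conj: main conjecture} is unaffected.

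Writing $g_\ell=\omega_\ell^*f_2$ and letting $\overline{E}_1,\overline{E}_{2,\ell}$ be $D_{\Delta_{f_1}},D_{\Delta_{f_2}}$ endowed with the semipositive toric metrics induced by $f_1,g_\ell$ (so $f_1,g_\ell$ are global sections of them), the divisors of these sections are the closures $\overline{V(f_1)},\overline{V(g_\ell)}$ in $X'$ of the hypersurfaces they cut on $\mathbb{T}$, with no component on the toric boundary since $f_1,g_\ell$ have the same supports as $f_1,f_2$. Moreover, for $\ell$ large these hypersurfaces meet properly not only on $\mathbb{T}$ but on all of $X'$: on a boundary stratum they restrict to twists of the corresponding face Laurent polynomials by the image of $\omega_\ell$ in the relevant quotient torus, and strictness passes to quotient tori, so \cref{cor: proper intersection for strict sequences} applies there as well. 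Applying the recursive height formula twice --- first with the section $f_1$ of $\overline{E}_1$ on $X'$, then with $g_\ell$ on $\overline{V(f_1)}$ --- one gets
\begin{equation*}
\h_{\overline{D}_0,\dots,\overline{D}_{n-2}}(Z_{\mathbb{T}}(\bm{\omega}_\ell^*\bm{f}))=A-B_\ell-C_\ell,
\end{equation*}
where
\begin{align*}
A&=\h_{\overline{E}_1,\overline{E}_{2,\ell},\overline{D}_0,\dots,\overline{D}_{n-2}}(X'),\\
B_\ell&=\sum_{v\in\mathfrak{M}}n_v\int_{X'_v}\bigl(-\log\|f_1\|_{\overline{E}_1,v}\bigr)\,\chern(\overline{E}_{2,\ell})_v\wedge\chern(\overline{D}_0)_v\wedge\dots\wedge\chern(\overline{D}_{n-2})_v,\\
C_\ell&=\sum_{v\in\mathfrak{M}}n_v\int_{\overline{V(f_1)}_v}\bigl(-\log\|g_\ell\|_{\overline{E}_{2,\ell},v}\bigr)\,\chern(\overline{D}_0)_v\wedge\dots\wedge\chern(\overline{D}_{n-2})_v.
\end{align*}
Here $A$ does not depend on $\ell$: twisting by a torsion point changes neither the absolute values of the coefficients of $f_2$ nor its $v$-adic Ronkin function, so the roof function of $\overline{E}_{2,\ell}$ is $\rho_{f_2,v}^\vee$, and by the formula of \cite{BPS} for the height of a toric variety under semipositive toric metrized divisors, $A=\sum_{v}n_v\,\MI_M(\vartheta_{\overline{D}_0,v},\dots,\vartheta_{\overline{D}_{n-2},v},\rho_{f_1,v}^\vee,\rho_{f_2,v}^\vee)$, the conjectural limit. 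And $B_\ell=0$: the $f_1$-metric on $\overline{E}_1$ is normalized so that $-\log\|f_1\|_{\overline{E}_1,v}(x)=\log|f_1(x)|_v-\rho_{f_1,v}(\val(x))$, which has zero Haar-integral on every fibre of $\mathbb{T}_v^{\an}\to N_{\mathbb{R}}$, while $\chern(\overline{E}_{2,\ell})_v\wedge\bigwedge_i\chern(\overline{D}_i)_v$ is a toric measure.

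It remains to prove $C_\ell\to0$. Expanding the height over a field of definition, grouping the places above each $v\in\mathfrak{M}$ into Galois orbits, and using $-\log\|g_\ell\|_{\overline{E}_{2,\ell},v}(x)=\log|g_\ell(x)|_v-\rho_{f_2,v}(\val(x))$ together with $(\sigma\omega_\ell)^*f_2(x)=f_2((\sigma\omega_\ell)\cdot x)$, one rewrites $C_\ell$ as $\sum_{v\in\mathfrak{M}}n_v$ times
\begin{equation*}
\int_{\overline{V(f_1)}_v}\Bigl(\int_{\mathbb{T}_v^{\an}}\log|f_2(\omega\cdot x)|_v\,d\mu_{\ell,v}(\omega)-\rho_{f_2,v}(\val(x))\Bigr)\,\chern(\overline{D}_0)_v\wedge\dots\wedge\chern(\overline{D}_{n-2})_v,
\end{equation*}
where $\mu_{\ell,v}$ is the uniform probability measure on the Galois orbit of $\omega_\ell$; crucially the outer measure is the fixed one of the $\mathbb{K}$-variety $\overline{V(f_1)}$, unaffected by the twist. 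For all but finitely many $v$ the inner bracket vanishes identically (there the twist affects neither the Ronkin function nor the relevant Gauss valuations, and $\bigwedge_i\chern(\overline{D}_i)_v$ is the canonical measure), so the sum is finite. For the remaining places, since $(\omega_\ell)_\ell$ is a strict sequence of torsion points, Bilu's equidistribution theorem gives that $\mu_{\ell,v}$ converges weakly to the Haar measure of the compact torus, and the average of $\log|f_2(\omega\cdot x)|_v$ over that measure is by definition $\rho_{f_2,v}(\val(x))$; hence the bracket tends to $0$ pointwise, and a dominated-convergence argument gives $C_\ell\to0$, whence the claimed limit equals $A$.

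The main obstacle is this last step: one must make the Galois-orbit rewriting of $C_\ell$ into a single adelic integral precise and uniform in $\ell$, and one must control the logarithmic poles of $\log|f_2|_v$ along $\overline{V(f_2)}_v$ against the semipositive, possibly singular, measures $\bigwedge_i\chern(\overline{D}_i)_v$ on the fixed analytic space $\overline{V(f_1)}_v$, so as to pass from pointwise to integral convergence, uniformly over the finite set of relevant places. The properness of the intersections at the toric boundary used above is a secondary technical point, handled by propagating strictness to the quotient tori along the rays of $X'$ and invoking \cref{cor: proper intersection for strict sequences} there.
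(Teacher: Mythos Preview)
Your overall strategy --- reduce to $(1,\omega_\ell)$, apply the arithmetic B\'ezout recursion, identify the conjectural limit via the BPS/Gualdi formulae, and show the remainder vanishes along strict sequences --- matches the paper's. The difficulty you flag at the end is real, but your treatment of it contains two genuine gaps.

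First, your claim that the inner bracket vanishes identically for all but finitely many $v$ is false. Even at a non-Archimedean place $v$ where all coefficients of $f_1,f_2$ are $v$-adic units and all metrics are canonical, the Monge--Amp\`ere measure of $\overline{D}_0,\dots,\overline{D}_{n-2}$ on $\overline{V(f_1)}_v$ is (a multiple of) the Dirac mass at the sup-norm point $\xi$ of the affinoid $\mathbb{C}_v\langle M\rangle/(f_1)$. At that point $|\omega^*f_2(\xi)|_v$ is the \emph{quotient} Gauss norm of $\omega^*f_2$ modulo $(f_1)$, not its Gauss norm in $\mathbb{C}_v\langle M\rangle$; the paper computes this explicitly (\cref{lem:5}, \cref{prop: explicit equalities for I}) and finds that when $\supp(f_2)=\supp(f_1)+m_0$ it equals $\max_{m,m'}|\alpha_m\beta_{m'}\chi^{m'}(\omega)-\alpha_{m'}\beta_m\chi^m(\omega)|_v$, which is typically $<1$ and can be so at infinitely many places simultaneously. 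A separate adelic argument (\cref{sec: adelic equidistribution}) is needed to show that the sum over $v\notin\mathfrak{S}$ of these contributions tends to zero; this uses the product formula together with a pigeonhole to produce small multiplicative relations among the coefficient ratios, and is not a formality.

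Second, Bilu's equidistribution does not apply to the integrand $\omega\mapsto\log|f_2(\omega\cdot x)|_v$, which is not continuous. The paper proceeds in the other order: it first integrates over $\overline{V(f_1)}_v$ to obtain the function $F_v(t)=\int_{\overline{V(f_1)}_v}\log|t^*f_2|_v\,\bigwedge_i \chern(\overline{D}_i)_v$, proves (all of \cref{sec:an-auxil-funct-1}, via Stoll's fiber-integral continuity in the Archimedean case and formal/analytic geometry plus a resultant argument otherwise) that $F_v$ extends to $\mathbb{T}_v^{\an}$ with at most logarithmic singularities along an explicit closed subset $\Upsilon_v$, and then invokes the \emph{logarithmic} equidistribution theorems of Dimitrov--Habegger (Archimedean) and Tate--Voloch (non-Archimedean) rather than Bilu. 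Your ``dominated convergence after Bilu'' would have to reproduce all of this: the logarithmic poles are precisely what makes the equidistribution step non-trivial, and Bilu is not strong enough.
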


When specialized to bivariate Fermat polynomials of arbitrary degree,
this result confirms \cite[Conjecture~11.8]{GualdiSombra}.  It also
implies the validity in the $2$-codimensional situation of the
preliminary \cite[Conjecture~6.4.4]{Guathesis} via the argument
explained in \cite[Section~6]{GualdiSombra}.

Independently of us, Destic, Hultberg and Szachniewicz have proven the
conjecture in the general case \cite{DHS}. Their approach is radically
different, relying on the application of Yuan's equidistribution
theorem and on the theory of globally valued fields from unbounded
continuous logic introduced by Ben Yaacov and Hrushovski. 

\vspace{\baselineskip}

The strategy of the proof of \cref{thm: main theorem} is centered on
the study of the local contributions to the height.  More precisely,
applying the arithmetic B\'ezout theorem we can realize the height of
$Z_{\mathbb{T}}(\bm{\omega}_\ell^*\bm{f})$ as
\begin{equation}
\label{eq:138}
  \h_{\overline{D}_0,\ldots,\overline{D}_{n-2},\overline{D}^{\Ron}_{f_{2}}}(Z_{\mathbb{T}}(\omega_{\ell,1}^*f_1))
+
\sum_{v\in\mathfrak{M}}\frac{n_{v}}{\# O(\boldsymbol{\omega}_{\ell})_{v}}\sum_{\boldsymbol{\eta}\in O(\boldsymbol{\omega}_{\ell})_{v}} I_{v}(\boldsymbol{\eta}),
\end{equation}
where $\overline{D}^{\Ron}_{f_2}$ is the toric divisor associated with
the Newton polytope of~$f_2$ equipped with its {Ronkin metric},
$O(\bm{\omega}_{\ell})_v$ is the image of the Galois orbit of
$\bm{\omega}_{\ell}$ in the $v$-adic analytic torus
$\mathbb{T}_v^{\an}$, and
$I_v\colon \mathbb{T}(\mathbb{C}_{v})^{2}\to \mathbb{R}\setminus
\{-\infty\}$ is a function defined as an integral over the $v$-adic
analytic toric variety $X_{v}^{\an}$.  As the height of the
hypersurface defined by $f_1$ with respect to the metrized divisors
$\overline{D}_0,\ldots,\overline{D}_{n-2},\overline{D}^{\Ron}_{f_2}$
is well-understood in convex-analytic terms from~\cite{Gualdi}, we can
reduce the proof of \cref{thm: main theorem} to showing that the
second summand converges to zero along sequences
$(\bm{\omega}_\ell)_\ell$ with strict image by the diagonal action
(\cref{sec:height-inters-cycl}).

The study of the asymptotic behaviour of a function on strict
sequences of torsion points immediately calls for the application of
the equidistribution results.  Unfortunately the equidistribution of
roots of unity can not be applied directly to the
function~$I_v$. Indeed this function is only defined on the algebraic
points of~$\mathbb{T}$, and one first needs to extend it to the whole
analytic torus.  More seriously, this extension does not need to be
continuous, and in fact it can take the value $-\infty$.  Our focal
goal is then to prove that $I_v$ is (the restriction of) a function on
the $v$-adic analytic torus $\mathbb{T}_v^{\an}$ with at most
logarithmic singularities. We achieve this by applying Stoll's theorem
on the continuity of the fiber integral in the Archimedean case
\cite{Stoll} and non-Archimedean analytic and formal geometry
otherwise \cite{BGR,Ber}.  This is done in
\cref{sec:an-auxil-funct-1}.

As a consequence of this property we can apply the local logarithmic
equidistribution theorems for torsion points of Dimitrov and Habegger
in the Archimedean case \cite{DH} and of Tate and Voloch in the
non-Archimedean case \cite{Tate_Voloch}.  When combined, these two
statements show that each $v$-adic summand in \eqref{eq:138} converges
to~$0$ (\cref{sec: local log equidistribution}).

Still, this local asymptotic vanishing is not enough to conclude the proof of \cref{thm: main theorem}.
In fact, it can happen that there are infinitely many places for which
the corresponding local $v$-adic contribution does not eventually
vanish, and therefore we are obliged to keep track of infinitely many
places even for arbitrary large values of~$\ell$.  To ensure that the
second summand in \eqref{eq:138} asymptotically vanishes we therefore
need to prove that there exists a finite subset
$\mathfrak{S}\subset\mathfrak{M}$ such that
\[
  \lim_{\ell\to\infty} \sum_{v\in\mathfrak{M\setminus\mathfrak{S}}}
  \frac{n_{v}}{\#
    O(\boldsymbol{\omega}_{\ell})_{v}}\sum_{\boldsymbol{\eta}\in
    O(\boldsymbol{\omega}_{\ell})_{v}} I_{v}(\boldsymbol{\eta})=0.
\]
This can be done by explicit computation for a specific set
$\mathfrak{S}$ of bad places (\cref{sec: adelic equidistribution}).

\vspace{\baselineskip}

Because of its local nature, this approach can be used to obtain
formul\ae\ for the typical values of the local heights of the cycle
$Z_{\mathbb{T}}(\bm{\omega}^*\bm{f})$.  Moreover, as it
relies on equidistribution theorems for which quantitative versions
are available \cite{DH,Schefer}, it might yield explicit bounds for
the approximation of these local heights to their limit, as already
done by Lin for the special case $f_1=f_2=1+x_1+x_2$ and the canonical
height on the projective plane \cite{Lin}.

\vspace{\baselineskip}
\noindent\textbf{Acknowledgments.}
We are grateful to Francesco Amoroso, Joaquim
Ortega-Cerd\`a, Riccardo Pengo and Alain Yger for several discussions
and suggestions about the contents of this article.
We also thank  Pablo Destic, Nuno Hultberg and Micha{\l} Szachniewicz for sharing a preliminary version of their article \cite{DHS}.

This research was
carried out at the Universitat de Barcelona, the Universit\"at
Regensburg and the Universitat Polit\`ecnica de Catalunya.  We also
thank these institutions for their hospitality.

Roberto Gualdi was partially supported by the DFG collaborative research center SFB 1085, the MICINN research project
PID2023-147642NB-I00, the AGAUR research project 2021-SGR-00603 and
the UPC program ALECTORS--2023.
Mart\'{\i}n Sombra was partially supported by the MICINN research
projects PID2019-104047GB-I00 and PID2023-147642NB-I00, the AGAUR
research project 2021-SGR-01468 and the AEI project CEX2020-001084-M
of the Mar\'{\i}a de Maeztu program for centers and units of
excellence in R\&D.

\section*{Conventions and notations}

A \emph{variety} is an integral separated scheme $X$ of finite type
over a field $K$. A \emph{divisor} on $X$ is a Cartier divisor, unless
otherwise said.  For an algebraic closure $\overline{K}$ of $K$, the
points of $X(\overline{K})$ are called the \emph{algebraic points} of
$X$.

Set $X_{\overline{K}}=X\times_{K} \Spec(\overline{K})$ for the base
change by the field extension $\overline{K}/K$.  A \emph{cycle} $Z$ of
$X_{\overline{K}}$ is a formal finite sum of subvarieties of $X$ with
integer coefficients.  For an integer $r$, the cycle $Z$ is an
\emph{$r$-cycle} if these subvarieties can be taken of dimension $r$.
We denote by $|Z|$ the support of $Z$.  For an open subset
$U\subset X$ and a cycle $Y$ of $U$, we set~$\overline{Y}$ for its
closure in $X$.

Set $n=\dim(X)$. Then for an integer $0\le k\le n$ and a family of global sections
$s_1,\ldots,s_k$ of line bundles on $X_{\overline{K}}$ we denote by
$V_{X}(s_{1}, \dots, s_{k})$ its zero set. These global sections are
said to \emph{meet properly} if for every subset
$I\subseteq\{1,\ldots,k\}$ each irreducible component of
$V_{X}(\{s_{i}\}_{i\in I})$ has codimension~$\#I$.  When this is the
case, intersection theory allows to define the $(n-k)$-cycle of $X_{\overline{K}}$ 
\begin{equation}
  \label{eq:2}
  Z_{X}(s_1,\ldots,s_k)=\div(s_{1})\cdots \div(s_{k})\cdot X_{\overline{K}}.
\end{equation}
We have that $| Z_{X}(s_1,\ldots,s_k)|= V_{X}(s_1,\ldots,s_k)$.

We denote by $\mathbb{K}$ a number field and by $\mathfrak{M}$ its set
of (nontrivial) places. For each $v\in\mathfrak{M}$ we denote by $|\cdot|_v$ the
unique absolute value on $\mathbb{K}$ representing $v$ and extending
one of the standard absolute values on $\mathbb{Q}$.  We set
$\mathbb{K}_{v}$ for the completion of $\mathbb{K}$ with respect to
$|\cdot|_{v}$ and associate to  $v$ the positive real
weight
\[
n_v=\frac{[\mathbb{K}_v:\mathbb{Q}_{v_{0}}]}{[\mathbb{K}:\mathbb{Q}]}
\]
where $v_{0}$ is the restriction of this place to the field of
rational numbers.
We denote by $\mathbb{C}_{v}$ the completion of an algebraic
closure of~$\mathbb{K}_{v}$. The absolute value on $\mathbb{K}_v$ has a unique
continuous extension to $\mathbb{C}_{v}$, that is also denoted by
$|\cdot|_{v}$. When $v$ is non-Archimedean, we denote by $\mathbb{C}_v^\circ$ the corresponding valuation ring, and by $\widetilde{\mathbb{C}}_{v}$ the associated residue field.

For a variety $X$ over $\mathbb{K}$ and a place $v\in \mathfrak{M}$ we
denote by $X_{v}$ the base change with respect to $\mathbb{C}_{v}$ and
by $X_{v}^{\an}$ the corresponding analytification in the sense of
Berkovich~\cite{Ber}. Similarly, for a morphism $f$ of varieties over
$\mathbb{K}$ we denote by $f_{v}$ its base change with respect to
$\mathbb{C}_{v}$ and by $f_{v}^{\an}$ its analytification.

\numberwithin{equation}{section}

\section{Geometric results}

Here we recall the basic notions and constructions from
toric geometry that we will use throughout, referring to
\cite{Fulton_toric} and \cite[Chapter 3]{BPS} for the proofs and more
details.  In addition, we study the cycles of a toric variety given as
the closure of intersection cycles of the torus of generic twists of
Laurent polynomials.  Our main result here is \cref{thm: geometric
  main theorem}, which realizes them as the intersection cycles of the
same twists for the associated global sections.  In particular, this
allows to express their degree in combinatorial terms (\cref{cor:
  geometric degree of cycle}).

In this section we denote by ~$K$ an arbitrary field and by
$\mathbb{T}$ a split algebraic torus over $K$ of dimension $n\ge
0$. We set $M$ for the character lattice of $\mathbb{T}$ and
$N=M^{\vee}$ for the dual lattice. Set
also~$M_\mathbb{R}= M\otimes\mathbb{R}$ and
$N_{\mathbb{R}}= N\otimes\mathbb{R}$ for the associated vector spaces,
and let $\langle \cdot , \cdot \rangle $ denote their pairing.

\subsection{Geometric toric constructions}
\label{sec:geom-toric-constr}

Let $f$ be a nonzero Laurent polynomial in the group algebra~$K[M]$ and set
\begin{math}
Z_{\mathbb{T}}(f)=\div(f)\cdot \mathbb{T}
\end{math}
for its associated Weil divisor.

\begin{definition}
  \label{def:4}
Writing
$f=\sum_{m}\alpha_{m}\chi^{m}$, the \emph{twist} of $f$ by a point
$t\in \mathbb{T}({K})$ is defined as
\begin{math}
  t^{*}f = \sum_{m}\alpha_{m}\chi^{m}(t)\chi^{m} \in {K}[M].
\end{math}
\end{definition}

This twist coincides with the pullback of $f$ by the
translation-by-$t$ map $\tau_{t}\colon \mathbb{T}\to \mathbb{T}$
defined by $x\mapsto t\cdot x$. In particular, its Weil divisor
agrees with the pushforward of the Weil divisor of $f$ with respect
to the inverse translation, that~is
\begin{equation*}
Z_{\mathbb{T}}(t^{*}f)=  \tau_{t^{-1},*} Z_{\mathbb{T}}(f).
\end{equation*}

We denote by
$\NP(f) \subset M_{\mathbb{R}}$ the {Newton polytope} of~$f$ and we
set $\Psi_{\NP(f)}\colon N_{\mathbb{R}}\to \mathbb{R}$ for its
\emph{support function}, defined as
\begin{displaymath}
  \Psi_{\NP(f)}(u)=\inf_{x\in \NP(f)} \langle u,x\rangle \quad \text{ for } u\in N_{\mathbb{R}}.
\end{displaymath}
Let then $\Sigma$ be a complete fan on $N_\mathbb{R}$ that is
\emph{compatible} with~$\NP(f)$, in the sense that the support function
of this polytope is linear on each of the cones of this fan.

Set $X$ for the toric variety over $K$ associated with $\Sigma$. It is
an equivariant compactification of the torus~$\mathbb{T}$ that is both
normal and complete.  It admits an open covering by the affine toric
varieties corresponding to the cones of the fan:
\begin{equation}
  \label{eq:10}
  X=\bigcup_{\sigma\in \Sigma}X_{\sigma}.
\end{equation}
The affine toric variety $X_{0}$ corresponding to the zero cone is
canonically isomorphic to the torus, and is called the \emph{principal
  open subset} of $X$.

Since the fan is compatible with the Newton polytope of $f$, we can
associate to this Laurent polynomial a nef toric divisor $D_{f}$ on
$X$. It is defined~as
\begin{equation*}
  D_{f}=(X_{\sigma},\chi^{-m_{\sigma}})_{\sigma\in \Sigma},
\end{equation*}
where for each $\sigma$ we denote by $m_{\sigma}\in M$ the slope of
$\Psi_{\NP(f)}$ on the cone $\sigma$. Set also
$L_{f}=\mathcal{O}(D_f)$ for the associated toric line bundle and
$s_{D_{f}}$ for its \emph{canonical rational section}. The first is the
subsheaf of the sheaf of rational functions $\mathcal{K}_{X}$ that is
generated on each affine chart $X_{\sigma}$ by the monomial
$\chi^{m_{\sigma}}$, and the second is the rational section of $L_{f}$ induced by
the global section $1$ of $\mathcal{K}_{X}$. We have that
$\div(s_{D_{f}})=D_{f}$.

We also set $s_{f}$ for the rational section of $L_{f}$ similarly induced
by the Laurent polynomial~$f$ considered as a global section of
$\mathcal{K}_{X}$. It is related to the canonical rational section by
$s_{f}=f s_{D_{f}}$. It follows from \cite[Theorem~4.3]{Gualdi} that
$s_f$ is a global section with Weil divisor equal to
the closure in $X$ of the Weil divisor of $\mathbb{T}$ defined by~$f$,
that is  
\begin{displaymath}
s_{f}\in \Gamma(X,L_{f}) \and  Z_{X}(s_{f})=  \overline{Z_{\mathbb{T}}(f)}.
\end{displaymath}

The action of $\mathbb{T}$ on $L_{f}$ induces an action on the
global sections of this line bundle, which can be described at the
level of closed points as follows. For $t\in \mathbb{T}({K})$ we now
denote by $\tau_{t}\colon X\to X$ the translation-by-$t$ map on the
toric variety and by
\begin{math}
  \tau_{t}^{*}\colon \mathcal{K}_{X}\to  \mathcal{K}_{X}
\end{math}
the corresponding pullback morphism.  We have that $L_{f}$ is
invariant with respect to the latter, and so we can pullback global
sections by $\tau_{t}$.

\begin{definition}
  \label{def:3}
  For  $s \in \Gamma(X, L_{f})$ and $t\in \mathbb{T}(K)$, the
  \emph{twist} of $s$ by $t$ is the pullback global section
  $\tau_{t}^{*}s$. It is denoted as $t^{*}s$ for convenience.
\end{definition}

For the global section 
$s_{f}$ we have that
\begin{equation}\label{eq: twist of section}
t^*s_{f}= (t^{*} f ) s_{D_f}.
\end{equation}

The next lemma describes the Weil divisor defined by $s_{f}$ in terms
of the orbit decomposition of the toric variety. To state this
properly, we recall that to each cone $\sigma\in \Sigma$ we can attach the split algebraic
torus
\begin{displaymath}
O(\sigma) =\Spec(K[\sigma^{\bot}\cap M]),  
\end{displaymath}
where $\sigma^{\bot} \subset M_{\mathbb{R}}$ denotes the orthogonal
linear subspace.  On the other hand, the corresponding affine toric
variety in \eqref{eq:10} is $X_{\sigma}=\Spec(K[\sigma^{\vee}\cap M])$
where ${\sigma^{\vee}\subset M_{\mathbb{R}}}$ denotes the dual cone. In
this situation, the inclusion $\sigma^{\bot}\subset \sigma^{\vee}$
induces the homomorphism of algebras
$K[\sigma^\vee\cap M]\rightarrow K[\sigma^{\bot}\cap M]$ given by
\begin{align*}
\chi^m&\longmapsto
\begin{cases}
\chi^m&\text{if }m\in\sigma^\perp,
\\0&\text{otherwise},
\end{cases}
\end{align*}
and so a closed immersion
\begin{math}
\iota_{\sigma}\colon  O(\sigma)\hookrightarrow X_{\sigma}.
\end{math}

Varying the cones and identifying the corresponding tori with their
images in~$X$ gives the different orbits of the action of $\mathbb{T}$
on~$X$, producing a decomposition
\begin{displaymath}
  X=\bigsqcup_{\sigma\in \Sigma} O(\sigma).
\end{displaymath}

To each $\sigma$ we can also associate the face of the Newton polytope
of $f$ defined as
\[
\NP(f)^{\sigma}=\{x\in\NP(f):\langle u,x\rangle=\Psi_{\NP(f)}(u) \text{ for all } u\in\sigma\}.
\]
It is contained in $m_{\sigma}+\sigma^\perp \subset M_\mathbb{R}$ for
the slope $m_{\sigma}$ of $\Psi_{\NP(f)}$ on this cone, and in fact it
is the intersection between this affine subspace
and~$\NP(f)$. Restricting the monomial expansion of $f$ to this face
and translating the exponents we obtain the nonzero Laurent polynomial
\[
f(\sigma)=\sum_{m\in \NP(f)^{\sigma}\cap M} \alpha_m\chi^{m-m_\sigma}\in K[\sigma^\perp\cap M].
\]

The inclusion $\sigma^\perp\cap M\subset M$ also induces an inclusion
of algebras $K[\sigma^\perp\cap M] \subset K[ M]$ and so
a homomorphism of tori
${\pi_{\sigma}\colon\mathbb{T}\to O({\sigma})}$.  These constructions
are related to the twisting by
\begin{equation}
  \label{eq:5}
(t^{*}f)(\sigma)=\chi^{m_{\sigma}}(t) \cdot (\pi_{\sigma}(t))^{*} (f(\sigma)) . 
\end{equation}

\begin{lemma}\label{lem: restriction of twist to orbits}
  For each $\sigma\in \Sigma$ we have that
  \begin{displaymath}
V_{X}(s_{f})\cap O(\sigma)= V_{O(\sigma)}(f(\sigma)).    
  \end{displaymath}
  In particular, for $t\in \mathbb{T}(K)$ we have that
  $V_{X}(t^{*}s_{f}) \cap O(\sigma)=
  V_{O(\sigma)}(\pi_{\sigma}(t)^{*}f(\sigma))$.
\end{lemma}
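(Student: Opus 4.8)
The claim is purely about affine toric charts, so the plan is to work one cone at a time inside the affine chart $X_\sigma = \Spec(K[\sigma^\vee\cap M])$ and compute the ideal cutting out $V_X(s_f)$ there, then restrict it along the closed immersion $\iota_\sigma\colon O(\sigma)\hookrightarrow X_\sigma$. First I would recall that $L_f$ is generated on $X_\sigma$ by the monomial $\chi^{m_\sigma}$, so the global section $s_f = f\, s_{D_f}$ corresponds on this chart to the regular function $\chi^{-m_\sigma}f = \sum_m \alpha_m \chi^{m-m_\sigma}$, which indeed lies in $K[\sigma^\vee\cap M]$ because $\Psi_{\NP(f)}$ has slope $m_\sigma$ on $\sigma$, i.e. $\langle u, m - m_\sigma\rangle \ge 0$ for all $m\in\NP(f)$ and $u\in\sigma$. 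Thus $V_X(s_f)\cap X_\sigma = V(\chi^{-m_\sigma}f)$ as a subscheme of $X_\sigma$.

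Next I would pull this equation back along $\iota_\sigma$, using the explicit description of the algebra map $K[\sigma^\vee\cap M]\to K[\sigma^\perp\cap M]$ that sends $\chi^m$ to $\chi^m$ if $m\in\sigma^\perp$ and to $0$ otherwise. Applying this to $\chi^{-m_\sigma}f = \sum_m \alpha_m\chi^{m-m_\sigma}$, a term $\alpha_m\chi^{m-m_\sigma}$ survives precisely when $m - m_\sigma\in\sigma^\perp$, i.e. when $\langle u,m\rangle = \langle u,m_\sigma\rangle = \Psi_{\NP(f)}(u)$ for all $u\in\sigma$; since we already know $m\in\NP(f)$ forces $\langle u,m\rangle\ge\Psi_{\NP(f)}(u)$, this is exactly the condition $m\in\NP(f)^\sigma$. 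Hence the image of $\chi^{-m_\sigma}f$ in $K[\sigma^\perp\cap M]$ is $\sum_{m\in\NP(f)^\sigma\cap M}\alpha_m\chi^{m-m_\sigma} = f(\sigma)$. Therefore $V_X(s_f)\cap O(\sigma) = \iota_\sigma^{-1}(V(\chi^{-m_\sigma}f)) = V_{O(\sigma)}(f(\sigma))$, which is the first assertion. A small point to check carefully here is that $f(\sigma)$ is nonzero, which holds because $\NP(f)^\sigma$ is a nonempty face of $\NP(f)$, so the scheme-theoretic description is not degenerate.

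For the second assertion I would simply apply the first one to the twisted section. By \eqref{eq: twist of section} we have $t^*s_f = (t^*f)\, s_{D_f}$, which is the canonical rational section construction applied to the Laurent polynomial $t^*f$ in place of $f$; since twisting does not change the Newton polytope, the same fan $\Sigma$ is compatible with $\NP(t^*f) = \NP(f)$ and the first part gives $V_X(t^*s_f)\cap O(\sigma) = V_{O(\sigma)}((t^*f)(\sigma))$. Now I invoke the identity \eqref{eq:5}, namely $(t^*f)(\sigma) = \chi^{m_\sigma}(t)\cdot(\pi_\sigma(t))^*(f(\sigma))$: the scalar factor $\chi^{m_\sigma}(t)\in K^\times$ is a unit and does not affect the zero set, so $V_{O(\sigma)}((t^*f)(\sigma)) = V_{O(\sigma)}(\pi_\sigma(t)^*f(\sigma))$, completing the proof.

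The argument is essentially bookkeeping with monomials, so I do not expect a genuine obstacle; the one place demanding care is the matching of the two combinatorial conditions — "$m-m_\sigma\in\sigma^\perp$ after killing the other monomials" versus "$m\in\NP(f)^\sigma$" — which relies on the fact that $m_\sigma$ realizes the minimum $\Psi_{\NP(f)}(u)$ for every $u\in\sigma$ simultaneously, so that membership in $\NP(f)$ already gives the inequality $\langle u,m\rangle\ge\langle u,m_\sigma\rangle$ and equality on all of $\sigma$ is equivalent to lying in the face. Keeping straight whether we need equality for all $u\in\sigma$ or just for $u$ in the relative interior (these coincide by linearity) is the only subtlety worth spelling out.
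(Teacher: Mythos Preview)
Your proof is correct and follows essentially the same route as the paper: identify $s_f$ on the chart $X_\sigma$ with the regular function $\chi^{-m_\sigma}f$, restrict along $\iota_\sigma$ to obtain $f(\sigma)$, and then deduce the twisted statement from \eqref{eq:5} together with the fact that $\chi^{m_\sigma}(t)$ is a unit. The paper's version is terser, simply writing $\iota_\sigma^*(\chi^{-m_\sigma}f)=f(\sigma)$ without unpacking the combinatorial matching between $m-m_\sigma\in\sigma^\perp$ and $m\in\NP(f)^\sigma$ that you spell out, but there is no substantive difference in the argument.
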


\begin{proof}
  On $X_{\sigma}$, the line bundle $L_{f}$ is generated by
  $\chi^{m_{\sigma}}$ and its global section $s_{f}$ is defined by
  $f$. Hence
\begin{math}
V_{X_{\sigma}}(s_{f})=V_{X_{\sigma}}(\chi^{-m_{\sigma}}f),   
\end{math}
where the latter denotes the zero set of the regular function
$\chi^{-m_{\sigma}}f $ on $X_{\sigma}$.  Restricting to the orbit
$O(\sigma)$ we get
\begin{displaymath}
  V_{X}(s_{f}) \cap {O(\sigma)}= V_{X_{\sigma}}(\chi^{-m_{\sigma}}f)  \cap {O(\sigma)}= V_{O({\sigma})}(\iota_{\sigma}^{*}(\chi^{-m_{\sigma}}f)) =V_{O(\sigma)}(f(\sigma)),
\end{displaymath}
proving the first statement. The second statement follows from the
first together with the functoriality in \eqref{eq:5}, noting that
$\chi^{m_{\sigma}}(t) \ne 0$.
\end{proof}

Using this result, we show that the zero set of a generic twist of the
global section~$s_{f}$ avoids any given finite set of points.  Recall
that a condition on the algebraic points of $\mathbb{T}$ is
\emph{generic} if it holds on a nonempty open subset of
$\mathbb{T}_{\overline{K}}$.

\begin{lemma}\label{lemma: generically twisted of divisors avoid a finite set of points}
  Let $p_{1}, \dots, p_{r} \in X_{\overline{K}}$ be a finite family of
  (not necessarily closed) points. Then for a generic choice
  of~$t\in\mathbb{T}(\overline{K})$ we have that
  $p_i\notin V_{X}(t^*s_f)$ for all $i$.
\end{lemma}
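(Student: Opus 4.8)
The plan is to reduce the statement to a finite union of irreducibility-type arguments, one for each cone of the fan, using \cref{lem: restriction of twist to orbits} to localize the condition on each orbit. Concretely, since $X = \bigsqcup_{\sigma \in \Sigma} O(\sigma)$, each point $p_i$ lies in exactly one orbit $O(\sigma_i)$, and by \cref{lem: restriction of twist to orbits} the condition $p_i \notin V_X(t^*s_f)$ is equivalent to $\pi_{\sigma_i}(q_i) \notin V_{O(\sigma_i)}(\pi_{\sigma_i}(t)^* f(\sigma_i))$, where $q_i$ denotes the point $p_i$ viewed inside the subtorus $O(\sigma_i)$. Since genericity is stable under finite intersection (a finite intersection of nonempty opens in the irreducible variety $\mathbb{T}_{\overline{K}}$ is nonempty), it suffices to treat each $p_i$ separately. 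So the whole problem reduces to the following: given a nonzero Laurent polynomial $g = f(\sigma) \in K[\sigma^\perp \cap M]$ on a torus $O(\sigma)$, a surjective homomorphism of tori $\pi_\sigma \colon \mathbb{T} \to O(\sigma)$, and a single point $q \in O(\sigma)_{\overline{K}}$, show that for a generic $t \in \mathbb{T}(\overline{K})$ one has $q \notin V_{O(\sigma)}(\pi_\sigma(t)^* g)$.

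For this reduced statement, the key computation is to unwind what $q \in V_{O(\sigma)}(\pi_\sigma(t)^* g)$ means. Writing $g = \sum_m \beta_m \chi^m$ with the sum over $m$ in the finite set $\NP(g) \cap (\sigma^\perp \cap M)$, the twisted polynomial is $\pi_\sigma(t)^* g = \sum_m \beta_m \chi^m(\pi_\sigma(t)) \chi^m$, so the condition $(\pi_\sigma(t)^* g)(q) = 0$ reads
\[
\sum_m \beta_m \, \chi^m(\pi_\sigma(t)) \, \chi^m(q) = 0.
\]
Viewing this as a function of $t$, it is $\sum_m \beta_m \chi^m(q) \cdot (\chi^m \circ \pi_\sigma)(t)$, a nonzero Laurent polynomial in $t$ provided at least one coefficient $\beta_m \chi^m(q)$ is nonzero (which holds since $\chi^m(q) \ne 0$ as $q$ is a point of a torus and $g \ne 0$) and the characters $\chi^m \circ \pi_\sigma$ of $\mathbb{T}$ are pairwise distinct. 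The latter holds because $\pi_\sigma$ corresponds to the inclusion $\sigma^\perp \cap M \hookrightarrow M$, which is injective; distinct characters of a torus are linearly independent, so no cancellation can occur. Hence $t \mapsto \sum_m \beta_m \chi^m(q)\,(\chi^m\circ\pi_\sigma)(t)$ is a nonzero regular function on $\mathbb{T}_{\overline{K}}$, and its non-vanishing locus is the desired nonempty (hence dense) open subset.

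Assembling: intersecting the finitely many nonempty opens obtained for $p_1, \dots, p_r$ yields a nonempty open subset of $\mathbb{T}_{\overline{K}}$ on which $p_i \notin V_X(t^* s_f)$ simultaneously for all $i$, which is exactly the genericity assertion. The only mild subtlety — and the step I expect to require the most care — is the bookkeeping for \emph{non-closed} points $p_i$: one must check that ``$p_i \notin V_X(t^*s_f)$'' is equivalent to the analogous statement for the closure, or rather handle it directly via the scheme-theoretic zero set, noting that $p_i \notin V_X(t^*s_f)$ iff the generic point's residue field evaluation of the local equation is nonzero, which still amounts to the non-vanishing of the same Laurent polynomial in $t$ (now with $\chi^m(q)$ interpreted in the residue field of $p_i$, still nonzero since $p_i$ lies in the torus $O(\sigma_i)$). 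Everything else is a routine application of \cref{lem: restriction of twist to orbits} together with the linear independence of characters.
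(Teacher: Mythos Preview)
Your proof is correct and follows essentially the same approach as the paper: reduce to a single point, use \cref{lem: restriction of twist to orbits} to localize on the orbit $O(\sigma)$, and observe that the resulting condition on $t$ is the non-vanishing of a nonzero Laurent polynomial. The paper packages the last step slightly differently, rewriting $(\pi_\sigma(t)^*f(\sigma))(p)\ne 0$ as $f(\sigma)(\pi_\sigma(t)\cdot p)\ne 0$, i.e.\ $t\notin \pi_\sigma^{-1}(V_{O(\sigma)}(p^*f(\sigma)))$, which sidesteps the explicit linear-independence-of-characters argument; and it handles non-closed points by passing to a closed point of $\overline{\{p\}}$ (if that closed point avoids $V_X(t^*s_f)$ then so does $p$, since the zero set is closed), which is a bit cleaner than working with coefficients in the residue field.
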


\begin{proof}
  We reduce without loss of generality to a single point
  $p\in X_{\overline{K}}$, since the general case follows from this
  one by intersecting the corresponding genericity
  conditions. Moreover, taking any closed point in the closure
  $\overline{\{p\}}$ we can also suppose that
  ${p \in X(\overline{K})}$.

  Take $\sigma\in \Sigma$ such that $p\in O(\sigma)$. By \cref{lem:
    restriction of twist to orbits}, for
  $t\in \mathbb{T}(\overline{K})$ we have that
  $p\notin V_{X}(t^{*}s_{f})$ if and only if
  $p\notin V_{O(\sigma)}(\pi_{\sigma}(t)^{*} f(\sigma))$. This is
  equivalent to the fact that
  \begin{displaymath}
f(\sigma)(\pi_{\sigma}(t)\cdot p) \ne 0,   
  \end{displaymath}
  which in turn is equivalent to
  $t\notin \pi_{\sigma}^{-1}(V_{O(\sigma)}(p^{*} f(\sigma)))$.  Since
  the Laurent polynomial $f(\sigma)$ is nonzero, this latter condition
  is open and nonempty, and so generic.
\end{proof}

For $0\le r\le n$, the \emph{degree} of an $r$-cycle $Z$ of $\mathbb{T}_{\overline{K}}$ with respect
to a family $D_1, \dots, D_{r}$ of divisors on~$X$ is defined by
taking its closure, that is
\begin{displaymath}
\deg_{D_{1},\dots, D_{r}}(Z)\coloneqq\deg_{D_{1},\dots, D_{r}}(\overline{Z}),
\end{displaymath}
where the latter degree is defined by considering the base change of
the divisors with respect to~$\overline{K}$.

Recall that to each toric divisor $D$ on $X$ we can associate a
lattice polytope ${\Delta_{D_{i}}\subset M_{\mathbb{R}}}$. For a family
$D_{1},\dots, D_{n}$ of nef toric divisors on $X$ we have that
\begin{equation}
  \label{eq:6}
  \deg_{D_{1},\dots, D_{n}}(\mathbb{T}_{\overline{K}})=\MV_{M}(\Delta_{D_{1}},\dots, \Delta_{D_{n}}),
\end{equation}
where $\MV_{M}$ denotes the mixed volume function for the Haar measure
$\vol_{M}$ on $M_{\mathbb{R}}$ that gives covolume $1$ to the lattice
$M$ and acts on families of $n$ convex bodies of this vector space
\cite[Definition 2.7.14]{BPS}.

\subsection{The dimension of intersection cycles of twists}
\label{sec:dimens-inters-cycl}

Here we study the intersection cycles defined by the twists of several Laurent
polynomials.  We start by considering an integer $0\le k\le n$ and
a family of nonzero Laurent polynomials in~$K[M]$
\begin{displaymath}
\boldsymbol{f}= (f_1,\ldots,f_k).
\end{displaymath}
Fix a complete fan $\Sigma$ on $N_\mathbb{R}$ which is {compatible}
with the Newton polytopes of these Laurent polynomials and set $X$ for
the associated complete toric variety. This compatibility allows to
consider the family $s_{\boldsymbol{f}}=(s_{f_{1}}, \dots, s_{f_{k}})$
where $s_{f_{i}}$ is the global section of the nef toric line
bundle $L_{f_{i}}$ on~$X$ associated with $f_{i}$.

Let
$\mathbb{T}^k$ be the product of $k$-many copies of the
torus~$\mathbb{T}$, and for $\boldsymbol{t}\in\mathbb{T}(\overline{K})^{k}$ set
\begin{displaymath}
  \boldsymbol{t}^{*}\boldsymbol{f}=(t_{1}^{*}f_{1}, \dots, t_{k}^{*}f_{k})
  \and   \boldsymbol{t}^{*}s_{\boldsymbol{f}}=(t_{1}^{*}s_{f_{1}}, \dots, t_{k}^{*}s_{f_{k}})
\end{displaymath}
for the corresponding families of twists over $\overline{K}$
(\cref{def:4,,def:3}).  

The following is the main result of this section, realizing the
closure of the intersection cycle of the torus defined by a generic
twist of $\boldsymbol{f}$ as the intersection cycle of the toric
variety defined by the same twist of $s_{\boldsymbol{f}}$.

\begin{theorem}\label{thm: geometric main theorem}
  There is a proper closed subset
  $H\subset \mathbb{T}^{k}$ such that for
  $\bm{t} \in (\mathbb{T}^k\setminus H)(\overline{K})$ we have that
  $t_1^*s_{f_1},\ldots,t_k^*s_{f_k}$ meet properly and that
  $ \overline{Z_{\mathbb{T}}(\bm{t}^*\bm{f})} =
  Z_{X}(\boldsymbol{t}^{*} s_{\boldsymbol{f}}).  $
\end{theorem}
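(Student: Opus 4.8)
The plan is to reduce the statement to two genericity assertions: (I) for $\bm t$ outside a proper closed subset of $\mathbb{T}^k$, the global sections $t_1^*s_{f_1},\ldots,t_k^*s_{f_k}$ meet properly on $X_{\overline K}$; and (II) for such $\bm t$, no irreducible component of the cycle $Z_X(\bm t^*s_{\bm f})$ is contained in the boundary $X\setminus\mathbb{T}$. Granting these, the theorem follows quickly: restricting the intersection cycle to the principal open subset $X_0=\mathbb{T}$, and using \cref{eq: twist of section} together with the fact that the canonical section $s_{D_{f_i}}$ is nowhere vanishing on $X_0$ (its divisor $D_{f_i}$ being supported on the toric boundary), one gets $Z_X(\bm t^*s_{\bm f})|_{\mathbb{T}}=Z_{\mathbb{T}}(\bm t^*\bm f)$ by compatibility of intersection products with restriction to open subsets; since a cycle all of whose components meet $\mathbb{T}$ equals the closure of its restriction to $\mathbb{T}$, assertion (II) then yields $Z_X(\bm t^*s_{\bm f})=\overline{Z_{\mathbb{T}}(\bm t^*\bm f)}$.

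The key input is a Bertini-type statement for twists on a split torus: given nonzero Laurent polynomials $g_1,\ldots,g_k$ on a split torus $\mathbb{T}'$ over a field $L$, the set of $\bm u\in(\mathbb{T}')^k$ for which $u_1^*g_1,\ldots,u_k^*g_k$ meet properly is open and nonempty, so the locus where they meet improperly is a proper closed subset of $(\mathbb{T}')^k$. Openness is the usual semicontinuity of fibre dimension in the universal family $\{x\in\bigcap_{i\in I}V_{\mathbb{T}'}(u_i^*g_i)\}\subset(\mathbb{T}')^k\times\mathbb{T}'$, taken over all $I$. Nonemptiness I would prove by induction on $k$: assuming $\{u_i^*g_i\}_{i<k}$ meet properly, each of the finitely many components $C$ of the finitely many $V_{\mathbb{T}'}(\{u_i^*g_i\}_{i\in I'})$ with $I'\subseteq\{1,\ldots,k-1\}$ can be moved off the fixed hypersurface $V_{\mathbb{T}'}(g_k)$ by requiring $u_k\notin V_{\mathbb{T}'}(g_k)\cdot c_0^{-1}$ for a chosen $c_0\in C$, a proper closed condition; then $u_k^*g_k$ vanishes on no such $C$ and the new intersections again have the expected codimension. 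Alternatively, one may invoke \cref{lemma: generically twisted of divisors avoid a finite set of points} for the generic points of the $C$.

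To deduce (I), I would combine this with the orbit stratification $X=\bigsqcup_{\sigma\in\Sigma}O(\sigma)$ and \cref{lem: restriction of twist to orbits}, which give
\[
V_X\bigl(\{t_i^*s_{f_i}\}_{i\in I}\bigr)\cap O(\sigma)=V_{O(\sigma)}\bigl(\{\pi_\sigma(t_i)^*f_i(\sigma)\}_{i\in I}\bigr)
\]
for every $I\subseteq\{1,\ldots,k\}$ and every cone $\sigma$. Since $\pi_\sigma\colon\mathbb{T}\to O(\sigma)$ is surjective and each $f_i(\sigma)$ is a nonzero Laurent polynomial on the $(n-\dim\sigma)$-dimensional torus $O(\sigma)$, pulling back along $\pi_\sigma^{\,k}$ the proper closed locus where these twists meet improperly on $O(\sigma)$ yields a proper closed $H_\sigma\subset\mathbb{T}^k$ outside which each such slice has dimension $\le(n-\dim\sigma)-\#I\le n-\#I$. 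Setting $H=\bigcup_{\sigma\in\Sigma}H_\sigma$, a proper closed subset of $\mathbb{T}^k$, we get $\dim V_X(\{t_i^*s_{f_i}\}_{i\in I})\le n-\#I$ for $\bm t\notin H$; since the reverse inequality holds automatically (these zero sets are locally cut out by $\#I$ equations, so Krull's height theorem applies), the sections meet properly, and in particular they also meet properly after restriction to the open subset $\mathbb{T}$.

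For (II), fix $\bm t\notin H$. By (I) the cycle $Z_X(\bm t^*s_{\bm f})$ is defined, and being supported on the pure $(n-k)$-dimensional set $V_X(\bm t^*s_{\bm f})$, all its components $W$ satisfy $\dim W=n-k$. If such a $W$ were contained in $X\setminus\mathbb{T}$ it would meet some $O(\sigma)$ with $\sigma\ne0$ in a dense open subset, whence $\dim W\le\dim\bigl(V_X(\bm t^*s_{\bm f})\cap O(\sigma)\bigr)\le(n-\dim\sigma)-k<n-k$, a contradiction; thus every component of $Z_X(\bm t^*s_{\bm f})$ meets $\mathbb{T}$, and the reduction above finishes the proof. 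I expect the main obstacle to be the genericity-in-families part of the Bertini statement — promoting "for a general $u_k$" to an honest proper closed subset of the full product $(\mathbb{T}')^k$, and organizing the constructible bad loci accordingly — which is routine (Chevalley, semicontinuity of fibre dimension) but must be set up carefully; everything else is bookkeeping with orbit closures and the behaviour of intersection products under restriction to open subsets.
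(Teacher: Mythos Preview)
Your proof is correct and takes a genuinely different route from the paper's. The paper argues via an incidence variety $\Omega\subset\mathbb{T}^k\times X$ cut out by the pullbacks $\mu_i^*s_{f_i}$ along the multiplication maps $\mu_i\colon\mathbb{T}^k\times X\to X$: it first exhibits a single $\bm t$ satisfying both conclusions (\cref{lemma: exists fiber of good dimension}, proved by the same inductive avoidance you sketch), and then uses the theorem on dimension of fibers for the projection $\Omega\to\mathbb{T}^k$ to propagate this to the generic $\bm t$; the boundary statement is handled by running the identical argument for $\Omega\cap(\mathbb{T}^k\times(X\setminus X_0))$. Your approach instead works orbit by orbit: for each cone $\sigma$ you run the Bertini statement directly on the torus $O(\sigma)$ for the nonzero Laurent polynomials $f_i(\sigma)$, pull the bad locus back along the surjection $\pi_\sigma^{\,k}\colon\mathbb{T}^k\to O(\sigma)^k$, and take the finite union over $\sigma$. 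This is arguably more elementary---only Krull and semicontinuity of fibre dimension, no global incidence correspondence---and it makes (II) a one-line stratification count: orbits with $\sigma\ne 0$ are too low-dimensional to carry an $(n-k)$-dimensional component. The paper's approach, by contrast, is more uniform (the same incidence-variety trick handles $X$ and $X\setminus X_0$ without ever stratifying by orbits in the main proof) and packages the existence of a good fibre into a single lemma, at the cost of invoking the dimension-of-fibers theorem.
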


We first prove the next auxiliary result.

\begin{lemma}\label{lemma: exists fiber of good dimension}
  There is $\boldsymbol{t}\in\mathbb{T}(\overline{K})^{k}$ such that every
  irreducible component of the zero set
  $V_{X}(\boldsymbol{t}^{*}s_{\boldsymbol{f}})$ has codimension $k$
  and is not contained in $X\setminus X_{0}$.
\end{lemma}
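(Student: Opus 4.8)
The plan is to exhibit $\boldsymbol{t}$ as a generic point of $\mathbb{T}^{k}$, controlling the dimension of $V_X(\boldsymbol{t}^{*}s_{\boldsymbol{f}})$ orbit by orbit. The essential ingredient is a \emph{generic translate bound}: for any split torus $\mathbb{T}'$ over $K$ of dimension $n'$ and any nonzero Laurent polynomials $g_1,\dots,g_k\in K[M']$, the set of $\boldsymbol{s}=(s_1,\dots,s_k)\in\mathbb{T}'(\overline{K})^{k}$ for which $\bigcap_i V_{\mathbb{T}'}(s_i^{*}g_i)$ has dimension at most $n'-k$ (with the convention that this holds whenever the intersection is empty) is generic. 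I would prove this by induction on $k$, exploiting that $\mathbb{T}'$ acts transitively on itself: intersecting with a generic translate $V_{\mathbb{T}'}(s_{k+1}^{*}g_{k+1})$ drops the dimension of each irreducible component of $\bigcap_{i\le k}V_{\mathbb{T}'}(s_i^{*}g_i)$ by exactly one, because for a fixed subvariety $C$ the locus $\{s\in\mathbb{T}':sC\subseteq V_{\mathbb{T}'}(g_{k+1})\}$ is not dense in $\mathbb{T}'$ --- a dimension count on the incidence variety $\{(s,c):c\in C,\ sc\in V_{\mathbb{T}'}(g_{k+1})\}$, whose fibres over $c\in C$ have dimension $n'-1$, gives this. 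The dependence of the components on $\boldsymbol{s}$ is dealt with by passing to the universal family over $(\mathbb{T}')^{k}$. In characteristic zero the same conclusion follows from Kleiman's transversality theorem; only its dimension-counting part, valid in arbitrary characteristic, is needed here.

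Granting this, recall the orbit decomposition $X=\bigsqcup_{\sigma\in\Sigma}O(\sigma)$ and that, by \cref{lem: restriction of twist to orbits}, for every cone $\sigma$ one has $V_X(\boldsymbol{t}^{*}s_{\boldsymbol{f}})\cap O(\sigma)=V_{O(\sigma)}\bigl(\pi_\sigma(t_1)^{*}f_1(\sigma),\dots,\pi_\sigma(t_k)^{*}f_k(\sigma)\bigr)$, where $O(\sigma)$ is a torus of dimension $n-\dim\sigma$, the $f_i(\sigma)$ are nonzero Laurent polynomials on it, and $\pi_\sigma\colon\mathbb{T}\to O(\sigma)$ is the surjective homomorphism introduced above; in particular $O(0)=\mathbb{T}$, $\pi_0=\mathrm{id}$ and $f_i(0)=f_i$. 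Since $\pi_\sigma^{k}\colon\mathbb{T}^{k}\to O(\sigma)^{k}$ is surjective, the preimage of a generic subset of $O(\sigma)^{k}$ is generic in $\mathbb{T}^{k}$. Applying the generic translate bound to each $O(\sigma)$ --- there being finitely many cones --- we obtain a nonempty open $U\subseteq\mathbb{T}^{k}$ such that for $\boldsymbol{t}\in U(\overline{K})$ one has $\dim V_{\mathbb{T}}(\boldsymbol{t}^{*}\boldsymbol{f})\le n-k$ and, for every $\sigma\ne 0$, $\dim\bigl(V_X(\boldsymbol{t}^{*}s_{\boldsymbol{f}})\cap O(\sigma)\bigr)\le (n-\dim\sigma)-k<n-k$. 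We fix such a $\boldsymbol{t}$, which exists because $\overline{K}$ is algebraically closed.

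It remains to verify the two required properties. Since $V_X(\boldsymbol{t}^{*}s_{\boldsymbol{f}})$ is cut out, locally on each affine chart, by the $k$ local equations of the effective Cartier divisors $\div(t_i^{*}s_{f_i})$, Krull's height theorem shows that every irreducible component $W$ of $V_X(\boldsymbol{t}^{*}s_{\boldsymbol{f}})$ has codimension at most $k$ in $X$, i.e.\ $\dim W\ge n-k$. Let $\sigma$ be the cone whose orbit $O(\sigma)$ contains the generic point of $W$, so that $W\cap O(\sigma)$ is dense in $W$. If $\sigma\ne 0$ then $\dim W=\dim(W\cap O(\sigma))\le\dim\bigl(V_X(\boldsymbol{t}^{*}s_{\boldsymbol{f}})\cap O(\sigma)\bigr)<n-k$, contradicting the lower bound; hence $\sigma=0$. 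Consequently $W$ meets the principal open subset $X_0=\mathbb{T}$ and is therefore not contained in $X\setminus X_0$; moreover $W\cap X_0$ is a closed subset of $V_{\mathbb{T}}(\boldsymbol{t}^{*}\boldsymbol{f})$ of dimension $\dim W$, so $\dim W\le n-k$. Combining the two bounds gives $\dim W=n-k$, i.e.\ $W$ has codimension $k$, as wanted. The main obstacle is the generic translate bound; the rest is bookkeeping with the orbit stratification together with Krull's theorem.
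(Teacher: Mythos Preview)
Your proof is correct and takes a genuinely different route from the paper. The paper argues by induction on $k$ directly on $X$: given $t_1,\dots,t_{k-1}$ satisfying both conclusions, it invokes \cref{lemma: generically twisted of divisors avoid a finite set of points} to choose $t_k$ so that $V_X(t_k^{*}s_{f_k})$ misses the generic points of the components of $V_X(t_1^{*}s_{f_1},\dots,t_{k-1}^{*}s_{f_{k-1}})$ and of its intersection with the boundary, then checks that the two conditions propagate. You instead isolate a purely toric statement (the generic translate bound on an arbitrary $\mathbb{T}'$) and apply it separately to every orbit $O(\sigma)$ via \cref{lem: restriction of twist to orbits}; the gluing is done by Krull's principal ideal theorem, and the exclusion of boundary components falls out of the dimension count, since orbits with $\sigma\ne 0$ are too small to host an $(n-k)$-dimensional piece. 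Your argument makes transparent \emph{why} no component can lie in $X\setminus X_0$ and in fact yields more, namely the expected dimension of $V_X(\boldsymbol{t}^{*}s_{\boldsymbol{f}})\cap O(\sigma)$ on every orbit; the paper's version is shorter because it hides the orbit analysis inside the already-proved \cref{lemma: generically twisted of divisors avoid a finite set of points}. Both ultimately rest on the same incidence-variety dimension count, organised differently.
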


\begin{proof} 
  We proceed by induction on~$k$.  The case~$k=0$ is tautological, and
  so we suppose that $k\ge 1$.
  The inductive step follows by applying \cref{lemma: generically
    twisted of divisors avoid a finite set of points} to the generic
  point of each of the components of
  $V_{X}(t_{1}^{*}s_{f_{1}}, \dots, t_{k-1}^{*}s_{f_{k-1}})$ and of
  the intersection of this zero set with $X\setminus X_{0}$. As a
  consequence, there is ${t_{k}\in \mathbb{T}(\overline{K})}$ such that
  $V_{X}(t_{k}^{*}s_{f_{k}})$ does not contain any of the components
  of $V_{X}(t_{1}^{*}s_{f_{1}}, \dots,
  t_{k-1}^{*}s_{f_{k-1}})$. Setting
  $\boldsymbol{t}=(t_{1},\dots, t_{k-1},t_{k})$ we have that every
  component of
\begin{displaymath}
V_{X}(\boldsymbol{t}^{*}s_{\boldsymbol{f}})=   V_{X}(t_{1}^{*}s_{f_{1}}, \dots, t_{k-1}^{*}s_{f_{k-1}})\cap V_{X}(t_{k}^{*}s_{f_{k}})
\end{displaymath}
has codimension $k$,
  as stated.

  Moreover, by the inductive hypothesis all the  components
  of the intersection of
  $V_{X}(t_{1}^{*}s_{f_{1}}, \dots, t_{k-1}^{*}s_{f_{k-1}})$ with
  $X\setminus X_{0}$ have codimension $k$. By construction of $t_{k}$,
  none of them is contained in
  $V_{X}(\boldsymbol{t}^{*}s_{\boldsymbol{f}})$, and so this zero set
  has no  component contained in $X\setminus X_{0}$.
\end{proof}

\begin{proof}[Proof of  \cref{thm: geometric main theorem}]
  We first show that the conditions in \cref{lemma: exists fiber of
    good dimension} are generic. To this end, for each $i$ denote by
  $\mu_{i}\colon \mathbb{T}^{k}\times X \to X$ the {$i$-th
    multiplication map} defined~by
  \begin{displaymath}
(t_{1},\dots, t_{k}, p)\longmapsto t_{i}\cdot p.   
  \end{displaymath}
  The pullback $\mu_{i}^{*}s_{f_{i}}$ is a global section of the line
  bundle $ \mu_{i}^{*}L_{f_{i}}$ on $\mathbb{T}^{k}\times X$, and we
  put 
  \begin{displaymath}
    \Omega=V_{\mathbb{T}^{k}\times X}(\mu_{1}^{*}s_{f_{1}}, \dots, \mu_{k}^{*}s_{f_{k}}). 
  \end{displaymath}
  This a closed set of $ \mathbb{T}^{k}\times X$ whose algebraic
  points are the pairs
  $(\boldsymbol{t},p) \in (\mathbb{T}^{k}\times
  X)(\overline{K})=\mathbb{T}(\overline{K})^{k}\times X(\overline{K})
  $ such that
  \begin{displaymath}
s_{f_{1}}(t_{1}\cdot p)=\cdots = s_{f_{k}}(t_{k}\cdot p)=0.
  \end{displaymath}
  Hence $\Omega$ is the incidence closed subset of
  the global sections $s_{f_{i}}$, $i=1,\dots, k$, with respect to the
  action of the torus on the toric variety.

For
  $\boldsymbol{t}\in \mathbb{T}(\overline{K})^{k}$ the fiber of the
  projection map $\pi\colon   \mathbb{T}^{k}\times X\to \mathbb{T}^{k}$ restricted to
  this closed subset identifies with the zero set of the family of
  twisted global sections $\boldsymbol{t}^{*}s_{\boldsymbol{f}}$, that
  is
  \begin{equation*}
    (\pi|_{\Omega})^{-1}(\boldsymbol{t})=\{\boldsymbol{t}\}\times V_{X}(\boldsymbol{t}^{*}s_{\boldsymbol{f}}).
  \end{equation*}
  
  Let $W$ be an irreducible component of $\Omega$.
  We have that the map $\pi$ is closed because $X$ is complete, and so
  the image $\pi(W)\subset \mathbb{T}^{k}$ is a closed subset.

  On the one hand, if $\pi(W)\ne \mathbb{T}^{k}$ then
  $\pi(W) \subset \mathbb{T}^{k}$ is a proper closed subset and so
  $ (\pi|_{W})^{-1}(\boldsymbol{t})=\emptyset$ for $\boldsymbol{t}$
  generic.  On the other hand, if $\pi(W)=\mathbb{T}^{k}$ then the
  restriction
  \begin{equation}
    \label{eq:9}
\pi|_{W}\colon W\longrightarrow \mathbb{T}^{k}
  \end{equation}
  is surjective.  As $\Omega$ is defined by $k$
  equations, dimension theory implies that the codimension of $W$
  cannot exceed~$k$. Moreover, by the first part of \cref{lemma:
    exists fiber of good dimension} the map in~\eqref{eq:9} has a
  fiber of dimension $n-k$, and by the theorem of dimension of fibers
  \cite[Exercise 3.22 at page 95]{Har} this quantity bounds above the
  relative dimension of $W$ over $\mathbb{T}^{k}$. Altogether this
  implies that
\[
n-k\leq\dim(W)-\dim(\mathbb{T}^k)\leq n-k.
\]
It follows that this relative dimension is equal to~$n-k$, and so the
theorem of dimension of fibers also gives that
$\dim((\pi|_{W})^{-1}(\boldsymbol{t}))= n-k$ for $\boldsymbol{t}$
generic.

Note that
\begin{displaymath}
 (\pi|_{\Omega})^{-1}(\boldsymbol{t})=
\bigcup_{W}(\pi|_{W})^{-1}(\boldsymbol{t}), 
\end{displaymath}
the union being over the irreducible components of $\Omega$. Hence
intersecting the genericity conditions corresponding to each $W$ we get
that $V_{X}(\boldsymbol{t}^{*}s_{\boldsymbol{f}})$ is either empty or
has pure codimension $k$ for $\boldsymbol{t}$ generic.

Applying the same reasoning to the intersection
$\Omega \cap (\mathbb{T}^{k}\times (X\setminus X_{0}))$ and using the
second part of \cref{lemma: exists fiber of good dimension} we deduce
that
$V_{X}(\boldsymbol{t}^{*}s_{\boldsymbol{f}}) \cap (X\setminus X_{0}) $
is either empty or has pure codimension $k+1$ in $X$ for a generic
choice of $\boldsymbol{t}$. We obtain that
$V_{X}(\boldsymbol{t}^{*}s_{\boldsymbol{f}})$ has no irreducible
component contained in $X\setminus X_{0}$ for this choice of
$\boldsymbol{t}$.

This proves that the conditions in \cref{lemma: exists fiber of good
  dimension} applied to the family 
$s_{f_{1}},\dots, s_{f_{k}}$ hold for $\boldsymbol{t}$
generic. Applying this to the global sections corresponding to each
index subset $I\subset \{1,\dots, k\}$ and intersecting the resulting
genericity conditions we get that a generic twist of the global
sections meets properly, as stated.

Summing up, for a generic choice of $\boldsymbol{t}$ the intersection
cycle $Z_{X}(\boldsymbol{t}^{*}s_{\boldsymbol{f}})$ can be defined and
has no irreducible component contained in $X\setminus X_{0}$. In
particular, it coincides with the closure of
$Z_{\mathbb{T}}(\boldsymbol{t}^{*}\boldsymbol{f})$ in $X$, as stated.
\end{proof}

We next use this result to compute the degree of the intersection
cycle of the torus defined by a generic twist of $\boldsymbol{f}$.

\begin{corollary}\label{cor: geometric degree of cycle}
  Let $D_{1},\dots, D_{n-k}$ be nef toric divisors on~$X$ and  $H\subset \mathbb{T}^{k}$ the proper closed subset in \cref{thm: geometric main theorem}. 
  Then for  $\bm{t}\in(\mathbb{T}^k\setminus H)(\overline{K})$ we have that
\[
\deg_{D_1,\ldots,D_{n-k}}(Z_{\mathbb{T}}(\bm{t}^*\bm{f}))
=
\MV_M(\Delta_{D_{1}},\ldots,\Delta_{D_{n-k}},\NP(f_1),\ldots,\NP(f_{k})).
\]
\end{corollary}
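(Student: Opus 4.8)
The plan is to deduce the degree formula directly from \cref{thm: geometric main theorem} together with the combinatorial expression~\eqref{eq:6} for the degree of the torus. Let $H\subset\mathbb{T}^k$ be the proper closed subset provided by that theorem, and fix $\bm{t}\in(\mathbb{T}^k\setminus H)(\overline{K})$. By construction the twisted global sections $t_1^*s_{f_1},\ldots,t_k^*s_{f_k}$ meet properly on $X_{\overline{K}}$, and moreover they meet properly jointly with the toric divisors $D_1,\ldots,D_{n-k}$ after shrinking $H$ if necessary — indeed, since the $D_i$ are toric and in particular torus-invariant, one can replace each $D_i$ by a linearly equivalent divisor whose support meets the relevant components transversally, or simply invoke the moving lemma / genericity of the twist once more. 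The key geometric input is that $\overline{Z_{\mathbb{T}}(\bm{t}^*\bm{f})}=Z_X(\bm{t}^*s_{\bm f})$, so that by the very definition of the degree of a cycle of the torus,
\[
\deg_{D_1,\ldots,D_{n-k}}(Z_{\mathbb{T}}(\bm t^*\bm f))=\deg_{D_1,\ldots,D_{n-k}}(Z_X(\bm t^*s_{\bm f})).
\]

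Next I would unwind the right-hand side as an intersection number on $X_{\overline{K}}$. Writing $Z_X(\bm t^*s_{\bm f})=\div(t_1^*s_{f_1})\cdots\div(t_k^*s_{f_k})\cdot X_{\overline{K}}$ and recalling that $t_i^*s_{f_i}$ is a global section of $L_{f_i}=\mathcal O(D_{f_i})$, the projection formula and the associativity of intersection products give
\[
\deg_{D_1,\ldots,D_{n-k}}(Z_X(\bm t^*s_{\bm f}))=\deg_{D_1,\ldots,D_{n-k},D_{f_1},\ldots,D_{f_k}}(X_{\overline{K}}),
\]
using that the class of $\div(t_i^*s_{f_i})$ in the Chow group is $c_1(L_{f_i})=[D_{f_i}]$ (twisting by $t_i$ does not change the linear equivalence class, as $\tau_{t_i}$ is an automorphism acting trivially on $\Pic$). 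At this point the left-hand quantity has been reduced to a purely geometric intersection number of $n$ nef toric divisors on the complete toric variety $X$, with no remaining dependence on $\bm t$.

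Finally I would apply the toric degree formula~\eqref{eq:6}: for the family of nef toric divisors $D_1,\ldots,D_{n-k},D_{f_1},\ldots,D_{f_k}$ one has
\[
\deg_{D_1,\ldots,D_{n-k},D_{f_1},\ldots,D_{f_k}}(\mathbb T_{\overline K})=\MV_M(\Delta_{D_1},\ldots,\Delta_{D_{n-k}},\Delta_{D_{f_1}},\ldots,\Delta_{D_{f_k}}),
\]
and it remains only to identify the lattice polytope $\Delta_{D_{f_i}}$ associated to the nef toric divisor $D_{f_i}$ with the Newton polytope $\NP(f_i)$. This is immediate from the construction of $D_{f_i}$ in \cref{sec:geom-toric-constr}: its support function is $\Psi_{\NP(f_i)}$, so the associated polytope $\{x\in M_{\mathbb R}:\langle u,x\rangle\ge\Psi_{\NP(f_i)}(u)\text{ for all }u\in N_{\mathbb R}\}$ is exactly $\NP(f_i)$. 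Substituting yields the claimed equality. The only genuine subtlety — and the one step requiring a word of care rather than routine bookkeeping — is justifying that the $D_i$ can be taken to meet the twisted sections properly so that all the intersection products are defined; this is handled by the same genericity argument as in the proof of \cref{thm: geometric main theorem}, applied after the fact to the finitely many components of $Z_X(\bm t^*s_{\bm f})$, or by passing to rational equivalence where the degree depends only on the classes $[D_i]$ and $[D_{f_i}]$.
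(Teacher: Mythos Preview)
Your proof is correct and follows essentially the same route as the paper's: use \cref{thm: geometric main theorem} to identify the closure with $Z_X(\bm t^*s_{\bm f})$, apply B\'ezout to rewrite the degree as $\deg_{D_1,\ldots,D_{n-k},D_{f_1},\ldots,D_{f_k}}(X_{\overline K})$, and then invoke~\eqref{eq:6} together with $\Delta_{D_{f_i}}=\NP(f_i)$. Your worry about proper intersection of the $D_i$ with the twisted sections is unnecessary precisely for the reason you yourself give at the end---the degree depends only on the rational equivalence classes $[D_i]$ and $[D_{f_i}]$, so no moving or further genericity is needed.
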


\begin{proof}
  By \cref{thm: geometric main theorem}, for a generic choice of
  $\bm{t}\in\mathbb{T}(\overline{K})^{k}$ we have that
\begin{displaymath}
\deg_{D_1,\ldots,D_{n-k}}(Z_{\mathbb{T}}(\bm{t}^*\bm{f}))
=  \deg_{D_1,\ldots,D_{n-k}}( \div(t_{1}^{*}s_{f_{1}}) \cdots  \div(t_{n-k}^{*}s_{f_{n-k}}) \cdot X).
\end{displaymath}
By B\'ezout's formula, this latter degree coincides with
$ \deg_{D_{1},\dots, D_{n-k}, D_{f_{1}}, \dots,
  D_{f_{k}}}(\mathbb{T}_{\overline{K}})$ where  $D_{f_{i}}$ is the nef
toric divisor on $X$  associated with  $f_{i}$ for each
$i$. The polytope of $D_{f_{i}}$ coincides with $\NP(f_{i})$ and so
the statement follows from \eqref{eq:6}.
\end{proof}

Following \cite{Bilu}, a sequence of algebraic points of a torus is
said to be \emph{strict} if it eventually avoids any fixed proper
algebraic subgroup. In this subsection we slightly relax this notion
to one that suits better to twists of systems of equations.

Consider the diagonal embedding
$\mathbb{T}\hookrightarrow \mathbb{T}^{k}$ defined by
$t\mapsto (t,\dots, t)$. The quotient $\mathbb{T}^{k}/ \mathbb{T}$ is
isomorphic to the torus $\mathbb{T}^{k-1}$ through  the map
\begin{equation}
  \label{eq:7}
  [(t_{1},t_{2},\dots, t_{k})]\longmapsto (t_{1}^{-1}t_{2},\dots, t_{1}^{-1}t_{k}). 
\end{equation}
Set $ \varpi\colon \mathbb{T}^{k}\to \mathbb{T}^{k}/ \mathbb{T}$ for
the quotient homomorphism.

\begin{definition}
  \label{def:1}
  A sequence $(\boldsymbol{t}_{\ell})_{\ell}$ in
  $ \mathbb{T}(\overline{K})^{k}$ is \emph{quasi-strict} if for every
  proper algebraic subgroup $G\subset \mathbb{T}^{k}/ \mathbb{T}$
  there is $\ell_{0}\in \mathbb{N}$ such that
  $\varpi(\boldsymbol{t}_{\ell})\notin G(\overline{K})$ for all
  $\ell \ge \ell_{0}$.
\end{definition}

Notice that any strict sequence in $\mathbb{T}(\overline{K})^{k}$ is
also quasi-strict.  In fact, a sequence
$(\boldsymbol{t}_{\ell})_{\ell}$ in $ \mathbb{T}(\overline{K})^{k}$ is
quasi-strict if and only if the sequence
\begin{math}
  ((t_{\ell,1}^{-1}t_{\ell,2}, \dots, t_{\ell,1}^{-1}t_{\ell,k}))_{\ell}
\end{math}
in $\mathbb{T}(\overline{K})^{k-1}$ is strict.

The next result is a direct consequence of \cref{thm: geometric main
  theorem} and \cref{cor: geometric degree of cycle} together with the
former toric Manin-Mumford conjecture \cite{Laurent}.
 
\begin{corollary}\label{cor: proper intersection for strict sequences}
  Let $(\bm{\omega}_\ell)_{\ell}$ be a quasi-strict sequence of
  torsion points of $\mathbb{T}(\overline{K})^{k}$.  Then for $\ell$ sufficiently
  large the global sections
  $\bm{\omega}_{\ell,1}^*s_{f_1},\ldots,\bm{\omega}_{\ell,k}^*s_{f_k}$
  meet properly and
\[
\overline{Z_{\mathbb{T}}(\bm{\omega}^*_\ell\bm{f})}
= 
Z_{X}(\boldsymbol{\omega}_{\ell}^{*} s_{\boldsymbol{f}}). 
\]
In particular, for a family $D_{1},\dots, D_{n-k}$ of nef toric divisors
on~$X$ we have that
\[
  \deg_{D_1,\ldots,D_{n-k}}(Z_{\mathbb{T}}(\bm{\omega}_\ell^*\bm{f})) =
  \MV_M(\Delta_{D_{1}},\ldots,\Delta_{D_{n-k}},\NP(f_1),\ldots,\NP(f_{k}))
\]
for $\ell$ sufficiently large.
\end{corollary}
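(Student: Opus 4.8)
The plan is to derive the corollary from \cref{thm: geometric main theorem} by checking that a quasi-strict sequence of torsion points eventually falls outside the exceptional subset $H$ furnished by that theorem; the degree formula is then immediate from \cref{cor: geometric degree of cycle}.

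The first --and, I expect, only delicate-- step is to arrange $H$ so that it is detected by the quotient map $\varpi\colon\mathbb{T}^{k}\to\mathbb{T}^{k}/\mathbb{T}$. For this I would observe that the locus of $\bm{t}\in\mathbb{T}^{k}$ where the conclusion of \cref{thm: geometric main theorem} fails is invariant under the diagonal action of $\mathbb{T}$. Indeed, for $s\in\mathbb{T}(\overline{K})$ one has $(t_{i}s)^{*}f_{i}=s^{*}(t_{i}^{*}f_{i})$ and $(t_{i}s)^{*}s_{f_{i}}=\tau_{s}^{*}(t_{i}^{*}s_{f_{i}})$, so both $Z_{\mathbb{T}}\big((\bm{t}\cdot(s,\dots,s))^{*}\bm{f}\big)$ and $Z_{X}\big((\bm{t}\cdot(s,\dots,s))^{*}s_{\bm{f}}\big)$ are obtained from $Z_{\mathbb{T}}(\bm{t}^{*}\bm{f})$ and $Z_{X}(\bm{t}^{*}s_{\bm{f}})$ by applying the automorphism $\tau_{s^{-1}}$ of $X$ (which restricts to the corresponding translation of $\mathbb{T}$). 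Since an automorphism commutes with intersection products, with taking zero sets and with taking closures, the proper-intersection property of $\bm{t}^{*}s_{\bm{f}}$ and the identity $\overline{Z_{\mathbb{T}}(\bm{t}^{*}\bm{f})}=Z_{X}(\bm{t}^{*}s_{\bm{f}})$ hold for $\bm{t}$ exactly when they hold for $\bm{t}\cdot(s,\dots,s)$. Hence this failure locus $F$ is a diagonal-invariant closed subset contained in $H$, thus proper and $\varpi$-saturated. As $\varpi$ is, through the isomorphism \eqref{eq:7}, a trivial fibration and in particular open, its image $\varpi(F)$ is a proper closed subset $H'\subset\mathbb{T}^{k}/\mathbb{T}\cong\mathbb{T}^{k-1}$ with $F=\varpi^{-1}(H')$; replacing $H$ by $F$ we may assume $H=\varpi^{-1}(H')$.

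Next I would exploit the hypothesis. Since $\varpi$ is a group homomorphism, the points $\varpi(\bm{\omega}_{\ell})$ are torsion in $\mathbb{T}^{k}/\mathbb{T}$, and by the reformulation of quasi-strictness recorded after \cref{def:1} (via the isomorphism \eqref{eq:7}) the sequence $(\varpi(\bm{\omega}_{\ell}))_{\ell}$ is strict. By Laurent's theorem for the toric Manin--Mumford conjecture, the torsion points of $\mathbb{T}^{k}/\mathbb{T}$ contained in $H'$ form a set whose Zariski closure is a finite union $\bigcup_{i=1}^{r}\zeta_{i}G_{i}$ of torsion cosets, each contained in $H'$ and hence proper, so that each $G_{i}$ is a proper subtorus and each $\zeta_{i}G_{i}$ lies in the proper algebraic subgroup $\widetilde{G}_{i}$ generated by $G_{i}$ together with the torsion point $\zeta_{i}$. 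A strict sequence eventually avoids each of the finitely many subgroups $\widetilde{G}_{i}$, hence eventually avoids $H'$; consequently $\bm{\omega}_{\ell}\notin H=\varpi^{-1}(H')$ for all $\ell$ large enough.

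For such $\ell$, \cref{thm: geometric main theorem} gives at once that $\bm{\omega}_{\ell,1}^{*}s_{f_{1}},\dots,\bm{\omega}_{\ell,k}^{*}s_{f_{k}}$ meet properly and that $\overline{Z_{\mathbb{T}}(\bm{\omega}_{\ell}^{*}\bm{f})}=Z_{X}(\bm{\omega}_{\ell}^{*}s_{\bm{f}})$, and the resulting degree identity is precisely the content of \cref{cor: geometric degree of cycle} applied to $\bm{\omega}_{\ell}$ (alternatively it follows directly from B\'ezout's theorem and \eqref{eq:6}). The main obstacle is the verification in the second paragraph that the exceptional locus descends to a genuine proper closed subset of the quotient torus; once that is in place, the rest is a formal assembly of the cited results.
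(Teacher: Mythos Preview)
Your proof is correct and follows essentially the same route as the paper's: show that the exceptional locus from \cref{thm: geometric main theorem} is invariant under the diagonal action, push it down to a proper closed subset of $\mathbb{T}^{k}/\mathbb{T}$, and then invoke Laurent's theorem to conclude that the projected strict sequence of torsion points eventually avoids it. The paper is terser---it simply takes the minimal such $H$, asserts the diagonal invariance, and cites Laurent---whereas you spell out explicitly why the failure locus is diagonal-invariant and how to pass from torsion cosets to proper algebraic subgroups; but the substance is the same.
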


\begin{proof}
  Let $H \subset \mathbb{T}^{k}$ be the minimal closed subset
  satisfying the conditions in \cref{thm: geometric main theorem}.
  One can check that these conditions are invariant under the diagonal
  action of $\mathbb{T}(\overline{K})$. This implies that
  $\varpi(H)\subset \mathbb{T}^{k}/\mathbb{T}$ is a proper closed
  subset, and so by Laurent's theorem \cite{Laurent}
    there is a finite union of proper algebraic subgroups of
    $\mathbb{T}^{k}/\mathbb{T}$ that contains all torsion points of
    $\varpi(H)$.

    The sequence $(\bm{\omega}_\ell)_{\ell}$ projects to a strict
    sequence of torsion points of
    $(\mathbb{T}^{k}/\mathbb{T})(\overline{K})$.  Hence this strict
    sequence eventually avoids $\varpi(H)$, and so
    $(\bm{\omega}_\ell)_{\ell}$ eventually avoids $H$.
    The statement then follows readily from
  \cref{thm: geometric main theorem} and \cref{cor: geometric degree
    of cycle}.
\end{proof}

\section{The height of intersection cycles of twists}
\label{sec:height-inters-cycl}

In this section we move to the arithmetic setting, approaching the
main content of the paper.  After recalling from \cite{BPS, Gualdi}
the notions and objects playing a role in the statement of \cref{conj:
  main conjecture} we present a strategy for its proof, which reduces
it to a local logarithmic equidistribution statement and a global
adelic one.  We also provide several reduction steps that can be
employed for the proof, relegating the burdensome details to
\cref{sec: appendix A}.

\subsection{Arithmetic toric constructions}
\label{sec:arithm-toric-constr}

Throughout this section we denote by $\mathbb{T}$ a split algebraic
torus over $\mathbb{K}$ of dimension $n\ge 0$ and $X$ a complete toric
variety compactifying $\mathbb{T}$. Recall that for each 
$v\in \mathfrak{M}$ we respectively denote by $\mathbb{T}_v$ and
$X_{v}$ the base change of the torus and the toric variety with
respect to the complete and algebraically closed field
$\mathbb{C}_{v}$, and we set $\mathbb{T}_{v}^{\an} $ and $X_{v}^{\an}$
for their analytification. The \emph{$v$-adic compact torus} is the
subset of $\mathbb{T}_{v}^{\an}$ defined as
\begin{equation}\label{eq: compact torus}
  \mathbb{S}_v=\big\{t\in\mathbb{T}_v^{\an}:|\chi^m(t)|_v=1 \text{ for all }m\in M\big\}.
\end{equation}
It is an analytic group acting on $X_{v}^{\an}$.

We also denote by
$\val_v\colon\mathbb{T}_v^{\an}\rightarrow N_\mathbb{R}$ the
\emph{$v$-adic valuation map} \cite[Section 4.1]{BPS}. Choosing a
splitting $\mathbb{T}\simeq \Gm^{n}$ identifies $N_{\mathbb{R}}$ with
$ \mathbb{R}^{n}$ and the set of rigid points of
$\mathbb{T}_{v}^{\an}$ with $(\mathbb{C}_{v}^{\times})^{n}$.  In these
coordinates, the $v$-adic valuation map writes down at the level of
these rigid points~as
\begin{displaymath}
  \val_{v}(t_{1},\dots, t_{n})=(-\log|t_{1}|_{v},\dots, -\log|t_{n}|_{v}).
\end{displaymath}

Let $\overline{D}$ be a \emph{semipositive toric (adelically) metrized
divisor} on $X$. It is the datum
\[
\overline{D}=\big(D,(\|\cdot\|_v)_{v\in\mathfrak{M}}\big),
\]
where $D$ is a toric divisor on~$X$ and each $\|\cdot\|_v$ is a
semipositive $\mathbb{S}_{v}$-invariant and continuous metric on the
analytic line bundle $\mathcal{O}(D)_{v}^{\an}$ that is induced by a
single integral model of the pair $(X,O(D))$ for all but finitely many
$v\in \mathfrak{M}$ \cite[Definition~4.9.1]{BPS}.

Let $s_{D}$ be the canonical rational section of the line bundle
$\mathcal{O}(D)$ associated with~$D$.  For each $v$ the function
$\log \|s_{D}\|_v \colon \mathbb{T}_v^{\an}\to \mathbb{R}$ factors
through the valuation map. In~fact,
\begin{displaymath}
\log \|s_{D}\|_v=\psi_{\overline{D},v}\circ \val_{v}  
\end{displaymath}
for a concave function
$\psi_{\overline{D},v}\colon N_{\mathbb{R}}\to \mathbb{R}$, called the
\emph{$v$-adic metric function} of $\overline{D}$. The
Legendre--Fenchel dual of $\psi_{\overline{D},v}$ is a continuous
concave function 
\begin{displaymath}
  \vartheta_{\overline{D},v}\colon \Delta_{D}\longrightarrow \mathbb{R},
\end{displaymath}
where $\Delta_{D} \subset M_{\mathbb{R}}$ is the polytope associated
with $D$. It is called the \emph{$v$-adic roof function} of
$\overline{D}$. The adelicity of $\overline{D}$ implies that
$\vartheta_{\overline{D},v}$ is the zero function on $\Delta_{D}$ for
all except finitely many $v\in \mathfrak{M}$.

For $0\le r\le n$, the \emph{height} of an $r$-cycle $Z$ of
$\mathbb{T}_{\overline{\mathbb{K}}}$ with respect to a family of
semipositive metrized divisors
$\overline{D}_{0},\dots, \overline{D}_{r}$ on $X$ is defined through
its closure as
\begin{displaymath}
  \h_{\overline{D}_{0},\dots, \overline{D}_{r}}(Z)=  \h_{\overline{D}_{0},\dots, \overline{D}_{r}}(\overline{Z}).
\end{displaymath}
The latter is defined by considering the base change of these metrized
divisors to any finite extension of $\mathbb{K}$ over which
$\overline{Z}$ is defined, and applying the recursive definition of
the height in \cite[Chapter 1]{BPS}.

The following result due to Burgos, Philippon and the second author
expresses the height of the torus with respect to a family of
semipositive toric metrized divisors in convex-analytic terms. It
involves the {mixed integral} operator $\MI_{M}$ with respect to the
Haar measure $\vol_{M}$ on $M_{\mathbb{R}}$ acting on families of
$n+1$ concave functions on convex bodies of $M_{\mathbb{R}}$, see for
instance \cite[Definition 2.7.16]{BPS}.

\begin{theorem}[{\cite[Theorem~5.2.5]{BPS}}]\label{thm: BPS main}
  Let $\overline{D}_{0},\ldots,\overline{D}_{n}$ be
  semipositive toric metrized divisors on $X$. Then
\[
\h_{\overline{D}_{0},\ldots,\overline{D}_{n}}( \mathbb{T}_{\overline{\mathbb{K}}})=\sum_{v\in\mathfrak{M}}n_v\MI_M(\vartheta_{\overline{D}_{0},v},\ldots,\vartheta_{\overline{D}_{n},v}).
\]
\end{theorem}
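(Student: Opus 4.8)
The plan is to follow the inductive strategy of \cite{BPS}, reducing the global identity first to a statement about $v$-adic local heights and then to a purely convex-analytic identity involving mixed integrals and Legendre--Fenchel duality. To begin, note that both sides of the claimed equality are continuous in the metrics: the left-hand side because heights vary continuously under uniform convergence of semipositive metrics, and the right-hand side because the mixed integral operator is continuous for uniform convergence of concave functions on a fixed polytope, only finitely many places contributing in either case. Since every semipositive toric metric is a uniform limit of metrics whose metric functions $\psi_{\overline{D}_i,v}$ are piecewise affine and rational on $N_{\mathbb{R}}$ --- namely those induced by nef toric models, for which the roof functions $\vartheta_{\overline{D}_i,v}$ are then piecewise affine on $\Delta_{D_i}$ --- it suffices to treat such metrized divisors. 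Choosing rational sections compatibly and using the product formula, the global height decomposes as $\sum_{v\in\mathfrak{M}}n_v\,\h^{v}_{\overline{D}_{0},\ldots,\overline{D}_{n}}(\mathbb{T}_{v}^{\an})$ into well-defined $v$-adic local heights, so it is enough to prove that for each $v$ this local height equals $\MI_M(\vartheta_{\overline{D}_{0},v},\ldots,\vartheta_{\overline{D}_{n},v})$.

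The local statement I would prove by induction on $n=\dim\mathbb{T}$, the case $n=0$ being immediate. For the inductive step, fix the canonical rational section $s_{D_n}$ of $\mathcal{O}(D_n)$, whose Weil divisor $D_n=\sum_\rho a_\rho V(\rho)$ is supported on the toric boundary, the sum running over the rays $\rho$ of the defining fan. By the recursive definition of local heights in \cite[Chapter~1]{BPS}, with the normalizations used there,
\[
\h^{v}_{\overline{D}_{0},\ldots,\overline{D}_{n}}(\mathbb{T}_{v}^{\an}) = \sum_{\rho} a_\rho\, \h^{v}_{\overline{D}_{0},\ldots,\overline{D}_{n-1}}(V(\rho)_v^{\an}) + \int_{X_v^{\an}} \log\|s_{D_n}\|_v\; c_1(\overline{D}_0)_v\wedge\cdots\wedge c_1(\overline{D}_{n-1})_v.
\]
Each $V(\rho)$ is again a complete toric variety, and the restrictions to it of $\overline{D}_0,\ldots,\overline{D}_{n-1}$ are semipositive toric metrized divisors whose roof functions are the restrictions of $\vartheta_{\overline{D}_i,v}$ to the faces of the $\Delta_{D_i}$ corresponding to $\rho$; hence the inductive hypothesis handles the first sum. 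For the second, I would invoke the toric computation of \cite[Chapter~4]{BPS}: up to the toric boundary, which has measure zero, the mixed Chern measure $c_1(\overline{D}_0)_v\wedge\cdots\wedge c_1(\overline{D}_{n-1})_v$ is the pushforward by $\val_v$ of the mixed real Monge--Amp\`ere measure $\mathcal{M}_M(\psi_{\overline{D}_{0},v},\ldots,\psi_{\overline{D}_{n-1},v})$ of the corresponding concave functions on $N_\mathbb{R}$, while $\log\|s_{D_n}\|_v=\psi_{\overline{D}_{n},v}\circ\val_v$. The integral therefore equals $\int_{N_\mathbb{R}}\psi_{\overline{D}_{n},v}\, d\mathcal{M}_M(\psi_{\overline{D}_{0},v},\ldots,\psi_{\overline{D}_{n-1},v})$.

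What remains carries no arithmetic content. For concave functions $\psi_0,\ldots,\psi_n$ on $N_\mathbb{R}$ with stability sets $\Delta_0,\ldots,\Delta_n$ and Legendre--Fenchel duals $\vartheta_0,\ldots,\vartheta_n$, one must show that the sum of the face contributions predicted by the theorem for the faces of $\Delta_n$ singled out by the rays $\rho$, weighted by the $a_\rho$, together with the bulk integral $\int_{N_\mathbb{R}}\psi_n\, d\mathcal{M}_M(\psi_0,\ldots,\psi_{n-1})$, equals $\MI_M(\vartheta_0,\ldots,\vartheta_n)$. This is precisely the recursive ``integration by parts'' property of the mixed integral operator developed in \cite[Chapter~2]{BPS}; it follows from Legendre--Fenchel duality --- the dual of an inf-convolution being a sum, and the faces of the $\Delta_i$ corresponding to the linearity domains of the $\psi_i$ --- together with the compatibility between mixed Monge--Amp\`ere measures and this duality. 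With this identity in hand the induction closes, and summing over $v$ yields the theorem.

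I expect the convex-analytic recursion to be the main obstacle: matching the geometric decomposition --- boundary divisors plus a bulk integral of one metric function against the mixed Monge--Amp\`ere measure of the others --- with the combinatorial recursion satisfied by $\MI_M$ of the roof functions requires careful bookkeeping of signs, of the dictionary between cones of the fan and faces of the polytopes, and of the degenerate cases in which some $\Delta_i$ is lower-dimensional or some $a_\rho$ are negative. The other technically demanding ingredient, the pushforward formula for $c_1(\overline{D}_0)_v\wedge\cdots\wedge c_1(\overline{D}_{n-1})_v$ in the non-Archimedean case, is obtained via admissible formal models and the reduction map; both ingredients belong to the machinery already set up in \cite{BPS}.
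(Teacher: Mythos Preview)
The paper does not prove this statement at all: it is quoted verbatim from \cite[Theorem~5.2.5]{BPS} and used as a black box, so there is no ``paper's own proof'' to compare against. Your outline is a faithful sketch of the argument actually carried out in \cite{BPS}, namely a reduction to local heights, induction on $n$ via the recursive definition of the local height with respect to the canonical section $s_{D_n}$, the identification of the toric Monge--Amp\`ere measure with the pushforward of the mixed real Monge--Amp\`ere measure of the metric functions under $\val_v$, and the convex-analytic recursion for the mixed integral under Legendre--Fenchel duality; the two points you flag as delicate (the non-Archimedean pushforward and the combinatorial matching of the recursion) are exactly the places where \cite{BPS} invests the most effort.
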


Let now $f\in\mathbb{K}[M]$ be a nonzero Laurent polynomial.  When
investigating the height of its Weil divisor it is necessary to consider an
adelic family of functions measuring the arithmetic complexity of this
Laurent polynomial.  To this end, for each $v\in\mathfrak{M}$ we
consider the \emph{$v$-adic Ronkin function} of $f$, which is a
concave function
\begin{displaymath}
  \rho_{f,v}\colon N_\mathbb{R}\longrightarrow \mathbb{R}  
\end{displaymath}
defined as the average of
$\log |f|_{v} \colon \mathbb{T}_v^{\an}\to \mathbb{R}\cup\{-\infty\}$
on the fibers of the $v$-adic valuation map \cite[Section 2]{Gualdi}.
Its Legendre--Fenchel dual $\rho_{f,v}^\vee$ is a continuous concave
function on the Newton polytope of~$f$.

\begin{theorem}[{\cite[Theorem~5.12]{Gualdi}}]\label{thm: Gualdi thesis main}
  Let $f\in\mathbb{K}[M]$ be a nonzero Laurent polynomial and
  $\overline{D}_{0},\ldots,\overline{D}_{n-1}$ semipositive toric
  metrized divisors on $X$.  Then
\[
  \h_{\overline{D}_{0},\ldots,\overline{D}_{n-1}}(Z_\mathbb{T}(f))=\sum_{v\in\mathfrak{M}}n_v\MI_M(\vartheta_{\overline{D}_{0},v},\ldots,\vartheta_{\overline{D}_{n-1},v},\rho_{f,v}^\vee).
\]
\end{theorem}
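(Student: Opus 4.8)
The plan is to reduce the statement to \cref{thm: BPS main} by endowing the toric line bundle $L_f$ with an adelic semipositive metric built from the Ronkin functions of $f$, and then showing that the resulting local error terms all vanish. First, replacing $X$ by a toric modification whose fan is compatible with $\NP(f)$ — an operation that leaves the left-hand side unchanged by the projection formula and the right-hand side unchanged because the roof functions and the Ronkin functions are unaffected — I may assume the fan of $X$ to be compatible with $\NP(f)$. Then the objects of \cref{sec:geom-toric-constr} are available, and since $s_f\in\Gamma(X,L_f)$ satisfies $Z_X(s_f)=\overline{Z_\mathbb{T}(f)}$ we have
\[
\h_{\overline{D}_0,\ldots,\overline{D}_{n-1}}(Z_\mathbb{T}(f))=\h_{\overline{D}_0,\ldots,\overline{D}_{n-1}}(\div(s_f)).
\]

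Next I would introduce the \emph{Ronkin metric} on $L_f$: for each $v\in\mathfrak{M}$, the $\mathbb{S}_v$-invariant toric metric $\|\cdot\|_{f,v}$ on $\mathcal{O}(D_f)_v^{\an}$ characterized by the property that the average of $\log\|s_f\|_{f,v}$ over each orbit of $\mathbb{S}_v$ (equivalently, over each fiber of $\val_v$) vanishes identically; this is what ties $\|\cdot\|_{f,v}$ to the $v$-adic Ronkin function of $f$. Writing $\overline{D}_f^{\Ron}=(D_f,(\|\cdot\|_{f,v})_v)$, three points must be checked: \emph{(i)} semipositivity, which amounts to a convexity property of the fiberwise average of $\log|f|_v$ — classical in the Archimedean case (Passare--Rullg\aa rd) and proved via an analysis of $\log|f|_v$ on the Berkovich space $\mathbb{T}_v^{\an}$ in the non-Archimedean case; \emph{(ii)} adelicity, i.e.\ that the metric agrees with the canonical one for all but finitely many $v$, which holds at every $v$ where all coefficients of $f$ are units; and \emph{(iii)} that the $v$-adic roof function of $\overline{D}_f^{\Ron}$ is $\rho_{f,v}^\vee$, which is immediate from the definition of the roof function as a Legendre--Fenchel dual.

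With $\overline{D}_f^{\Ron}$ at hand, the recursive definition of the height (the arithmetic analogue of B\'ezout's theorem) gives
\[
\h_{\overline{D}_0,\ldots,\overline{D}_{n-1}}(\div(s_f))
=
\h_{\overline{D}_0,\ldots,\overline{D}_{n-1},\overline{D}_f^{\Ron}}(X_{\overline{\mathbb{K}}})
-\sum_{v\in\mathfrak{M}}n_v\int_{X_v^{\an}}\log\|s_f\|_{f,v}\;c_1(\overline{D}_0)_v\wedge\cdots\wedge c_1(\overline{D}_{n-1})_v,
\]
valid since $s_f$ does not vanish identically on $X$. The first term on the right is computed by \cref{thm: BPS main} and, using \emph{(iii)}, it equals $\sum_v n_v\MI_M(\vartheta_{\overline{D}_0,v},\ldots,\vartheta_{\overline{D}_{n-1},v},\rho_{f,v}^\vee)$, which is exactly the asserted right-hand side. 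So the theorem follows once I prove that the local integral $\int_{X_v^{\an}}\log\|s_f\|_{f,v}\,\mu_v$ vanishes for every $v$, where $\mu_v=c_1(\overline{D}_0)_v\wedge\cdots\wedge c_1(\overline{D}_{n-1})_v$. This integral is well-defined because $\log\|s_f\|_{f,v}$ has at most logarithmic singularities (along $Z_X(s_f)$) and $\mu_v$ gives no mass to $Z_X(s_f)$. Now $\mu_v$ is $\mathbb{S}_v$-invariant, so the integral equals that of the $\mathbb{S}_v$-orbit average of $\log\|s_f\|_{f,v}$; by the very normalization of the Ronkin metric this average vanishes on every orbit — including those contained in the toric boundary, by the compatibility of the Ronkin construction with restriction to boundary orbits — whence $\int_{X_v^{\an}}\log\|s_f\|_{f,v}\,\mu_v=0$.

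I expect the main obstacle to be this last vanishing. One has to give rigorous meaning to $\int_{X_v^{\an}}\log\|s_f\|_{f,v}\,\mu_v$ even though the integrand attains $-\infty$, which requires the integrability theory for functions with logarithmic singularities against Chambert-Loir/Monge--Amp\`ere measures of semipositive metrics, together with a sufficiently precise description of the disintegration of $\mu_v$ along the valuation map (and of the mass it places on the toric boundary) to make the $\mathbb{S}_v$-averaging argument legitimate. A secondary difficulty, needed for \emph{(i)}, is establishing semipositivity of the non-Archimedean Ronkin metric, for which one works on $\mathbb{T}_v^{\an}$.
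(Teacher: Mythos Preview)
The paper does not prove this theorem: it is quoted verbatim from \cite[Theorem~5.12]{Gualdi} and used as an input. There is therefore no proof in the present paper to compare against. That said, your plan is essentially the argument of the cited reference: pass to a refinement of the fan compatible with $\NP(f)$, endow $D_f$ with the Ronkin metric (the paper itself records this construction in \eqref{eq: Ronkin metric}, noting that the $v$-adic metric function is $\rho_{f,v}$ and hence the roof function is $\rho_{f,v}^\vee$), apply the arithmetic B\'ezout recursion with the section $s_f$, invoke \cref{thm: BPS main} for the height of $X$, and show that each local integral $\int_{X_v^{\an}}\log\|s_f\|_{\Ron,v}\,\bigwedge_i c_1(\overline{D}_{i,v})$ vanishes by $\mathbb{S}_v$-invariance of the toric Monge--Amp\`ere measure together with the defining averaging property of the Ronkin metric. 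Your identification of the two technical points --- integrability of the logarithmic singularity against the Monge--Amp\`ere measure and semipositivity of the non-Archimedean Ronkin metric --- is accurate; both are handled in \cite{Gualdi}.
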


\subsection{An approach to \cref{conj: main conjecture}}
\label{sec:an-appr-crefc}

Our main concern in this article is the height of higher codimensional
complete intersection cycles in~$X$. To this end, for an integer
$0\le k\le n$ fix a family $\boldsymbol{f}=(f_{1},\dots, f_{n-k})$ of
nonzero Laurent polynomials in $\mathbb{K}[M]$ and a family
$\overline{D}_0,\ldots,\overline{D}_{n-k}$ of semipositive toric
metrized divisors on~$X$. \cref{conj: main conjecture} in the
introduction predicts that
\begin{equation*}
\lim_{\ell\to\infty}\h_{\overline{D}_0,\ldots,\overline{D}_{n-k}}(Z_{\mathbb{T}}(\bm{\omega}_\ell^*\bm{f}))
=
\sum_{v\in\mathfrak{M}}n_v\MI_M(\vartheta_{\overline{D}_{0},v},\ldots,\vartheta_{\overline{D}_{n-k},v},\rho_{f_1,v}^\vee,\ldots,\rho_{f_k,v}^\vee)
\end{equation*}
for any quasi-strict sequence $(\bm{\omega}_\ell)_\ell$ of torsion
points of~$\mathbb{T}(\overline{\mathbb{K}})^{k}$.

\begin{remark}
  \label{rem:3}
For $k=0$ this claim boils down to \cref{thm: BPS main}, while for
$k=1$ it coincides with \cref{thm: Gualdi thesis main} noticing that
\[
\h_{\overline{D}_0,\ldots,\overline{D}_{n-1}}(Z_{\mathbb{T}}(\omega^*f))
=
\h_{\overline{D}_0,\ldots,\overline{D}_{n-1}}(Z_{\mathbb{T}}(f))
\]
for any torsion point~$\omega\in\mathbb{T}(\overline{\mathbb{K}})$, which follows from the arithmetic projection formula and the definition of toric metrics.
\end{remark}

We now outline a strategy for proving \cref{conj: main conjecture},
arguing by induction on the number $k$ of Laurent polynomials.

As the
conjecture holds true for $k=0$ and $k=1$ (\cref{rem:3}), we just need
to prove the inductive step.
Let~$k\geq2$ and assume for convenience that the fan of $X$ is
compatible with the Newton polytopes of~$f_1,\ldots,f_k$. As we will
later see, this can always be done without loss of generality
(\cref{prop: reduction steps}).  Under this compatibility condition,
for each Laurent polynomial $f_i$ we consider the nef toric divisor
$D_{f_i}$ on $X$ and the global section $s_{f_i}$ of the toric line
bundle~$\mathcal{O}(D_{f_i})$.  Following \cite[Section~5]{Gualdi}, we
can enrich the divisor $D_{f_{i}}$ with its adelic family of $v$-adic
Ronkin metrics. The obtained pair
\begin{equation}\label{eq: Ronkin metric}
\overline{D}^{\Ron}_{f_{i}}=(D_{f_i},(\|\cdot\|_{\Ron,v})_{v\in\mathfrak{M}})
\end{equation}
is a semipositive toric metrized divisor on $X$ whose $v$-adic metric
function coincides with the Ronkin function $\rho_{f_{i},v}$.

Let~$H\subset\mathbb{T}^k$ be the proper closed subset given by
\cref{thm: geometric main theorem} and take a family of torsion points
$\bm{\omega}=(\omega_{1},\dots,
\omega_{k})\in\mathbb{T}(\overline{\mathbb{K}})^{k}$ not lying in
it. By this result, the twisted global sections
$\omega_1^*s_{f_1},\ldots,\omega_k^*s_{f_k}$ meet properly and
\[
\overline{Z_{\mathbb{T}}(\bm{\omega}^*\bm{f})}
=
Z_X(\bm{\omega}^*s_{\bm{f}})
=
\div(\omega_k^*s_{f_k})\cdot Z_X\big(\widehat{\boldsymbol{\omega}}^{*} s_{\widehat{\boldsymbol{f}}}\big)
\]
with $\widehat{\boldsymbol{f}}=(f_{1},\dots, f_{k-1})$ and
$\widehat{\boldsymbol{\omega}}=(\omega_{1},\dots, \omega_{k-1})$.
Applying \cref{thm: geometric main theorem} to the
family~$\widehat{\boldsymbol{f}}$, up to possibly enlarging $H$ we
also have that
$\overline{Z_{\mathbb{T}}(\widehat{\bm{\omega}}^*\widehat{\bm{f}})} =
Z_X(\widehat{\bm{\omega}}^*s_{\widehat{\bm{f}}})$. Hence
\[
\overline{Z_{\mathbb{T}}(\bm{\omega}^*\bm{f})}
=
\div(\omega_k^*s_{f_k})\cdot
\overline{Z_{\mathbb{T}}(\widehat{\bm{\omega}}^*\widehat{\bm{f}})}.
\]

This expression is particularly favourable to compute the height of
this cycle of $X_{\overline{\mathbb{K}}}$ using the recursive
definition of the height.  To do so, let us consider the finite
extension~$\mathbb{K}(\bm{\omega})/\mathbb{K}$ and denote by
$\mathfrak{M}({\bm{\omega}})$ its set of places.  Each
$w \in \mathfrak{M}(\boldsymbol{\omega})$ restricts to a place
$v\in \mathfrak{M}$, and its weight is
\begin{equation}\label{eq: weight of places in field extension}
  n_w=\frac{[\mathbb{K}(\bm{\omega})_w:\mathbb{K}_v]}{[\mathbb{K}(\bm{\omega}):\mathbb{K}]} \, n_v
\end{equation}
by the multiplicativity of degrees of finite extensions.  Applying the
arithmetic B\'ezout formula over the number
field~$\mathbb{K}(\bm{\omega})$ yields
\begin{equation}\label{eq: recursive definition of height in the general strategy}
\h_{\overline{D}_0,\ldots,\overline{D}_{n-k}}(Z_{\mathbb{T}}(\bm{\omega}^*\bm{f}))
=
\h_{\overline{D}_0,\ldots,\overline{D}_{n-k},\overline{D}^{\Ron}_{f_{k}}}({Z_{\mathbb{T}}(\widehat{\bm{\omega}}^*\widehat{\bm{f}})})
\ +
\sum_{w\in\mathfrak{M}(\bm{\omega})}n_wJ_w(\bm{\omega}),
\end{equation}
where    for each~$w\in\mathfrak{M}(\bm{\omega})$ we have set
\begin{displaymath}
J_w(\bm{\omega})
=
\int_{X_w^{\an}}\log\|\omega_k^*s_{f_k}\|_{\Ron,w}\ \chern(\overline{D}_{0,w})\wedge\ldots\wedge\chern(\overline{D}_{n-k,w})
\wedge\delta_{Z_X( \widehat{\boldsymbol{\omega}}^*s_{\widehat{\boldsymbol{f}}})_w^{\an}}.
\end{displaymath}
Here the integral is with respect to the $w$-adic mixed
Monge--Amp\`ere measure of the extension to
$\mathbb{K}(\boldsymbol{\omega})$ of the metrized divisors, restricted
to the analytic $(n-k+1)$-cycle defined by the first $k-1$ twisted
global sections.

We can write \eqref{eq: recursive definition of height in the general
  strategy} in a friendlier way by grouping together the local
$w$-adic terms according to their restriction to~$\mathbb{K}$ and
reformulating the resulting sums in term of Galois orbits.

For $t\in\mathbb{T}(\overline{\mathbb{K}})$ we denote by
$O(t) \subset \mathbb{T}(\overline{\mathbb{K}})$ its \emph{Galois
  orbit}, that is the orbit of this algebraic point  under the action of the absolute Galois
group of $\mathbb{K}$.
To pass from the algebraic to the analytic setting, we need to choose a $\mathbb{K}$-embedding
\[
\iota_v\colon\overline{\mathbb{K}}\longhookrightarrow\mathbb{C}_v.
\]
This choice induces a composite injective map
$\mathbb{T}(\overline{\mathbb{K}})\rightarrow\mathbb{T}_v(\mathbb{C}_v)\rightarrow\mathbb{T}_v^{\an}$
that we also denote by~$\iota_v$. The \emph{$v$-adic Galois orbit} of
$t\in\mathbb{T}(\overline{\mathbb{K}})$ is now defined as the finite
set
\[
O(t)_v=\iota_v(O(t)) \subset \mathbb{T}_{v}^{\an}.
\]
Since the different $\mathbb{K}$-embeddings of $\overline{\mathbb{K}}$
into $\mathbb{C}_v$ differ by an element
of~$\Gal(\overline{\mathbb{K}}/\mathbb{K})$, the set $O(t)_v$ does not
depend on the choice of the embedding~$\iota_v$.  Moreover, its
cardinality coincides with that of~$O(t)$.

We also consider the function
$I_v\colon(\mathbb{T}^k\setminus
H)(\mathbb{C}_v)\rightarrow\mathbb{R}$ defined for
${\boldsymbol{t}}=(t_{1},\dots, t_{k})$ as
\[
  I_v(\bm{t})=\int_{X_{v}^{\an}}\log\|t_{k}^*s_{f_k}\|_{\Ron,v}\ \chern(\overline{D}_{0,v})\wedge\ldots\wedge\chern(\overline{D}_{n-k,v})\wedge
\delta_{Z_X( \widehat{\boldsymbol{t}}^*s_{\widehat{\boldsymbol{f}}})_v^{\an}} 
\]
with $\widehat{\boldsymbol{t}}=(t_{1},\dots, t_{k-1})$.

\begin{claim}\label{lem: from places to Galois}
  We have that 
$\displaystyle{
\sum_{w\in\mathfrak{M}(\bm{\omega})}n_wJ_w(\bm{\omega})
=
\sum_{v\in\mathfrak{M}}\frac{n_{v}}{\# O(\boldsymbol{\omega})}\sum_{\boldsymbol{\eta}\in O(\boldsymbol{\omega})_{v}} I_{v}(\boldsymbol{\eta}).}$
\end{claim}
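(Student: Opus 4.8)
The plan is to compare the two sides place by place. Write $L=\mathbb{K}(\boldsymbol{\omega})$, and regroup the left–hand sum according to the restriction of $w\in\mathfrak{M}(\boldsymbol{\omega})$ to $\mathbb{K}$. For a fixed $v\in\mathfrak{M}$ I will identify each local term $J_w(\boldsymbol{\omega})$, for $w\mid v$, with the value of $I_v$ at a point of the $v$-adic Galois orbit $O(\boldsymbol{\omega})_v$, and then reorganize the resulting double sum using the classical dictionary between places of $L$ above $v$ and $\mathbb{K}$-embeddings $L\hookrightarrow\mathbb{C}_v$.

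First I would pass from $J_w$ to $I_v$. For $w\mid v$ the completion $L_w$ is a finite extension of $\mathbb{K}_v$, hence has the same completed algebraic closure, so $\mathbb{C}_w=\mathbb{C}_v$; a choice of $\mathbb{K}_v$-embedding $\iota_w\colon L_w\hookrightarrow\mathbb{C}_v$ then induces an isomorphism of analytic spaces $X_w^{\an}\xrightarrow{\ \sim\ }X_v^{\an}$. Since the metrized divisors $\overline{D}_0,\dots,\overline{D}_{n-k}$, the Ronkin-metrized divisor $\overline{D}^{\Ron}_{f_k}$ and the sections $s_{f_i}$ are all defined over $\mathbb{K}$, this isomorphism carries their base changes to $L_w$ — together with the associated mixed Monge--Amp\`ere measures and Ronkin metrics — to the corresponding objects over $\mathbb{C}_v$, and it sends the rigid point of $\mathbb{T}_w^{\an}$ determined by $\omega_i\in\mathbb{T}(L)$ to the one determined by $\iota_w(\omega_i)$. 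Comparing the integrals defining $J_w(\boldsymbol{\omega})$ and $I_v$ then yields $J_w(\boldsymbol{\omega})=I_v(\iota_w(\boldsymbol{\omega}))$ with $\iota_w(\boldsymbol{\omega})=(\iota_w(\omega_1),\dots,\iota_w(\omega_k))$; and since $\iota_w|_L$ extends to a $\mathbb{K}$-embedding $\overline{\mathbb{K}}\hookrightarrow\mathbb{C}_v$, hence differs from the fixed $\iota_v$ by an element of $\Gal(\overline{\mathbb{K}}/\mathbb{K})$, the point $\iota_w(\boldsymbol{\omega})$ lies in $O(\boldsymbol{\omega})_v$.

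Next I would record that $I_v$ is invariant under $\Gal(\overline{\mathbb{K}_v}/\mathbb{K}_v)$: as $X$, the $f_i$ and the $\overline{D}_i$ are defined over $\mathbb{K}$, conjugation by $\sigma$ induces an automorphism of $X_v^{\an}$ preserving the relevant Monge--Amp\`ere measures and Ronkin metrics, with $\sigma(t_k^*s_{f_k})=(\sigma t_k)^*s_{f_k}$ and $\sigma$ transforming the cycle $Z_X(\widehat{\boldsymbol{t}}^*s_{\widehat{\boldsymbol{f}}})$ into the one attached to $\sigma\widehat{\boldsymbol{t}}$, so a change of variables gives $I_v(\sigma\boldsymbol{t})=I_v(\boldsymbol{t})$ (along $O(\boldsymbol{\omega})_v$ this also follows a posteriori from the independence of $J_w(\boldsymbol{\omega})$ from the choice of $\iota_w$). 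Now the isomorphism $L\otimes_{\mathbb{K}}\mathbb{K}_v\simeq\prod_{w\mid v}L_w$ partitions the $[L:\mathbb{K}]$ many $\mathbb{K}$-embeddings $L\hookrightarrow\mathbb{C}_v$ into blocks indexed by the $w\mid v$, the block of $w$ — those factoring through $L_w$ — having $[L_w:\mathbb{K}_v]$ elements; since $L=\mathbb{K}(\boldsymbol{\omega})$ is generated over $\mathbb{K}$ by the coordinates of $\boldsymbol{\omega}$, evaluation at $\boldsymbol{\omega}$ is a bijection from this set of embeddings onto $O(\boldsymbol{\omega})_v$, under which the block of $w$ maps onto a subset $S_w$ with $O(\boldsymbol{\omega})_v=\bigsqcup_{w\mid v}S_w$ and $\#S_w=[L_w:\mathbb{K}_v]$. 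Two embeddings in the block of $w$ differ by an element of $\Gal(\overline{\mathbb{K}_v}/\mathbb{K}_v)$, so by the above $I_v$ is constant on $S_w$ with value $I_v(\iota_w(\boldsymbol{\omega}))=J_w(\boldsymbol{\omega})$. Using the weight relation \eqref{eq: weight of places in field extension} and $\#O(\boldsymbol{\omega})=[L:\mathbb{K}]$ we then obtain
\begin{align*}
\sum_{w\in\mathfrak{M}(\boldsymbol{\omega})}n_wJ_w(\boldsymbol{\omega})
&=\sum_{v\in\mathfrak{M}}\frac{n_v}{\#O(\boldsymbol{\omega})}\sum_{w\mid v}[L_w:\mathbb{K}_v]\,I_v(\iota_w(\boldsymbol{\omega}))\\
&=\sum_{v\in\mathfrak{M}}\frac{n_v}{\#O(\boldsymbol{\omega})}\sum_{\boldsymbol{\eta}\in O(\boldsymbol{\omega})_v}I_v(\boldsymbol{\eta}),
\end{align*}
which is the asserted identity.

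I expect the only genuine work to be in the base-change compatibilities of the second paragraph — verifying that the isomorphism $X_w^{\an}\cong X_v^{\an}$ identifies the $w$-adic mixed Monge--Amp\`ere measures of the $\overline{D}_i$ and the $w$-adic Ronkin metric of $D_{f_k}$ with their $v$-adic counterparts — together with the parallel $\Gal(\overline{\mathbb{K}_v}/\mathbb{K}_v)$-equivariance of $I_v$; everything else is elementary bookkeeping with places and embeddings.
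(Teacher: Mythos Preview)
Your proof is correct and follows essentially the same approach as the paper: regroup the left-hand side over places $v$ of $\mathbb{K}$, identify each $J_w(\boldsymbol{\omega})$ with a value of $I_v$ via the isomorphism $X_w^{\an}\simeq X_v^{\an}$ and a change of variables, and then match the weighted sum over $w\mid v$ with the average over $O(\boldsymbol{\omega})_v$. The only organizational difference is that the paper exploits that $\mathbb{K}(\boldsymbol{\omega})/\mathbb{K}$ is Galois and parametrizes everything by $\sigma\in G_{\boldsymbol{\omega}}=\Gal(\mathbb{K}(\boldsymbol{\omega})/\mathbb{K})$ acting transitively on the places above $v$, whereas you use the decomposition $L\otimes_{\mathbb{K}}\mathbb{K}_v\simeq\prod_{w\mid v}L_w$ to partition the $\mathbb{K}$-embeddings $L\hookrightarrow\mathbb{C}_v$ and then invoke $\Gal(\overline{\mathbb{K}_v}/\mathbb{K}_v)$-invariance of $I_v$ on each block; this is a harmless repackaging of the same bookkeeping, and your version has the mild advantage of not needing the extension to be Galois.
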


\begin{proof}
  Let $v\in\mathfrak{M}$ and denote by
  $\mathfrak{M}(\boldsymbol{\omega})_{v}$ the set of places of
  $\mathbb{K}(\bm{\omega})$ over $v$. The finite extension
  $\mathbb{K}(\bm{\omega})/\mathbb{K}$ is Galois and we denote by
  $G_{\boldsymbol{\omega}}$ its Galois group.  This finite group acts
  by composition on the set $\mathfrak{M}(\boldsymbol{\omega})_{v}$,
  and by \cite[Chapter II, Proposition 9.1]{Neukirch:ant} this action
  is transitive.

  In particular, the degree
  $[\mathbb{K}(\bm{\omega})_w:\mathbb{K}_v]$ is the same for all
  $w\in \mathfrak{M}(\boldsymbol{\omega})_{v}$, and by \cite[Chapter
  II, Corollary 8.4]{Neukirch:ant} these degrees sum up to
  $[\mathbb{K}(\bm{\omega}):\mathbb{K}]$.  Combining this with the
  orbit-stabilizer theorem we obtain that
  \begin{equation}
    \label{eq:12}
\frac{[\mathbb{K}(\bm{\omega})_w:\mathbb{K}_v]}{[\mathbb{K}(\bm{\omega}):\mathbb{K}]}=
\frac{1}{\# \mathfrak{M}(\boldsymbol{\omega})_{v}} = \frac{\#\stab(w)}{\#G_{\bm{\omega}}} \quad \text{ for any }
w\in \mathfrak{M}(\boldsymbol{\omega})_{v},
  \end{equation}
where $\stab(w)$ denotes the stabilizer of $w$ with respect to the
action of $G_{\boldsymbol{\omega}}$.

Choose the place $w_{v} \in \mathfrak{M}(\bm{\omega})$ induced by the
embedding
$\iota_{v} \colon \overline{\mathbb{K}}\hookrightarrow
\mathbb{C}_{v}$. Then~\eqref{eq:12} together with the transitivity of
the action gives that
\[
  \sum_{w\in\mathfrak{M}(\bm{\omega})_{v}}
\frac{[\mathbb{K}(\bm{\omega})_w:\mathbb{K}_v]}{[\mathbb{K}(\bm{\omega}):\mathbb{K}]} \, 
J_w(\bm{\omega})
=
\frac{1}{\# G_{\boldsymbol{\omega}}}\sum_{\sigma\in G_{\boldsymbol{\omega}}}  J_{\sigma (w_{v})}(\bm{\omega}).
\]
For each $\sigma\in G_{\boldsymbol{\omega}}$ we have that
$J_{\sigma (w_{v})}(\bm{\omega}) =I_{v}(\iota_{v}\circ
\sigma(\bm{\omega}))$, as it can be checked using the isomorphism
$X_{\sigma(w_{v})}^{\an}\simeq X_{v}^{\an}$ and the change of
variables formula.

On the other hand, the finite group $G_{\boldsymbol{\omega}}$ acts
freely and transitively on the Galois orbit of 
$\boldsymbol{\omega}$. Hence the assignment
$\sigma\mapsto \iota_{v}\circ \sigma(\boldsymbol{w})$ is a bijection
between $G_{\boldsymbol{\omega}}$ and the $v$-adic Galois orbit of
this point. Altogether we deduce that
\[
  \sum_{w\in\mathfrak{M}(\bm{\omega})_{v}}
\frac{[\mathbb{K}(\bm{\omega})_w:\mathbb{K}_v]}{[\mathbb{K}(\bm{\omega}):\mathbb{K}]} \, 
J_w(\bm{\omega})
=
\frac{1}{\# O(\boldsymbol{\omega})}\sum_{\boldsymbol{\eta}\in O(\boldsymbol{\omega})_{v}} I_{v}(\boldsymbol{\eta}).
\]
The claim follows multiplying this equality by $n_{v}$ and summing
over all $v\in \mathfrak{M}$, together with the relation between the
weights of $\mathbb{K}$ and those of $\mathbb{K}(\boldsymbol{\omega})$
in \eqref{eq: weight of places in field extension}.
\end{proof}

Combining this claim with
\eqref{eq: recursive definition of height in the general strategy} we
deduce the following recursive formula.

\begin{proposition}\label{prop: induction in general strategy}
  In this setting, for any torsion point
  $\bm{\omega}\in\mathbb{T}(\overline{\mathbb{K}})^{k} \setminus
  H(\overline{\mathbb{K}})$ we have~that
\begin{displaymath}
\h_{\overline{D}_0,\ldots,\overline{D}_{n-k}}(Z_{\mathbb{T}}(\bm{\omega}^*\bm{f}))
=
\h_{\overline{D}_0,\ldots,\overline{D}_{n-k},\overline{D}^{\Ron}_{f_{k}}}(Z_{\mathbb{T}}( \widehat{\boldsymbol{\omega}}^*{\widehat{\boldsymbol{f}}}))
+
\sum_{v\in\mathfrak{M}}\frac{n_{v}}{\# O(\boldsymbol{\omega})}\sum_{\boldsymbol{\eta}\in O(\boldsymbol{\omega})_{v}} I_{v}(\boldsymbol{\eta}).
\end{displaymath}
\end{proposition}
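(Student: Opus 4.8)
The plan is to deduce the displayed equality directly, by combining the arithmetic B\'ezout decomposition \eqref{eq: recursive definition of height in the general strategy} with \cref{lem: from places to Galois}. First I would recall why \eqref{eq: recursive definition of height in the general strategy} holds: since $\bm{\omega}\notin H(\overline{\mathbb{K}})$, \cref{thm: geometric main theorem} applied to $\bm{f}$ and to $\widehat{\bm{f}}$ yields the geometric identity
\[
\overline{Z_{\mathbb{T}}(\bm{\omega}^*\bm{f})}
=
\div(\omega_k^*s_{f_k})\cdot
\overline{Z_{\mathbb{T}}(\widehat{\bm{\omega}}^*\widehat{\bm{f}})}
\]
inside $X_{\overline{\mathbb{K}}}$. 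Working over a number field $\mathbb{K}(\bm{\omega})$ that splits $\bm{\omega}$ and over which $\overline{Z_{\mathbb{T}}(\widehat{\bm{\omega}}^*\widehat{\bm{f}})}$ is defined, the recursive definition of the height (equivalently, the arithmetic B\'ezout formula) lets me peel off the last twisted global section $\omega_k^*s_{f_k}$; as the Ronkin-metrized divisor $\overline{D}^{\Ron}_{f_k}$ of \eqref{eq: Ronkin metric} is semipositive with $v$-adic metric function $\rho_{f_k,v}$, its contribution splits into $\h_{\overline{D}_0,\ldots,\overline{D}_{n-k},\overline{D}^{\Ron}_{f_{k}}}(Z_{\mathbb{T}}(\widehat{\bm{\omega}}^*\widehat{\bm{f}}))$ plus the sum of the local archimedean and non-archimedean integrals $n_w J_w(\bm{\omega})$ over $w\in\mathfrak{M}(\bm{\omega})$.

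Then I would invoke \cref{lem: from places to Galois}, which rewrites the place sum over $\mathbb{K}(\bm{\omega})$ as an adelic sum over $\mathfrak{M}$ weighted by $v$-adic Galois orbits:
\[
\sum_{w\in\mathfrak{M}(\bm{\omega})}n_wJ_w(\bm{\omega})
=
\sum_{v\in\mathfrak{M}}\frac{n_{v}}{\# O(\bm{\omega})}\sum_{\bm{\eta}\in O(\bm{\omega})_{v}} I_{v}(\bm{\eta}).
\]
Substituting this into \eqref{eq: recursive definition of height in the general strategy} gives exactly the asserted recursive formula; no further argument is needed, so the proposition is a formal consequence of the two cited inputs.

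Accordingly, the only real work sits inside \cref{lem: from places to Galois}, and that is where I expect the main obstacle to be. The delicate points there are: (i) the change-of-variables identity $J_{\sigma(w_v)}(\bm{\omega})=I_v(\iota_v\circ\sigma(\bm{\omega}))$ for $\sigma\in\Gal(\mathbb{K}(\bm{\omega})/\mathbb{K})$, which requires that the isomorphism $X_{\sigma(w_v)}^{\an}\simeq X_v^{\an}$ carry the mixed Monge--Amp\`ere currents and the cycle delta currents into one another; and (ii) the number-theoretic bookkeeping — transitivity of the Galois action on the places above a fixed $v$, the orbit--stabilizer count, and the fact that $\Gal(\mathbb{K}(\bm{\omega})/\mathbb{K})$ acts simply transitively on the Galois orbit of $\bm{\omega}$ — together with the weight relation \eqref{eq: weight of places in field extension}. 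I would not worry at this stage about finiteness of the integrals $J_w(\bm{\omega})$, since for $\bm{\omega}\notin H$ the intersection is proper and the currents involved are well-defined; the genuinely analytic difficulties, namely extending $I_v$ to all of $\mathbb{T}_v^{\an}$ with at worst logarithmic singularities, are a separate matter postponed to the later sections.
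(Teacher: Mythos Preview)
Your proposal is correct and matches the paper's own argument exactly: the proposition is stated right after \cref{lem: from places to Galois} and is proved in one line by combining that claim with \eqref{eq: recursive definition of height in the general strategy}. Your identification of where the actual work lies (inside the proof of \cref{lem: from places to Galois}, namely the Galois bookkeeping and the change-of-variables identity $J_{\sigma(w_v)}(\bm{\omega})=I_v(\iota_v\circ\sigma(\bm{\omega}))$) is also accurate.
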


Let us now see how this formula could lead to the proof of the
inductive step of \cref{conj: main conjecture}.  Let
$(\bm{\omega}_\ell)_\ell$ be a quasi-strict sequence of torsion points
of~$\mathbb{T}(\overline{\mathbb{K}})^{k}$.  Setting
$\widehat{\boldsymbol{\omega}}_{\ell}= (\omega_{\ell,1},\dots,
\omega_{\ell,k-1})$ for each $\ell$, the sequence
$(\widehat{\boldsymbol{\omega}}_{\ell})_{\ell}$ of torsion points of
$\mathbb{T}(\overline{\mathbb{K}})^{k-1}$ is also quasi-strict.  Using
the inductive hypothesis, the first summand in the right-hand side of
the formula in \cref{prop: induction in general strategy} converges to
the desired limit as $\ell\to \infty$.  Thus the proof of the
conjecture  is reduced to showing that the second summand in this formula
asymptotically vanishes, namely
\begin{equation}
  \label{eq:14}
\lim_{\ell\to \infty}\sum_{v\in\mathfrak{M}}\frac{n_{v}}{\# O(\boldsymbol{\omega_{\ell}})}\sum_{\boldsymbol{\eta}\in O(\boldsymbol{\omega}_\ell)_{v}} I_{v}(\boldsymbol{\eta})=0.
\end{equation}

When approaching this problem, the following reduction steps might be
useful.

\begin{proposition}\label{prop: reduction steps}
  When proving \cref{conj: main conjecture} for a given $0\le k \le n$
  it is enough to suppose that:
\begin{enumerate}[leftmargin=*]
\item\label{item: reduction of Laurent polynomials} the Laurent
  polynomials $f_1,\ldots,f_k$ have coefficients in the ring of
  integers of~$\mathbb{K}$, their supports contain the lattice
  point~$0\in M$, and (up to possibly replacing $\mathbb{K}$ by a
  finite extension) they are absolutely irreducible, i.e. they are
  irreducible as elements of $\overline{\mathbb{K}}[M]$
\item\label{item: reduction of toric variety} the toric variety $X$ is
  smooth and projective, and its fan is compatible with the Newton polytopes of the
  Laurent polynomials
\item\label{item: reduction of geometric divisors} the toric divisors $D_0,\ldots,D_{n-k}$
  are very ample
\item\label{item: reduction of metrized divisors} the semipositive
  toric metrics of $\overline{D}_{0},\ldots,\overline{D}_{n-k}$
  are smooth at Archimedean
  places and algebraic at non-Archimedean ones
\item\label{item: reduction of sequence of torsion points} the torsion
  points in the sequence are of the form
  $\bm{\omega}_\ell=(1,\omega_{\ell,2},\ldots,\omega_{\ell,k})$ with
  $1$ the neutral element of~$\mathbb{T}(\overline{\mathbb{K}})$ and
  $((\omega_{\ell,2},\ldots,\omega_{\ell,k}))_\ell$ a strict sequence
  in~$\mathbb{T}(\overline{\mathbb{K}})^{k-1}$.
\end{enumerate}
\end{proposition}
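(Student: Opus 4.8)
The plan is to prove each of the five reductions by checking that the passage to the more convenient situation leaves both sides of the conjectural identity in \cref{conj: main conjecture} unchanged — or transforms them in the same way — deferring the bookkeeping to \cref{sec: appendix A}. For \eqref{item: reduction of Laurent polynomials} I would first note that multiplying a Laurent polynomial $f_i$ by a nonzero element of $\mathbb{K}$ (to clear denominators) or by a monomial $\chi^m$ (to put $0$ in its support) does not affect $\div(\omega_i^*f_i)$ on $\mathbb{T}$, so the left-hand side is unchanged, whereas on the right-hand side it at most translates $\NP(f_i)$ and shifts $\rho_{f_i,v}$ by an affine function, so that by the product formula and the translation invariance of $\MI_M$ the right-hand side is unchanged too. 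To achieve absolute irreducibility I would enlarge $\mathbb{K}$ so that each $f_i$ factors as $\prod_j f_{i,j}$ into absolutely irreducible pieces; then $Z_{\mathbb{T}}(\bm{\omega}^*\bm{f})$ decomposes into the intersection cycles cut out by the various tuples of factors and the height is additive, while on the right-hand side one uses $\NP(f_i)=\sum_j\NP(f_{i,j})$ and the additivity of $\rho_{f_i,v}$ over products, together with the facts that the Legendre--Fenchel transform turns sums of concave functions into sup-convolutions and that $\MI_M$ is additive with respect to sup-convolution in each argument. The invariance of both sides under a finite base change $\mathbb{K}'/\mathbb{K}$ is the same manipulation of places and weights already carried out in the proof of \cref{lem: from places to Galois}.

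Reduction \eqref{item: reduction of sequence of torsion points} I would obtain by translating by the first coordinate. Since $\div(\omega_i^*f_i)=\tau_{\omega_i^{-1},*}\div(f_i)$, applying $\tau_{\omega_{\ell,1}}$ to the whole intersection cycle produces $Z_{\mathbb{T}}((\bm{\omega}'_\ell)^*\bm{f})$ with $\bm{\omega}'_\ell=(1,\omega_{\ell,1}^{-1}\omega_{\ell,2},\dots,\omega_{\ell,1}^{-1}\omega_{\ell,k})$, whose last $k-1$ entries form a strict sequence exactly because $(\bm{\omega}_\ell)_\ell$ is quasi-strict; and because $\omega_{\ell,1}$ is a torsion point it lies in every compact torus $\mathbb{S}_v$, so $\tau_{\omega_{\ell,1}}$ preserves the $\mathbb{S}_v$-invariant toric metrics and the linear equivalence class of each $D_i$, whence the height is unchanged, while the right-hand side never involves the torsion points at all — this is the $k$-variable analogue of the identity recalled in \cref{rem:3}. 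For \eqref{item: reduction of toric variety} I would replace the fan of $X$ by a smooth projective refinement compatible with the Newton polytopes of the $f_i$, which exists by toric resolution of singularities together with the existence of projective refinements of complete fans; the associated proper birational toric morphism is the identity over $\mathbb{T}$, so it identifies the cycles $Z_{\mathbb{T}}(\bm{\omega}^*\bm{f})$, the heights agree by the projection formula, and the roof and dual Ronkin functions do not change because they depend only on the $v$-adic metric functions on $N_\mathbb{R}$ and on the Newton polytopes.

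The substance of the argument lies in \eqref{item: reduction of geometric divisors} and \eqref{item: reduction of metrized divisors}, both of which rest on linearity and continuity properties of the height and of its conjectural counterpart. Fixing a very ample toric divisor $E$ on the now smooth projective $X$ with its canonical semipositive metric and an integer $m\gg 0$, I would expand $\h_{\overline{D}_0,\dots,\overline{D}_{n-k}}$ multilinearly by writing $\overline{D}_i=(\overline{D}_i+m\overline{E})-m\overline{E}$; each of the $2^{n-k+1}$ resulting terms is a height relative to a family of semipositive toric metrized divisors whose underlying divisors $D_i+mE$ and $mE$ are ample, hence very ample because $X$ is smooth (here one uses that semipositivity of $\overline{D}_i$ forces $D_i$ to be nef), and the matching expansion of the right-hand side is provided again by the additivity of $\MI_M$ with respect to sup-convolution, so the two expansions agree term by term and \eqref{item: reduction of geometric divisors} follows. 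For \eqref{item: reduction of metrized divisors} I would approximate each semipositive toric metric uniformly by semipositive ones that are smooth at the Archimedean places (by smoothing concave metric functions) and algebraic at the non-Archimedean ones (by model metrics), altering only finitely many places; the left-hand side then varies continuously, with an error bounded by a constant multiple of $\deg_{D_0,\dots,D_{n-k}}(Z_{\mathbb{T}}(\bm{\omega}_\ell^*\bm{f}))$, which is eventually constant along the sequence by \cref{cor: proper intersection for strict sequences}, so the approximation is uniform in $\ell$ and the limit in $\ell$ may be interchanged with the limit in the approximation; the right-hand side varies continuously because $\MI_M$ depends continuously on its concave arguments and the adelic sum has boundedly many nonzero terms.

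The hard part is precisely this last interchange of limits in \eqref{item: reduction of metrized divisors}: for the reduction to be legitimate the approximation of the metrics must be uniform along the quasi-strict sequence, and it is exactly the eventual constancy of the geometric degree furnished by \cref{cor: proper intersection for strict sequences} that makes this possible. The remaining difficulty is of a purely bookkeeping nature — keeping careful track of places and weights through the base changes of \eqref{item: reduction of Laurent polynomials} and \eqref{item: reduction of sequence of torsion points}, in the spirit of the computation in the proof of \cref{lem: from places to Galois} — and it is those verifications that I would relegate to \cref{sec: appendix A}.
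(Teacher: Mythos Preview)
Your proposal is correct and follows essentially the same route as the paper's proof in \cref{sec: appendix A}: the same multilinearity/additivity arguments for \eqref{item: reduction of Laurent polynomials} and \eqref{item: reduction of geometric divisors}, the same toric refinement plus projection formula for \eqref{item: reduction of toric variety}, the same uniform approximation with the degree bound from \cref{cor: proper intersection for strict sequences} and interchange of limits for \eqref{item: reduction of metrized divisors}, and the same diagonal translation for \eqref{item: reduction of sequence of torsion points}. One small slip: in \eqref{item: reduction of geometric divisors} you write that $D_i+mE$ and $mE$ are ``ample, hence very ample because $X$ is smooth'', but smoothness is not what gives this --- the paper uses instead that a nef toric divisor is globally generated and that globally generated plus very ample is very ample (\cite[Exercise~II.7.5(d)]{Har}); alternatively one can invoke the toric fact that ample implies very ample.
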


Proving \cref{prop: reduction steps} would result weighty here, and
therefore we postpone this to \cref{sec: appendix A}.

\section{An auxiliary function}
\label{sec:an-auxil-funct-1}

With the aim of proving \cref{thm: main theorem} and in view of the
reduction steps in \cref{prop: reduction steps} we now place ourselves
in the following setting.  Let ${f,g\in \mathbb{K}[M]}$ be two
absolutely irreducible Laurent polynomials such that both of their
supports contain the lattice point $0\in M$.  We write them as
\[
f=\sum_m\alpha_m\chi^m\quad\text{and}\quad g=\sum_m\beta_m\chi^m
\]
with $\alpha_m,\beta_m\in \mathbb{K}$ that are zero for all but
finitely many~$m$.  Let also $X$ be a smooth projective toric variety
compactifying $\mathbb{T}$ with fan compatible with the Newton
polytopes of $f$ and $g$. Furthermore we denote by
$\overline{D}_0,\ldots,\overline{D}_{n-2}$ a family of semipositive
toric metrized divisors on~$X$ with very ample underlying divisors,
and with smooth metrics at the Archimedean places and algebraic
metrics at the non-Archimedean ones.

Recall from \cref{sec:geom-toric-constr} that $D_{f}$ denotes the nef
toric divisor on $X$ associated with the Newton polytope of $f$ and
$s_{f}$ the global section of $\mathcal{O}(D_{f})$ associated with this
Laurent polynomial. We set $Z=Z_{X}(s_{f})$ for the corresponding
hypersurface of $X$.

Fix $v\in \mathfrak{M}$.  To treat the $v$-adic summand in
\eqref{eq:14} in the present $2$-codimensional case it will be
convenient to consider the function
$F_v\colon\mathbb{T}(\mathbb{C}_v)\to\mathbb{R}\cup\{-\infty\}$
defined by
\begin{equation}\label{eq: auxiliary function}
F_v(t)=\int_{X_{v}^{\an}}\log|t^*g|_v\ \chern(\overline{D}_{0,v})\wedge\ldots\wedge\chern(\overline{D}_{n-2,v})\wedge\delta_{Z_v^{\an}}.
\end{equation}
This integral is computed with respect to the $v$-adic mixed
Monge--Amp\`ere measure of these metrized divisors on the
analytification of the hypersurface defined by~$f$.  We devote this
section to the study of the regularity of this auxiliary function.

\subsection{The goal of this section}
\label{sec:goal-this-section}

We first introduce a class of functions on $\mathbb{T}_v^{\an}$ with
controlled behaviour along closed algebraic subsets, similarly as
those considered by Chambert-Loir and Thuillier along Weil divisors
\cite{CLT}.

\begin{definition}\label{def: log singularities}
  Let $H\subset \mathbb{T}_{v}$ be a proper closed subset and
  $h_1,\ldots,h_s \in \mathbb{C}_{v}[M]$ a system of Laurent
  polynomials defining it. A function
  $\varphi\colon \mathbb{T}_v^\an\to\mathbb{R}\cup\{-\infty\}$ is said
  to have \emph{at most logarithmic singularities along $H^{\an}$} if
  the two following conditions are met:
\begin{enumerate}[leftmargin=*]
\item \label{item:1} the restriction of $\varphi$ to
  $\mathbb{T}_v^{\an}\setminus H^{\an}$ is a continuous function with
  real values,
\item \label{item:2} for each point of $H^{\an}$ there exist an open neighbourhood
  $U \subset \mathbb{T}_v^{\an}$ and positive real numbers
  $c_{1},c_{2}$ such that
\[
  |\varphi| \le -c_{1}\,\log\max_{j=1,\ldots,s}|h_j|_{v} +c_{2} \quad
  \text{ on } U.
\]
\end{enumerate}
\end{definition}

\begin{remark}
  \label{rem:2}
  Let $ H' \subset \mathbb{T}_{v}$ be a closed  subset containing $H$, and 
  $g_{1},\dots, g_{r}$  a system of Laurent polynomials defining
  it. By Hilbert's Nullstellensatz, there is an integer
  $\kappa_{1} \ge 1$ such that
  $g_{i}^{\kappa_{1}} \in (h_{1},\dots, h_{s})$ for every
  $i$. Therefore for any open subset $U\subset \mathbb{T}_{v}^{\an}$
  with compact closure there is a positive real number $\kappa_{2}$
  such that
  \begin{displaymath}
-    \log \max_{j=1,\dots, s}  |h_{j}|_{v} \le -\kappa_{1}\, \log \max_{i=1,\dots, r}|g_{i}|_{v} +\kappa_{2} \quad \text{ on } U.
\end{displaymath}
Hence if
$\varphi$ has at most logarithmic singularities along~$H^{\an}$ then it
also has at most logarithmic singularities along~$(H')^{\an}$.

Applying this observation to the case when $H'=H$, it implies moreover that
\cref{def: log singularities} does not depend on the choice of the
defining system of Laurent polynomials.
\end{remark}

To study the singularities of the auxiliary function we introduce the
proper closed subset of the torus defined as
\begin{equation}
  \label{eq: set of bad
    intersection}
  \Upsilon=     V_{\mathbb{T}}(\{\alpha_{m}\beta_{m'+m_{0}}\chi^{m'+m_{0}}-\alpha_{m'}\beta_{m+m_{0}}\chi^{m+m_{0}}\}_{m,m'\in M}) 
\end{equation}
if there is $ m_{0}\in M $ such that $\supp(g)=\supp(f)+m_{0}$, and as
$\Upsilon= \emptyset$ otherwise.

\begin{lemma}
  \label{lem:7}
  For $t\in \mathbb{T}(\mathbb{C}_{v})$ we have that
  $F_{v}(t)=-\infty$  if and only if $t\in \Upsilon(\mathbb{C}_{v})$.
  \end{lemma}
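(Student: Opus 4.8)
The plan is to first describe $\Upsilon$ by elementary means, and then to read off the locus $\{F_v=-\infty\}$ from the standard properties of the local height and Chambert--Loir measure underlying its definition.

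First I would unwind the definition of $\Upsilon$. It can be nonempty only when $\supp(g)=\supp(f)+m_{0}$ for a (then unique) $m_{0}\in M$, and in that case, after cancelling the everywhere-nonzero factor $\chi^{m_{0}}(t)$, the defining equations say exactly that the two nowhere-zero vectors $(\alpha_{m})_{m\in\supp(f)}$ and $\bigl(\beta_{m+m_{0}}\chi^{m}(t)\bigr)_{m\in\supp(f)}$ are proportional. A direct computation then shows that in all cases $t\in\Upsilon(\mathbb{C}_{v})$ if and only if $t^{*}g=\lambda\,\chi^{m_{0}}f$ in $\mathbb{C}_{v}[M]$ for some $\lambda\in\mathbb{C}_{v}^{\times}$ and $m_{0}\in M$; that is, if and only if the Laurent polynomials $t^{*}g$ and $f$ are associate. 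Since $f$ and $g$ are absolutely irreducible and $t^{*}g=\tau_{t}^{*}g$ is the pullback of $g$ along an automorphism of $\mathbb{T}$, hence again absolutely irreducible, being associate is equivalent to $f\mid t^{*}g$ in $\mathbb{C}_{v}[M]$, hence to $t^{*}g$ vanishing along $V_{\mathbb{T}}(f)=Z_{v}^{\an}\cap\mathbb{T}_{v}^{\an}$, hence (by the relation $t^{*}s_{g}=(t^{*}g)\,s_{D_{g}}$) to the twisted global section $t^{*}s_{g}$ of $\mathcal{O}(D_{g})$ vanishing on the irreducible hypersurface $Z=Z_{X}(s_{f})$.

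Next I would reduce the integral formula for $F_v$ to the well-definedness of local heights. From $t^{*}s_{g}=(t^{*}g)\,s_{D_{g}}$ and the definition of the Ronkin metric one gets $\log|t^{*}g|_{v}=\log\|t^{*}s_{g}\|_{\Ron,v}-\log\|s_{D_{g}}\|_{\Ron,v}$ on $\mathbb{T}_{v}^{\an}$. The measure $\mu=\chern(\overline{D}_{0,v})\wedge\dots\wedge\chern(\overline{D}_{n-2,v})\wedge\delta_{Z_{v}^{\an}}$ is a positive measure on $Z_{v}^{\an}$ of total mass $\deg_{D_{0},\dots,D_{n-2}}(Z)=\MV_{M}(\Delta_{D_{0}},\dots,\Delta_{D_{n-2}},\NP(f))$, which is strictly positive since the $D_{i}$ are very ample and $f$ is not a monomial; and, by the toric description of these measures in \cite{BPS} and \cite{Gualdi}, $\mu$ gives no mass to the toric boundary $Z_{v}^{\an}\setminus\mathbb{T}_{v}^{\an}$, so $F_{v}(t)$ may be computed as the integral over $Z_{v}^{\an}\cap\mathbb{T}_{v}^{\an}$, where the above identity holds pointwise. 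Now $\overline{D}_{g}^{\Ron}$ is semipositive, and $s_{D_{g}}$ never vanishes on $Z$ because $|\div(s_{D_{g}})|=|D_{g}|\subset X\setminus\mathbb{T}$; hence $\int_{X_{v}^{\an}}\log\|s_{D_{g}}\|_{\Ron,v}\,\mu$ is finite, being a constituent of the well-defined $v$-adic local height of $Z$ with respect to $\overline{D}_{0},\dots,\overline{D}_{n-2},\overline{D}_{g}^{\Ron}$ and the section $s_{D_{g}}$. If $t\notin\Upsilon(\mathbb{C}_{v})$ then $t^{*}s_{g}$ does not vanish on $Z$, so the same theory gives that $\int_{X_{v}^{\an}}\log\|t^{*}s_{g}\|_{\Ron,v}\,\mu$ is finite, and therefore $F_{v}(t)\in\mathbb{R}$. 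If $t\in\Upsilon(\mathbb{C}_{v})$ then $t^{*}s_{g}$ vanishes identically on $Z_{v}^{\an}$, so $\log\|t^{*}s_{g}\|_{\Ron,v}\equiv-\infty$ on the whole support of $\mu$ and $\int_{X_{v}^{\an}}\log\|t^{*}s_{g}\|_{\Ron,v}\,\mu=-\infty$; subtracting the finite quantity $\int_{X_{v}^{\an}}\log\|s_{D_{g}}\|_{\Ron,v}\,\mu$ (the difference being legitimate since this last integral is finite) gives $F_{v}(t)=-\infty$.

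The main obstacle is the finiteness, for a semipositive toric metrized divisor $\overline{E}$ and a global section $s$ of $\mathcal{O}(E)$ not vanishing on $Z$, of $\int_{X_{v}^{\an}}\log\|s\|_{E,v}\,\mu$: this is the local integrability of the logarithm of a nonzero section against the mixed Monge--Amp\`ere measure of the $\overline{D}_{i}$ restricted to $Z$. At Archimedean places it follows from the classical $L^{1}_{\mathrm{loc}}$ property of the logarithm of a holomorphic function together with the fact that Monge--Amp\`ere measures of continuous semipositive metrics do not charge proper analytic subsets; at non-Archimedean places it follows from the analogous statement for Chambert--Loir measures of algebraic metrics, which are finite combinations of Dirac masses at divisorial points of $Z$ lying over the torus. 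Both are encapsulated in the well-definedness of $v$-adic local heights in \cite[Chapter~1]{BPS} and in \cite{CLT}, which I would simply cite. The remaining ingredients — the combinatorial shape of $\Upsilon$, the vanishing of $\mu$ on the toric boundary (checked orbit by orbit via \cref{lem: restriction of twist to orbits}), and the positivity of the mixed volume — are routine once those statements are in hand.
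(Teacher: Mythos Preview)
Your argument is correct and follows essentially the same route as the paper: both reduce the characterization of $\Upsilon(\mathbb{C}_v)$ to the condition that $t^*g$ be a monomial multiple of $f$, and both conclude by invoking the fact that the mixed Monge--Amp\`ere measure on $Z_v^{\an}$ integrates the logarithm of any section that is not identically zero on $Z$. Your detour through the decomposition $\log|t^*g|_v=\log\|t^*s_g\|_{\Ron,v}-\log\|s_{D_g}\|_{\Ron,v}$ is unnecessary (the paper works directly with $\log|t^*g|_v$ as a function with at most logarithmic singularities on $Z$), but it is not wrong and the extra care you take in checking that $\mu$ has positive total mass and does not charge the toric boundary is implicit in the paper's argument as well.
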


\begin{proof}
  First suppose that there is $ m_{0}\in M $ such that
  $\supp(g)=\supp(f)+m_{0}$. Then for
  $t\in \mathbb{T}(\mathbb{C}_{v})$ we have that
  $t\in \Upsilon(\mathbb{C}_{v})$ if and only if there is
  $\lambda \in \mathbb{C}_{v}^{\times}$ such that
  $\beta_{m+m_{0}} \chi^{m+m_{0}}(t)=\lambda \alpha_{m}$ for all $m$,
  or equivalently that
  \begin{displaymath}
      t^{*}g= \lambda \chi^{m_{0}} f.   
  \end{displaymath}
  Otherwise $f$ and $t^{*}g$ are coprime for all
  $t \in \mathbb{T}(\mathbb{C}_{v})$ because both Laurent polynomials
  are absolutely irreducible and their supports do not coincide modulo
  a translation.

  We conclude that $t\in \Upsilon(\mathbb{C}_{v})$ if and only if $f$
  and $t^{*}g$ coincide up to a monomial factor. When this condition
  holds we have that $|t^{*}g \, (x)|_{v}=0$ for all
  $x\in Z_{v}^{\an}$ and so $F_{v}(t)=-\infty$. Otherwise $t^{*}g$ is
  a nonzero rational function on the hypersurface $Z$ and therefore
  $F_{v}(t)\in \mathbb{R}$ because the $v$-adic mixed Monge--Amp\`ere
  measure integrates functions with at most logarithmic singularities.
\end{proof}

The following is our main result here. Its proof is rather long and
technical, and will occupy us for the rest of the section.

\begin{theorem}\label{thm: auxiliary function has log singularities}
The function $F_{v}$ extends uniquely to a
  function $ \mathbb{T}_v^{\an}\rightarrow \mathbb{R}\cup\{-\infty\} $
  with at most logarithmic singularities along~$\Upsilon_v^{\an}$ and
  taking the value $-\infty$ on this analytic subvariety.
\end{theorem}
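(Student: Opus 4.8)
The plan is to prove \cref{thm: auxiliary function has log singularities} by a local analysis on $\mathbb{T}_v^{\an}$, distinguishing the Archimedean and non-Archimedean cases but keeping the same strategy in both. First I would observe that by \cref{lem:7} the value $-\infty$ is taken exactly on $\Upsilon(\mathbb{C}_v)$, so the content of the statement is the existence of a continuous extension off $\Upsilon_v^{\an}$ together with the logarithmic lower bound near $\Upsilon_v^{\an}$. The key to continuity is to rewrite $F_v$ as a \emph{fiber integral}. Concretely, one considers the incidence variety cut out inside $\mathbb{T}\times Z$ by the section $\mu^*s_g$ (the pullback of $s_g$ along the multiplication map), or more simply one uses that $\log|t^*g|_v$ varies nicely in $t$; the measure $\chern(\overline{D}_{0,v})\wedge\cdots\wedge\chern(\overline{D}_{n-2,v})\wedge\delta_{Z_v^{\an}}$ is a fixed positive measure on $X_v^{\an}$ supported on $Z_v^{\an}$ with no mass on any $\mathbb{T}$-invariant proper subset (because $Z$ meets each orbit properly and the metrics are nice by our reductions). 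Thus $F_v(t)$ is the integral of the continuous-off-a-pluripolar-set function $\log|t^*g|_v$ against a fixed measure, parametrized by $t$.

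In the Archimedean case I would invoke Stoll's theorem on the continuity of fiber integrals \cite{Stoll}: presenting $F_v$ as the pushforward to the parameter space $\mathbb{T}(\mathbb{C}_v)$ of $\log|\,\cdot\,|_v$ applied to a holomorphic family of sections, one gets that $F_v$ is continuous (indeed plurisubharmonic-type behaviour) on the locus where $t^*g$ does not vanish identically on $Z_v^{\an}$, i.e. on $\mathbb{T}(\mathbb{C}_v)\setminus\Upsilon(\mathbb{C}_v)$. The logarithmic lower bound near a point $t_0\in\Upsilon_v^{\an}$ should come from the explicit defining equations of $\Upsilon$ in \eqref{eq: set of bad intersection}: on a bounded neighbourhood, $\log|t^*g|_v$ on $Z_v^{\an}$ is bounded below by $c_1\log\max_j|h_j(t)|_v-c_2$ uniformly in the point of $Z_v^{\an}$, where the $h_j$ are the defining Laurent polynomials of $\Upsilon$; integrating against the fixed finite measure preserves this bound. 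In the non-Archimedean case I would replace Stoll's theorem by the analogous continuity statement for fiber integrals of Green-type functions over Berkovich analytic spaces, using the formal/analytic geometry of \cite{BGR,Ber}: choosing a formal model adapted to the algebraic metrics, the fiber integral becomes, up to a combinatorial/tropical term handled via the valuation map, a continuous function of $t$, and one argues as in the Archimedean case for the singularity estimate.

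The main obstacle, and the reason the authors flag the proof as long and technical, is establishing the continuity of the fiber integral with the required uniformity, especially non-Archimedean. One has to be careful that the measure $\chern(\overline{D}_{0,v})\wedge\cdots\wedge\delta_{Z_v^{\an}}$ is a well-defined positive Radon measure that does not charge $\Upsilon_v^{\an}\cap Z_v^{\an}$ nor the boundary $Z\setminus Z_{X_0}$, which is where the reductions in \cref{prop: reduction steps} (smoothness, very ampleness, nice metrics, $0\in\supp$) are used; and one must control the family $\{t^*g\}$ locally uniformly — in the Berkovich setting this requires working on a suitable affinoid neighbourhood, bounding $\log|t^*g|_v$ from below by a finite maximum of logarithms of Laurent polynomials in $t$ with coefficients that are $v$-adically bounded, and then passing to the limit using the dominated convergence theorem together with the finiteness and the absence of atoms of the mixed Monge--Amp\`ere measure. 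The uniqueness of the extension is immediate since $\mathbb{T}(\mathbb{C}_v)$ is dense in $\mathbb{T}_v^{\an}$ and the extension is continuous off a closed subset.
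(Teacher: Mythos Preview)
Your outline for continuity is close to the paper's: Stoll's theorem in the Archimedean case and Berkovich/formal-analytic tools in the non-Archimedean case are exactly what the paper uses. However, your argument for the logarithmic lower bound contains a genuine error. You claim that on a bounded neighbourhood of a point of $\Upsilon_v^{\an}$ one has
\[
\log|t^*g(x)|_v \;\ge\; c_1\log\max_j|h_j(t)|_v - c_2
\]
\emph{uniformly in the point $x\in Z_v^{\an}$}, and then propose to integrate this pointwise bound against the Monge--Amp\`ere measure. But such a pointwise bound is impossible: for every $t\notin\Upsilon(\mathbb{C}_v)$ the function $x\mapsto\log|t^*g(x)|_v$ is $-\infty$ on the codimension-$1$ locus $V_{Z_v}(t^*g)$, so no lower bound depending only on $t$ can hold uniformly in $x$. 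The integral $F_v(t)$ is finite only because the Monge--Amp\`ere measure does not charge this locus and the singularity is integrable; integrability alone says nothing about how $F_v(t)$ behaves as $t$ approaches $\Upsilon_v^{\an}$.

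The paper's route to the lower bound is quite different and is the real technical heart of the section. Via an iterated application of a metric Weil reciprocity formula (their \cref{metric Weil reciprocity law}), they rewrite $F_v(t)$ up to a bounded error as the evaluation $\log|t^*g\,(W)|_v$ at a $0$-cycle $W=Z_{X_v}(\sigma_0,\dots,\sigma_{n-2},s_f)$ cut out by auxiliary generic sections $\sigma_i$. They then invoke the Poisson formula for the sparse resultant to show that this evaluation is a ratio $P(t,u)/Q(u)$ polynomial in $t$ and in the parameters $u$ governing the choice of the $\sigma_i$; a short interpolation argument over finitely many values of $u$ shows that the coefficients $P_a(t)$ of $P$ in the $u$-variables define exactly $\Upsilon_v$, which yields the desired bound $F_v(t)\ge\log\max_a|P_a(t)|_v - c$. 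None of this is a pointwise estimate on the integrand; it is a global algebraic identity relating the fiber integral to a resultant. Your proposal is missing this idea entirely, and I do not see how to repair the lower bound along the lines you suggest. The non-Archimedean continuity is also more delicate than a generic ``fiber integral continuity'' statement: the paper exploits that the Monge--Amp\`ere measure is discrete, supported on Shilov-type points $\xi_V$, and extends each term $\log|t^*g(\xi_V)|_v$ to non-rigid $t$ via the sup-norm on the completed tensor product $\mathscr{A}_V\widehat{\otimes}_{\mathbb{C}_v}\mathscr{H}(t)$, checking continuity via Berkovich's \cite[Lemma~5.2.6]{Ber}.
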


\subsection{Archimedean continuity}
\label{sec:cont-auxil-funct}

We start by showing that when $v$ is Archimedean, the auxiliary
function is continuous outside its singularity locus. The proof is an
application of Stoll's theorem on the continuity of the fiber
integral~\cite{Stoll}.

\begin{proposition}
  \label{prop:1}
  Let $v$ be an Archimedean place of $\mathbb{K}$. Then the
  restriction of $F_{v}$ to
  $\mathbb{T}_{v}^{\an}\setminus \Upsilon_{v}^{\an}$ is a continuous
  function with real values.
\end{proposition}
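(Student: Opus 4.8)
The plan is to reduce the statement to the continuity of an integral of a family of quasi\nobreakdash-plurisubharmonic functions, and then to split that into an easy upper semicontinuity and a harder lower semicontinuity. First I would fix the picture: as $v$ is Archimedean I identify $\mathbb{C}_{v}$ with $\mathbb{C}$, so that $X_{v}^{\an}$ is a compact complex (hence K\"ahler) manifold, $Z_{v}^{\an}$ a compact complex hypersurface, and, the metrics of $\overline{D}_{0},\dots,\overline{D}_{n-2}$ being smooth, $\mu:=\chern(\overline{D}_{0,v})\wedge\cdots\wedge\chern(\overline{D}_{n-2,v})\wedge\delta_{Z_{v}^{\an}}$ is a \emph{fixed} finite positive measure carried by $Z_{v}^{\an}$, given by a smooth density on the smooth locus of $Z_{v}^{\an}$ and charging no proper analytic subset. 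From $s_{g}=g\,s_{D_{g}}$ and $t^{*}s_{g}=(t^{*}g)\,s_{D_{g}}$ one gets, as functions on $X_{v}^{\an}$, the identity $\log|t^{*}g|_{v}=u_{t}-\log\|s_{D_{g}}\|_{\Ron,v}$ with $u_{t}:=\log\|t^{*}s_{g}\|_{\Ron,v}$; the second term does not depend on $t$, is bounded above, has at most logarithmic poles along the fixed divisor $D_{g}$, and, since $Z_{v}^{\an}$ is not contained in the support of $D_{g}$, restricts to a $\mu$\nobreakdash-integrable function on $Z_{v}^{\an}$ with a finite integral $C$. Hence $F_{v}(t)=\int_{Z_{v}^{\an}}u_{t}\,d\mu-C$, and it suffices to prove that $t\mapsto\int_{Z_{v}^{\an}}u_{t}\,d\mu$ is real-valued and continuous on the open set $S:=\mathbb{T}_{v}^{\an}\setminus\Upsilon_{v}^{\an}$. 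Here $u_{t}\colon X_{v}^{\an}\to[-\infty,+\infty)$ is continuous, $(t,x)\mapsto u_{t}(x)$ is jointly continuous into $[-\infty,+\infty)$ (as $t^{*}s_{g}$ varies analytically with $t$ among the global sections of $\mathcal{O}(D_{g})$), one has $u_{t}\le C(t)$ with $C(t)$ locally bounded in $t$, and, by \cref{lem:7} together with the displayed decomposition, $u_{t}|_{Z_{v}^{\an}}$ is not identically $-\infty$ precisely when $t\in S$.

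The upper semicontinuity of $t\mapsto\int_{Z_{v}^{\an}}u_{t}\,d\mu$, valid on all of $\mathbb{T}_{v}^{\an}$, is the easy half. For each integer $k\ge1$ the truncation $u_{t}^{(k)}:=\max(u_{t},-k)$ is jointly continuous and locally bounded in $(t,x)$, so $t\mapsto\int_{Z_{v}^{\an}}u_{t}^{(k)}\,d\mu$ is continuous: this is Stoll's theorem on the continuity of the fiber integral \cite{Stoll} for the trivial proper equidimensional family $(\text{a small neighbourhood})\times Z_{v}^{\an}\to(\text{that neighbourhood})$, and reduces here to plain dominated convergence since $\mu$ and $Z_{v}^{\an}$ are fixed with $\mu$ finite (in the general situation of \eqref{eq:14}, where the first $k-1$ divisors also move, Stoll's theorem is what genuinely does the job). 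As $u_{t}\le C(t)$, monotone convergence gives $\int_{Z_{v}^{\an}}u_{t}^{(k)}\,d\mu\downarrow\int_{Z_{v}^{\an}}u_{t}\,d\mu$ as $k\to\infty$, exhibiting $\int_{Z_{v}^{\an}}u_{t}\,d\mu$ as an infimum of continuous functions, hence upper semicontinuous.

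The substantial point—and the step I expect to be the main obstacle—is lower semicontinuity at a fixed $t_{0}\in S$: one must rule out that mass of $u_{t}$ escapes to $-\infty$ along the moving divisor $\div(t^{*}s_{g})$ as $t\to t_{0}$. I would pass to a resolution of singularities $\nu\colon\widetilde{Z}\to Z_{v}^{\an}$ with $\widetilde{Z}$ compact K\"ahler and $\nu$ biholomorphic over the smooth locus, so that $\nu^{*}\mu$ is a smooth finite positive measure on $\widetilde{Z}$ and $\int_{Z_{v}^{\an}}h\,d\mu=\int_{\widetilde{Z}}(h\circ\nu)\,d(\nu^{*}\mu)$. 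Since $u_{t}$ is globally $\chern(\overline{D}^{\Ron}_{g})$-plurisubharmonic on $X_{v}^{\an}$—its curvature identity being $dd^{c}u_{t}=\delta_{\div(t^{*}s_{g})}-\chern(\overline{D}^{\Ron}_{g})$—the pull-back $\widetilde{u}_{t}:=u_{t}\circ\nu$ is $\theta$-plurisubharmonic on $\widetilde{Z}$ for the \emph{fixed} closed positive $(1,1)$-current $\theta:=\nu^{*}\chern(\overline{D}^{\Ron}_{g})$, which has continuous local potentials. Choosing $x_{*}$ in the smooth locus of $Z_{v}^{\an}$ with $t_{0}^{*}s_{g}(x_{*})\ne0$, joint continuity furnishes a neighbourhood $V\ni t_{0}$ and $M>0$ with $u_{t}(x_{*})\ge-M$ and $u_{t}\le M$ for all $t\in V$, so $\sup_{\widetilde{Z}}\widetilde{u}_{t}\in[-M,M]$ for $t\in V$. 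Classical estimates for plurisubharmonic functions bounded above and normalized by their supremum—local sub-mean-value inequalities propagated along a Harnack chain, followed by the standard $L^{1}_{\mathrm{loc}}\Rightarrow L^{p}_{\mathrm{loc}}$ improvement—then bound $\{\widetilde{u}_{t}\}_{t\in V}$ uniformly in $L^{2}(\widetilde{Z})$ for a K\"ahler volume form; since $\nu^{*}\mu$ is a smooth measure on the compact $\widetilde{Z}$, this family is uniformly $\nu^{*}\mu$-integrable. On the other hand $u_{t}\to u_{t_{0}}$ pointwise on $X_{v}^{\an}\setminus\div(t_{0}^{*}s_{g})$ as $t\to t_{0}$, hence $\widetilde{u}_{t}\to\widetilde{u}_{t_{0}}$ $\nu^{*}\mu$-almost everywhere, and Vitali's convergence theorem yields $\int_{\widetilde{Z}}\widetilde{u}_{t}\,d(\nu^{*}\mu)\to\int_{\widetilde{Z}}\widetilde{u}_{t_{0}}\,d(\nu^{*}\mu)$, i.e.\ $\int_{Z_{v}^{\an}}u_{t}\,d\mu\to\int_{Z_{v}^{\an}}u_{t_{0}}\,d\mu\in\mathbb{R}$. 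Combined with the upper semicontinuity this shows $F_{v}$ is real-valued and continuous on $S$; the hard kernel is the uniform-integrability estimate of this last paragraph, where one needs pluripotential-theoretic control of quasi-plurisubharmonic families on top of the fiber-integral continuity used for the truncations.
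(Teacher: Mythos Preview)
Your argument is correct but follows a genuinely different route from the paper's. The paper covers $Z(\mathbb{C})$ by compact semialgebraic pieces $K_{\sigma}$ subordinate to the affine toric charts (a Batyrev--Tschinkel decomposition), writes $g|_{X_{\sigma}}=h_{1}/h_{2}$ as a quotient of regular functions on each chart, and then applies Stoll's continuity theorem \cite[Theorem~4.9]{Stoll} \emph{directly} to each map $t\mapsto\int_{K_{P}}\log|h_{j}(t\cdot x)|\,d\lambda$; inclusion--exclusion reassembles these into $F$. Thus Stoll's theorem is invoked at full strength for logarithmic (not just bounded) integrands, and delivers both semicontinuity directions in one blow. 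You instead rewrite the integrand globally as a $\theta$-psh function $u_{t}=\log\|t^{*}s_{g}\|_{\Ron}$, obtain upper semicontinuity by truncation and monotone convergence, and handle the hard lower-semicontinuity step by compactness of $\theta$-psh families with bounded supremum (uniform $L^{p}$ bounds) together with Vitali. The paper's approach buys a shorter proof with a single black box; yours is self-contained within pluripotential theory, makes the mechanism (uniform integrability) explicit, and---as you note---is perhaps more robust when the underlying cycle also varies. One small caveat: your invocation of Stoll for the truncated integrals is overkill (as you acknowledge, it is plain dominated convergence here), so in the present codimension-$2$ case your proof in fact bypasses Stoll entirely.
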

\begin{proof}
  After fixing an isometry we identify $\mathbb{C}_{v}$ with the
  field of complex numbers and consequently we drop the index $v$ from
  the notation throughout. In particular, the $v$-adic
  analytifications of $\mathbb{T}_{v}$ and $\Upsilon_{v}$ are respectively identified with 
  $\mathbb{T}(\mathbb{C})$ and $\Upsilon(\mathbb{C})$.
  Thus we can write the auxiliary function as
  \begin{equation}\label{eq: Archimedean auxiliary function}
    F(t)=\int_{Z(\mathbb{C})}\log|g(t\cdot x)|\ d \lambda \quad \text{ for } t\in \mathbb{T}(\mathbb{C})
  \end{equation}
  where $\lambda$ denotes the $(n-1,n-1)$-form
  $\bigwedge_{i=0}^{n-2}c_{1}(\overline{D}_{i}|_{Z})$.  Up to
  desingularizing we can assume that $Z$ is a smooth complete variety
  over $\mathbb{C}$.

  Consider the finite open covering of $X$ by the affine toric
  varieties $X_{\sigma}$, $\sigma\in \Sigma$, as in~\eqref{eq:10}.
  Choose a family of compact semialgebraic subsets
  ${K_{\sigma}\subset (X_{\sigma}\cap Z)(\mathbb{C})}$,
  $\sigma\in \Sigma$, such that
  \begin{displaymath}
 \bigcup_{\sigma\in \Sigma}K_{\sigma}=Z(\mathbb{C}).
  \end{displaymath}
  Such a covering can be obtained, for instance, as the restriction
  to $Z(\mathbb{C})$ of the Batyrev-Tschinkel decomposition of
  $X(\mathbb{C})$ \cite[Section 3.2]{Maillot}.  By the
  inclusion-exclusion formula we have 
    \begin{equation}
      \label{eq:17}
    F(t)=\sum_{P\subset \Sigma} (-1)^{\#P-1}\int_{ K_{P}} \log|g(t\cdot x)| \, d\lambda
  \end{equation}
  with $K_{P}=\bigcap_{\sigma\in P}K_{\sigma}$ for $P\subset \Sigma$.

  Fix $P$. Choose then any $\sigma\in P$ and write the restriction of
  the rational function $g$ to the corresponding affine toric variety
  as a quotient of nonzero regular functions
  \begin{displaymath}
    g|_{X_{\sigma}}=\frac{h_{1}}{h_{2}}
  \end{displaymath}
  that are coprime as elements of $\mathbb{C}[M]$. By \cite[Theorem
  4.9]{Stoll}, for each $j$ the function
  \begin{equation}
    \label{eq:15}
    t\longmapsto \int_{K_{P}} \log|h_{j}(t\cdot x)| \, d\lambda
  \end{equation}
  is continuous at any point
  $t_{0}\in \mathbb{T}(\mathbb{C})\setminus \Upsilon(\mathbb{C})$. We
  now check this claim by placing ourselves in the notation and
  terminology of \emph{loc. cit.}. To this end consider first the
  smooth complex manifold
  \begin{displaymath}
 M= (X_{\sigma}\cap Z)(\mathbb{C})\times \mathbb{T}(\mathbb{C}) 
  \end{displaymath}
  equipped with the differential form of bidegree $(n-1,n-1)$ obtained
  as the pullback of $\lambda$ with respect to the projection onto the
  first factor $M\to (X_{\sigma}\cap Z)(\mathbb{C})$.  Consider also
  the $(n-1)$-fibering obtained as the projection onto the second
  factor
  \begin{math}
  M \rightarrow  \mathbb{T}(\mathbb{C}).
  \end{math}
Furthermore  choose an open subset $B \subset \mathbb{T}(\mathbb{C})$
  with compact closure containing the point~$t_{0}$ and set
  \begin{displaymath}
    G=\interior(K_{P}) \times B \subset M,
  \end{displaymath}
  where $\interior(K_{P})$ denotes the interior of this compact
  semialgebraic subset.

  Notice that the boundary of $G$ restricted to the fiber of $t_{0}$
  is of measure zero with respect to the chosen
  $(n-1,n-1)$-form. Moreover the holomorphic function
  $M\to \mathbb{C}$ defined by $(x,t)\mapsto h_{j}(t\cdot x)$ is not
  identically zero on the fiber over $t_{0}$ because
  $t_{0}\notin \Upsilon(\mathbb{C})$.
  Then the hypotheses of Stoll's theorem are satisfied, and the
  continuity at $t_{0}$ of the integral in \eqref{eq:15} follows from
  this result together with the computation of the multiplicities of the
  fibering given by \cite[Lemma~5.2]{Stoll:mhm}.

  Finally the continuity on
  $ \mathbb{T}(\mathbb{C}) \setminus \Upsilon(\mathbb{C})$ in
  \eqref{eq:15} implies that of the $P$-term in the
  inclusion-exclusion formula in \eqref{eq:17}, and in turn that of
  $F$.
\end{proof}

\subsection{Non-Archimedean continuity}
\label{sec:non-arch-cont}

When $v$ is non-Archimedean, the function $F_v$ is only defined at the
rigid points of~$\mathbb{T}_{v}^{\an}$.  Here we show that it can be
uniquely extended to the whole of the $v$-adic analytic torus with
singularities along~$\Upsilon_{v}^{\an}$. Its proof relies on a number
of technical results and constructions from non-Archimedean formal and
analytic geometry.

\begin{proposition}
\label{prop:2}
Let $v$ be a non-Archimedean place of $\mathbb{K}$. Then $F_{v}$
extends uniquely to a function
$\mathbb{T}_{v}^{\an} \to \mathbb{R}\cup\{-\infty\}$
which is continuous with real values on
$\mathbb{T}_{v}^{\an}\setminus \Upsilon_{v}^{\an}$ and takes the value
$-\infty$ on $\Upsilon_{v}^{\an}$.
\end{proposition}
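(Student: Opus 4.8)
The plan is to reduce the problem to a statement about fiber integrals of continuous functions on a proper model over the valuation ring, then invoke the continuity of the pushforward (integration along the fibers) in non-Archimedean analytic geometry. First I would enlarge $\mathbb{K}$ if necessary so that $\mathbb{T}$, $X$, $f$, $g$ and the metrized divisors all have models over the ring of $v$-adic integers $\mathbb{C}_v^\circ$ (or rather over the valuation ring of a suitable finite extension); here one uses that the metrics of $\overline{D}_0,\dots,\overline{D}_{n-2}$ are algebraic, hence induced by a proper flat model $\mathscr{X}$ of $X$ together with models of the line bundles $\mathcal{O}(D_i)$, and that the Ronkin metric on $D_f$ is likewise $\mathbb{S}_v$-invariant and semipositive. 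On such a model the first Chern forms $\chern(\overline{D}_{i,v})$ are represented by Chambert-Loir measures supported on the reductions, and the current $\delta_{Z_v^{\an}}$ is the analytification of the cycle $Z=Z_X(s_f)$, which is an honest closed subvariety since $f$ is absolutely irreducible. Thus the measure $\mu=\chern(\overline{D}_{0,v})\wedge\dots\wedge\chern(\overline{D}_{n-2,v})\wedge\delta_{Z_v^{\an}}$ is a well-defined positive measure on $X_v^{\an}$, of total mass equal to the mixed degree, supported on $Z_v^{\an}$.

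The key step is then to realize $F_v$ as the integral against this fixed measure $\mu$ of the family of functions $x\mapsto \log|t^*g|_v(x)=\log|g|_v(\tau_t(x))$ as $t$ varies, and to show this depends continuously on $t\in\mathbb{T}_v^{\an}\setminus\Upsilon_v^{\an}$ and tends to $-\infty$ on $\Upsilon_v^{\an}$. Concretely, I would introduce the incidence space: form the product $\mathbb{T}_v^{\an}\times X_v^{\an}$, the multiplication morphism $\mu_{\mathrm{act}}\colon\mathbb{T}_v^{\an}\times X_v^{\an}\to X_v^{\an}$, $(t,x)\mapsto t\cdot x$, and the function $\Phi(t,x)=\log|g|_v(\mu_{\mathrm{act}}(t,x))=\log\|s_g\|'-\log\|s_{D_g}\|'$ composed with the action; restricted to $\mathbb{T}_v^{\an}\times Z_v^{\an}$ this has at most logarithmic singularities along the analytic subset where $g(t\cdot x)=0$, and is continuous and real-valued off it. Projecting to the first factor, $F_v(t)=\int_{\{t\}\times Z_v^{\an}}\Phi(t,\cdot)\,d\mu$ is the pushforward of $\Phi\cdot\mu$ along $\mathrm{pr}_1\colon\mathbb{T}_v^{\an}\times Z_v^{\an}\to\mathbb{T}_v^{\an}$. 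Away from $\Upsilon_v^{\an}$ the integrand is bounded on compact subsets (by \cref{lem:7} the fiber contains no component of $Z$, so $\Phi$ stays finite $\mu$-a.e. with uniformly controlled singularities), and continuity of the pushforward follows from the local constancy/continuity of such fiber integrals on the Berkovich space, arguing on a formal model where $Z_v^{\an}$ has a semistable or at least reduced reduction and $\mu$ is a finite combination of Dirac-type measures along vertical components weighted by Monge-Amp\`ere densities, so that $F_v$ becomes, locally in $t$, a finite sum of integrals of the form $\int \log|g(t\cdot x)|_v$ over pieces of the reduction varying continuously in $t$.

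For the uniqueness of the extension and the behaviour along $\Upsilon_v^{\an}$: the rigid points are dense in $\mathbb{T}_v^{\an}$, so any continuous extension off $\Upsilon_v^{\an}$ is unique; and on $\Upsilon_v^{\an}$ one checks, exactly as in the proof of \cref{lem:7}, that $t^*g$ vanishes identically on $Z_v^{\an}$ when $\supp(g)=\supp(f)+m_0$ and $t\in\Upsilon$, forcing $\log|t^*g|_v\equiv-\infty$ on the support of $\mu$, hence $F_v(t)=-\infty$; combined with the lower bound of logarithmic type near $\Upsilon_v^{\an}$ (obtained from the defining equations in \eqref{eq: set of bad intersection} and the fact that the $\mu$-integral of a Laurent polynomial's logarithm is, up to a constant depending only on $\mu$, bounded below by $c\log\max_j|h_j(t)|_v$), this gives both parts of the statement. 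The main obstacle I anticipate is the continuity of the fiber integral in the non-Archimedean setting: unlike the Archimedean case there is no off-the-shelf ``Stoll's theorem'', so one has to build the argument by hand on a suitable formal model, carefully tracking how the Monge-Amp\`ere measure $\mu$ and the reduction of $Z$ interact with the varying translation $\tau_t$, and in particular ensuring that no component of the reduction of $Z$ jumps into the zero locus of $g(t\cdot x)$ as $t$ stays in a small neighbourhood of a point outside $\Upsilon_v^{\an}$ — this is where the absolute irreducibility of $f$ and $g$ and the explicit description of $\Upsilon$ do the real work.
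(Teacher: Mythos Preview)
Your outline has the right scaffolding---work on a formal model so that the Monge--Amp\`ere measure becomes a finite Chambert--Loir sum of Dirac masses at Shilov-type points---but the crucial mechanism is missing. You acknowledge that there is no non-Archimedean Stoll theorem and then say continuity ``follows from the local constancy/continuity of such fiber integrals on the Berkovich space''; that sentence is precisely what has to be \emph{proved}, not invoked. Writing $\Phi(t,x)=\log|g(t\cdot x)|_v$ on $\mathbb{T}_v^{\an}\times Z_v^{\an}$ already hides a serious issue: for a non-rigid $t$ there is no naive product $t\cdot x$ as a point of $X_v^{\an}$, and the set-theoretic product $\mathbb{T}_v^{\an}\times Z_v^{\an}$ is not the analytification of the algebraic product, so the incidence space you describe does not carry the function you want without further work.

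The paper resolves this concretely as follows. Since the metrics are algebraic, the measure $\chern(\overline{D}_{0,v})\wedge\dots\wedge\chern(\overline{D}_{n-2,v})\wedge\delta_{Z_v^{\an}}$ is a finite sum $\sum_V c_V\,\delta_{\xi_V}$ over irreducible components $V$ of the reduced special fibre of a distinguished formal model $\mathfrak{Z}$ of $Z_v$, with $\xi_V$ the associated Shilov point; hence $F_v(t)=\sum_V c_V\log|t^*g(\xi_V)|_v$ for rigid $t$. To extend to an arbitrary $t\in\mathbb{T}_v^{\an}$ with complete residue field $\mathscr{H}(t)$, one base-changes the affinoid $\mathscr{A}_V$ underlying $\xi_V$: the completed tensor product $\mathscr{A}_V\widehat{\otimes}_{\mathbb{C}_v}\mathscr{H}(t)$ is again distinguished with integral reduction, so it carries a unique Shilov point $\xi_{V,t}$ (the sup-norm), and one sets $F_v(t)=\sum_V c_V\log|t^*g(\xi_{V,t})|_v$ with $t^*g\in\mathscr{H}(t)[M]/(f)\subset\mathscr{A}_V\widehat{\otimes}_{\mathbb{C}_v}\mathscr{H}(t)$. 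Continuity then follows directly from Berkovich's lemma on continuity of the completed tensor product norm (\cite[Lemma~5.2.6]{Ber}), and the value $-\infty$ on $\Upsilon_v^{\an}$ from the fact that $\xi_{V,t}$ is a norm (not merely a seminorm), so it kills $t^*g$ iff $t^*g\in(f)$ in $\mathscr{H}(t)[M]$, which one checks is exactly the condition $t\in\Upsilon_v^{\an}$. Your sketch never reaches this level: without the $\mathscr{H}(t)$-base-change and Berkovich's lemma you have no definition of $F_v$ off rigid points and no argument for continuity. Also, the ``lower bound of logarithmic type'' you mention is not needed for this proposition---that is the content of the separate \cref{prop: lower log bound for Fv}.
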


\begin{proof}
If the claimed extension exists then it is unique,
  by the density of $\mathbb{T}(\mathbb{C}_{v})$ in
  $\mathbb{T}_{v}^{\an}\setminus \Upsilon_{v}^{\an}$. We thus focus on the
  proof of its existence.

  We start by providing a more explicit expression for the function
  $F_v$ from which the definition of the extension will be more
  natural.  We denote by $\iota\colon Z\hookrightarrow X$ the
  inclusion of the hypersurface $Z$ into the toric variety $X$. Note
  that we can consider the pullback with respect to $\iota$ of any
  toric divisor on $X$ because $Z$ is not contained in the boundary
  $X\setminus X_{0}$.
  
  Since the metrics in the $v$-adic metrized divisors
  $\overline{D}_{0,v},\ldots,\overline{D}_{n-2,v}$ are algebraic, by passing
  to the formal completion along the special fiber they are also 
  formal  in the sense of \cite[Section~7]{G0}.
  Thus, the metrics of the pullbacks
  $\iota_v^*\overline{D}_{0,v},\ldots,\iota_v^*\overline{D}_{n-2,v}$
  are formal. It then follows from \cite[Proposition~3.11]{G2} and
  \cite[Proposition~1.11]{G0} that there exist a distinguished formal
  analytic variety $\mathfrak{Z}$ over~$\mathbb{C}_v$ with generic
  fiber $Z_v^{\an}$ and reduced special
  fiber~$\widetilde{\mathfrak{Z}}$, and for each~$i=0,\ldots,n-2$ a
  pair $(\mathfrak{L}_i, e_{i})$ consisting of a formal analytic line
  bundle on $\mathfrak{Z}$ and a positive integer inducing the metric
  of~$\iota_{v}^{*}\overline{D}_{i,v}$. In this setting the $v$-adic
  Monge--Amp\`ere measure in the definition of $F_v$ is the discrete
  measure on $X_{v}^{\an}$ given by
  \begin{multline}
 \label{eq: form of non-Archimedean Monge-Ampère}
\chern(\overline{D}_{0,v})\wedge\ldots\wedge\chern(\overline{D}_{n-2,v})\wedge\delta_{Z_v^{\an}}
=
\iota_{v,*}^{\an}(\chern(\iota_v^*\overline{D}_{0,v})\wedge\ldots\wedge\chern(\iota_v^*\overline{D}_{n-2,v}))
\\=
\sum_{V\in \widetilde{\mathfrak{Z}}^{(0)}}\frac{\deg_{\widetilde{\mathfrak{L}}_0,\ldots,\widetilde{\mathfrak{L}}_{n-2}}(V)}{e_{0}\cdots e_{n-2}}\, \delta_{\iota_v^{\an}(\xi_V)},
\end{multline}
where $\widetilde{\mathfrak{L}}_{i}$ is the line bundle on
$\widetilde{\mathfrak{Z}}$ induced by $\mathfrak{L}_{i}$, the index
set $\widetilde{\mathfrak{Z}}^{(0)}$ is the collection of irreducible
components of~$\widetilde{\mathfrak{Z}}$, and for each
$V\in \widetilde{\mathfrak{Z}}^{(0)}$ we denote by $\xi_V$ the unique
point of $Z_v^{\an}$ which is sent to the generic point $\eta_{V}$ of
$V$ by the reduction map.  Hence the auxiliary function takes
the form
\begin{equation}
  \label{eq:18}
    F_v(t) =
  \sum_{V\in \widetilde{\mathfrak{Z}}^{(0)}}\frac{\deg_{\widetilde{\mathfrak{L}}_0,\ldots,\widetilde{\mathfrak{L}}_{n-2}}(V)}{e_{0}\cdots e_{n-2}} \, 
  \log|t^*g \, (\xi_V)|_{v} \quad \text{ for } t\in
  \mathbb{T}(\mathbb{C}_{v}),
\end{equation}
where the Laurent polynomial $t^*g$ is seen as an element
of~$\mathbb{C}_v[M]/(f)$.

For what follows we need a more explicit description of the analytic
points in the support of the measure.  Since $\mathfrak{Z}$ is a
distinguished formal analytic variety with reduced special fiber, by
definition it is locally isomorphic to the formal spectrum of a
distinguished $\mathbb{C}_v$-affinoid algebra~$\mathscr{A}$ whose
reduction~$\wA$ is a reduced
$\widetilde{\mathbb{C}}_{v}$-algebra. Therefore for
each~$V\in \widetilde{\mathfrak{Z}}^{(0)}$ there is a distinguished
$\mathbb{C}_{v}$-affinoid algebra $\mathscr{A}_{V}$ such that
$\eta_{V}\in \Spec(\wAV)$. Following the proof of
\cite[Proposition~2.4.4]{Ber}, up to localizing we can assume that
$\wAV$ is a domain, in which case $\xi_V$ can be described as the
point of the Berkovich spectrum
$\mathcal{M}(\mathscr{A}_{V}) \subset Z_{v}^{\an}$ corresponding to
the sup-seminorm on~$\mathscr{A}_{V}$.  Precisely, this point is the
multiplicative seminorm on $\mathscr{A}_{V}$ defined as
\begin{equation}
  \label{eq:19}
  |a(\xi_V)|_{v}=\sup_{x\in\mathcal{M}(\mathscr{A}_V)}|a(x)|_v\quad\text{for }a\in\mathscr{A}_V.
\end{equation}

We want to apply this norm to the Laurent polynomial $t^{*}g $ for
$t\in \mathbb{T}(\mathbb{C}_{v})$. By \cite[6.4.3/Theorem~1 and
6.2.1/Proposition~4]{BGR}, for each $V$ the corresponding
$\mathbb{C}_{v}$-affinoid algebra $\mathscr{A}_V$ is reduced and
therefore the seminorm in \eqref{eq:19} is actually a norm. This
latter fact implies that $\xi_{V}$ avoids every proper analytic subset
of $\mathcal{M}(\mathscr{A}_{V})$, and in particular the
analytification of the boundary $Z \setminus X_{0}$. Hence up to
localizing again we can assume without loss of generality that
$\mathbb{C}_{v}[M]/(f)\subset \mathscr{A}_{V}$, and so
\begin{equation}
  \label{eq:20}
  |t^{*}g \, (\xi_{V})|_{v}=\sup_{x\in \mathcal{M}(\mathscr{A}_{V})} |t^{*}g \, (x)|_{v}.
\end{equation}

Let now $t\in\mathbb{T}_v^{\an}$ be an arbitrary analytic point
and~$\mathscr{H}(t)$ its complete residue field, which is a complete
valued field extension of~$\mathbb{C}_v$.  For each
$V\in \widetilde{\mathfrak{Z}}^{(0)}$ we plan to define
$|t^{*}g \, (\xi_{V})|_{v}$ extending the expression in \eqref{eq:20}.

To this end, recall that $\mathscr{A}_{V}$ is a distinguished
$\mathbb{C}_{v}$-affinoid algebra with integral reduction
$\wAV$. Since $\widetilde{\mathbb{C}}_{v}$ is algebraically closed,
the tensor product
\begin{math} 
\wAV \otimes_{\widetilde{\mathbb{C}}_{v}}\wHt
\end{math}
is an integral $\wHt$-algebra
\cite[V.17.5/Corollaire~2]{Bou81}.
Hence thanks to \cite[Satz~6.4]{Bosch_69} the completed tensor product
$\mathscr{A}_{V}\widehat{\otimes}_{\mathbb{C}_v}\mathscr{H}(t)$ is a
distinguished~$\mathscr{H}(t)$-algebra.
Set  then
\begin{displaymath}
  \xi_{V,t} \in
  \mathcal{M}(\mathscr{A}_{V}\widehat{\otimes}_{\mathbb{C}_{v}}\mathscr{H}(t))  
\end{displaymath}
for the sup-seminorm on this completed tensor product. As before, the
algebra
$\mathscr{A}_{V}\widehat{\otimes}_{\mathbb{C}_{v}} \mathscr{H}(t)$ is
reduced and so this multiplicative seminorm is actually a norm.
Moreover it coincides with the completed tensor product norm, as shown
in the proof of \cite[Proposition~5.2.5]{Ber}.

We have that
\begin{math}
  t^*g\in\mathscr{H}(t)[M]/(f) \simeq \mathbb{C}_{v}[M]/(f) \otimes_{\mathbb{C}_{v}} \mathscr{H}(t)
  \subset \mathscr{A}_V\widehat{\otimes}_{\mathbb{C}_v}\mathscr{H}(t),
\end{math}
and so we can consider the norm for this twist. By the previous
discussion it can be expressed~as
\begin{equation}
\label{eq:21}
|t^{*}g \, (\xi_{V,t})|_{v}=\sup_{x} |t^{*}g \, (x)|_{v}
=|t^{*}g |_{\mathscr{A}_{V}\widehat{\otimes}_{\mathbb{C}_{v}}\mathscr{H}(t)},
\end{equation}
where $x$ ranges over the Berkovich spectrum 
$\mathcal{M}(\mathscr{A}_{V}\widehat{\otimes}_{\mathbb{C}_{v}}\mathscr{H}(t))$.

Finally, the  sought extension for $F_{v}$ is defined as
\begin{equation}\label{eq: extension of non-Archimedean auxiliary map}
  F_v(t)
  =
  \sum_{V\in \widetilde{\mathfrak{Z}}^{(0)}}\frac{\deg_{\widetilde{\mathfrak{L}}_0,\ldots,\widetilde{\mathfrak{L}}_{n-2}}(V)}{e_{0}\cdots e_{n-2}} \, \log|t^*g \, (\xi_{V,t})|.
\end{equation}
When~$t\in\mathbb{T}(\mathbb{C}_v)$ we have that
$\mathscr{H}(t) =\mathbb{C}_v$ and so the above expression agrees with
that for the auxiliary function in \eqref{eq:18}.

Moreover the defined extension takes the value $-\infty$ if and only
if there is at least one $V$ for which
$|t^*g\,(\xi_{V,t})|_{v}=0$. Since $\xi_{V,t}$ is a norm, this is
equivalent to the fact that $t^*g=0 \in \mathscr{H}(t)[M]/(f)$. As both $f$
and $g$ are absolutely irreducible, this vanishing occurs if and only
if $t^*g$ and $f$ coincide up to a nonzero scalar in  $\mathscr{H}(t)$,
which translates into the condition that
\[
\alpha_m\beta_{m^\prime}\chi^{m^\prime}(t)-\alpha_{m^\prime}\beta_m\chi^m(t)=0\in \mathscr{H}(t)\quad\text{ for all }m,m^\prime\in M.
\]
This is equivalent to the fact that the ideal of $\Upsilon_{v}$ is contained
in the kernel of the multiplicative seminorm corresponding to the analytic point $t$,
and in turn to the fact that $t\in \Upsilon_{v}^{\an}$.

To conclude, we are left to show that the function $F_v$ is continuous
on~$\mathbb{T}^{\an}\setminus \Upsilon_v^{\an}$.  Let
$t_{0}\in \mathbb{T}_{v}^{\an}$ and choose a $\mathbb{C}_{v}$-affinoid
algebra $\mathscr{B}$ such that the Berkovich spectrum
$\mathcal{M}(\mathscr{B})$ is a neighbourhood of $t_{0}$. Then
\cite[Lemma~5.2.6]{Ber} applied to the element
$\sum_m\beta_m\chi^m\otimes\chi^m \in
\mathscr{A}_{V}\widehat{\otimes}_{\mathbb{C}_{v}} \mathscr{B}$ gives
that the function on $\mathcal{M}(\mathscr{B})$ defined as
\[
t
\longmapsto |t^{*}g |_{\mathscr{A}_{V}\widehat{\otimes}_{\mathbb{C}_{v}}\mathscr{H}(t)}
\]
is continuous. Because of \eqref{eq:21}
and the fact that $|t^*g\,(\xi_{V,t})|_{v}>0$ for
$t\in \mathbb{T}_{v}^{\an}\setminus \Upsilon_{v}^{\an}$ for all $V$ this
implies that $F_v$ is continuous with real values on this set.
\end{proof}

\begin{remark}
The extension of the
  auxiliary function can be also written as
\[
F_v(t)=\int_{X_{v}^{\an}}\log|g(t\ast x)|\  \chern(\overline{D}_{0,v})\wedge\ldots\wedge\chern(\overline{D}_{n-2,v})\wedge\delta_{Z_v^{\an}},
\]
where $t\ast x\in X^{\an}$ is the \emph{peaked product} between
$t\in\mathbb{T}^{\an}$ and $x\in X^{\an}$ as defined in
\cite[\S5.2]{Ber}, see also \cite[Definition 4.2.2]{BPS}. This follows
from the previous proof, precisely from \eqref{eq:21} and \eqref{eq:
  extension of non-Archimedean auxiliary map}.  This is the
non-Archimedean analogue of the expression for $F_v$ in \eqref{eq:
  Archimedean auxiliary function}.
\end{remark}

\subsection{A logarithmic lower bound}
\label{sec: logarithmic lower bound}

Here we prove the following lower bound for the $v$-adic auxiliary
function.

\begin{proposition}\label{prop: lower log bound for Fv}
Let $v\in \mathfrak{M}$. There is a system of Laurent
  polynomials $h_{1},\dots, h_{s}\in \mathbb{C}_{v}[M]$ defining
  $\Upsilon_{v}$ and a positive real number $c$ such that
\[
  F_v\geq \log\max_{j=1,\dots, s} |h_{j}|_{v} -c
   \quad \text{ on }  \mathbb{T}(\mathbb{C}_{v}).
\]
\end{proposition}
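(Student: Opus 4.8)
The plan is to produce a \emph{pointwise} lower bound for the value $F_v(t)$, uniformly in $t$, by bounding from below each of the finitely many summands $\log|t^*g\,(\xi_V)|_v$ appearing in the explicit formula \eqref{eq:18} (non-Archimedean case) respectively in the integral expression \eqref{eq: Archimedean auxiliary function} (Archimedean case). The key observation is that $|t^*g\,(\xi_V)|_v$ is the norm of the element $t^*g$ in some reduced affinoid algebra $\mathscr{A}_V$ (or, Archimedean-ly, the integral over $Z(\mathbb{C})$ of $\log|g(t\cdot x)|$), and all that really matters is how fast $t^*g$ can degenerate: it becomes small exactly when $t$ approaches $\Upsilon_v$, and the rate of this degeneration is governed, via a Nullstellensatz-type argument, by the defining equations of $\Upsilon_v$. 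So the main steps are: (i) reduce to bounding, uniformly in $t$, each local quantity $\log|t^*g\,(\xi_V)|_v$ from below by $c_1\log\max_j|h_j(t)|_v - c_2$ for a fixed system $h_1,\dots,h_s$ cutting out $\Upsilon_v$; (ii) in the non-Archimedean case, exploit the fact that $\xi_{V}$ is the sup-seminorm on the reduced affinoid $\mathscr{A}_V$, together with the fact (already used in the proof of Proposition~\ref{prop:2}) that $\mathbb{C}_v[M]/(f)\hookrightarrow\mathscr{A}_V$, to reduce the bound to a statement purely about the residue norm of $t^*g$ in $\mathbb{C}_v[M]/(f)$; (iii) in the Archimedean case, reduce similarly using a compactness argument on $Z(\mathbb{C})$ and the covering by affine charts as in the proof of Proposition~\ref{prop:1}, again reducing to a bound on $g$ restricted to $Z$.

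The heart of the matter is then the following algebraic estimate, which is essentially place-independent and depends only on $f$ and $g$: writing $t^*g = \sum_m \beta_m\chi^m(t)\chi^m$, the image of $t^*g$ in $\mathbb{C}_v[M]/(f)$ vanishes iff $t\in\Upsilon_v$, by the coprimality analysis already carried out in the proof of Lemma~\ref{lem:7}. One wants a \emph{quantitative} version: there exist a fixed generating set $h_1,\dots,h_s$ of the ideal of $\Upsilon_v$ (one can take the explicit binomials $\alpha_m\beta_{m'+m_0}\chi^{m'+m_0}-\alpha_{m'}\beta_{m+m_0}\chi^{m+m_0}$ from \eqref{eq: set of bad intersection} when $\supp(g)=\supp(f)+m_0$, and otherwise $\Upsilon_v=\emptyset$ and there is nothing to prove beyond boundedness of $F_v$, which follows from Lemma~\ref{lem:7} together with the (non-)Archimedean continuity proved in this section on a \emph{compact} piece and an easy estimate at the boundary of the torus), and constants $c_1,c_2>0$, such that the relevant norm of $t^*g$ is $\geq c_1\log\max_j|h_j(t)|_v - c_2$ for all rigid points $t$. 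I would obtain this by a Nullstellensatz / Łojasiewicz-type argument: the equations $h_j(t)$ generate, up to radical and bounded powers, the same ideal as the coefficients of $t^*g\bmod f$ viewed as a function of $t$, so on any affinoid (resp.\ compact) neighbourhood one gets a two-sided polynomial comparison between $\log\max_j|h_j(t)|_v$ and $\log$ of the residue norm of $t^*g$. Patching the finitely many charts/components and absorbing constants gives the stated global inequality on $\mathbb{T}(\mathbb{C}_v)$.

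I expect the main obstacle to be making the Nullstellensatz comparison genuinely \emph{uniform in the place $v$} and valid on the \emph{whole} torus $\mathbb{T}(\mathbb{C}_v)$ rather than just on a compact part: the affinoid algebras $\mathscr{A}_V$ and the integral model of $Z$ are defined over $\mathbb{C}_v$ for one $v$ at a time, so some care is needed to see that the generating set $h_1,\dots,h_s$ and the constant $c$ can be chosen depending only on $f$ and $g$ (this is what is needed later for the adelic summation in \cref{sec: adelic equidistribution}). The way around this is that, apart from finitely many places, the whole situation — the toric variety, the divisors, the hypersurface $Z$ and its formal model, the Laurent polynomials $f,g$ — comes from a single model over the ring of integers of $\mathbb{K}$, so the comparison of ideals holds integrally and specializes to all but finitely many $v$ with the \emph{same} constants; the finitely many remaining places are handled individually, each contributing a finite constant, and one takes the maximum. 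The boundary estimate (controlling $\log|t^*g\,(\xi_V)|_v$ as $t$ runs off to the boundary of $\mathbb{T}_v^{\an}$, i.e.\ for $|\chi^m(t)|_v$ very large or very small) is routine: since $t^*g$ is a Laurent polynomial in $t$, its residue norm grows at most polynomially in $\max_m|\chi^m(t)|_v^{\pm1}$, which is itself dominated by $\max_j|h_j(t)|_v$ up to bounded powers, so no new phenomenon appears there.
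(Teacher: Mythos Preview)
Your approach is genuinely different from the paper's, and the non-Archimedean half of it is both correct and arguably simpler than what the paper does. The paper does \emph{not} work place by place on the summands $\log|t^*g(\xi_V)|_v$; instead it uses the metric Weil reciprocity law (Proposition~\ref{metric Weil reciprocity law}) iteratively to rewrite $F_v(t)$ as $\log|t^*g(W)|_v$ plus terms bounded uniformly in $t$, where $W=Z_{X_v}(\sigma_0,\dots,\sigma_{n-2},s_f)$ is a $0$-cycle obtained from generic auxiliary sections (Lemma~\ref{lem:2}). Then the Poisson formula for sparse resultants expresses $t^*g(W)$ as $P(t,u)/Q(u)$, and one shows that the coefficients $P_a(t)$ of $P$ in the auxiliary variables $u$ cut out $\Upsilon_v$; an interpolation lemma (Lemma~\ref{lem:4}) turns this into the stated $\max$-bound. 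This argument treats the Archimedean and non-Archimedean cases on exactly the same footing. Your finite-dimensional norm-comparison argument in the non-Archimedean case bypasses both the Weil reciprocity and the resultant formula: since $|\,\cdot\,(\xi_V)|_v$ restricts to an honest norm on the fixed finite-dimensional subspace $\overline W\subset\mathscr{A}_V$ containing all $t^*g\bmod f$, equivalence of norms gives the bound directly.

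The gap is in your Archimedean case. There $F_v(t)=\int_{Z(\mathbb{C})}\log|t^*g|\,d\lambda$ is \emph{not} the logarithm of a norm on $\overline W$, so your Nullstellensatz step only compares the defining equations $h_j$ of $\Upsilon$ with the coordinate functions $c_i(t)$ of $t^*g$ in $\overline W$; it says nothing about why the \emph{integral} $F_v(t)$ is controlled by $\log\max_i|c_i(t)|$. What is missing is the inequality $F_v(t)\ge D\log\|t^*g\|_{\overline W}-C$ with $D=\deg_{D_0,\dots,D_{n-2}}(Z)$. This can be obtained: the map $s\mapsto\int_Z\log|s|\,d\lambda$ on $\overline W\setminus\{0\}$ is homogeneous of weight $D$ under scaling, is real-valued because $Z$ is irreducible and no nonzero $s\in\overline W$ vanishes identically on it, and is continuous by Stoll's theorem applied with the linear parameter space $\overline W$ rather than $\mathbb{T}$; compactness of the unit sphere in $\overline W$ then gives the bound. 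But you have not said this, and invoking ``compactness of $Z(\mathbb{C})$'' or ``\L ojasiewicz'' does not supply it: $F_v$ is not real-analytic in $t$, and compactness of $Z(\mathbb{C})$ alone does not prevent the integral of a log from being very negative. This is exactly the difficulty that the paper's Weil-reciprocity manoeuvre is designed to sidestep, by replacing the integral with a pointwise evaluation at a $0$-cycle.

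Finally, your paragraph on uniformity in $v$ is misplaced. The proposition is a single-place statement and its constants are allowed to depend on $v$; the adelic vanishing in \cref{sec: adelic equidistribution} is handled by an explicit computation at the good places, not by a place-uniform version of this bound.
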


The main idea of the proof is to compare the value of $F_v$ at $t$
with the valuation of $t^*g$ at the $0$-cycle in $Z_v$ obtained by
cutting the latter with suitably generic global sections.  The Poisson
formula for the sparse resultant from \cite{DS:pfsr} will then allow
to see that such a valuation depends polynomially on $t$ in a way that
is related with the vanishing of the Laurent polynomials defining
$\Upsilon_v$.
  
  To turn this strategy into practice we need a series of side results. The first gives a
simple criterion for the proper intersection of a family of global
sections in the ample case.

\begin{lemma}
  \label{lem:6}
  Let $Y$ be a complete variety over $\mathbb{C}_{v}$ of dimension
  $e$, and for $i=1,\dots, k$ with $1\le k\le e+1$ let $s_{i}$ be a
  global section of an ample line bundle $L_{i}$ on $Y$. Then
\begin{equation}
  \label{eq:25}
 \dim(V_{Y}(s_{1},\dots, s_{k}))=e-k
\end{equation}
if and only if $s_{1},\dots, s_{k}$ meet properly.
\end{lemma}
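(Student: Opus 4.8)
The plan is to prove both implications, the forward direction being the substantial one. First I would dispatch the easy direction: if $s_1,\dots,s_k$ meet properly, then by definition every irreducible component of $V_Y(s_1,\dots,s_k)=V_Y(\{s_i\}_{i\in I})$ with $I=\{1,\dots,k\}$ has codimension $\#I=k$, hence dimension $e-k$, which is exactly \eqref{eq:25}. (One should note that \eqref{eq:25} is to be read with the convention $\dim(\emptyset)=-1$ when $k=e+1$, so the two sides still match.)

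For the converse, suppose $\dim(V_Y(s_1,\dots,s_k))=e-k$; I want to show that for every subset $I\subseteq\{1,\dots,k\}$ each irreducible component of $V_Y(\{s_i\}_{i\in I})$ has codimension exactly $\#I$. The key point is that on a complete variety, a global section of an ample line bundle cannot vanish identically on any positive-dimensional subvariety, and more quantitatively, intersecting with such a section drops the dimension of every component by at most one and—crucially—cannot fail to drop it unless the section vanishes on that component. I would argue by descending induction: fix $I$, relabel so that $I=\{1,\dots,j\}$ with $j=\#I$, and set $W=V_Y(s_1,\dots,s_j)$. By Krull's principal ideal theorem (or the Serre-type dimension bound for intersections with $j$ divisors), every component of $W$ has codimension $\le j$. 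On the other hand, $V_Y(s_1,\dots,s_k)=V_Y(s_{j+1},\dots,s_k)\cap W$ is cut out on $W$ by $k-j$ further sections of ample line bundles, so each component of it has dimension $\ge \dim(W')-(k-j)$ for any component $W'$ of $W$ meeting it; since the left-hand side has dimension $e-k$, we get $\dim(W')\le e-j$, i.e. $\codim(W')\ge j$. Combining the two bounds forces $\codim(W')=j$ for every component $W'$ of $W$ that meets $V_Y(s_1,\dots,s_k)$. The remaining subtlety is to rule out components of $W$ disjoint from $V_Y(s_1,\dots,s_k)$: this is where ampleness is essential. If $W'$ were such a component of dimension $e'>e-j$, then restricting the ample sections $s_{j+1},\dots,s_k$ to $W'$ and intersecting successively would, at each step, either lower the dimension or produce a nonempty subvariety (an ample line bundle has no nowhere-vanishing section on a positive-dimensional complete variety), eventually yielding a nonempty subset of $W'\cap V_Y(s_1,\dots,s_k)$, a contradiction; when $W'$ is a point, a section of an ample bundle may of course be nonzero there, but then this point is not in $V_Y(s_1,\dots,s_k)$ and contributes nothing, and since $k\le e+1$ one checks that a zero-dimensional isolated component of $W$ cannot occur unless $j=e$... hence the dimension count closes.

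I expect the main obstacle to be the bookkeeping in this last point—precisely, showing that no ``extra'' low-dimensional component of an intermediate $V_Y(s_1,\dots,s_j)$ can persist, i.e. that the dimension being correct for the \emph{full} intersection propagates back to force the correct codimension for \emph{every} partial intersection and every subset $I$. The clean way to organize this is: (i) use that for global sections of ample line bundles on a complete variety, $\dim(V_Y(s_1,\dots,s_j))\ge e-j$ whenever this set is nonempty and in fact $V_Y(s_1,\dots,s_j)\ne\emptyset$ as soon as $j\le e$ (a Nakai–Moishezon / Bertini-type nonemptiness statement), together with the upper bound $\codim\le j$ from Krull; (ii) deduce that all partial intersections have pure dimension $e-j$; (iii) observe that purity of all partial intersections is exactly the definition of meeting properly. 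Alternatively, and perhaps more cleanly for the write-up, one can reduce to the case where all $L_i$ are equal and very ample by replacing each $L_i$ with a suitable tensor power (using that a positive tensor power of an ample bundle is very ample and that $V_Y(s_i)$ is unchanged, while $V_Y(s_i^{\otimes N})$ has the same support), embed $Y$ in projective space, and invoke the classical fact that on a projective variety a subvariety of dimension $\ge 1$ meets every hypersurface; this gives the nonemptiness input (i) directly. I would likely present the argument in this reduced form to keep it short.
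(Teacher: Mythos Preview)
Your proposal is correct in substance, though organized more elaborately than necessary. The paper's proof takes a sharper route: rather than verifying the codimension condition for each subset $I$ separately and splitting into ``meets the full intersection / disjoint from it'' cases, it argues by contradiction using a \emph{maximal} bad subset. Assuming the sections do not meet properly, one chooses $I\subsetneq\{1,\dots,k\}$ maximal with $\dim V_Y(\{s_i\}_{i\in I})>e-\#I$, picks a component $C$ of excess dimension and an index $i_0\notin I$. Maximality forces $C\not\subset V_Y(s_{i_0})$, while ampleness forces $C\cap V_Y(s_{i_0})\ne\emptyset$ (since $\dim C>0$); Krull then gives a component of dimension $\dim C-1>e-\#(I\cup\{i_0\})$ for the enlarged index set, contradicting maximality. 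This single step replaces your entire two-case analysis and the iterative ``intersect successively with $s_{j+1},\dots,s_k$'' argument.

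What the paper's approach buys is brevity and the avoidance of the bookkeeping you flagged as the main obstacle: there is no need to track components disjoint from the full intersection or to worry about the point case (your trailing ``$\dots$'' there is a symptom of this). Your approach, on the other hand, is closer to proving the slightly stronger statement that each partial intersection is \emph{pure} of the right dimension directly, and the nonemptiness input you isolate (an ample section vanishes on any positive-dimensional complete subvariety) is exactly the same ingredient the paper uses---just invoked once rather than $k-j$ times. Your suggested alternative via very ample reduction and projective embedding would also work but is heavier than either argument needs.
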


\begin{proof}
  If $s_{1},\dots, s_{k}$ meet properly then the
  condition \eqref{eq:25} clearly holds.

Conversely assume that this condition is satisfied. If
$s_{1},\dots,s_{k}$ do not meet properly then there is
$I\subset \{1,\dots, k\}$ such that
  \begin{displaymath}
  \dim(V_{Y}(\{s_{i}\}_{i\in I})>e-\#I.  
  \end{displaymath}
  Suppose that $I$ is the maximal index subset satisfying this
  inequality and take an irreducible component $C$ of this zero set
  with
  \begin{math}
\dim(C)>e-\#I.  
  \end{math}
  Take also an index ${i_{0}\in \{1,\dots, k\} \setminus I}$, which is
  possible because \eqref{eq:25} ensures that $I\ne \{1,\dots, k\}$.

  This irreducible component has positive dimension because
  $\#I \le k-1 \le e$, and cannot be contained in the hypersurface
  $V_{Y}(s_{i_{0}})$ because this would contradict the maximality
  of~$I$. Hence this hypersurface cuts~$C$, and so
  $C\cdot \div(s_{i_{0}})$ is a cycle of positive degree with respect
  to the ample line bundle~$L_{i_{0}}$.  In particular
  $C\cap V_{Y}(s_{i_{0}}) \ne \emptyset$, and by Krull's
  Hauptidealsatz this intersection has dimension $\dim(C)-1$. Hence
  \begin{displaymath}
    \dim(V_{Y}(\{s_{i}\}_{i\in I\cup\{i_{0}\}})) \ge \dim(V_{Y}(\{s_{i}\}_{i\in I}))-1
>e-\#(I \cup\{i_{0}\}),
  \end{displaymath}
  contradicting again the maximality of $I$ and thus proving the statement. 
\end{proof}

We also need the next formula from \cite[Lemma 8.17(ii)]{BE21}.  It
can be seen as a higher dimensional metric version of the Weil
reciprocity law as explained in \cite[Remark~9.7]{GualdiSombra}, and a
self-contained proof for complex smooth projective curves can be found
in \cite[Proposition~9.6]{GualdiSombra}.

\begin{proposition}\label{metric Weil reciprocity law}
  Let $Y$ be a complete variety over $\mathbb{C}_{v}$ of dimension $e$
  and
  $\overline{L}_{0}=(L_0,\|\cdot\|_0),
  \ldots,\overline{L}_{e}=(L_e,\|\cdot\|_e)$ semipositive metrized
  line bundles on $Y$.  Let
  $s_{e-1} \in \Gamma(Y,L_{e-1})$
  and $s_{e}\in \Gamma(Y,L_{e})$ be global sections that meet properly. Then
  \begin{multline*}
\hspace{-3mm}    \int_{Y^{\an}}\hspace{-2mm}\log\|s_{e-1}\|_{e-1}\, \big(\chern(\overline{L}_0)\wedge\ldots\wedge\chern(\overline{L}_{e-2})\wedge
    \chern(\overline{L}_{e})\,-\,\chern(\overline{L}_0)\wedge\ldots\wedge\chern(\overline{L}_{e-2})\wedge\delta_{Z_{Y}(s_{e})^{\an}}\big) \\
    =\int_{Y^{\an}}\hspace{-2mm}\log\|s_{e}\|_{e}\, \big(\chern(\overline{L}_0)\wedge\ldots\wedge\chern(\overline{L}_{e-2})\wedge
    \chern(\overline{L}_{e-1})\,-\,\chern(\overline{L}_0)\wedge\ldots\wedge\chern(\overline{L}_{e-2})\wedge\delta_{Z_{Y}(s_{e-1})^{\an}}\big). 
\end{multline*}
\end{proposition}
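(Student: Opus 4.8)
The plan is to reduce the identity, via the Poincar\'e--Lelong equation, to the symmetry of the $\star$-product of the Green currents of $s_{e-1}$ and $s_{e}$, and to establish the latter by an integration by parts. First I would reduce to the case where $Y$ is smooth and projective over $\mathbb{C}_{v}$: by Chow's lemma and resolution of singularities (available since $\mathbb{C}_{v}$ has characteristic zero) one dominates $Y$ by such a variety, and Chern currents, currents of integration and the two displayed integrals are all compatible with pushforward along the resulting proper birational morphism. Writing $a=\log\|s_{e-1}\|_{e-1}$, $b=\log\|s_{e}\|_{e}$ and $\eta=\chern(\overline{L}_{0})\wedge\cdots\wedge\chern(\overline{L}_{e-2})$, the Poincar\'e--Lelong equations $\chern(\overline{L}_{e-1})-\delta_{Z_{Y}(s_{e-1})^{\an}}=dd^{c}(-a)$ and $\chern(\overline{L}_{e})-\delta_{Z_{Y}(s_{e})^{\an}}=dd^{c}(-b)$ turn the claimed equality into the symmetric identity
\[
\int_{Y^{\an}}a\ dd^{c}b\wedge\eta=\int_{Y^{\an}}b\ dd^{c}a\wedge\eta,
\]
equivalently $\int_{Y^{\an}}(g_{e-1}\star g_{e})\wedge\eta=\int_{Y^{\an}}(g_{e}\star g_{e-1})\wedge\eta$ for the Gillet--Soul\'e $\star$-product of the Green currents $g_{e-1}=-a$ and $g_{e}=-b$. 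The hypothesis that $s_{e-1}$ and $s_{e}$ meet properly enters here: it forces $Z_{Y}(s_{e-1})\cap Z_{Y}(s_{e})$ to have codimension at least $2$ in $Y$, which is precisely what makes $g_{e-1}\wedge\delta_{Z_{Y}(s_{e})^{\an}}$ a well-defined current, i.e. makes $a$ restrict on $Z_{Y}(s_{e})^{\an}$ to an $\eta$-integrable function that is not identically $-\infty$, and symmetrically for $b$.

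To prove the symmetric identity I would first treat metrics that are smooth and semipositive at an Archimedean $v$, and algebraic and semipositive --- induced by a common projective model of $Y$ over the valuation ring of $\mathbb{C}_{v}$ --- at a non-Archimedean $v$. In the Archimedean case, one excises $\varepsilon$-tubular neighbourhoods of $Z_{Y}(s_{e-1})^{\an}$ and $Z_{Y}(s_{e})^{\an}$, integrates by parts on the complement using Stokes' theorem and the closedness of $\eta$, and checks --- using that the two zero loci meet in codimension two --- that the remaining boundary integrals tend to $0$ as $\varepsilon\to0$; this is the mechanism underlying the commutativity of the $\star$-product up to $\operatorname{Im}\partial+\operatorname{Im}\overline{\partial}$. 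In the non-Archimedean case, I would instead rewrite both integrals as intersection numbers on the common model --- of the line bundles extending the $\overline{L}_{i}$, corrected by the vertical components cut out by the reductions of $s_{e-1}$ and $s_{e}$ --- and invoke the commutativity of the intersection product there, or equivalently use the $\star$-calculus of Chambert-Loir--Thuillier \cite{CLT}. Finally, a general semipositive metric is a uniform limit of smooth, respectively algebraic, semipositive ones, and both sides of the identity vary continuously along such approximations, since the integral of a continuous function with at most logarithmic singularities against a difference of mixed Monge--Amp\`ere measures, or against the current of integration on a properly intersecting cycle, is continuous under uniform convergence of the metrics (compare \cite{BPS}); one then concludes by passing to the limit.

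The step I expect to be the main obstacle is the boundary estimate in the smooth case, and this is exactly where the proper intersection hypothesis is indispensable: if the two logarithmic singularities met in codimension one, the limiting boundary term would contribute an extra residue and the symmetry would fail. When $e=1$ the current $\eta$ is trivial and the identity specializes to the classical metric Weil reciprocity law, whose self-contained proof for complex smooth projective curves in \cite[Proposition~9.6]{GualdiSombra} is precisely this excision-and-Stokes computation; the general statement is \cite[Lemma~8.17(ii)]{BE21}.
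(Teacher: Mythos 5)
The paper does not prove this proposition; it simply quotes it from \cite[Lemma~8.17(ii)]{BE21}, noting that it is a higher-dimensional version of the metric Weil reciprocity law with a self-contained one-dimensional proof in \cite[Proposition~9.6]{GualdiSombra}. Your sketch therefore fills a gap rather than duplicating the paper's argument, and its overall architecture --- Poincar\'e--Lelong to convert the identity into symmetry of $\int a\,dd^{c}b\wedge\eta$, excision-and-Stokes at Archimedean places, models and intersection theory at non-Archimedean places, then uniform approximation --- is the right one and is precisely the route taken in \cite{GualdiSombra} for curves; what \cite{BE21} itself does is somewhat different (it works with the Boucksom--Eriksson formalism of functional metrics and Deligne pairings), so your proposal is a genuine alternative presentation.

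Two points deserve more care than the sketch gives them. First, in the reduction to smooth projective $Y$ via Chow's lemma and resolution, the pullbacks $\pi^{*}s_{e-1},\pi^{*}s_{e}$ need not meet properly on $\widetilde{Y}$: if the centre of blow-up meets $Z_{Y}(s_{e-1})\cap Z_{Y}(s_{e})$, an exceptional divisor can be a common component of both total transforms. The identity still passes through, but for the subtler reason that the pullback of the $(e-1)$-form $\eta$ to an exceptional divisor is a zero measure (it factors through a map with positive-dimensional fibres), so the exceptional components contribute nothing to either integral even though $\pi^{*}a$ and $\pi^{*}b$ are identically $-\infty$ there. This should be said explicitly, since it is exactly the place where a na\"{\i}ve application of the resolution would appear to destroy the hypothesis. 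Second, the limiting step for general semipositive metrics is not automatic: both terms on each side involve a function with logarithmic singularities integrated against a weakly converging family of mixed Monge--Amp\`ere measures, and weak convergence of measures alone does not allow one to pass to the limit against an unbounded integrand. The correct way to phrase this is to rewrite each side as a difference of local heights of the cycle $Z_{Y}(s_{e-1})\cdot Z_{Y}(s_{e})$ in the sense of \cite[Chapter~1]{BPS}, and invoke the continuity of local heights under uniform convergence of the metrics, which is proved there; simply asserting continuity of each integral separately is not quite enough.

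\end{document}
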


We next give a lower bound for the auxiliary function at a point in
terms of the evaluation of the corresponding twist of $g$ at a certain
$0$-cycle of $X_{v}$. The evaluation of a Laurent polynomial
$q\in \mathbb{C}_{v}[M]$ at a $0$-cycle $W=\sum_{i\in I}k_{i}x_{i}$ of
$X_{v}$ is the scalar defined as
\begin{displaymath}
q  (W)=\prod_{i\in I} q (x_{i})^{k_{i}}
  \in \mathbb{C}_{v}.
\end{displaymath}

Recall from \cref{sec:geom-toric-constr} that $D_{f}$ and $D_{g}$ are
the nef toric divisors associated with the Newton polytopes of $f$ and
$g$ respectively, and $s_{f} \in \Gamma(X,\mathcal{O}(D_{f})) $ and
$s_{g} \in \Gamma(X,\mathcal{O}(D_{g})) $ the global sections induced
by these Laurent polynomials.  Moreover $s_{D_{g}}$ denotes the
canonical rational section of $\mathcal{O}(D_{g})$, which in our
current setting is also a global section because of the assumption
that $0\in \supp(g)$.

\begin{lemma}
  \label{lem:2}
  Let $\sigma_{i}\in \Gamma(X_{v},\mathcal{O}(D_{i}))$, $i=0,\dots, n-2$,
  such that the global sections $\sigma_{0},\dots, \sigma_{n-2},s_{f}$
  meet properly and their common zero set is contained in the
  principal open subset $X_{v,0}$. Then there is $c>0$ such that
  \begin{displaymath}
    F_{v}(t)\ge \log |t^{*}g\,  (Z_{X_{v}}(\sigma_{0},\dots, \sigma_{n-2},s_{f}))|_{v} -c \quad \text{ for } t\in \mathbb{T} (\mathbb{C}_{v}) .
  \end{displaymath}
\end{lemma}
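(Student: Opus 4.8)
The plan is to trade $\log|t^*g|_v$ for the norm of the global section $t^*s_g$ of $\mathcal O(D_g)$ endowed with its canonical toric metric, and then to strip off the cutting sections $\sigma_0,\dots,\sigma_{n-2}$ one by one by iterating the metric Weil reciprocity law of \cref{metric Weil reciprocity law}. Write $W=Z_{X_v}(\sigma_0,\dots,\sigma_{n-2},s_f)$, a $0$-cycle supported in $X_{v,0}=\mathbb T_v$ by hypothesis, and note that $\sigma_0,\dots,\sigma_{n-2}$ meet properly on $Z_v$. The $v$-adic metric function of the canonical toric metric on $\mathcal O(D_g)$ is the support function $\Psi_{\NP(g)}$, which is $\le 0$ since $0\in\NP(g)$; hence $\log\|s_{D_g}\|_v\le 0$ on $X_v^{\an}$, and from $t^*s_g=(t^*g)\,s_{D_g}$ (see \eqref{eq: twist of section}) we get $\log|t^*g|_v\ge\log\|t^*s_g\|_v$ on $Z_v^{\an}$, so that
\[
F_v(t)\ \ge\ \int_{Z_v^{\an}}\log\|t^*s_g\|_v\ \chern(\iota_v^*\overline{D}_{0,v})\wedge\dots\wedge\chern(\iota_v^*\overline{D}_{n-2,v}).
\]
As $|W|\subset\mathbb T_v$, the same identity gives $\sum_x\multiplicity_x(W)\log\|t^*s_g(x)\|_v=\log|t^*g\,(W)|_v+\sum_x\multiplicity_x(W)\,\Psi_{\NP(g)}(\val_v(x))$ with last summand a fixed constant $\le 0$; so it suffices to bound the displayed integral from below by $\sum_x\multiplicity_x(W)\log\|t^*s_g(x)\|_v$ minus a constant independent of~$t$.

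Normalize $\|\sigma_j\|_v\le 1$ on $X_v^{\an}$, write $\overline{D}^{\can}_g$ for $\mathcal O(D_g)$ with its canonical metric, and set $Y_j=Z_{X_v}(s_f,\sigma_{n-2},\dots,\sigma_{j+1})$ for $-1\le j\le n-2$, so that $Y_{n-2}=Z_v$, $Y_{-1}=W$, and $Z_{Y_j}(\sigma_j)=Y_{j-1}$. Take $t\notin\Upsilon_v$ avoiding a suitable proper closed subset of $\mathbb T_v$, so that for every $j$ the sections $\sigma_j$ and $t^*s_g$ meet properly on each component of $Y_j$ and all the integrals below are finite; this is a generic condition on $t$ by \cref{lemma: generically twisted of divisors avoid a finite set of points} applied to $g$ at the generic points of the finitely many components of the cycles $Y_j$ and $Z_{X_v}(s_f,\sigma_{n-2},\dots,\sigma_j)$. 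Applying \cref{metric Weil reciprocity law} on each component of $Y_j$, with the semipositive metrized divisors $\iota_v^*\overline{D}_{0,v},\dots,\iota_v^*\overline{D}_{j,v},\iota_v^*\overline{D}^{\can}_{g}$ and the two meeting sections $\sigma_j$ and $t^*s_g$, and using $Z_{Y_j}(\sigma_j)=Y_{j-1}$, one obtains
\[
\int_{Y_j^{\an}}\log\|t^*s_g\|_v\ \chern(\iota_v^*\overline{D}_{0,v})\wedge\dots\wedge\chern(\iota_v^*\overline{D}_{j,v})=\int_{Y_{j-1}^{\an}}\log\|t^*s_g\|_v\ \chern(\iota_v^*\overline{D}_{0,v})\wedge\dots\wedge\chern(\iota_v^*\overline{D}_{j-1,v})+A_j+B_j(t),
\]
where $A_j$ is the fixed integral of $\log\|\sigma_j\|_v$ against $\chern(\iota_v^*\overline{D}_{0,v})\wedge\dots\wedge\chern(\iota_v^*\overline{D}_{j-1,v})\wedge\chern(\iota_v^*\overline{D}^{\can}_{g})$ — finite, because $\sigma_j$ does not vanish on any component of $Y_j$, and $\le 0$ — while $B_j(t)$ is the integral of the nonnegative function $-\log\|\sigma_j\|_v$ against the positive measure $\chern(\iota_v^*\overline{D}_{0,v})\wedge\dots\wedge\chern(\iota_v^*\overline{D}_{j-1,v})\wedge\delta_{Z_{Y_j}(t^*s_g)^{\an}}$, hence $B_j(t)\ge 0$. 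Discarding the $B_j(t)$ and iterating from $j=n-2$ down to $j=0$ gives the bound sought in the first part, with the constant $C=-\sum_{j=0}^{n-2}A_j$; combining with that first part yields $F_v(t)\ge\log|t^*g\,(W)|_v-c$ on the chosen dense open set, for some $c>0$ independent of~$t$.

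It remains to extend the inequality to all of $\mathbb T_v(\mathbb C_v)$. By \cref{prop:1} and \cref{prop:2} the function $F_v$ is continuous with real values on $\mathbb T_v^{\an}\setminus\Upsilon_v^{\an}$, and $t\mapsto t^*g\,(W)$ is a regular function of $t$; moreover, if $t\in\Upsilon_v$ then $t^*g$ is a monomial multiple of $f$, hence vanishes on $Z_v\cap\mathbb T_v\supset|W|$, so that $\Upsilon_v$ lies in the zero locus of $t\mapsto t^*g\,(W)$. Therefore the inequality holds trivially where $t^*g\,(W)=0$, and at the remaining points both sides are continuous, so it passes from the generic locus to the whole torus. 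The main obstacle is precisely this genericity bookkeeping: \cref{metric Weil reciprocity law} requires $\sigma_j$ and $t^*s_g$ to meet properly on $Y_j$, which can fail, so the estimate can only be established generically and one must then rely on the continuity of $F_v$ proved beforehand; one also has to check the finiteness of the $A_j$ (ensured by the properness of $\sigma_0,\dots,\sigma_{n-2},s_f$) and to track signs carefully, so that every $t$-dependent term carries the favourable sign and may be discarded.
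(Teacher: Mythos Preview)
Your argument is correct and follows the same overall scheme as the paper—iterated application of \cref{metric Weil reciprocity law} to peel off the $\sigma_j$'s—but differs in two respects worth recording. First, you endow $\mathcal O(D_g)$ with its \emph{canonical} toric metric and use $\Psi_{\NP(g)}\le 0$ (from $0\in\NP(g)$) to get the initial inequality $\log|t^*g|_v\ge\log\|t^*s_g\|_v$, then apply Weil reciprocity once at each step; the paper instead takes an arbitrary semipositive metric on $\mathcal O(D_g)$, applies Weil reciprocity \emph{twice} at each step (to both $t^*s_g$ and $s_{D_g}$), and subtracts to obtain an exact identity $F_v(t)=\log|t^*g(W)|_v+\sum_i(S_i+T_i)$ before bounding the $t$-dependent terms $T_i$ from below via $\|\sigma_i\|_{\sup}$. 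Your $B_j(t)$'s are precisely these $T_i$'s after the normalization $\|\sigma_j\|_v\le1$. Second, and more substantively, you establish the inequality only for generic $t$ and then extend by the continuity of $F_v$ from \cref{prop:1} and \cref{prop:2}; the paper avoids this detour by observing, via \cref{lem:6} applied to the ample $D_0,\dots,D_{n-2}$, that the single assumption $t^*g(W)\ne 0$ already forces $\sigma_0,\dots,\sigma_{n-2},t^*s_g$ to meet properly on $Z_v$, so Weil reciprocity applies directly for every relevant $t$. This makes the paper's proof self-contained, whereas yours leans on results proved earlier in the section—logically fine, but less portable.
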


\begin{proof}
  Set $W=Z_{X_{v}}(\sigma_{0},\dots, \sigma_{n-2},s_{f})$ for short. This
  is an effective $0$-cycle of $X_{v}$ whose support $|W|$ is contained in
  $Z_{v} \cap X_{v,0}$. Its degree coincides with
  $\deg_{D_{0},\dots, D_{n-2}}(Z)$, and so it is positive because
  these divisors are ample. In particular $W\ne 0$.

  Let $t\in \mathbb{T} (\mathbb{C}_{v})$.  We suppose without loss of
  generality that $t^{*}g \,  (W) \ne 0$ because otherwise the right-hand
  side of the sought inequality  equals $-\infty$ and so it  is
  trivially verified.  Then the twist $t^{*}s_{g}=t^{*}g \, s_{D_{g}}$
  has no zeros on $|W|$  because of this assumption and
  the fact that the zero set of $s_{D_{g}}$ is contained in the
  boundary $X_{v}\setminus X_{v,0}$, that is
  \begin{equation}
    \label{eq:30}
|W|\cap V_{X_{v}}(t^{*} s_{g})  = V_{X_{v}}(\sigma_{0},\dots, \sigma_{n-2},s_{f},t^{*} s_{g})=\emptyset.
\end{equation}

This implies that $t\notin \Upsilon(\mathbb{C}_{v})$ because the
support $|W|$ is contained in $ Z_{v}$ and nonempty. Hence the global
sections $s_{f}$ and $t^{*}s_{g}$ meet properly. We can then 
apply \cref{lem:6} to each irreducible component $C$ of  $V_{X_{v}}(s_{f},t^{*}s_{g})$ 
and  the ample line bundles
$\mathcal{O}(D_{0})|_{C}, \dots, \mathcal{O}(D_{n-2})|_{C}$ equipped
with global sections $\sigma_{0}|_{C},\dots, \sigma_{n-2}|_{C}$, to
deduce that $\sigma_{0},\dots, \sigma_{n-2}$ meet properly on this
zero set.  In turn, this implies that for $i=0,\dots, n-2$ the global
sections $\sigma_{i}$ and $t^{*}s_{g}$ meet properly on
$V_{X_{v}}(\sigma_{i+1},\dots, \sigma_{n-2},s_{f})$.

Similarly, the fact that 
$V_{X_{v}}(\sigma_{0},\dots, \sigma_{n-2},s_{f}, s_{D_{g}})= \emptyset$
implies that for each $i$
the global sections $\sigma_{i}$ and $s_{D_{g}}$ meet properly on
$V_{X_{v}}(\sigma_{i+1},\dots, \sigma_{n-2},s_{f})$.

Now take any semipositive metric $\|\cdot\|$ on the analytic line
  bundle $\mathcal{O}(D_{g})_{v}^{\an}$ and consider the metrized
  divisor $ \overline{D}_{g,v}=(D_{g,v},\|\cdot\|)$ on $X_{v}$.  Since 
  $t^{*}s_{g}= t^{*}g \, s_{D_{g}} $ we have that
  $\|t^{*}s_{g}(x)\|=|t^{*} g(x)|_{v}\, \|s_{D_{g}}(x)\|$ for
all  $x\in X_{v,0}^{\an}$ and so
\begin{multline}
    \label{eq:23}
        F_{v}(t)= \int_{X_{v}^{\an}} \log\|t^{*}s_{g}\|
    \ \chern(\overline{D}_{0,v})\wedge\ldots\wedge\chern(\overline{D}_{n-2,v})
    \wedge\delta_{Z_v^{\an}} \\ -\int_{X_{v}^{\an}} \log\|s_{D_{g}}\| 
    \ \chern(\overline{D}_{0,v})\wedge\ldots\wedge\chern(\overline{D}_{n-2,v})
    \wedge\delta_{Z_v^{\an}}.
  \end{multline}
  By the previous discussion we can apply twice \cref{metric Weil
    reciprocity law} to the restriction to the hypersurface
  $Z_{v}=Z_{X_{v}}(s_{f})$ of the metrized line bundles of
  $\overline{D}_{0,v},\dots, \overline{D}_{n-2,v},
  \overline{D}_{g,v}$, first to the global sections $ \sigma_{n-2}$
  and $ t^{*}s_{g}$, and secondly to $\sigma_{n-2} $ and $ s_{D_{g}}$.
  Subtracting the resulting formulae and taking into account
  \eqref{eq:23} we obtain
  \begin{align*}
    F_{v}(t)= &\int_{X_{v}^{\an}} \log|t^{*}g|_{v} 
    \ \chern(\overline{D}_{0,v})\wedge\ldots\wedge\chern(\overline{D}_{n-3,v})
    \wedge\delta_{Z_{X_{v}}(\sigma_{n-2},s_f)^{\an}} \\
&    +\int_{X_{v}^{\an}} \log\|\sigma_{n-2}\|_{n-2}     \  \chern(\overline{D}_{0,v})\wedge\ldots\wedge\chern(\overline{D}_{n-3,v})
    \wedge\delta_{Z_{X_{v}}(s_f,s_{D_{g}})^{\an}}
\\ &-\int_{X_{v}^{\an}} \log\|\sigma_{n-2}\|_{n-2}     \ \chern(\overline{D}_{0,v})\wedge\ldots\wedge\chern(\overline{D}_{n-3,v})
\wedge\delta_{Z_{X_{v}}(s_f,t^{*}s_{g})^{\an}}.
  \end{align*}
  Iterating this procedure we obtain
  \begin{equation}
    \label{eq:22}
F_{v}(t)=\int_{X_{v}^{\an}} \log|t^{*}g|_{v} 
    \ \delta_{W^{\an}} 
    + \sum_{i=0}^{n-2} (S_{i} +T_{i})
    = \log|t^{*}g\, (W) |_{v} 
    + \sum_{i=0}^{n-2} (S_{i} +T_{i})
  \end{equation}
  with
    \begin{align*}
S_{i}& =    \int_{X_{v}^{\an}} \log\|\sigma_{i}\|_{i}     \ \chern(\overline{D}_{0,v})\wedge\ldots\wedge\chern(\overline{D}_{i-1,v})
    \wedge\delta_{Z_{X_{v}}(\sigma_{i+1}, \dots, \sigma_{n-2},s_f, s_{D_{g}})^{\an}},\\      
T_{i}& =    -\int_{X_{v}^{\an}} \log\|\sigma_{i}\|_{i}     \ \chern(\overline{D}_{0,v})\wedge\ldots\wedge\chern(\overline{D}_{i-1,v})
    \wedge\delta_{Z_{X_{v}}(\sigma_{i+1}, \dots, \sigma_{n-2},s_f, t^{*}s_{g})^{\an}}.
    \end{align*}
    The first of these quantities is independent of $t$ whereas the
    second can be bounded below in terms of the sup-norm of the global
    section $\sigma_{i}$ as
    \begin{align*}
      T_{i} &\ge -\log\|\sigma_{i}\|_{i,\sup} \,
\int_{X_{v}^{\an}}    \chern(\overline{D}_{0,v})\wedge\ldots\wedge\chern(\overline{D}_{i-1,v})
\wedge\delta_{Z_{X_{v}}(\sigma_{i+1}, \dots, \sigma_{n-2},s_f, t^{*}s_{g})^{\an}}
\\ &=-\log\|\sigma_{i}\|_{i,\sup} \, (D_{0}
      \cdots D_{i-1}\cdot D_{i+1} \cdots D_{n-2} \cdot D_{f} \cdot
      D_{g}\cdot X).
    \end{align*}
    The statement follows then from \eqref{eq:22} together with this
    lower bound.
  \end{proof}

  We also need the following upper bound for the coefficients of a
  Laurent polynomial.
  
  \begin{lemma}
    \label{lem:4}
    For $r\ge 1$ let $\mathcal{A}\subset \mathbb{Z}^{r}$ be a finite subset 
and 
    $U\subset \Gm^{r}$   open and nonempty. Then there is a
    finite subset $F\subset U(\mathbb{C}_{v})$ such that for every
    Laurent polynomial $p\in \mathbb{C}_{v}[\mathbb{Z}^{r}]$ of the
    form $p=\sum_{a\in \mathcal{A}} p_{a}\chi^{a}$ we have
    \begin{displaymath}
      \max_{a\in \mathcal{A}}|p_{a}|_{v}\le \max_{u\in F}|p(u)|_{v}.
    \end{displaymath}
  \end{lemma}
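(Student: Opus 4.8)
The plan is to reduce the statement to the invertibility of a discrete Fourier transform: I will recover the coefficients of $p$ from its values on a grid of roots of unity, translating that grid into $U$ by a well-chosen torus element so that the reconstruction becomes norm-decreasing.

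First I would fix the combinatorial data. Since $\mathcal{A}$ is finite, there are $a_{0}\in\mathbb{Z}^{r}$ and an integer $D\ge 1$ with $\mathcal{A}-a_{0}\subseteq\{0,\dots,D-1\}^{r}$, and when $v$ is non-Archimedean I would moreover take $D$ coprime to the residue characteristic, which is possible since $D$ may be chosen arbitrarily large. Let $\zeta\in\mathbb{C}_{v}$ be a primitive $D$-th root of unity, which exists because $\mathbb{C}_{v}$ is algebraically closed of characteristic zero, so that $|\zeta|_{v}=1$; and let $T=\{w\in\Gm^{r}(\mathbb{C}_{v}):|w_{i}|_{v}=1\text{ for all }i\}$, a subgroup that is Zariski dense in $\Gm^{r}(\mathbb{C}_{v})$ because $\{x\in\mathbb{C}_{v}:|x|_{v}=1\}$ is infinite, hence Zariski dense in the curve $\Gm$, and a product of Zariski dense subsets is Zariski dense in the product.

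Next I would build the interpolation set. As $\Gm^{r}$ is irreducible, the finite intersection $\bigcap_{j\in\{0,\dots,D-1\}^{r}}\zeta^{-j}\cdot U$ (of the translates of $U$ by the points $\zeta^{-j}=(\zeta^{-j_{1}},\dots,\zeta^{-j_{r}})$) is a non-empty Zariski open subset, so by density it contains some $w\in T$; then $F=\{\,w\cdot(\zeta^{j_{1}},\dots,\zeta^{j_{r}}):j\in\{0,\dots,D-1\}^{r}\,\}$ is a finite subset of $U(\mathbb{C}_{v})$. Given $p=\sum_{a\in\mathcal{A}}p_{a}\chi^{a}$, set $\widetilde{p}=\chi^{-a_{0}}p$, whose support lies in $\{0,\dots,D-1\}^{r}$ and whose $\chi^{c}$-coefficient equals $p_{c+a_{0}}$ (read as $0$ when $c+a_{0}\notin\mathcal{A}$). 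The elementary identity $\sum_{j\in\{0,\dots,D-1\}^{r}}\zeta^{\langle c-c',j\rangle}=D^{r}$ if $c=c'$ and $0$ otherwise, valid for $c,c'\in\{0,\dots,D-1\}^{r}$, applied to the function $x\mapsto\widetilde{p}(w\cdot x)$, yields
\[
\chi^{c}(w)\,p_{c+a_{0}}=\frac{1}{D^{r}}\sum_{j\in\{0,\dots,D-1\}^{r}}\widetilde{p}(w\cdot\zeta^{j})\,\zeta^{-\langle c,j\rangle}.
\]
Since $w\in T$ and $|\zeta|_{v}=1$ we have $|\chi^{c}(w)|_{v}=1$ and $|\widetilde{p}(w\cdot\zeta^{j})|_{v}=|p(w\cdot\zeta^{j})|_{v}$, so passing to absolute values and bounding the right-hand side — by the triangle inequality when $v$ is Archimedean, where the $D^{r}$ summands absorb the factor $1/D^{r}$, and by the ultrametric inequality together with $|D|_{v}=1$ (by the choice of $D$) when $v$ is non-Archimedean — gives $|p_{c+a_{0}}|_{v}\le\max_{u\in F}|p(u)|_{v}$ for every $c\in\{0,\dots,D-1\}^{r}$, hence $\max_{a\in\mathcal{A}}|p_{a}|_{v}\le\max_{u\in F}|p(u)|_{v}$.

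The computation is short once the set $F$ is in place; the point I expect to require the most care is arranging that the constant is exactly $1$ rather than merely bounded. This is what forces the two choices above: taking $|w_{i}|_{v}=1$ makes every monomial — in particular $\chi^{-a_{0}}$, used to shift $\mathcal{A}$ into $\{0,\dots,D-1\}^{r}$ — an isometry at the points of $F$, and taking $D$ prime to the residue characteristic in the non-Archimedean case prevents the normalizing factor $1/D^{r}$ from enlarging the bound. The remaining inputs — the Zariski density of $T$, and the fact that the root-of-unity grid can be translated inside $U$ — are immediate from the irreducibility of $\Gm^{r}$.
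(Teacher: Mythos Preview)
Your proof is correct and follows essentially the same approach as the paper: both recover the coefficients of $p$ by discrete Fourier inversion over a grid of $k$-th (your $D$-th) roots of unity, translated by a point of unit absolute value chosen so that the whole grid lands in $U$, and both arrange $|k|_{v}=1$ in the non-Archimedean case so that the normalising factor does not spoil the bound. The only cosmetic difference is that you shift $\mathcal{A}$ into a box $\{0,\dots,D-1\}^{r}$ via $a_{0}$, whereas the paper merely requires that the reduction $\mathbb{Z}^{r}\to\mathbb{Z}^{r}/k\mathbb{Z}^{r}$ be injective on $\mathcal{A}$.
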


  \begin{proof}
Let $k$ be a positive integer. Then for each
    $m\in \mathbb{Z}^{r}$ we have
    \begin{displaymath}
\frac{1}{k^{r}}      \sum_{\zeta\in \mu_{k}^{r}}\chi^{m}( \zeta)=
      \begin{cases}
1        & \text{ if } m\in k\mathbb{Z}^{r}, \\
        0 & \text{ otherwise,}
      \end{cases}
    \end{displaymath}
    where $\mu_{k}$ denotes the set of $k$-roots of unity of
    $ \mathbb{C}_{v}$.
 
    Take $k$ large enough so that the reduction map
    $\mathbb{Z}^{r}\to \mathbb{Z}^{r}/k\mathbb{Z}^{r}$ is injective on~$\mathcal{A}$. Then the latter observation implies that for any $\eta\in \Gm^{r}(\mathbb{C}_{v})$ and
    each $a\in \mathcal{A}$ we have
    \begin{displaymath}
p_{a}=\frac{1}{k^{r}} \sum_{\zeta \in \mu_{k}^{r}} \chi^{-a}(\eta  \cdot \zeta) \, p (\eta \cdot \zeta).
    \end{displaymath}
    Choosing $\eta=(\eta_1,\ldots,\eta_r)$ with
    $|\eta_1|_v=\ldots=|\eta_r|_v=1$ and assuming furthermore that
 $|k|_{v}=1$   if $v$ is non-Archimedean, we deduce
  from this identity that
        \begin{displaymath}
      \max_{a\in \mathcal{A}}|p_{a}|_{v}\le \max_{\zeta \in \mu_{k}^{r}} |p(\eta \cdot\zeta)|_{v}.
    \end{displaymath}
    The result follows then by taking $F= \eta \cdot \mu_{k}^{r}$ for
    any choice of $\eta$ lying both in the nonempty open subset
    $\bigcap_{\zeta \in \mu_{k}^{r}} \zeta \cdot U (\mathbb{C}_{v})$
    and in the dense
    $\{\eta\in\Gm^{r}(\mathbb{C}_{v}):|\eta_1|_v=\dots=|\eta_r|_v=1\}$.
  \end{proof}

  \begin{proof}[Proof of \cref{prop: lower log bound for Fv}]
  For $i=0,\dots, n-2$ denote by $\Delta_{i} \subset M_{\mathbb{R}}$
  the lattice polytope of the toric divisor~$D_{i}$ and set
  \begin{displaymath}
 p_{i}= \sum_{m\in \Delta_{i}\cap M}\chi^{m} \in \mathbb{C}_{v}[M],
  \end{displaymath}
  which is a Laurent polynomial whose Newton polytope coincides with~$\Delta_{i}$. Hence the corresponding nef toric divisor
  on $X_v$ agrees with the base change $D_{i,v}$ and we can consider the
  global section $s_{p_{i}} \in \Gamma(X_{v},\mathcal{O}(D_{i,v}))$.
  
  By \cref{thm: geometric main theorem} there exists a nonempty open
  subset $U\subset \mathbb{T}^{n-1}$ such that for all
  $(u_{0},\dots, u_{n-2}) \in U(\mathbb{C}_{v})$ the global sections
  $u_{0}^{*}s_{p_{0}}, \dots, u_{n-2}^{*}s_{p_{n-2}}, s_{f}$ meet
  properly and their zero set is contained in $X_{0,v}$.  Indeed, the
  direct application of the theorem ensures these properties for a
  generic twist of $s_{p_{0}}, \dots, s_{p_{n-2}}, s_{f}$, but they
  also hold without twisting $s_{f}$ because of their invariance under
  the diagonal action of $\mathbb{T}$ on $\mathbb{T}^{n}$.

  In this situation, the zero set in $X_{v}$ of
  $u_{0}^{*}s_{p_{0}}, \dots, u_{n-2}^{*}s_{p_{n-2}}, s_{f}$ agrees
  with the zero set in $\mathbb{T}_{v}$ of the Laurent polynomials
  $u_{0}^{*}{p_{0}}, \dots, u_{n-2}^{*}{p_{n-2}}, {f} $ and so
  \begin{displaymath}
Z_{X_{v}}(u_{0}^{*}s_{p_{0}}, \dots, u_{n-2}^{*}s_{p_{n-2}}, s_{f})= Z_{\mathbb{T}_{v}}(u_{0}^{*}p_{0}, \dots, u_{n-2}^{*} p_{n-2}, {f}) .
  \end{displaymath}
  Then for $t\in \mathbb{T}(\mathbb{C}_{v})$ the Poisson formula for
  the sparse resultant \cite[Theorem~1.1]{DS:pfsr} shows that the
  evaluation of $t^{*}g$ at this effective $0$-cycle can be written as
  the quotient of the resultant of
  $u_{0}^{*}p_{0}, \dots, u_{n-2}^{*}p_{n-2}, {f}, t^{*}g$ by the
  product of the resultants of a finite number of initial parts of
  $u_{0}^{*}{p_{0}}, \dots, u_{n-2}^{*}{p_{n-2}}, {f}$. Since all
  these resultants are polynomials in the coefficients of the
  corresponding family of Laurent polynomials, this implies that there
  are $P\in \mathbb{C}_{v}[M\oplus M^{\oplus (n-1)}]$ and
  $Q\in \mathbb{C}_{v}[M^{\oplus (n-1)}] \setminus \{0\}$ with
  \begin{equation}
    \label{eq:26}
  t^{*} g \, ( Z_{X_{v}}(u_{0}^{*}s_{p_{0}}, \dots, u_{n-2}^{*}s_{p_{n-2}}, s_{f}))=\frac{P(t,u_{0},\dots, u_{n-2})}{Q(u_{0},\dots, u_{n-2})}. 
\end{equation}
Combining with \cref{lem:2}, this implies that for each
$(u_{0},\dots, u_{n-2}) \in U(\mathbb{C}_{v})$ there is $c>0$ such that
\begin{equation}
  \label{eq:24}
      F_{v}(t)\ge \log |P(t,u_{0},\dots, u_{n-2})|_{v} - c
      \quad \text{ for all } t\in \mathbb{T} (\mathbb{C}_{v}).
    \end{equation}

    Now consider $P$ as a Laurent polynomial in
    $ \mathbb{C}_{v}[M] [ M^{\oplus (n-1)}]$ and write it as
\begin{displaymath}
  P=\sum_{a\in \mathcal{A}}P_{a} \, \chi^{a}
\end{displaymath}
for a finite subset $\mathcal{A} \subset M^{\oplus (n-1)}$ and
$P_{a}\in \mathbb{C}_{v}[M]$ for each~$a \in \mathcal{A}$.

We claim that the family $P_{a}\in \mathbb{C}_{v}[M]$,
$a\in \mathcal{A}$, defines the closed subset~$\Upsilon_{v} \subset \mathbb{T}_{v}$.  Indeed it is enough to show
this for the $\mathbb{C}_{v}$-points of the torus. For
$t\in \mathbb{T}(\mathbb{C}_{v})$ we have that $P_{a}(t)=0$ for all
$a$ if and only if $P(t,u_{0},\dots, u_{n-2})=0$ for all
$(u_{0},\dots, u_{n-2})\in U(\mathbb{C}_{v})$. By the identity in
\eqref{eq:26} this is equivalent to the fact that $t^{*}g $ vanishes
on the support of the $0$-cycle therein for all such $  (u_{0},\dots, u_{n-2})$, which is the same~as
    \begin{equation}
      \label{eq:37}
V_{X}( s_{f}, t^{*}s_{g}) \cap V_{X}(u_{0}^{*}s_{p_{0}}, \dots, u_{n-2}^{*}s_{p_{n-2}})  \ne \emptyset.
    \end{equation}

    On the other hand $t\in \Upsilon(\mathbb{C}_{v})$ if and only if
    $\codim(V_{X}(s_{f}, t^{*}s_{g})) \le 1$. Since
    $ u_{i}^{*}s_{p_{i}}$, $i=0,\dots, n-2$, are $n-1$ generic global
    sections of ample line bundles, this latter condition is
    equivalent to that in \eqref{eq:37}. Hence $P_{a}(t)=0$ for all
    $a$ if and only if $t\in \Upsilon(\mathbb{C}_{v})$, proving this
    claim.

    Finally, by \cref{lem:4} there is a finite subset
    $F\subset U(\mathbb{C}_{v})$ such that
    \begin{displaymath}
 \max_{(u_{0},\dots, u_{n-2})\in F}|P(t,u_{0},\dots, u_{n-2})|_{v} \ge        \max_{a\in
      \mathcal{A}}|P_{a}(t)|_{v}  \quad \text{ for all } t\in \mathbb{T} (\mathbb{C}_{v}).
    \end{displaymath}
    The statement then follows by taking the maximum of  the lower bounds in
    \eqref{eq:24} for $(u_{0},\dots, u_{n-2})\in F$.
\end{proof}

\subsection{Proof of \cref{thm: auxiliary function has log singularities}}
\label{sec:proof-theorem}

When $v$ is Archimedean we have that
$\mathbb{T}_v^{\an}=\mathbb{T}_v(\mathbb{C}_v)$, and  so the statement
is an immediate consequence of \cref{prop:1,prop: lower log bound for Fv}.

When $v$ is non-Archimedean, \cref{prop:2} shows that there is a
unique extension
$ \mathbb{T}_{v}^{\an} \to \mathbb{R}\cup \{-\infty\}$ that
is continuous with real values outside $ \Upsilon_v^{\an}$ and takes
the value $-\infty$ therein. The result then follows from \cref{prop:
  lower log bound for Fv} together with the density of
$\mathbb{T}_v(\mathbb{C}_v)$ in~$\mathbb{T}_v^{\an}$.

\section{A local logarithmic equidistribution}\label{sec: local log equidistribution}

We now begin building up the proof of \cref{thm: main theorem} following
the strategy proposed in \cref{sec:an-appr-crefc}. Our main result
here shows that in the two-codimensional situation each $v$-adic
summand in \eqref{eq:14} vanishes asymptotically (\cref{thm: local
  equidistribution for main theorem}).

Throughout this section we denote by $\mathbb{T}$ a split algebraic
torus over the number field~$\mathbb{K}$, of dimension $n$ and with
character lattice~$M$.

\subsection{Logarithmic equidistribution of torsion points}\label{sec: logarithmic equidistribution}

In this subsection we recall the results concerning the distribution
of Galois orbits of torsion points in analytic tori that will play a
key role for our present local treatment.

When $v$ is Archimedean, the analytic torus $\mathbb{T}_v^{\an}$
identifies with~$\mathbb{T}_v(\mathbb{C}_v)$. We have that
$\mathbb{S}_v$ is a compact subgroup of this topological group, and we
denote by $\nu_{v}$ its Haar probability measure.

The following result is a direct consequence of a theorem of Dimitrov
and Habegger~\cite{DH}.  For a closed subset
$H\subset \mathbb{T}_{v}$, its analytification $H^{\an}$ identifies
with $H(\mathbb{C}_{v})$.  Following \textit{loc. cit.}, we then say
that $H$ is \emph{essentially atoral} if the closure
of~$H^{\an}\cap \mathbb{S}_{v}$ in~$\mathbb{T}_{v}$ is a finite union
of subvarieties of codimension at least $2$ and proper torsion cosets,
that is translates of subtori by torsion points.

\begin{theorem}[Archimedean logarithmic equidistribution of torsion
  points]\label{thm: archimedean logarithmic equidistribution}
  Let $v$ be an Archimedean place of $\mathbb{K}$. Let  $H \subset \mathbb{T}_{v}$ be an
  essentially atoral closed algebraic subset and
  $\varphi\colon\mathbb{T}_v(\mathbb{C}_v)\to\mathbb{R}\cup\{-\infty\}$
  a function with at most logarithmic singularities
  along~$H(\mathbb{C}_v)$.  Then for any strict sequence
  $(\omega_\ell)_\ell$ of torsion points of
  $\mathbb{T}(\overline{\mathbb{K}})$ we have~that
\[
\lim_{\ell\to+\infty}\ \frac{1}{\#O(\omega_\ell)}\sum_{y\in O(\omega_\ell)_{v}}\varphi(y)=\int_{\mathbb{S}_v}\varphi\ d\nu_v.
\]
\end{theorem}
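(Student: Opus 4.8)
The plan is to deduce the statement from the equidistribution theorem for Galois orbits of roots of unity along strict sequences due to Dimitrov and Habegger \cite{DH}, via the usual dictionary between a split torus and $\Gm^n$. First I would fix a splitting $\mathbb{T}\simeq\Gm^n$; this identifies $N_{\mathbb{R}}$ with $\mathbb{R}^n$, the valuation map $\val_v$ with $(-\log|\cdot|_v,\dots,-\log|\cdot|_v)$ on rigid points, and the $v$-adic compact torus $\mathbb{S}_v$ with the standard polycircle $(S^1)^n\subset(\mathbb{C}^\times)^n$ carrying its Haar probability measure $\nu_v$. Under this identification a torsion point of $\mathbb{T}(\overline{\mathbb{K}})$ is an $n$-tuple of roots of unity and so is each of its Galois conjugates; hence $O(\omega_\ell)_v\subset\mathbb{S}_v$, the averages on the left-hand side are well defined, and since proper algebraic subgroups correspond under the isomorphism the sequence $(\omega_\ell)_\ell$ remains strict. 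Finally, $H$ is in particular a real algebraic subset of $\mathbb{T}_v(\mathbb{C}_v)=\mathbb{T}_v^{\an}$, and by \cref{rem:2} the property that $\varphi$ have at most logarithmic singularities along $H^{\an}$ is independent of the chosen defining equations and stable under enlarging $H$; this is the regularity hypothesis on the test function appearing in \emph{loc. cit.} (which there extends Bilu's equidistribution from continuous to logarithmically singular functions). Granting this translation, the statement becomes a special case of the Dimitrov--Habegger theorem applied to the essentially atoral set $H$ and to $\varphi|_{\mathbb{S}_v}$.

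It is worth spelling out the role of the essentially atoral hypothesis, i.e. of a decomposition $\overline{H^{\an}\cap\mathbb{S}_v}=\bigcup_i W_i\cup\bigcup_j T_j$ with the $W_i$ of codimension $\ge 2$ and the $T_j$ proper torsion cosets. The components $W_i$ are $\nu_v$-null sets along which $\log$ of a defining Laurent polynomial is $\nu_v$-integrable, so that $\int_{\mathbb{S}_v}\varphi\,d\nu_v$ is a well-defined element of $(-\infty,+\infty]$ — and of $\mathbb{R}$ once one knows, as is the case for the auxiliary functions of interest, that $\varphi$ is bounded above on $\mathbb{S}_v$. As for a component $T_j=\omega_{0}\cdot G$, it is contained in the proper algebraic subgroup $\langle\omega_0\rangle\cdot G$ of $\mathbb{T}$ (a finite union of translates of $G$, hence closed, and of dimension $\dim G<n$); by strictness the sequence $(\omega_\ell)_\ell$, and therefore for $\ell$ large its whole Galois orbit, eventually avoids a fixed neighbourhood of each $T_j$. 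Thus the part of the singular locus of $\varphi$ carried by torsion cosets is invisible to the orbits from some index on, which is precisely why strictness — rather than the a priori stronger requirement of avoiding all torsion cosets — suffices.

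If \cite{DH} is only invoked in the purely atoral (codimension $\ge 2$) case, this last observation yields the general one: for $\delta>0$ choose a continuous cut-off $0\le\psi_\delta\le 1$ on $\mathbb{S}_v$ vanishing on a neighbourhood of $\bigcup_jT_j$ and equal to $1$ outside a neighbourhood of $\nu_v$-measure $<\delta$; then $\varphi\psi_\delta$ has at most logarithmic singularities along the codimension-$\ge 2$ set $\bigcup_iW_i$, so \cite{DH} gives $\frac{1}{\#O(\omega_\ell)}\sum_{y\in O(\omega_\ell)_v}\varphi\psi_\delta(y)\to\int_{\mathbb{S}_v}\varphi\psi_\delta\,d\nu_v$; the orbit points eventually avoid the support of $1-\psi_\delta$, so for $\ell\gg0$ the left-hand side equals the average of $\varphi$, while letting $\delta\to0$ the integrals converge to $\int_{\mathbb{S}_v}\varphi\,d\nu_v$ by dominated convergence (using $\varphi\in L^1(\nu_v)$, or a truncation argument if $\varphi$ is only bounded above). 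The substantive input is entirely \cite{DH}; the work on our side is the bookkeeping just described, the main subtlety being the disposal of the torsion-coset components via strictness.
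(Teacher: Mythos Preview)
Your proposal treats the theorem as a direct citation of \cite{DH}, but the result in \emph{loc.\ cit.}\ (specifically Corollary~8.9 as invoked in the paper) is stated for the test function $\log|h|$ with $h$ a single essentially atoral Laurent polynomial, and for averages over subgroups $G_\ell\subset(\mathbb{Z}/d_\ell\mathbb{Z})^\times$ with \emph{uniformly bounded} index and conductor. Two pieces of work are therefore missing. First, the reduction from a general $\varphi$ with logarithmic singularities along $H$ to $\varphi=\log|h|$: the paper does this by building $h(x)=\sum_j\overline{h_j}(x^{-1})h_j(x)$ so that $h|_{(S^1)^n}=\sum_j|h_j|^2$ and $V(h)\cap(S^1)^n=H\cap(S^1)^n$, and then invoking \cite[Lemma~6.3]{CLT} together with Bilu to pass from $\varphi$ to $\log|h|$. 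Second, and more substantively, the Galois orbits here are over the number field $\mathbb{K}$, not over~$\mathbb{Q}$; the groups $G_\ell=\Gal(\mathbb{K}(\omega_\ell)/\mathbb{K})$ sit inside $\Gamma_{d_\ell}=(\mathbb{Z}/d_\ell\mathbb{Z})^\times$, and one must check that $[\Gamma_{d_\ell}:G_\ell]$ and the conductor $\mathfrak{f}_{G_\ell}$ are bounded independently of~$\ell$. The paper proves this using that $\mathbb{K}$ has finitely many subfields, so that the intersections $\mathbb{K}\cap\mathbb{Q}(\omega_\ell)$ range over a finite set of cyclotomic fields. Without this verification, the hypotheses of \cite{DH} are not known to hold and the citation is incomplete.

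Your backup cut-off argument also contains an error: the claim that for $\ell$ large the whole Galois orbit $O(\omega_\ell)_v$ avoids a fixed analytic neighbourhood of a torsion coset $T_j$ is false. Strictness only guarantees avoidance of the \emph{algebraic} subgroup $\langle\omega_0\rangle\cdot G$, not of an open neighbourhood of~$T_j$; indeed by Bilu the orbits equidistribute on $(S^1)^n$, so a positive proportion of orbit points lands in any open set of positive measure. The handling of the torsion-coset components in the essentially atoral decomposition is precisely what makes \cite{DH} delicate, and cannot be bypassed by this kind of cut-off.
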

\begin{proof}
  After fixing an isometry between $(\mathbb{C}_v,|\cdot|_v)$
  and~$(\mathbb{C},|\cdot|)$ and an isomorphism between $\mathbb{T}$
  and $\Gm^n$ we can identify $\mathbb{T}_v(\mathbb{C}_v)$
  with~$(\mathbb{C}^{\times})^n$.  Under this identification, the
  compact torus $\mathbb{S}_v$ corresponds to the polycircle
\[
(S^1)^n=\{(z_1,\ldots,z_n)\in(\mathbb{C}^{\times})^n: |z_1|=\dots=|z_n|=1\}
\]
and $\nu_v$ to its Haar probability measure.

Let $h_1,\ldots,h_s \in \mathbb{C}[x_{1}^{\pm1}, \dots, x_{n}^{\pm1}]$ be a system of Laurent
polynomials defining~$H$ and consider the Laurent polynomial  defined by 
\begin{displaymath}
  h(x_{1},\dots, x_{n})=\sum_{i=1}^{s} \overline{h_{i}}(x_{1}^{-1},\dots, x_{n}^{-1}) \, h_{i}(x_{1},\dots, x_{n}),
\end{displaymath}
where $\overline{h_{i}}$ denotes the complex conjugate of $h_{i}$.
For all $ z=(z_{1},\dots, z_{n})\in (S^{1})^{n}$ we have that
$z_{i}\overline{z_{i}}=1$ for each $i$, and so
\begin{displaymath}
  h(z)=\sum_{i=1}^{s} \overline{h_{i}}(\overline{z}) \, h_{i}(z) =\sum_{i=1}^{s} | h_{i}(z)|^{2}.
\end{displaymath}
This readily implies that
$H(\mathbb{C}) \cap (S^{1})^{n} = V_{\Gm^{n}}(h)(\mathbb{C}) \cap
(S^{1})^{n}$.  As the closed subset $H$ is essentially atoral, the
Laurent polynomial $h$ is essentially atoral in the sense
of~\cite{DH}.

Since $V_{\Gm^{n}}(h)$ contains $H$, by \cref{rem:2} we have that
$\varphi$ has at most logarithmic singularities along the
analytification of this hypersurface, and so there are positive real numbers $c_{1},c_{2}$ such
that
\begin{displaymath}
  \varphi(z)\ge c_{1}\log|h(z)|-c_{2}  \quad \text{ for all } z\in (S^{1})^{n}
\end{displaymath}
thanks to the compactness of~$(S^{1})^{n}$.  Applying the
measure-theoretical lemma \cite[Lemma~6.3]{CLT} together with Bilu's
equidistribution theorem \cite{Bilu} we can now reduce the proof of
the theorem to the case $\varphi=\log|h|$. This boils down to a
particular case of \cite[Corollary 8.9]{DH}, as we next explain.

To do so, fix an embedding
$\iota\colon \overline{\mathbb{K}}\hookrightarrow \mathbb{C}$ and for
each $\ell$ consider the Galois group
${G_\ell=\Gal(\mathbb{K}(\omega_\ell)/\mathbb{K})}$. In the present
situation, we have to prove that
\begin{equation}\label{eq: integrals in DH proof}
  \lim_{\ell\to \infty}\frac{1}{\#G_\ell}\sum_{\sigma\in G_\ell}\log|h(\iota\circ \sigma(\omega_\ell))|
  =\m( \iota(h)),
\end{equation}
 where $\m(\iota(h))$ denotes the Mahler measure of the complex Laurent polynomial~$\iota(h)$.
 
 Set $\Gamma_b=(\mathbb{Z}/b\mathbb{Z})^{\times}$ for
 $b\in\mathbb{N}$.  With this notation, for each $\ell$ the group
 $\Gamma_{\ord(\omega_\ell)}$ agrees with the Galois group of the
 cyclotomic extension~$\mathbb{Q}(\omega_\ell)/\mathbb{Q}$.  Notice
 that $G_\ell$ is isomorphic to the Galois group
 of~$\mathbb{Q}(\omega_\ell)/(\mathbb{K}\cap\mathbb{Q}(\omega_\ell))$
 \cite[Chapter VI, Theorem~1.12]{Langbook}, and hence $G_{\ell}$ can
 be identified with a subgroup of~$\Gamma_{\ord(\omega_\ell)}$.  On
 the other hand, the conductor $\mathfrak{f}_{G_\ell}$ is the minimal
 positive integer $b\, |\ord(\omega_\ell)$ such that $G_\ell$ contains
 the kernel of the reduction
 map~$\Gamma_{\ord(\omega_\ell)}\to\Gamma_b$.

 To apply \cite[Corollary~8.9]{DH} we need to show that both
 $[\Gamma_{\ord(\omega_\ell)}:G_\ell]$ and $ \mathfrak{f}_{G_\ell}$
 are uniformly bounded above. When this is case, this result ensures
 that the discrepancy between the Mahler measure $\m(\iota(h))$ and
 its $\ell$-th approximant in \eqref{eq: integrals in DH proof} is
 bounded above by
 \begin{equation}
   \label{eq:13}
   {c}\, {\delta(\omega_{\ell})^{-\kappa}}
 \end{equation}
 for all $\ell$ such that the strictness degree~$\delta(\omega_\ell)$
 is sufficiently large, where both $c,\kappa$ are positive real
 numbers depending only on $h$.

 As $\mathbb{K}$ is a number field, it has finitely many subfields.
 Therefore there is a finite subset $S\subset \overline{\mathbb{K}}$
 of roots of unity such that for every $\ell$ there is
 $\eta(\ell)\in S$ with
 \begin{equation*}
\mathbb{K}\cap\mathbb{Q}(\omega_\ell)=\mathbb{K}\cap\mathbb{Q}(\eta(\ell) ) \and \mathbb{Q}(\eta(\ell))\subseteq\mathbb{Q}(\omega_\ell). 
\end{equation*}
Setting  for short $d_{\ell}=\ord(\omega_{\ell})$ and
$e_{\ell}=\ord(\eta(\ell))$, there is a  diagram
\[
\xymatrix@=0.6em{
&&\mathbb{Q}(\omega_\ell)\ar@{-}[lld]_{G_{\ell}}\ar@{-}[rd]\ar@{-}[dd]_{\Gamma_{d_{\ell}}}&
\\ \mathbb{K} \cap \mathbb{Q}(\omega_{\ell})\ar@{-}[rrd]& & &\mathbb{Q}(\eta(\ell))\ar@{-}[ld]^{\Gamma_{e_{\ell}}}\\
&&\mathbb{Q}&
}
\]
for each $\ell$. This readily implies that
\begin{displaymath}
  [\Gamma_{d_{\ell}}:G_\ell]=[\mathbb{K}\cap\mathbb{Q}(\omega_\ell):\mathbb{Q}]\leq[\mathbb{K}:\mathbb{Q}].
\end{displaymath}
On the other hand, the Galois group of the extension
$\mathbb{Q}(\omega_\ell)/\mathbb{Q}(\eta(\ell))$ is isomorphic to the
kernel of the reduction map~$\Gamma_{d_{\ell}}\to\Gamma_{e_{\ell}}$
and it is a subgroup of~$G_\ell$, implying
that~$\mathfrak{f}_{G_\ell}\leq e_{\ell}$.  Therefore
\begin{displaymath}
  \mathfrak{f}_{G_\ell}\leq\max_{\eta\in S}\ord(\eta) \quad \text{ for all } \ell,
\end{displaymath}
and so it is also uniformly bounded above.

As the sequence $(\omega_\ell)_\ell$ is strict, its strictness degree
diverges and  the required convergence follows from \eqref{eq:13}.
\end{proof}

\begin{remark}
  It is expected that the technical condition on $H$ in \cref{thm:
    archimedean logarithmic equidistribution} is not necessary for the
  conclusion of the theorem, as discussed in  \cite[Conjecture~1.3]{DH}.
\end{remark}

When $v$ is non-Archimedean, recall  that the  \emph{Gauss
  point} $\zeta_{v}$ of~$\mathbb{T}_v^{\an}$ is the point of this Berkovich analytic space
corresponding to the multiplicative seminorm on $\mathbb{C}_v[M]$
defined by
\[
\|f\|_{\zeta_{v}} =\max_m|c_m|_v \quad \text{ for }  f= \sum_{m} c_m\chi^m\in\mathbb{C}_v[M].
\]
It is a point of $\mathbb{S}_v$, and the Dirac delta measure at it is
the non-Archimedean analogue of the Haar probability measure on the
compact torus from the Archimedean case.

The following result is a direct consequence of the theorem of Tate
and Voloch on linear forms in $p$-adic roots of unity
\cite{Tate_Voloch}.

\begin{theorem}[non-Archimedean logarithmic equidistribution of torsion points]\label{thm: nonarchimedean logarithmic equidistribution}
  Let $v$ be a non-Archimedean place of~$\mathbb{K}$ and
  $\varphi \colon \mathbb{T}_{v}^{\an}\to \mathbb{R}\cup \{-\infty\}$
  a function with at most logarithmic singularities along $H^{\an}$
  for a closed subset $H\subset \mathbb{T}_v$.  Then for any strict
  sequence $(\omega_\ell)_\ell$ of torsion points of
  $\mathbb{T}(\overline{\mathbb{K}})$ we have that
\[
\lim_{\ell\to+\infty}\ \frac{1}{\#O(\omega_\ell)}\sum_{y\in O(\omega_\ell)_v}\varphi(y)=\varphi(\zeta_{v}).
\]
\end{theorem}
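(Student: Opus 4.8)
The plan is to follow the same pattern as the proof of \cref{thm: archimedean logarithmic equidistribution}, with the Dirac mass $\delta_{\zeta_v}$ playing the role of the Haar measure $\nu_v$ and the theorem of Tate and Voloch on linear forms in $p$-adic roots of unity \cite{Tate_Voloch} that of the Dimitrov--Habegger theorem. First I would record the preliminary facts that keep the statement meaningful. Since $H$ is a proper closed subset and the multiplicative seminorm defining $\zeta_v$ is in fact a norm on $\mathbb{C}_v[M]$, the Gauss point does not lie on $H^{\an}$, so $\varphi(\zeta_v)\in\mathbb{R}$ and the first condition in \cref{def: log singularities} applies at $\zeta_v$. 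Every torsion point of $\mathbb{T}(\overline{\mathbb{K}})$ has all of its characters of absolute value $1$, so its $v$-adic Galois orbit---and also $\zeta_v$---lies in the compact set $\mathbb{S}_v$; in particular so does $O(\omega_\ell)_v$. Finally, by Laurent's theorem on the toric Manin--Mumford conjecture \cite[Theorem~7.4.7]{BG} the torsion points of $\mathbb{T}$ lying on $H$, together with all of their Galois conjugates, are contained in a finite union of proper torsion cosets, which a strict sequence eventually avoids; hence $O(\omega_\ell)_v\cap H^{\an}=\emptyset$ for $\ell$ large and the left-hand side of the claimed equality is well defined.

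Next I would use Tate and Voloch's theorem, applied to each of the hypersurfaces cut out by a defining system $h_1,\dots,h_s\in\mathbb{C}_v[M]$ of $H$, to produce $\delta_0>0$ with $\max_j|h_j(y)|_v\geq\delta_0$ for every torsion point $y$ not lying on $H$; after shrinking $\delta_0$ below $\max_j\|h_j\|_{\zeta_v}$, the compact set $\mathcal{K}=\{y\in\mathbb{S}_v:\max_j|h_j(y)|_v\geq\delta_0\}$ is disjoint from $H^{\an}$, is a neighbourhood of $\zeta_v$, contains $O(\omega_\ell)_v$ for $\ell$ large, and carries a continuous---hence bounded---restriction of $\varphi$. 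From this point I would argue exactly as in the Archimedean case: the lower bound in \cref{def: log singularities} together with the Tate--Voloch estimate show that on $\mathbb{S}_v$ the function $\varphi$ differs from a continuous function by a remainder which near $H^{\an}$ is bounded below by a multiple of $\log\max_j|h_j|_v$ minus a constant, so the measure-theoretic lemma \cite[Lemma~6.3]{CLT} reduces the theorem to two inputs: the weak convergence on $\mathbb{S}_v$ of the normalised counting measures $\tfrac{1}{\#O(\omega_\ell)}\sum_{y\in O(\omega_\ell)_v}\delta_y$ to $\delta_{\zeta_v}$ (the non-Archimedean analogue of Bilu's theorem), and the special case $\varphi=\log|h|_v$ for a Laurent polynomial $h=\sum_m c_m\chi^m$ cutting out a hypersurface containing $H$. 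In the latter case $\varphi(\zeta_v)=\log\|h\|_{\zeta_v}=\log\max_m|c_m|_v$, and the desired limit $\tfrac{1}{\#O(\omega_\ell)}\sum_{y\in O(\omega_\ell)_v}\log|h(y)|_v\to\log\|h\|_{\zeta_v}$ says that the $v$-adic analogue of the Mahler measure is computed by torsion averages; both this case and the weak convergence I would deduce from Tate and Voloch's theorem, the first by viewing $h(y)$ as a linear form in roots of unity.

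I expect this last convergence to be the main obstacle. The delicate feature is the behaviour of the conjugates of $\omega_\ell$ near the Gauss point: a positive proportion of the orbit can cluster inside a single residue polydisc---this happens precisely when the $p$-part of the order of some coordinate of $\omega_\ell$ is large---and one must verify that these clustering points contribute negligibly to the average. The key quantitative input is the estimate $|\zeta_{p^{k}}-1|_v=p^{-1/(p^{k-1}(p-1))}$ for a primitive $p^{k}$-th root of unity $\zeta_{p^{k}}$, which tends to $1$ as $k\to\infty$: the clustering points are individually $v$-adically close to $\zeta_v$, so $\log|h|_v$ takes there a value close to $\log\|h\|_{\zeta_v}$ in spite of their number. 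Strictness of $(\omega_\ell)_\ell$ enters twice more: it forces $\#O(\omega_\ell)\to\infty$, and---through the toric Manin--Mumford theorem---it guarantees the eventual avoidance of $H$, which is what makes the remaining bookkeeping, of the finitely many exceptional residue polydiscs produced by the Tate--Voloch constant, go through.
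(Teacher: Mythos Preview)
Your reduction to the special case $\varphi=\log|h|_v$ via \cite[Lemma~6.3]{CLT} and \cref{rem:2} is exactly what the paper does, and your preliminary observations about $\zeta_v\notin H^{\an}$ and $O(\omega_\ell)_v\subset\mathbb{S}_v$ are correct.  Where you diverge from the paper is in the endgame, and there you make it harder than it needs to be.  Once Tate--Voloch gives a constant $c>0$ with $|h(y)|_v\ge c$ for every torsion point $y\notin V(h)$, the paper simply replaces $\log|h|_v$ by the \emph{continuous} compactly supported function
\[
\psi(x)=\rho(x)\,\log\max\bigl(|h|_x,\,c\bigr),
\]
where $\rho$ is a continuous cutoff equal to~$1$ on $\mathbb{S}_v$.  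On the one hand $\psi(y)=\log|h(y)|_v$ for every $y\in O(\omega_\ell)_v$ with $\ell$ large, and on the other $\psi(\zeta_v)=\log\|h\|_{\zeta_v}$ because $c\le\|h\|_{\zeta_v}$.  So the claim for $\log|h|_v$ follows immediately from the equidistribution of Galois orbits of small points towards $\delta_{\zeta_v}$ tested against \emph{continuous} functions, which is a direct instance of Chambert-Loir's non-Archimedean equidistribution theorem \cite[Exemple~3.2]{C-L}.

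This truncation trick is the point you are missing.  Your third paragraph worries about a ``main obstacle''---the clustering of conjugates in residue polydiscs and the estimate $|\zeta_{p^k}-1|_v=p^{-1/(p^{k-1}(p-1))}$---but none of this analysis is needed: the Tate--Voloch bound you already invoked shows that $\log|h|_v$ is \emph{uniformly bounded} on torsion points, so there is no singular behaviour left to control, and the weak convergence to $\delta_{\zeta_v}$ is a known theorem rather than something to extract from Tate--Voloch.  Your proposed direct derivation of both the weak convergence and the $\log|h|_v$ case from Tate--Voloch alone would amount to reproving Chambert-Loir's result in this special setting; it may be feasible, but it is considerably more work than citing it.
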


\begin{proof}
  As in the proof of \cref{thm: archimedean logarithmic
    equidistribution} we can assume that~$\mathbb{T}=\Gm^n$.
      Consider a nonzero Laurent polynomial
  $h\in\mathbb{C}_v[x_1^{\pm1},\ldots,x_n^{\pm1}]$ vanishing on~$H$.
  Then $\varphi$ has logarithmic singularities along
  $V_{\Gm^{n}}(h)^{\an}$ by \cref{rem:2}, and so it is sufficient to
  show the result for $\varphi=\log|h|_{v}$, similarly as for the
  proof of \cref{thm: archimedean logarithmic equidistribution}. This
  amounts to show~that
\begin{equation}\label{eq: claim in proof of nonarchimedean log equidistribution}
\lim_{\ell\to+\infty}\ \frac{1}{\#O(\omega_\ell)}\sum_{y\in O(\omega_\ell)_v}\log|h(y)|_{v}=\log\|h\|_{\zeta_{v}}.
\end{equation}

By Tate--Voloch's theorem \cite[Theorem~2]{Tate_Voloch}, there is a
constant $c>0$ such that for any family
$\eta_1,\ldots,\eta_n\in\mathbb{C}_{v}$ of roots of unity
with~$h(\eta_1,\ldots,\eta_n)\neq0$ we have that
$|h(\eta_1,\ldots,\eta_n)|_{v}\geq c$. In particular, for such a
family of roots of unity we have that
\begin{equation}\label{eq: inequality in proof of nonarchimedean log equidistribution}
c\leq|h(\eta_1,\ldots,\eta_n)|_{v}\leq\|h\|_{\zeta_{v}}.
\end{equation}

Let $\rho\colon \mathbb{T}_{v}^{\an}\to \mathbb{R}$ be a continuous
function with compact support and value $1$ on~$\mathbb{S}_{v}$. Its
existence is ensured by \cite[Corollaire~(3.3.4)]{CLD} since
$\mathbb{S}_{v}$ is compact.  We now consider the function
$\psi\colon \mathbb{T}_v^{\an}\to\mathbb{R}$ defined as
\[
\psi(x)=\rho(x) \log\max(|h|_x,c).
\]
It is a continuous and compactly supported function, and so it can be
extended to a continuous function on the analytic projective space.
Applying Chambert-Loir's non-Archimedean equidistribution theorem as
in \cite[Exemple 3.2]{C-L}
we thus obtain
\[
\lim_{\ell\to+\infty}\ \frac{1}{\#O(\omega_\ell)}\sum_{y\in O(\omega_\ell)_v}\psi(y)=\psi(\zeta_{v}).
\]
Since the sequence $(\omega_{\ell})_{\ell}$ is strict, for $\ell$
large enough the Galois orbit $O(\omega_{\ell})$ eventually avoids the
zero set of~$h$ and so for each $y\in O(\omega_{\ell})_{v}$ we have
that $\psi(y)=\log|h(y)|_{v}$.  Finally
$\psi(\zeta_{v})=\log\|h\|_{\zeta_{v}}$ because of~\eqref{eq:
  inequality in proof of nonarchimedean log equidistribution}, proving
\eqref{eq: claim in proof of nonarchimedean log equidistribution}.
\end{proof}

\subsection{The local vanishing}
\label{sec:local-vanishing}

We now come back to the situation of the present paper, and address the
main objective of this section.  We show that, in the
$2$-codimensional situation and under suitable hypotheses, each of the
local error terms in the recursive expression for the height in
\cref{prop: induction in general strategy} converges to zero for
strict sequences of torsion points.

We set ourselves in the same hypotheses and notations of
\cref{sec:an-auxil-funct-1}. In particular, $f,g\in \mathbb{K}[M]$ are
two absolutely irreducible Laurent polynomials whose supports
contain the origin lattice point, $X$ is a smooth projective toric
variety compactifying~$\mathbb{T}$ whose fan is compatible with the
Newton polytopes of $f$ and~$g$, and
$\overline{D}_0,\ldots,\overline{D}_{n-2}$ is a family of semipositive
toric metrized divisors on~$X$ with very ample underlying divisors and
with smooth metrics at the Archimedean places and algebraic metrics at
the non-Archimedean ones.

For all $v\in\mathfrak{M}$, similarly as in \cref{sec:an-appr-crefc}, we consider the function $I_v\colon\mathbb{T}(\mathbb{C}_v)\to\mathbb{R}\cup\{-\infty\}$ defined by
\[
I_v(t)=\int_{X_{v}^{\an}}\log\|t^*s_{g}\|_{\Ron,v}\ \chern(\overline{D}_{0,v})\wedge\ldots\wedge\chern(\overline{D}_{n-2,v})\wedge
\delta_{Z_X(s_{f})_v^{\an}},
\]
where $\|\cdot\|_{\Ron,v}$ stands for the $v$-adic Ronkin metric on
$D_g$ as in~\eqref{eq: Ronkin metric}.  This function takes the value
$-\infty$ precisely on the $\mathbb{C}_v$-points of the proper closed
subset $\Upsilon$ of $\mathbb{T}$ defined in  \eqref{eq: set of bad
  intersection}.

By combining the local logarithmic equidistribution theorems for
torsion points from \cref{sec: logarithmic equidistribution}, the
definition of the Ronkin function and \cref{thm: auxiliary function
  has log singularities}, we obtain the following 
asymptotic vanishing for~$I_v$.

\begin{theorem}\label{thm: local equidistribution for main theorem}
In the previous hypotheses and notations, assume moreover that if $v$ is Archimedean the closed subset $\Upsilon_v\subset\mathbb{T}_v$ is essentially atoral.
Then, for all strict sequence $(\omega_\ell)_\ell$ of torsion points of $\mathbb{T}(\overline{\mathbb{K}})$ we have
\[
\lim_{\ell\to \infty}\frac{1}{\# O(\omega_{\ell})}\sum_{\eta\in O(\omega_\ell)_{v}} I_{v}(\eta)=0.
\]
\end{theorem}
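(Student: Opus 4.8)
The plan is to deduce the statement from the two logarithmic equidistribution theorems of \cref{sec: logarithmic equidistribution}, applied to the function $I_{v}$ itself; the preliminary work is to realize $I_{v}$ as the restriction of a function on $\mathbb{T}_{v}^{\an}$ with at most logarithmic singularities, for which we use \cref{sec:an-auxil-funct-1}. The link with the auxiliary function $F_{v}$ of that section is the identity $t^{*}s_{g}=(t^{*}g)\, s_{D_{g}}$, valid for $t\in \mathbb{T}(\mathbb{C}_{v})$ thanks to $s_{g}=g\, s_{D_{g}}$ and \eqref{eq: twist of section}. Since the mixed Monge--Amp\`ere measure $\chern(\overline{D}_{0,v})\wedge\ldots\wedge\chern(\overline{D}_{n-2,v})\wedge\delta_{Z_{v}^{\an}}$ is supported in the principal open torus --- at the Archimedean places because the boundary strata of $Z$ have lower dimension, and at the non-Archimedean ones because by the proof of \cref{prop:2} it is a finite combination of Dirac masses at points lying outside $(Z\setminus X_{0})^{\an}$ --- substituting this identity in the definitions of $I_{v}$ and of $F_{v}$ in \eqref{eq: auxiliary function} yields
\[
I_{v}(t)=F_{v}(t)+C_{v}\qquad\text{for }t\in \mathbb{T}(\mathbb{C}_{v}),
\]
where $C_{v}=\int_{X_{v}^{\an}}\log\|s_{D_{g}}\|_{\Ron,v}\, \chern(\overline{D}_{0,v})\wedge\ldots\wedge\chern(\overline{D}_{n-2,v})\wedge\delta_{Z_{v}^{\an}}$ is a finite real number (its integrand being bounded on the compact support of the measure, which avoids the zero divisor of $s_{D_{g}}$). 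By \cref{thm: auxiliary function has log singularities}, $F_{v}$, and hence also $I_{v}$, extends uniquely to a function on $\mathbb{T}_{v}^{\an}$ with at most logarithmic singularities along $\Upsilon_{v}^{\an}$ and taking the value $-\infty$ there.

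Next I would apply equidistribution to this extension. If $v$ is Archimedean, the extra hypothesis that $\Upsilon_{v}$ is essentially atoral permits invoking \cref{thm: archimedean logarithmic equidistribution} with $H=\Upsilon_{v}$ and $\varphi=I_{v}$, obtaining that the limit under study equals $\int_{\mathbb{S}_{v}}I_{v}\, d\nu_{v}$; this is a finite quantity because $\Upsilon_{v}^{\an}$ meets $\mathbb{S}_{v}$ in a $\nu_{v}$-null set (a proper subtorus meets $\mathbb{S}_{v}$ in a lower-dimensional subgroup) while, by \cref{prop: lower log bound for Fv}, $I_{v}$ is bounded below by the logarithm of a nonzero Laurent polynomial, a $\nu_{v}$-integrable function. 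If $v$ is non-Archimedean, \cref{thm: nonarchimedean logarithmic equidistribution} with $H=\Upsilon_{v}$ and $\varphi=I_{v}$ gives that the limit equals $I_{v}(\zeta_{v})$, which is finite since $\zeta_{v}\notin \Upsilon_{v}^{\an}$, the Gauss seminorm of a nonzero Laurent polynomial being positive.

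It thus remains to prove that the limiting value --- namely $\int_{\mathbb{S}_{v}}I_{v}\, d\nu_{v}$ when $v$ is Archimedean and $I_{v}(\zeta_{v})$ when $v$ is non-Archimedean --- vanishes. The key input is the defining property of the Ronkin metric of \eqref{eq: Ronkin metric}: the function $p\mapsto \log\|s_{g}(p)\|_{\Ron,v}$ on the principal open torus has zero average over every fiber of $\val_{v}$ (equivalently, the $v$-adic metric function of $\overline{D}^{\Ron}_{g}$ cancels the fiberwise average of $\log|g|_{v}$), see \cite{Gualdi}. Combining the peaked-product formula for the extension of $F_{v}$ from the remark following \cref{prop:2} with the constant $C_{v}$, the $\mathbb{S}_{v}$-invariance of the Ronkin metric, and the relation $\val_{v}(t\ast x)=\val_{v}(x)$ for $t\in \mathbb{S}_{v}$, one gets
\[
I_{v}(t)=\int_{X_{v}^{\an}}\log\|s_{g}(t\ast x)\|_{\Ron,v}\, \chern(\overline{D}_{0,v})\wedge\ldots\wedge\chern(\overline{D}_{n-2,v})\wedge\delta_{Z_{v}^{\an}}\qquad\text{for all }t\in \mathbb{S}_{v}.
\]
Since $\nu_{v}$ is the Haar probability measure of $\mathbb{S}_{v}$ in the Archimedean case and the Dirac mass at $\zeta_{v}$ in the non-Archimedean one, in both cases the limiting value equals $\int_{X_{v}^{\an}}\big(\int_{\mathbb{S}_{v}}\log\|s_{g}(t\ast x)\|_{\Ron,v}\, d\nu_{v}(t)\big)\, \chern(\overline{D}_{0,v})\wedge\ldots\wedge\delta_{Z_{v}^{\an}}$, after exchanging the two integrals by Tonelli's theorem. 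For $x$ in the principal open torus the map $t\mapsto t\ast x$ pushes $\nu_{v}$ forward to the canonical measure on the fiber $\val_{v}^{-1}(\val_{v}(x))$, so the inner integral is the fiberwise average of $\log\|s_{g}\|_{\Ron,v}$ over that fiber, which vanishes. Hence the limiting value is $0$, and the theorem follows.

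I expect the principal obstacle to be not this last computation but the one already settled in \cref{sec:an-auxil-funct-1} and used here through \cref{thm: auxiliary function has log singularities}: turning the algebraically defined $I_{v}$ into a function on the whole analytic torus to which the equidistribution theorems apply. A related, lesser difficulty is to make the peaked-product identity for $I_{v}$ rigorous at every analytic point (not only at rigid ones), so that the fiberwise-average argument can be run at the Gauss point in the non-Archimedean case, together with the attendant sign bookkeeping between the Ronkin function, the metric function of $\overline{D}^{\Ron}_{g}$, and the valuation map.
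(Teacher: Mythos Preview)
Your proposal is correct and follows essentially the same route as the paper. You decompose $I_{v}=F_{v}+C_{v}$ with $C_{v}=\int(\rho_{g,v}\circ\val_{v})\,d\mu$, apply the logarithmic equidistribution theorems (you apply them to $I_{v}$, the paper to $F_{v}$, a cosmetic difference), and then show the limiting value vanishes. The only organizational difference is in this last step: you package the cancellation uniformly for both places via the fiberwise-average property of the Ronkin metric (the average of $\log\|s_{g}\|_{\Ron,v}$ over each fiber of $\val_{v}$ is zero), whereas the paper, in the non-Archimedean case, verifies the identity $\log|\zeta_{v}^{*}g\,(\xi_{V,\zeta_{v}})|_{v}+(\rho_{g,v}\circ\val_{v})(\xi_{V})=0$ by an explicit affinoid computation for each point $\xi_{V}$ in the support of $\mu$. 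Your conceptual phrasing is cleaner, but --- as you yourself flag --- it requires knowing that $\zeta_{v}\ast\xi_{V}$ is the Gauss point of the fiber $\val_{v}^{-1}(\val_{v}(\xi_{V}))$, or equivalently that $|g(\zeta_{v}\ast\xi_{V})|_{v}=\max_{m}|\beta_{m}|_{v}|\chi^{m}(\xi_{V})|_{v}$; this is precisely what the paper's explicit computation with completed tensor products establishes.
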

\begin{proof}
Recall from \eqref{eq: twist of section} that for all $t\in\mathbb{T}(\mathbb{C}_v)$ we have the equality $t^*s_g=(t^*g)\,s_{D_g}$ of rational sections of the toric line bundle~$\mathcal{O}(D_g)$.
Together with the definition of the $v$-adic Ronkin metric on $D_g$ this relation yields
\begin{equation}\label{eq: decomposition of Ronkin}
\log\|t^*s_g\|_{\Ron,v}
=
\log|t^*g|+\rho_{g,v}\circ\val_v
\quad
\text{on }X_{0,v}^{\an},
\end{equation}
where $\rho_{g,v}$ is the $v$-adic Ronkin function of~$g$.
The measure
\[
\chern(\overline{D}_{0,v})\wedge\ldots\wedge\chern(\overline{D}_{n-2,v})\wedge
\delta_{Z_X(s_{f})_v^{\an}}
\]
has zero mass on the analytic boundary $X_v^{\an}\setminus X_{0,v}^{\an}$, and to lighten the notation we write $\mu$ for its restriction to~$X_{0,v}^{\an}$.
It follows then from \eqref{eq: decomposition of Ronkin} that
\begin{equation}
  \label{eq:41}
I_v(t)
=
F_v(t)
+
\int_{X_{0,v}^{\an}}(\rho_{g,v}\circ\val_v)\,d\mu  
\end{equation}
for all~$t\in\mathbb{T}(\mathbb{C}_v)$, with $F_v$ being the $v$-adic auxiliary function from~\eqref{eq: auxiliary function}.
In particular, for a strict sequence $(\omega_\ell)_\ell$ of torsion points of~$\mathbb{T}(\overline{\mathbb{K}})$ we obtain that
\begin{equation}\label{eq: main eq in local vanishing}
\lim_{\ell\to \infty}\frac{1}{\# O(\omega_{\ell})}\sum_{\eta\in O(\omega_\ell)_{v}} I_{v}(\eta)
=
\lim_{\ell\to \infty}\frac{1}{\# O(\omega_{\ell})}\sum_{\eta\in O(\omega_\ell)_{v}}F_v(\eta)
+
\int_{X_{0,v}^{\an}}(\rho_{g,v}\circ\val_v)\,d\mu.
\end{equation}

To conclude the proof it is hence enough to show that the right hand side of \eqref{eq: main eq in local vanishing} vanishes.
We do this by distinguishing the Archimedean and non-Archimedean cases.

Suppose first that $v$ is Archimedean.
In this case, the valuation map $\val_v\colon X_{0,v}^{\an}\to N_{\mathbb{R}}$ is a group homomorphism, and for each $x\in X_{0,v}^{\an}$ the fiber of $\val_v(x)$ agrees with the coset of $x$ under the preimage of~$0$, that is with the set~$\mathbb{S}_v\cdot x$.
Thus, denoting by $\nu_v$ the Haar probability measure on~$\mathbb{S}_v$, the definition of the Ronkin function and Fubini--Tonelli's theorem yield
\begin{equation}\label{eq: Archimedean value of Ronkin integral}
\begin{split}
\int_{X_{0,v}^{\an}}(\rho_{g,v}\circ\val_v)\,d\mu
&=
\int_{X_{0,v}^{\an}}\int_{\val_v^{-1}(\val_v(x))}-\log|g|_v\,d\nu_v\,d\mu(x)
\\&=
\int_{X_{0,v}^{\an}}\int_{\mathbb{S}_v}-\log|g(\theta\cdot x)|_v\,d\nu_v(\theta)\,d\mu(x)
\\&=
\int_{\mathbb{S}_v}\int_{X_{0,v}^{\an}}-\log|(\theta^*g)(x)|_v\,d\mu(x)\,d\nu_v(\theta)
\\&=
-\int_{\mathbb{S}_v}F_v\,d\nu_v.
\end{split}
\end{equation}
On the other hand, thanks to \cref{thm: auxiliary function has log singularities} the auxiliary function $F_v$ is a function on $\mathbb{T}_v^{\an}$ with at most logarithmic singularities along~$\Upsilon_v(\mathbb{C}_v)$, and $\Upsilon_v$ is an essentially atoral closed algebraic subset of $\mathbb{T}_v$ because of the hypotheses.
Therefore, \cref{thm: archimedean logarithmic equidistribution} together with \eqref{eq: main eq in local vanishing} and \eqref{eq: Archimedean value of Ronkin integral} implies that
\[
\lim_{\ell\to \infty}\ \frac{1}{\# O(\omega_{\ell})}\sum_{\eta\in O(\omega_\ell)_{v}} I_{v}(\eta)=0.
\]

Let us now suppose that $v$ is non-Archimedean.
Again by \cref{thm: auxiliary function has log singularities}, the auxiliary function extends to a function $F_v$ on $\mathbb{T}_v^{\an}$ with at most logarithmic singularities along~$\Upsilon_v^{\an}$.
Therefore, applying \cref{thm: nonarchimedean logarithmic equidistribution} to \eqref{eq: main eq in local vanishing} yields
\begin{equation}\label{eq: equation non-archimedean in local vanishing}
\lim_{\ell\to \infty}\frac{1}{\# O(\omega_{\ell})}\sum_{\eta\in O(\omega_\ell)_{v}} I_{v}(\eta)
=
F_v(\zeta_v)+\int_{X_{0,v}^{\an}}(\rho_{g,v}\circ\val_v)\,d\mu,
\end{equation}
where $\zeta_v$ is the Gauss point of~$\mathbb{T}_v^{\an}$.  To
compute this sum, we recall from the proof of \cref{prop:2} the
expression for the non-Archimedean Monge--Amp\`ere measure $\mu$ in
\eqref{eq: form of non-Archimedean Monge-Ampère} and the definition of
the extended auxiliary function in~\eqref{eq: extension of
  non-Archimedean auxiliary map}.  Using those, we obtain that the
right hand side in \eqref{eq: equation non-archimedean in local
  vanishing} is equal to a weighted sum of
\[
\log|\zeta_v^*g\, (\xi_{V,\zeta_v}))|_v+(\rho_{g,v}\circ\val_v)(\xi_V),
\]
where the sum ranges over the set of irreducible components of the special fiber of a suitable distinguished formal analytic model of~$Z_X(s_f)_v$, with $\xi_V$ and $\xi_{V,\zeta_v}$ as in the cited proof.

We claim that each of the previous terms is zero, which will be enough to conclude. 
On the one hand, let $\mathscr{A}_V$ be as in the proof of \cref{prop:2} and $\mathscr{B}$ be a completion of $\mathbb{C}_v[M]$ with respect to the Gauss norm.
They are $\mathbb{C}_v$-affinoid algebras satisfying $\xi_V\in\mathcal{M}(\mathscr{A}_V)$ and $\zeta_v\in\mathcal{M}(\mathscr{B})$ respectively.
Because of \cite[Lemme~3.1]{Poineau_angelique}, both canonical morphisms $\mathscr{A}_V\widehat{\otimes}_{\mathbb{C}_{v}}\mathscr{B}\to\mathscr{A}_V\widehat{\otimes}_{\mathbb{C}_{v}}\mathscr{H}(\zeta_v)$ and $\mathscr{A}_V\widehat{\otimes}_{\mathbb{C}_{v}}\mathscr{B}\to\mathscr{H}(\xi_V)\widehat{\otimes}_{\mathbb{C}_{v}}\mathscr{B}$ are isometries.
Therefore, writing~$g=\sum_m\beta_m\chi^m$ and using the definition in \eqref{eq:21}, we have that
\begin{equation*}
\begin{split}
|\zeta_v^*g\, (\xi_{V,\zeta_v})|_v
&=
\Big|\sum_m\beta_m\chi^m\otimes\chi^m(\zeta_v)\Big|_{\mathscr{A}_{V}\widehat{\otimes}_{\mathbb{C}_{v}}\mathscr{H}(\zeta_v)}
=
\Big|\sum_m\beta_m\chi^m\otimes\chi^m\Big|_{\mathscr{A}_{V}\widehat{\otimes}_{\mathbb{C}_{v}}\mathscr{B}}
\\&=
\Big|\sum_m\beta_m\chi^m(\xi_V)\otimes\chi^m\Big|_{\mathscr{H}(\xi_V)\widehat{\otimes}_{\mathbb{C}_{v}}\mathscr{B}}.
\end{split}
\end{equation*}
As $\zeta_v$ is the Gauss norm on~$\mathscr{B}$, the completed tensor product norm on $\mathscr{H}(\xi_V)\widehat{\otimes}_{\mathbb{C}_{v}}\mathscr{B}$ agrees with the Gauss norm on the corresponding algebra, see for instance \cite[Appendix B/Proposition 7]{Bosch_lectures}, so that we get
\begin{equation}\label{eq: non-Archimedean equation 1 in proof of local vanishing}
|\zeta_v^*g\, (\xi_{V,\zeta_v})|_v
=
\max_m|\beta_m|_v|\chi^m(\xi_V)|_v.
\end{equation}
On the other hand, the expression for the non-Archimedean Ronkin function from \cite[Remark~2.8]{Gualdi} implies that
\begin{multline}\label{eq: non-Archimedean equation 2 in proof of local vanishing}
(\rho_{g,v}\circ\val_v)(\xi_V)
=
\min_{m}(\langle m,\val_v(\xi_V)\rangle-\log|\beta_m|_v)
\\=
\min_{m}(-\log|\chi^m(\xi_V)|_v-\log|\beta_m|_v)
=
-\log\max_{m}(|\beta_m|_v|\chi^m(\xi_V)|_v).
\end{multline}
Thus, putting together \eqref{eq: non-Archimedean equation 1 in proof of local vanishing} and \eqref{eq: non-Archimedean equation 2 in proof of local vanishing} concludes the proof.
\end{proof}

\section{The adelic vanishing}\label{sec: adelic equidistribution}

We continue in the $2$-codimensional situation and keep the setting at
the beginning of \cref{sec:an-auxil-funct-1}.  Our aim here is to show
that for a strict sequence of torsion points the sum of the correcting
integrals over all places outside a specific finite subset converges
to zero.
To this end, we consider the following conditions on a place $v$
of~$\mathbb{K}$:
\begin{enumerate}[leftmargin=*]
\item \label{item:6} the restriction of $v$ to $\mathbb{Q}$ is a
  $p$-adic absolute value for a prime number $p\ne 2$,
\item \label{item:7} all nonzero coefficients of $f$ and $g$ have
$v$-adic absolute value equal to~$1$,
\item \label{item:8} for $i=0,\ldots,n-2$ the $v$-adic metric of
  $\overline{D}_{i}$ is canonical.
\end{enumerate}

Whenever these conditions are met, $v$ is a non-Archimedean place
and~$f,g\in\mathbb{C}_v^\circ[M]$.  This allows to consider the following additional conditions on~$v$:

\begin{enumerate}[resume,leftmargin=*]
\item \label{item:15} the value group of $v$ coincides with that of
  its restriction to $\mathbb{Q}$, namely
  $|\mathbb{K}^{\times}|_v =p^{\mathbb{Z}}$,
\item \label{item:9} the reductions
  $\widetilde{f}, \widetilde{g}\in \widetilde{\mathbb{C}}_{v}[M]$ are
 irreducible Laurent polynomials.
\end{enumerate}

We denote by $\mathfrak{S} \subset \mathfrak{M}$ the subset of places
for which at least one of the conditions~\eqref{item:6}-\eqref{item:9}
fails to hold.

\begin{lemma}\label{lemma: S is finite} The set $\mathfrak{S}$ is
finite.
\end{lemma}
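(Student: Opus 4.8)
The plan is to show that each of the four defining conditions of $\mathfrak{M}\setminus\mathfrak{S}$ excludes only finitely many places, so that $\mathfrak{S}$, being the union of finitely many finite sets, is itself finite. I would argue condition by condition.

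First, condition~\eqref{item:6} excludes exactly the Archimedean places of $\mathbb{K}$, of which there are finitely many. Second, for condition~\eqref{item:7}, recall that $f$ and $g$ have only finitely many nonzero coefficients, each a nonzero element $\alpha\in\mathbb{K}$; for every such $\alpha$ the product formula (or simply the fact that $\alpha$ and $\alpha^{-1}$ lie in $\mathcal{O}_{\mathbb{K}}$ away from finitely many primes) shows that $|\alpha|_v=1$ for all but finitely many non-Archimedean $v$, and taking the union over the finitely many coefficients gives that~\eqref{item:7} fails at only finitely many places. Third, condition~\eqref{item:8}: by the definition of a semipositive toric adelically metrized divisor recalled in \cref{sec:arithm-toric-constr}, each $\overline{D}_i$ is induced by a single integral model for all but finitely many $v$, and since the canonical metric on a toric line bundle is precisely the one coming from the canonical toric integral model, the $v$-adic metric of $\overline{D}_i$ coincides with the canonical one outside a finite set of places; taking the union over $i=0,\ldots,n-2$ handles this condition.

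The remaining and slightly more substantial point is condition~\eqref{item:9}: I must show that the reductions modulo $\mathfrak{p}$ of the absolutely irreducible Laurent polynomials $f$ and $g$ remain absolutely irreducible over $\mathcal{O}_{\mathbb{K}}/\mathfrak{p}$ for all but finitely many primes $\mathfrak{p}$. Here I would first arrange (possibly after clearing denominators, which by condition~\eqref{item:7}-type reasoning only changes finitely many places, and using that $f,g$ already have coefficients in $\mathcal{O}_{\mathbb{K}}$ by the reduction step \cref{prop: reduction steps}\eqref{item: reduction of Laurent polynomials}) that $f$ and $g$ are primitive integral Laurent polynomials with fixed Newton polytopes. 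Passing from Laurent polynomials in $\mathcal{O}_{\mathbb{K}}[M]$ to genuine polynomials by multiplying by a suitable monomial $\chi^{m_0}$ does not affect (absolute) irreducibility, so it suffices to treat ordinary polynomials. Then I invoke the standard spreading-out / generic-fiber argument: the locus of primes over which an absolutely irreducible polynomial stays absolutely irreducible is open and nonempty in $\Spec(\mathcal{O}_{\mathbb{K}})$ — this is a consequence of the constructibility of the locus where a scheme of finite type is geometrically irreducible (EGA~IV, \S9, e.g.\ EGA IV$_3$ 9.7.7), applied to the hypersurface $V(f)$ (resp.\ $V(g)$) over $\Spec(\mathcal{O}_{\mathbb{K}})$, whose generic fiber is geometrically irreducible by the absolute irreducibility hypothesis. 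Hence only finitely many $\mathfrak{p}$ fail condition~\eqref{item:9}. Alternatively one can give an elementary Bertini–Noether style argument: absolute irreducibility of $f$ over $\mathbb{K}$ is witnessed by the non-vanishing of finitely many resultant/determinant polynomials in the coefficients, which are integral and nonzero, hence nonzero modulo all but finitely many $\mathfrak{p}$.

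The main obstacle is thus condition~\eqref{item:9}; once one accepts the constructibility statement for geometric irreducibility in families (or the Bertini–Noether theorem), the proof is routine. I would then conclude: $\mathfrak{S}$ is contained in the union of the finite set of Archimedean places, the finite set of primes dividing some coefficient or its inverse, the finite set where some $\overline{D}_i$ is not canonically metrized, and the finite set where $f$ or $g$ fails to reduce to an absolutely irreducible polynomial; therefore $\mathfrak{S}$ is finite. \qed
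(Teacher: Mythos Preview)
Your proof is correct and follows essentially the same approach as the paper: both argue condition by condition that each of \eqref{item:6}--\eqref{item:9} fails at only finitely many places. The only difference is the reference invoked for condition~\eqref{item:9}: the paper cites a classical 1919 result of Ostrowski, while you appeal to the constructibility of geometric irreducibility from EGA~IV (or equivalently the Bertini--Noether theorem); these are the same underlying fact.
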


\begin{proof} This is equivalent to the fact that each of the above
  conditions fails for at most finitely many places.  This is clear
  for \eqref{item:6}, \eqref{item:7} and \eqref{item:8} because the
  number field $\mathbb{K}$ has a finite number of places above each
  place of $\mathbb{Q}$, every $\alpha\in\mathbb{K}^\times$ has
  unitary $v$-adic absolute value for almost all $v$, and for each $i$
  the non-Archimedean metrics of $\overline{D}_i$ are defined by the
  canonical integral model of the line bundle $\mathcal{O}(D_{i})$ for
  almost all $v$.
  
  For the remaining we assume that the first three conditions are met.
  To the non-Archimedean place $v$ it corresponds a prime ideal
  $\mathfrak{p}$ of the ring of integers~$\mathcal{O}_{\mathbb{K}}$
  with ramification equal to the index of the value group
  $|\mathbb{Q}^{\times}|_{v}$ inside $|\mathbb{K}^{\times}|_{v}$.
  Thus the finiteness for \eqref{item:15} follows from the well-known
  fact that only finitely many rational primes ramify in a given
  number field \cite[Proposition~I.8.4]{Neukirch:ant}.  Furthermore
  the algebraic closure of the residue
  field~$\mathcal{O}_{\mathbb{K}}/\mathfrak{p}$ coincides with
  $\widetilde{\mathbb{C}}_v$. Hence the finiteness for~\eqref{item:9}
  is given by the classical result of Ostrowski stating that the
  reductions of an absolutely irreducible polynomial over $\mathbb{K}$
  are absolutely irreducible for all but a finite number of prime
  ideals of $\mathcal{O}_{\mathbb{K}}$ \cite[Hilfssatz at the end of
  page~296]{Ostr2}.
\end{proof}

As in \cref{sec:local-vanishing}, for each $v\in\mathfrak{M}$ we denote by
$I_v\colon\mathbb{T}(\mathbb{C}_v)\to\mathbb{R}\cup\{-\infty\}$ the
function defined by
\begin{equation*}
  I_v(t)=\int_{X_{v}^{\an}}\log\|t^*s_{g}\|_{\Ron,v}\
\chern(\overline{D}_{0,v})\wedge\ldots\wedge\chern(\overline{D}_{n-2,v})\wedge
\delta_{Z_v^{\an}}
\end{equation*}
with $\|\cdot\|_{\Ron,v}$ the $v$-adic Ronkin metric on the
divisor~$D_g$.  We devote the rest of this section to the proof of the
following result.

\begin{theorem}\label{thm: adelic equidistribution}
  For all strict sequence $(\omega_\ell)_\ell$ of torsion points of
  $\mathbb{T}(\overline{\mathbb{K}})$ we have
\begin{equation}
  \label{eq:16}
\lim_{\ell\to \infty}\sum_{v\in\mathfrak{M}\setminus \mathfrak{S}} \frac{n_{v}}{\# O(\omega_{\ell})} \hspace{-1.5mm} \sum_{\eta\in O(\omega_\ell)_{v}} I_{v}(\eta)=0.
\end{equation}
\end{theorem}

\subsection{Computing the integral}
\label{sec:an-expl-expr}

We start by giving an explicit expression for the function $I_{v}$ for
$v\in\mathfrak{M}\setminus\mathfrak{S}$ in terms of the coefficients
of the involved Laurent polynomials. To this end, write again
\[
f=\sum_{m}\alpha_{m}\chi^m \and g=\sum_{m}\beta_m\chi^m
\]
with $\alpha_{m},\beta_{m}\in \mathbb{K}$ that are zero except for
finitely many $m$.
Recall that $\mathbb{S}_v\subset\mathbb{T}_v^{\an}$ is the $v$-adic compact torus of $\mathbb{T}$ as in \eqref{eq: compact torus}.

\begin{proposition}\label{prop: explicit equalities for I}
  Let~$v\in\mathfrak{M}\setminus\mathfrak{S}$
  and~$t \in \mathbb{T}(\mathbb{C}_{v})\cap \mathbb{S}_{v}$.  If there
  is $m_0\in M$ such that~$\supp(g)=\supp(f)+m_0$ then
\[
  I_v(t) = \deg_{D_0,\ldots,D_{n-2}}(Z) \ \log
\max_{m,m'\in M} |\alpha_{m-m_{0}}\beta_{m'}\chi^{m'}(t)-\alpha_{m'-m_{0}}\beta_{m}\chi^{m}(t)|_{v}.
\]
Otherwise $I_v(t)=0$.
\end{proposition}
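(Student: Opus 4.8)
The plan is to exploit that for $v\in\mathfrak{M}\setminus\mathfrak{S}$ the metrized divisors $\overline{D}_0,\ldots,\overline{D}_{n-2}$ carry their canonical metrics and the hypersurface $Z=Z_X(s_f)$ has good reduction, which pins down both the Monge--Amp\`ere measure defining $I_v$ and the value of the integrand on its support.

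First I would identify the measure. Since the $v$-adic metrics of the $\overline{D}_i$ are canonical (condition~\eqref{item:8}), they are induced by the canonical toric model $\mathcal{X}$ of $X$ over $\mathcal{O}_v$, whose special fibre is the integral toric variety with fan $\Sigma$ over $\widetilde{\mathbb{C}}_v$. As the coefficients of $f$ are $v$-adic units (condition~\eqref{item:7}) and its reduction $\widetilde{f}$ is absolutely irreducible (condition~\eqref{item:9}), the torus part of the closure $\mathcal{Z}$ of $Z$ in $\mathcal{X}$ is $\Spec(\mathcal{O}_v[M]/(f))$, which is $\mathcal{O}_v$-flat with integral and reduced special fibre $\Spec(\widetilde{\mathbb{C}}_v[M]/(\widetilde{f}))$. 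Recalling from the proof of \cref{thm: local equidistribution for main theorem} that the measure $\chern(\overline{D}_{0,v})\wedge\cdots\wedge\chern(\overline{D}_{n-2,v})\wedge\delta_{Z_v^{\an}}$ carries no mass on the analytic boundary $X_v^{\an}\setminus X_{0,v}^{\an}$, and feeding the above into the expression for this measure coming from the proof of \cref{prop:2} (see~\eqref{eq: form of non-Archimedean Monge-Ampère}), it reduces to the single Dirac mass $\deg_{D_0,\ldots,D_{n-2}}(Z)\,\delta_\xi$ --- its total mass being $\deg_{D_0,\ldots,D_{n-2}}(Z)$ --- where $\xi$ is the unique point of $Z_v^{\an}$ reducing to the generic point of the special fibre of $\mathcal{Z}$; this point lies in $X_{0,v}^{\an}$ and satisfies $\val_v(\xi)=0$. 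Combining this with the decomposition $\log\|t^*s_g\|_{\Ron,v}=\log|t^*g|_v+\rho_{g,v}\circ\val_v$ valid on $X_{0,v}^{\an}$ (see~\eqref{eq: decomposition of Ronkin}), and with the fact that $\rho_{g,v}=\Psi_{\NP(g)}$ vanishes at $0$ (again by condition~\eqref{item:7} together with the expression for the non-Archimedean Ronkin function in \cite[Remark~2.8]{Gualdi}), I obtain
\[
I_v(t)=\deg_{D_0,\ldots,D_{n-2}}(Z)\cdot\log|t^*g\,(\xi)|_v,
\]
where $t^*g$ is regarded as an element of $\mathbb{C}_v[M]/(f)$.

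It then remains to compute $|t^*g\,(\xi)|_v$. Following the proof of \cref{prop:2}, $\xi$ is the sup-norm on a reduced affinoid algebra containing $\mathbb{C}_v[M]/(f)$, so for $h\in\mathcal{O}_{\mathbb{C}_v}[M]$ one has $|h\,(\xi)|_v\le 1$, with equality precisely when $\widetilde{f}\nmid\widetilde{h}$ in $\widetilde{\mathbb{C}}_v[M]$. If $\supp(g)$ is not a translate of $\supp(f)$, then $\widetilde{t^*g}$ and $\widetilde{f}$ are absolutely irreducible (by condition~\eqref{item:9}, since twisting by a point of $\mathbb{S}_v$ preserves absolute irreducibility and supports) with supports that are not translates of one another, hence coprime in $\widetilde{\mathbb{C}}_v[M]$; thus $\widetilde{f}\nmid\widetilde{t^*g}$, so $|t^*g\,(\xi)|_v=1$ and $I_v(t)=0$, as required. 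If instead $\supp(g)=\supp(f)+m_0$, I would introduce $q'=\chi^{-m_0}\,t^*g=\sum_b\beta'_b\chi^b$ with $\beta'_b=\beta_{b+m_0}\,\chi^{b+m_0}(t)$, a Laurent polynomial with support $\supp(f)$ and $v$-adic unit coefficients (using $t\in\mathbb{S}_v$); since $\val_v(\xi)=0$ forces $|\chi^{m_0}(\xi)|_v=1$, we get $|t^*g\,(\xi)|_v=|q'\,(\xi)|_v$. Writing $|q'\,(\xi)|_v=\inf_{c\in\mathbb{C}_v[M]}\|q'-cf\|_{\zeta_v}$ and restricting to scalars $c\in\mathbb{C}_v$, the ultrametric identity $\min_c\max_b|c-z_b|_v=\max_{b,b'}|z_b-z_{b'}|_v$ (with $z_b=\beta'_b/\alpha_b$) and $|\alpha_b|_v=1$ give the upper bound $|q'\,(\xi)|_v\le\rho:=\max_{b,b'}|\alpha_b\beta'_{b'}-\alpha_{b'}\beta'_b|_v$. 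For the reverse inequality I would choose $c_0=z_{b_0}$ realizing this maximum, note that $q'-c_0f$ has Gauss norm $\rho$ and vanishing coefficient in degree $b_0$, rescale it by $\pi^{-1}$ with $|\pi|_v=\rho$ to a polynomial of Gauss norm $1$ with support inside $\supp(f)\setminus\{b_0\}$, and check that $\widetilde{f}$ does not divide its reduction --- otherwise the reduction would be a monomial multiple of $\widetilde{f}$, with support a translate of $\supp(f)$ contained in $\supp(f)\setminus\{b_0\}$, which is impossible. Hence $|q'\,(\xi)|_v=\rho$, and the substitution $m=b+m_0$, $m'=b'+m_0$ rewrites $\rho$ as the maximum in the statement; when $\rho=0$ one has $t\in\Upsilon(\mathbb{C}_v)$ and both sides of the asserted equality are $-\infty$.

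The main obstacle is the first step, namely the precise identification of the measure as a single Dirac mass of weight $\deg_{D_0,\ldots,D_{n-2}}(Z)$ at a point $\xi\in X_{0,v}^{\an}$ with $\val_v(\xi)=0$: this requires matching the canonical metrics with the toric model $\mathcal{X}$ and controlling the reduction of $\mathcal{Z}$ through conditions~\eqref{item:7} and~\eqref{item:9}. The other delicate point is the reverse estimate $|q'\,(\xi)|_v\ge\rho$, where the absolute irreducibility of $\widetilde{f}$ enters essentially.
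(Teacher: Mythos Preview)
Your proposal is correct and follows essentially the same route as the paper: you identify the Monge--Amp\`ere measure as a single Dirac mass via the canonical model and the integrality of the reduction of $Z$, reduce $I_v$ to $F_v$ using that $\rho_{g,v}(0)=0$, and then compute the quotient Gauss norm $\|t^*g+(f)\|$ on $\mathbb{C}_v\langle M\rangle/(f)$. The paper packages the last computation as a separate lemma (\cref{lem:5}), reducing first to $m_0=0$ and $t=1$; its lower bound argues by contradiction in two reduction steps, while you exhibit directly an element $q'-c_0f$ of Gauss norm $\rho$ with a missing coefficient and show its rescaled reduction cannot be a multiple of~$\widetilde f$. The one step you leave implicit---why the quotient $\widetilde h$ would have to be a monomial---is a short Newton-polytope argument ($\NP(\widetilde h)+\NP(f)\subseteq\NP(f)$ forces $\NP(\widetilde h)$ to be a point), after which your cardinality contradiction goes through.
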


To prove this statement we need the following construction for a
fixed~$v\in \mathfrak{M}\setminus \mathfrak{S}$.  By the condition
\eqref{item:6} this place is non-Archimedean, and we can consider the
associated $\mathbb{C}_v$-affinoid algebra
$\mathbb{C}_{v}\langle M\rangle$ of \emph{strictly convergent Laurent
  series} \cite[Section 6.1.4]{BGR}. As a set, it consists of the
Laurent series $\sum_{m} \gamma_{m}\chi^{m} $ such that for every
$\varepsilon >0$ we have that $|\gamma_{m}|_{v} < \varepsilon$ for all
but a finite number of~$m \in M$.  We equip this set with the standard
addition and multiplication, and with the corresponding Gauss
norm~$\|\cdot\|_{\zeta_{v}}$.

We then consider the distinguished $\mathbb{C}_{v}$-affinoid algebra
\begin{equation}
  \label{eq:40}
\mathscr{A}= \mathbb{C}_{v}\langle M\rangle /(f)
\end{equation}
and denote by $\|\cdot \|$ its sup-seminorm. This latter is the
quotient of the Gauss norm, and so it is defined as
\begin{equation}
  \label{eq:53}
  \|q+ (f)\|=\inf_{h \in \mathbb{C}_{v}\langle M\rangle}\|q-h\, f\|_{\zeta_{v}} \quad \text{ for } q\in \mathbb{C}_{v}\langle M\rangle.
\end{equation}
The reduction of $\mathscr{A}$ agrees with 
\begin{math}
 \wA=\widetilde{\mathbb{C}}_{v}[M]/(\widetilde{f}),
\end{math}
and so it is an integral~$\widetilde{\mathbb{C}}_{v}$-algebra because
of the condition~\eqref{item:9}.

\begin{lemma}
  \label{lem:5}
  Let $q =\sum_{m} \kappa_{m}\chi^{m}\in \mathbb{C}_{v}[ M] $ with
  $|\kappa_{m}|_{v}=1$ for all $m \in \supp(q)$ such that its
  reduction is either invertible or irreducible.  If there is
  $m_{0}\in M$ such that $\supp(q)=\supp(f)+m_{0}$ then
  \begin{equation}
    \label{eq:27}
  \|q+(f)\|= \max_{m,m'\in M} |\alpha_{m-m_{0}}\kappa_{m'}-\alpha_{m'-m_{0}}\kappa_{m}|_{v}.
\end{equation}
Otherwise $  \|q+(f)\|= 1$.
\end{lemma}
\begin{proof}
  By the hypotheses and the condition \eqref{item:7} we have that
  $\|f\|_{\zeta_{v}}=\|q\|_{\zeta_{v}}=1$ and so
  $\|q+(f)\|\le \|q\|_{\zeta_{v}}=1$. On the other hand, let
  $ h=\sum_{m}\gamma_{m} \chi^{m}\in \mathbb{C}_{v}\langle M\rangle$ and
  suppose that
  \begin{math}
\|q-hf\|_{\zeta_{v}}<1.  
  \end{math}
  The ultrametric inequality implies that~$\|h\|_{\zeta_{v}}=1$, and so
  we can consider the reduction of $q-hf$ to obtain that
  \begin{equation}
    \label{eq:52}
    \widetilde{q}=\widetilde{h}\widetilde{f} \in \widetilde{\mathbb{C}}_{v}[M]. 
  \end{equation}
  When $\widetilde{q}$ is invertible this equality cannot hold because
  $\widetilde{f}$ is irreducible, and so in this case ${\|q+(f)\|=1}$.

  Hence for the rest of this proof we can restrict to the case when
  $\widetilde{q}$ is irreducible. Then \eqref{eq:52} implies that
  $\widetilde{h}$ is a monomial or equivalently that
  $ h= \gamma_{m_{0}} \chi^{m_{0}}+h'$ with $|\gamma_{m_{0}}|_{v}=1$
  and $ h'=\sum_{m\ne m_{0}}\gamma_{m} \chi^{m}$ satisfying
  $ \|h'\|_{\zeta_{v}}<1$.  We deduce that
  \begin{math}
    \supp(q)=\supp(\widetilde{q})= \supp(\widetilde{f})+m_{0} =\supp(f)+m_{0}, 
  \end{math}
  completing the proof of the second statement.

  For the first suppose that $\supp(q)=\supp(f)+m_{0}$ for
  some~$m_{0}$.  We have that $\|q+(f)\|=\|\chi^{-m_0}q+(f)\|$ and so
  we can also suppose without loss of generality that $m_{0}=0$.  With
  this normalization consider the finite subset
  \begin{displaymath}
\mathcal{C}=\supp(f)=\supp(q) \subset M.    
  \end{displaymath}
  It verifies that~$0\in \mathcal{C}$ and that
  $\mathcal{C}\setminus \{0\} \ne \emptyset$ because $f$ is absolutely
  irreducible.  Then the right hand side of \eqref{eq:27} can be
  written as
  \begin{multline}
    \label{eq:36}
    \max_{m,m'\in M} |\alpha_{m}\kappa_{m'}-\alpha_{m'}\kappa_{m}|_{v}
    =       \max_{m,m'\in \mathcal{C}} |\alpha_{m}\kappa_{m'}-\alpha_{m'}\kappa_{m}|_{v}
\\    =    \max_{m,m'\in \mathcal{C}} \Big|\frac{\kappa_{m}}{\alpha_{m}}-\frac{\kappa_{m'}}{\alpha_{m'}}\Big|_{v}=
    \max_{m\in \mathcal{C}\setminus \{0\}} \Big|\frac{\kappa_{m}}{\alpha_{m}}-\frac{\kappa_{0}}{\alpha_{0}}\Big|_{v},
  \end{multline}
  with the last equality coming from the ultrametric property.
  Denoting this quantity by $\rho$ we then have
  \begin{displaymath}
    \Big\| q- \frac{\kappa_{0}}{\alpha_{0}} f\Big\|_{\zeta_{v}} =\rho,
  \end{displaymath}
  and so the left hand side of \eqref{eq:27} is less or equal to the
  right hand side.

  If $\rho=0$ the proof is complete, so that we assume now
  that~$\rho>0$.  To prove the equality in this case, suppose that
  there exists $h\in \mathbb{C}_{v}\langle M\rangle $ such that
  $ \|q-hf\|_{\zeta_{v}}<\rho$. Since $\rho\le 1$ we can consider
  again the reduction of the Laurent series $q-hf$ as in
  \eqref{eq:52}, and using the fact that the supports of
  $\widetilde{f}$ and $\widetilde{q}$ coincide we deduce that
  $h=\gamma_{0}+h'$ with $|\gamma_{0}|_{v}=1$ and $h'$ a Laurent
  series with no constant term satisfying $\|h'\|_{\zeta_{v}}<1$.

  It follows from \eqref{eq:36} that
  $\|q-\gamma_{0} f\|_{\zeta_{v}}\ge \rho$, and since
  \begin{equation}
    \label{eq:38}
    \|q-\gamma_{0}f - h'f\|_{\zeta_{v}}=\|q-hf\|_{\zeta_{v}}<\rho
  \end{equation}
  this implies that
  $\|h'\|_{\zeta_{v}}=\|h'f\|_{\zeta_{v}}=\|q-\gamma_{0}f\|_{\zeta_{v}}$. Choose
  then $\tau\in \mathbb{C}_{v}^{\times}$ with
  $|\tau|_{v}=\|h'\|_{\zeta_v}$, so that setting $h''=\tau^{-1}h'$ and
  $q'=\tau^{-1} (q-\gamma_{0}f)$ we have that
  $\|h''\|_{\zeta_{v}}=\|q'\|_{\zeta_{v}}=1$. By \eqref{eq:38} we have
  $\|q'-h''f\|_{\zeta_v}<1$ and so the reductions of these Laurent
  polynomials satisfy
  \begin{displaymath}
    \widetilde{q'}= \widetilde{h''} \widetilde{f}. 
  \end{displaymath}
  Since
  $\supp(\widetilde{q'})\subset\supp(q')\subset\supp(f)=\supp(\widetilde{f})$
  this implies that $\widetilde{h''}$ is a nonzero constant, which is
  not possible because $h'$ has no constant term.
  \end{proof}

\begin{proof}[Proof of \cref{prop: explicit equalities for
    I}]
  Let $\mathcal{X}$ be the canonical model of the toric variety $X$
  and $\mathcal{Z} \subset \mathcal{X}$ the closure of the
  hypersurface $Z$.  By the condition \eqref{item:8}, for each $i$ the
  metric of $\overline{D}_{i,v}$ is the algebraic metric induced by
  the canonical model of the line bundle $\mathcal{O}(D_{i,v})$
  \cite[Definition 3.6.3]{BPS}.  This is a line bundle on
  $\mathcal{X}$, and we denote by $\mathcal{L}_{i}$ its pullback to
  $\mathcal{Z}$.

  These integral models induce a distinguished formal analytic variety
  $\mathfrak{Z}$ over $\mathbb{C}_{v}$ with generic fiber
  $Z_{v}^{\an}$ and special fiber
  $ \widetilde{\mathfrak{Z}}=\widetilde{\mathcal{Z}}$. The latter is
  an integral scheme over $\widetilde{\mathbb{C}}_{v}$ by the
  condition \eqref{item:9}.  For each $i$ we also obtain a formal
  analytic line bundle $\mathfrak{L}_i$ on $\mathfrak{Z}$. Its
  reduction is a line bundle on $\widetilde{\mathfrak{Z}}$ which
  coincides with the reduction $\widetilde{\mathcal{L}}_{i}$ of
  $\mathcal{L}_{i}$.

  Now let $t\in \mathbb{T}(\mathbb{C}_{v}) \cap \mathbb{S}_{v}$ and
  recall from \eqref{eq:41} that
\begin{equation}
  \label{eq:54}
I_v(t)
=
F_v(t)
+
\int_{X_{0,v}^{\an}}(\rho_{g,v}\circ\val_v)\, \chern(\overline{D}_{0,v})\wedge\ldots\wedge\chern(\overline{D}_{n-2,v})\wedge
\delta_{Z_v^{\an}}
\end{equation}
where $F_{v}$ is the auxiliary function in \eqref{eq: auxiliary
  function}, $\rho_{g,v}$ the $v$-adic Ronkin function of $g$, and
$\val_{v}$ the $v$-adic valuation map.  As in \eqref{eq: form of
  non-Archimedean Monge-Ampère}, the above Monge-Ampère measure is
equal to
\begin{math}
  \deg_{\widetilde{\mathfrak{L}}_0,\ldots,\widetilde{\mathfrak{L}}_{n-2}}(  \widetilde{\mathfrak{Z}}) \, \delta_{\xi_{\widetilde{\mathfrak{Z}}}},
\end{math}
where $\xi_{\widetilde{\mathfrak{Z}}}$ is the sup-seminorm of the the
distinguished $\mathbb{C}_{v}$-affinoid algebra $\mathscr{A}$ in
\eqref{eq:40}.  In our setting
\begin{displaymath}
\deg_{\widetilde{\mathfrak{L}}_0,\ldots,\widetilde{\mathfrak{L}}_{n-2}}(  \widetilde{\mathfrak{Z}}) =   \deg_{\widetilde{\mathcal{L}}_0,\ldots,\widetilde{\mathcal{L}}_{n-2}}(  \widetilde{\mathcal{Z}})=\deg_{D_0,\ldots,D_{n-2}}(Z)
\end{displaymath}
because of our previous considerations and the fact that the degree of
the fibers of a proper map is constant
\cite[Example~20.3.3]{Fulton_Intersection}.
  
Thus by the formula in \eqref{eq: non-Archimedean equation 2 in proof
  of local vanishing} and \cref{lem:5}, the second term in
\eqref{eq:54} is equal to
\begin{displaymath}
  -\deg_{D_0,\ldots,D_{n-2}}(Z) \, \log\max_{m}|\chi^{m}(\xi_{\widetilde{\mathfrak{Z}}})|_{v}=
  -\deg_{D_0,\ldots,D_{n-2}}(Z) \, \log \max_{m}  \|\chi^{m} +(f)\|=0.
\end{displaymath}

On the other hand, by \eqref{eq:18} we also have
\begin{displaymath}
    F_v(t) =
\deg_{D_0,\ldots,D_{n-2}}(Z) \, 
\,   \log|t^*g \, (\xi_{  \widetilde{\mathfrak{Z}}})|_{v}=
\deg_{D_0,\ldots,D_{n-2}}(Z) \, 
\,   \log \|t^{*}g +(f) \|.
\end{displaymath}
The statement then follows from \cref{lem:5}.
\end{proof}

\subsection{Bounds for the integral}
\label{sec:bounds-integral}

Fix $v\in \mathfrak{M}\setminus\mathfrak{S}$ and let $\mathcal{A} \subset M$ be a nonempty
finite subset together with a family of scalars
$\gamma_{m}\in \mathbb{K}$ with $|\gamma_{m}|_{v}=1$,
$m \in \mathcal{A}$.  For a torsion point
$\omega\in \mathbb{T}(\overline{\mathbb{K}})$ we set
\begin{equation}
  \label{eq:43}
  K_{v}(\omega)=\frac{1}{\# O(\omega)} \sum_{\eta\in O(\omega)_{v}} \log \max_{m\in\mathcal{A}} |\chi^{m}(\eta)-\gamma_{m}|_{v}.
\end{equation}
This quantity arises naturally when computing the mean of the function
$I_{v}$ over the $v$-adic Galois orbit of $\omega$, as we will see in
\cref{sec:proof-theorem-1}. In this subsection we give the bounds needed
for the proof of our adelic vanishing theorem.

To this end, we denote by $H\colon \mathbb{Z}^{\mathcal{A}}\to M$ the
linear map defined by
\begin{equation}\label{eq: linear map from support}
 H(a)= \sum_{m\in \mathcal{A}} a_{m}m,
\end{equation}
and by
$\phi\colon \mathbb{T} \to \Gm^{\mathcal{A}}$ the corresponding monomial
map, defined by
\begin{displaymath}
  \phi(t)= (\chi^{m}(t))_{m\in \mathcal{A}}.
\end{displaymath}
For
$a=(a_{m})_{m\in \mathcal{A}} \in \mathbb{Z}^{\mathcal{A}}$ and
$u\in
\Gm^{\mathcal{A}}(\overline{\mathbb{K}})=(\overline{\mathbb{K}}^{\times})^{\mathcal{A}}$
we set
\begin{math}
u^{a}=\prod_{m\in \mathcal{A}} u_{m}^{a_{m}} \in \overline{\mathbb{K}}^{\times}.  
\end{math}
With this notation we have that
\begin{displaymath}
\chi^{H(a)}(t)=\phi(t)^{a}.   
\end{displaymath}

Denote by $p_{v}$ the unique prime number such that the restriction of
$v$ to $\mathbb{Q}$ is the $p_{v}$-adic place.  We consider separately
the case when this prime number divides the order of~$\phi(\omega)$ and when it
does not.

\begin{proposition}
  \label{prop:6}
  If $p_{v} \mid \ord(\phi(\omega))$ then $\displaystyle{\frac{-\log(p_{v})}{\# O(\phi(\omega))} \le     K_{v}(\omega)\le 0.}$
\end{proposition}

\begin{proposition}
\label{prop:7}
  Suppose that $p_{v}\nmid  \ord(\phi(\omega))$ and let  $a \in \mathbb{Z}^{\mathcal{A}}$. 
  \begin{enumerate}[leftmargin=*]
  \item \label{item:13} If $\chi^{H(a)}(\omega)= 1$ then
    \begin{displaymath}
      \frac{\log |\gamma^{a}-1|_{v}}{\#
        O(\phi(\omega))} \le K_{v}(\omega) \le 0.
    \end{displaymath}
  \item \label{item:12} If   $\chi^{H(a)}(\omega)\ne 1$ and $\gamma^{a}=1$ then
    $K_{v}(\omega)=0$.
  \end{enumerate}
\end{proposition}

The proofs of these results rely on the following reduction properties of roots
of unity.
For a prime number~$p$, let $\mathbb{C}_p$ be the algebraically closed field of $p$-adic numbers, equipped with the standard $p$-adic absolute value~$|\cdot|_p$.
Its residue field is identified with an algebraic closure of the finite field of $p$ elements, and we denote by
\[
\red\colon \mathbb{C}_{p}^{\circ}\to \overline{\mathbb{F}}_{p}
\]
the corresponding reduction map.
Also, we write $\varphi$ for the Euler totient function.

\begin{lemma}
  \label{lem:8}
Let $\xi$ be a root of unity in $\mathbb{C}_p$ of order~$d= p^{e}\, b$, with $e\ge 0$ and
  $b\in \mathbb{Z}$ such that $p\nmid b$.
  \begin{enumerate}[leftmargin=*]
  \item \label{item:16} The element $\red(\xi)$ is a root of unity in $\overline{\mathbb{F}}_p$ of order~$b$, and the reduction map is $\varphi(p^{e})$-to-$1$ on the set of roots of unity of order $d$ in~$\mathbb{C}_p$.
  \item \label{item:18} For each root of unity $\rho\in\mathbb{C}_p$ of order $b$ we have that
    \begin{displaymath}
      |\xi-\rho|_{p}=
      \begin{cases}
        p^{-1/\varphi(p^{e})} & \text{ if } \red(\xi)=\red(\rho), \\
        1 & \text{ otherwise.}
      \end{cases}
    \end{displaymath}
  \end{enumerate}
\end{lemma}

\begin{proof}
  For \eqref{item:16}, consider first the case in which~$e=0$.
  In this situation, the polynomial $x^b-1$ over~$\mathbb{F}_p$ is separable since~$p\nmid b$, and then $\red(\xi)$ is a root of a single factor in
  \[
   x^{b}-1 = \prod_{c\mid b} \red(\Phi_c)\in\mathbb{F}_p[x],
  \]
  where $\red(\Phi_c)$ denotes the reduction to $\mathbb{F}_p$ of the cyclotomic polynomial~$\Phi_{c}\in\mathbb{Z}_p[x]$.
  Since $\Phi_b(\xi)=0$ we deduce that $\red(\xi)$ is not a root of $\red(\Phi_{c})$ for all
  $c\neq b$ and so $\ord(\red(\xi))=b$, as stated.
  Moreover, the separable polynomials $\Phi_b$ and $\red(\Phi_{b})$ have as many roots, in $\mathbb{C}_p$ and in $\overline{\mathbb{F}}_p$ respectively, as their common degree.
  Since by Hensel's lemma the reduction map between the sets of their zeros is surjective, we conclude that the map is $1$-to-$1$ in this case.

  When $e\ge 1$ the Frobenius homomorphism $\Frob\colon\overline{\mathbb{F}}_{p}\to     \overline{\mathbb{F}}_{p}$ mapping $x$ to $x^{p^{e}}$ is an automorphism, and so the above implies that
  \begin{displaymath}
    \ord(\red(\xi))=\ord(\Frob(\red(\xi)))=\ord(\red(\xi^{p^{e}}))=b.
  \end{displaymath}
The statement about the degree of the reduction map follows from a cardinality count and from the surjectivity of the morphism $x\mapsto x^{p^e}$ between the set of roots of unity of order $d$ and the ones of order $b$ in~$\mathbb{C}_p$.

  For \eqref{item:18}, the case in which $b=1$ is given by  \cite[Lemma~2.5]{GualdiSombra}, that is
  \begin{displaymath}
    |\xi-1|_{p}=p^{-1/\varphi(p^{e})}.
  \end{displaymath}
  When $b$ is arbitrary, we then have that
  \begin{displaymath}
    \prod_{\rho^{b}=1}|\xi-\rho|_{p}=|\xi^{b}-1|_{p}=p^{-1/\varphi(p^{e})}.
  \end{displaymath}
  Since $|\xi-\rho|_{p}=1$ whenever $\red(\xi)\neq\red(\rho)$, and since the equality $\red(\rho)=\red(\xi)$ is realized for a single root of unity $\rho$ of order $b$ in $\mathbb{C}_p$ because of~\eqref{item:16}, the
  claim follows.
\end{proof}

Let us now focus on the proof of \cref{prop:6,prop:7}.
Notice that, for each $\eta\in O(\omega)_{v}$ and~$a\in\mathbb{Z}^{\mathcal{A}}$, simple algebraic manipulations allow to write
\[
\chi^{ H(a)}(\eta)-\gamma^{a}=\phi(\eta)^a-\gamma^a
=\sum_{m\in \mathcal{A}} \varepsilon_{m} \,  (\chi^{m}(\eta)-\gamma_{m}),
\]
with each $\varepsilon_{m}$ being the product between a monomial in $(\chi^m(\eta))_m$ and one in $(\gamma_m)_m$.
In particular, we have that $|\varepsilon_m|_v=1$ for all $m\in\mathcal{A}$ and so
\begin{equation}
  \label{eq:48}
  | \chi^{ H(a)}(\eta)- \gamma^{a}|_{v}\le \max_{m\in \mathcal{A}} |  \chi^{m}(\eta)-\gamma_{m}|_{v},
\end{equation}
which turns out to be useful to lower bound the quantity $K_v(\omega)$ in~\eqref{eq:43}.

\begin{proof}[Proof of \cref{prop:6}]
  The second inequality is clear from the fact that
  $ |\chi^{m}(\eta)|_{v}=|\gamma_{m}|_{v}=1$ for all~$m$, and so we
  focus on the proof of the first one.
To this end, let~$d=\ord(\phi(\omega))$; since $p_v$ divides $d$ by hypothesis, we can write $d=p_v^eb$ with $e\geq1$ and $b$ an integer such that~$p_v\nmid b$.
Choose also
  $a\in \mathbb{Z}^{\mathcal{A}}$ such that the root of unity
  \begin{math}
\chi^{H(a)}(\omega)=\phi(\omega)^a
  \end{math}
  has order equal to~$d$; this is possible by writing
\[
  \phi(\omega)=(\zeta^{c_{1}},\dots, \zeta^{c_{r}})
  \]
for a root of unity $\zeta$ of order $d$ in $\overline{\mathbb{K}}$ and for $0\le c_{i}\le d-1$ with~$\gcd(c_1,\ldots,c_r,d)=1$.

  Identifying $(\mathbb{C}_v,|\cdot|_v)$ with~$(\mathbb{C}_{p_v},|\cdot|_{p_v})$, the character $\chi^{ H(a)}$ thus induces a surjective map from the
  $v$-adic Galois orbit $O(\omega)_{v}$ to the set $\mu_d^{\circ}$ of roots of unity
  in $\mathbb{C}_{p_v}$ of order~$d$.
  This map is $\#O(\omega)/\varphi(d)$-to-$1$ and so, together with \eqref{eq:48}, it yields
  \begin{equation}\label{eq: K in divisible case}
  K_v(\omega)
   \geq
    \frac{1}{\# O(\omega)} \sum_{\eta\in O(\omega)_{v}} \log| \chi^{ H(a)}(\eta)- \gamma^{a}|_{v}
    =
    \frac{1}{\varphi(d)} \sum_{\xi\in\mu_d^\circ} \log|\xi- \gamma^{a}|_{p_v}.
  \end{equation}

We can now suppose that there is some $\xi_0\in\mu_d^\circ$ for which
  $ |\xi_0-\gamma^{a}|_{p_v}<1$, because otherwise the statement holds
  trivially.
  Under this assumption $\red(\gamma^a)=\red(\xi_0)$ and so \cref{lem:8}\eqref{item:16} ensures that $\gamma^a$ has the same reduction as a root of unity~$\rho$ of order $b$ in~$\mathbb{C}_{p_v}$.
  It follows for instance from \cite[Proposition~16
  at page~77]{Serre:local} that $\rho$ lives in a
  unramified extension of~$\mathbb{K}_{v}$. Combined with the assumption \eqref{item:15} on~$v$, this yields
\[
    |\rho-\gamma^{a}|_{p_v}\leq p_v^{-1}<p_v^{-1/\varphi(p_v^{e})}<1,
\]
where the second inequality follows from the fact that $e\geq1$ and that $p_v>2$ by the assumption \eqref{item:6} on~$v$.
Therefore, using \cref{lem:8}\eqref{item:18} we obtain that
  \begin{equation}\label{eq: distance in divisible case}
    |\xi-\gamma^a|_{p_v}=|\xi-\rho+\rho-\gamma^a|_{p_v}= 
    \begin{cases}
        p_v^{-1/\varphi(p_v^{e})} & \text{ if } \red(\xi)=\red(\rho), \\
        1 & \text{ otherwise.}
      \end{cases}
      \end{equation}
      
  Thanks to \cref{lem:8}\eqref{item:16} there are exactly $\varphi(p^e)$ roots of unity $\xi\in\mu_d^\circ$ satisfying $\red(\xi)=\red(\rho)$.
  Hence, we conclude from \eqref{eq: K in divisible case} and \eqref{eq: distance in divisible case} that
  \[
   K_v(\omega)
   \geq
    \frac{\varphi(p_v^e)}{\varphi(d)} \log p_v^{-1/\varphi(p_v^{e})}
    =
    \frac{-\log(p_v)}{\varphi(d)}
    =\frac{-\log(p_{v})}{\# O(\phi(\omega))},
  \]
  as desired.
  \end{proof}

\begin{proof}[Proof of \cref{prop:7}]
Again, the inequality $K_v(\omega)\leq0$ is clear from the fact that
  $ |\chi^{m}(\eta)|_{v}=|\gamma_{m}|_{v}=1$ for all~$m$, and so we focus on proving lower bounds for~$K_v(\omega)$.
  In doing this, we call $d=\ord(\phi(\omega))$ and we identify $(\mathbb{C}_v,|\cdot|_v)$ with~$(\mathbb{C}_{p_v},|\cdot|_{p_v})$.

  To prove \eqref{item:13} we start by remarking that for each $\eta\in O(\omega)_v$ the inequality
  \begin{equation}\label{eq: inequality in indivisibility}
  \max_{m \in \mathcal{A}}|\chi^{m}(\eta)-\gamma_{m}|_{p_v}\neq1
  \end{equation}
  holds if and only if $\red(\chi^{m}(\eta))=\red(\gamma_{m})$ for all~$m\in\mathcal{A}$.
  We can suppose that \eqref{eq: inequality in indivisibility} is realized for a certain $\eta_0$, otherwise the sought bound is obviously true.
  We claim that in this case \eqref{eq: inequality in indivisibility} happens for exactly $\# O(\omega)/\# O(\phi(\omega))$ choices of $\eta\in O(\omega)_v$.
  Indeed, if $\eta$ satisfies \eqref{eq: inequality in indivisibility} it must also fulfil
  \[
  \red(\chi^m(\eta))=\red(\gamma_m)=\red(\chi^m(\eta_0)) \quad\text{ for all }m\in\mathcal{A}.
  \]
  But each $\chi^m(\eta)$ is a coordinate of $\phi(\eta)$ and hence it is a root of unity in $\mathbb{C}_{p_v}$ of order dividing~$d$.
  In particular, its order is coprime with $p_v$ by hypothesis and hence it must happen that $\chi^m(\eta)=\chi^m(\eta_0)$ because of \cref{lem:8}\eqref{item:16}; thus,~$\phi(\eta)=\phi(\eta_0)$.
  We deduce that $\eta$ and $\eta_0$ are conjugate by an element of the Galois group of the extension $\mathbb{K}(\omega)/\mathbb{K}(\phi(\omega))$, and there are precisely $\# O(\omega)/\# O(\phi(\omega))$ such conjugates.

For all of these $\eta\in O(\omega)_v$ we have by \eqref{eq:48} and by hypothesis on $a$ that 
  \begin{displaymath}
    |1-\gamma^{a}|_{v} \le \max_{m \in \mathcal{A}}|\chi^{m}(\eta)-\gamma_{m}|_{v},
  \end{displaymath}
  which implies the desired lower bound on~$K_v(\omega)$.
  
  To prove \eqref{item:12}, we have by the equality $\gamma^a=1$ and by \eqref{eq:48} that
  \begin{displaymath}
    K_v(\omega)\geq\frac{1}{\#O(\omega)}\sum_{\eta\in O(\omega)_v}\log|\chi^{ H(a)}(\eta)-1|_{p_v}.
  \end{displaymath}
  For any given~$\eta\in O(\omega)_v$, by hypotheses $\chi^{H(a)}(\eta)=\phi(\eta)^a$ is a root of unity in $\mathbb{C}_{p_v}$ different from $1$, and with order coprime with~$p_v$.
  Therefore, by \cref{lem:8}\eqref{item:16} its reduction has the same order, and in particular $\red(\chi^{H(a)}(\eta))\neq\red(1)$. It follows that $|\chi^{H(a)}(\eta)-1|_{p_v}=1$, proving the statement.
  \end{proof}

  We will also need to control the size of a relation of
  multiplicative dependence for a torsion point of a torus.

\begin{proposition}
  \label{prop:5}
 There is
  $a\in \mathbb{Z}^{\mathcal{A}}$ with $H(a)\ne 0$ such that
  \begin{displaymath}
    \chi^{H(a)}(\omega)=1 \and \max_{m\in \mathcal{A}}|a_{m}|\le c \, \ord(\phi(\omega))^{1/\rank(H(\mathbb{Z}^{\mathcal{A}}))}
  \end{displaymath}
  for a constant $c>0$ independent of  $\omega$.
\end{proposition}

\begin{proof}
  Set $d=\ord(\phi(\omega))$. For every
  $a\in \mathbb{Z}^{\mathcal{A}}$ we have that
  $\chi^{H(a)}(\omega)=\phi(\omega)^{a}$ is a $d$-th root of
  unity. Hence we can consider the homomorphism
  $\mathbb{Z}^{\mathcal{A}}/\Ker(H)\to \mu_{d}$ defined~as
  \begin{equation}
    \label{eq:57}
    a\longmapsto \chi^{H(a)}(\omega).
  \end{equation}
  It is surjective, and so its kernel is subgroup of
  $\mathbb{Z}^{\mathcal{A}}/\Ker(H)$ of index~$d$.

  Now set $r=\rank(H(\mathbb{Z}^{\mathcal{A}}) )$. We have that
  $\mathbb{Z}^{\mathcal{A}}/\Ker(H) \simeq
  H(\mathbb{Z}^{\mathcal{A}})$, and so we can choose an isomorphism
  \begin{equation}
    \label{eq:58}
  \mathbb{Z}^{\mathcal{A}}/\Ker(H) \simeq \mathbb{Z}^{r}.  
  \end{equation}
  This identifies the kernel of the homomorphism in \eqref{eq:57} with
  a subgroup $\Lambda \subset \mathbb{Z}^{r}$.

  Denote by $\|\cdot\|$ the max-norm on $\mathbb{Z}^{r}$. Since
  $\Lambda$ is a subgroup of index $d$, by Minkowski's first theorem
  there is $m_{0}\in \Lambda \setminus \{0\}$ such that
  \begin{math}
    \|m_{0}\|\le d^{1/r}.
  \end{math}
  Considering the linear map
  $ \mathbb{Z}^{\mathcal{A}}\to \mathbb{Z}^{r}$ induced by the
  isomorphism in \eqref{eq:58} and the Smith normal form of its
  associated matrix, we deduce that there is
  $a\in \mathbb{Z}^{\mathcal{A}}$  with  $H(a)=m_{0}$ satisfying
  \begin{displaymath}
\max_{m\in \mathcal{A}} |a_{m}|\le c\, \|m_{0}\| \le c\, d^{1/r}
\end{displaymath}
for a constant $c>0$ depending only on the coefficients of this
matrix, as desired. 
\end{proof}

\subsection{Proof of  \cref{thm: adelic equidistribution}}
\label{sec:proof-theorem-1}

By \cref{prop: explicit equalities for I} we can reduce to the case
when the supports of $f$ and $g$ agree up to a translation because
otherwise $I_v(\eta)=0$ for all
$v\in\mathfrak{M}\setminus\mathfrak{S}$ and
$\eta\in O(\omega_\ell)_{v}$, and so the double sum in \eqref{eq:16}
vanishes for all $\ell$. Assuming that this is the case, up to
multiplying by a suitable monomial we can also suppose that
$\supp(f)=\supp(g)$. Recall that this support contains the lattice
point $0\in M$.

Consider then the finite subset
$ \mathcal{A}=\supp(f) \setminus \{0\}=\supp(g) \setminus \{0\}
\subset M$, which is nonempty because of the hypothesis that $f$ and
$g$ are absolutely irreducible. Set also 
\begin{displaymath}
  \gamma_{m}=\frac{\alpha_{m} \beta_{0}}{\alpha_{0}\beta_{m}} \in \mathbb{C}_{v}  \quad \text{ for } m\in \mathcal{A}.
\end{displaymath}

Let $\ell\ge 1$.  In our present situation, for each
$v\in \mathfrak{M}\setminus \mathfrak{S}$ and
$\eta\in O(\omega_{\ell})_{v}$ we have
\begin{equation}
  \label{eq:50}
  \max_{m,m'\in M} |\alpha_{m}\beta_{m'}\chi^{m'}(\eta)-\alpha_{m'}\beta_{m}\chi^{m}(\eta)|_{v} = \max_{m \in \mathcal{A}}|\chi^{m}(\eta) -\gamma_{m}|_{v},
\end{equation}
as shown in \eqref{eq:36}.  Hence using the notation in \eqref{eq:43}
and \cref{prop: explicit equalities for I} we can write
\begin{equation}
  \label{eq:44}
  \sum_{v\in\mathfrak{M}\setminus \mathfrak{S}}\frac{n_{v}}{\# O(\omega_{\ell})} \hspace{-1.5mm} \sum_{\eta\in O(\omega_\ell)_{v}} I_{v}(\eta)=
  \deg_{D_{0},\dots, D_{n-2}}(Z)
  \sum_{v\in\mathfrak{M}\setminus \mathfrak{S}}n_{v} K_{v}(\omega_{\ell}).
\end{equation}
Thus to prove the statement it is enough to show the asymptotic
vanishing of the sum in the right-hand side of this equality.

Recall the linear map $H$ in \eqref{eq: linear map from support} and denote by $r$ the rank of the subgroup $H(\mathbb{Z}^{\mathcal{A}})$ of~$M$.
When $r=1$, the fact that $f$ and $g$ are absolutely irreducible implies that
$f=\alpha_{m}\chi^{m}+\alpha_{0}$ and $g=\beta_{m}\chi^{m}+\beta_{0}$ with   $m\in M$
primitive. Hence in this case  $\mathcal{A}=\{m\}$ and so 
\begin{multline*}
  \sum_{v\in\mathfrak{M}\setminus \mathfrak{S}}n_{v} K_{v}(\omega_{\ell}) =      \sum_{v\in\mathfrak{M}\setminus \mathfrak{S}}
  \frac{n_{v}}{\# O(\omega_{\ell})} \sum_{\eta\in O(\omega_{\ell})_{v}} \log|\chi^{m}(\eta)-{\gamma_{m}}|_{v}\\=
  \sum_{v\in \mathfrak{S}}
  \frac{-n_{v}}{\# O(\omega_{\ell})} \sum_{\eta\in O(\omega_{\ell})_{v}} \log|\chi^{m}(\eta)-{\gamma_{m}}|_{v},
\end{multline*}
where the second equality follows from the product formula. Since
$\mathfrak{S}$ is finite, this sum converges to $0$ as
$\ell\to \infty$ as a consequence of the local vanishing (\cref{thm:
  local equidistribution for main theorem}).

Hence from now on we suppose that $r\ge 2$.  Write
$d_{\ell}=\ord(\phi(\omega_{\ell}))$, and denote by $\mathcal{P}_{\ell}$ the
finite set of prime divisors of this integer and by $\mathfrak{P}_{\ell} $
for the finite subset of $\mathfrak{M}$ of places extending the $p$-adic
place of $\mathbb{Q}$ for some $p\in \mathcal{P}_{\ell}$.
We then split the sum in the
right hand side of \eqref{eq:44} as
\begin{equation}
  \label{eq:45}
  \sum_{v\in\mathfrak{M}\setminus \mathfrak{S}}n_{v} K_{v}(\omega_{\ell}) = S_{\ell}+T_{\ell}
\end{equation}
where $S_{\ell}$ denotes the sum over the places in
$\mathfrak{P}_{\ell}\setminus \mathfrak{S}$, and $T_{\ell}$ the
complementary sum.

For the first sum, we deduce from \cref{prop:6} that
\begin{equation}
  \label{eq:55}
  0\ge   S_{\ell}  = \sum_{v\in \mathfrak{P}_{\ell} \setminus \mathfrak{S}} n_{v} K_{v}(\omega_{\ell})
  \ge \sum_{v\in \mathfrak{P}_{\ell} \setminus \mathfrak{S}}  \frac{-n_{v} \log(p_{v})}{\# O(\phi(\omega_{\ell}))} \ge
  \frac{-1}{\# O(\phi(\omega_{\ell}))} \sum_{p\in \mathcal{P}_{\ell}}\log(p), 
\end{equation}
where the last inequality comes from the fact that
$\sum_{v\mid v_{0}}n_{v}=1$ for each place $v_{0}$ of~$\mathbb{Q}$.
Furthermore we have
\begin{equation}
  \label{eq:51}
\# O(\phi(\omega_{\ell}))= [\mathbb{K}(\phi(\omega_{\ell})): \mathbb{K}] =
\frac{[\mathbb{K}(\phi(\omega_{\ell})): \mathbb{Q}]}{[\mathbb{K}: \mathbb{Q}]} \ge 
\frac{[\mathbb{Q}(\phi(\omega_{\ell})): \mathbb{Q}]}{[\mathbb{K}: \mathbb{Q}]} =
\frac{\varphi(d_{\ell})}{[\mathbb{K}: \mathbb{Q}]}, 
\end{equation}
where $\varphi$ denotes the Euler totient function. We conclude that
\begin{equation}
  \label{eq:29}
    0\ge   S_{\ell} \ge -[\mathbb{K}:\mathbb{Q}] \, \frac{\log(d_{\ell})}{
  \varphi(d_{\ell})}.
\end{equation}

For the second sum, suppose first that there is
$a\in \mathbb{Z}^{\mathcal{A}}$ such that $H(a)\ne 0$ and $\gamma^{a}=1$.  Since the sequence $(\omega_{\ell})_{\ell}$ is strict,
there is $\ell_{0}\ge 1$ such that $\chi^{H(a)}(\omega_{\ell})\ne 1$
for $\ell\ge \ell_{0}$, and so by \cref{prop:7}\eqref{item:12} we have
that $T_{ \ell}=0$ for all such $\ell$'s.

Otherwise suppose that for all $a\in \mathbb{Z}^{\mathcal{A}}$ with
$H(a)\ne 0$ we have that $\gamma^{a}\ne 1$.  Applying \cref{prop:5} to the
torsion point $\omega_{\ell} $ we deduce that there is
$a_{\ell}\in \mathbb{Z}^{\mathcal{A}}$ with $H(a_{\ell})\ne 0$ such
that
\begin{equation}
  \label{eq:42}
\chi^{H(a_{\ell})}(\omega_{\ell})=1 \and
\|a_{\ell} \|\le c\, d_{\ell}^{1/r}  
\end{equation}
for a constant $c>0$
independent of $\ell$.  Then by \cref{prop:7}\eqref{item:13} 
\begin{multline}
  \label{eq:46}
  0\ge   T_{\ell}= \hspace{-3mm} \sum_{v\in \mathfrak{M} \setminus ( \mathfrak{S} \cup\mathfrak{P}_{\ell})} n_{v} K_{v}(\omega_{\ell})\ge \frac{1}{\# O(\phi(\omega_{\ell}))}
 \hspace{-1mm}  \sum_{v\in  \mathfrak{M}\setminus (\mathfrak{S}\cup \mathfrak{P}_{\ell})} \hspace{-3mm}n_{v} \log|\gamma^{a_{\ell}}-1|_{v}
\\  \ge \frac{-1}{\# O(\phi(\omega_{\ell}))}\sum_{v \mid \infty} n_{v} \log|\gamma^{a_{\ell}}-1|_{v} ,
\end{multline}
where the last inequality is a consequence of the product formula and
the fact that $|\gamma^{a_{\ell}}-1|_{v}\le 1$ whenever $v$ is
non-Archimedean. It follows from \eqref{eq:42} and \eqref{eq:46} that
\begin{equation}
  \label{eq:56}
  0\ge T_{\ell} \ge \frac{-c'\, d_{\ell}^{1/r} }{\# O(\phi(\omega_{\ell}))}
  \ge -c'\, [\mathbb{K}:\mathbb{Q}]\,  \frac{d_{\ell}^{1/r} }{\varphi(d_{\ell})}
\end{equation}
for a constant $c'>0$ independent of $\ell$, where the last inequality is ensured by \eqref{eq:51}.

To conclude, notice that the sequence of torsion points $(\phi(\omega_{\ell}))_{\ell}$ is strict in the torus~$\phi(\mathbb{T})$, which has dimension at least~$1$, and therefore $\lim_{\ell\to \infty}d_{\ell}= \infty$. 
The result now follows from \eqref{eq:29} and \eqref{eq:56} by taking
$\ell\to \infty$, using the hypothesis that $r\ge 2$ and the fact that
for any $\varepsilon >0$ we have
\begin{displaymath}
  \lim_{\ell\to \infty }\frac{d_\ell^{1-\varepsilon}}{\varphi(d_\ell)} =0 .   
\end{displaymath}

\section{Main results}\label{sec: main results}

We now put together the main results from the previous sections to prove \cref{thm: main theorem}.
In doing this, we need to treat separately the case in which the involved Laurent polynomials are binomials, due to the annoying hypothesis in the Archimedean logarithmic equidistribution of torsion points.
As an application, we show how the statement of the main theorem affirmatively answers previous questions by the authors.

As usual, throughout the whole section $\mathbb{K}$ denotes a number field, and $\mathbb{T}$ a split algebraic torus over $\mathbb{K}$ of dimension $n\geq2$ with character lattice~$M$.
Consider also a complete toric variety $X$ which compactifies~$\mathbb{T}$, and a family $\overline{D}_0,\ldots,\overline{D}_{n-2}$ of semipositive toric metrized divisors on~$X$, with corresponding collection $(\vartheta_{\overline{D}_i,v})_{v\in\mathfrak{M}}$ of $v$-adic roof functions, for $i=0,\ldots,n-2$.

\subsection{Proof of \cref{thm: main theorem}}

We start by remarking that when the Laurent polynomials involved in \cref{conj: main conjecture} have a large locus of bad intersection, its statement can be easily verified.

\begin{lemma}\label{lem: dimension of Upsilon}
Let $f,g$ be two absolutely irreducible Laurent polynomials in $\mathbb{K}[M]$ for which the closed subset $\Upsilon\subset\mathbb{T}$ from \eqref{eq: set of bad intersection} has codimension at most~$1$.
Then $f$ and $g$ are either monomials or binomials with the same support up to translation, and in this case \cref{conj: main conjecture} holds true.
\end{lemma}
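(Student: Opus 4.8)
The plan is to establish the two assertions separately.

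\emph{Step 1 (the classification).} If no $m_{0}\in M$ satisfied $\supp(g)=\supp(f)+m_{0}$, then $\Upsilon=\emptyset$ by its definition in \eqref{eq: set of bad intersection}, and its codimension would exceed $1$; so such an $m_{0}$ exists. Running the computation in the proof of \cref{lem:7} at the level of $\mathbb{C}_{v}$-points shows that $t\in\Upsilon$ if and only if $\chi^{m-m'}(t)$ takes a prescribed value in $\overline{\mathbb{K}}^{\times}$ for all $m,m'\in\supp(f)$. Since $\Upsilon$ is nonempty (its codimension being at most $1$), it is therefore a coset of the subtorus $\bigcap_{m\in\supp(f)}\ker(\chi^{m-m_{\ast}})$ for a fixed $m_{\ast}\in\supp(f)$, and its codimension equals the rank of the subgroup of $M$ generated by $\{m-m_{\ast}:m\in\supp(f)\}$, that is $\dim\NP(f)=\dim\NP(g)$. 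The hypothesis forces $\dim\NP(f)\le1$, so $\supp(f)$ lies on an affine line $a+\mathbb{R}m_{0}$ with $m_{0}\in M$ primitive and $f=\chi^{a}\,p(\chi^{m_{0}})$ for a univariate polynomial $p$ with $p(0)\neq0$. As $\chi^{m_{0}}$ is a unit of $\overline{\mathbb{K}}[M]$ and $\chi^{m_{0}}-r$ is a non-unit for $r\neq0$, absolute irreducibility of $f$ forces $\deg p\le1$; thus $f$, and likewise $g$, is a monomial or a binomial with support a segment $\{a,a+m_{0}\}$, and $\supp(g)=\supp(f)+m_{0}$. This proves the first assertion.

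\emph{Step 2 (set-up and the monomial case).} Multiplying $f$ and $g$ by monomials alters neither the cycles $Z_{\mathbb{T}}(\boldsymbol{\omega}^{\ast}\boldsymbol{f})$ nor either side of \cref{conj: main conjecture}, so I reduce to $\supp(f)=\supp(g)$, equal to either $\{0\}$ (\emph{monomial case}) or $\{0,m_{0}\}$ with $m_{0}$ primitive (\emph{binomial case}). In the monomial case one factor of $Z_{\mathbb{T}}(\boldsymbol{\omega}_{\ell}^{\ast}\boldsymbol{f})$ is the divisor of a unit, so this cycle is $0$ and the left-hand side of the conjecture vanishes; applying \cref{thm: Gualdi thesis main} with the $n$ semipositive toric metrized divisors $\overline{D}_{0},\dots,\overline{D}_{n-2},\overline{D}^{\Ron}_{f_{2}}$ and the Laurent polynomial $f_{1}$, and using that $\vartheta_{\overline{D}^{\Ron}_{f_{2}},v}=\rho_{f_{2},v}^{\vee}$, identifies the right-hand side with $\h_{\overline{D}_{0},\dots,\overline{D}_{n-2},\overline{D}^{\Ron}_{f_{2}}}(Z_{\mathbb{T}}(f_{1}))=0$, again because the cycle is zero.

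\emph{Step 3 (the binomial case).} Write $f_{i}=\alpha_{0}^{(i)}+\alpha_{1}^{(i)}\chi^{m_{0}}$ with $\alpha_{j}^{(i)}\in\mathbb{K}^{\times}$, and set $p=\chi^{m_{0}}\colon\mathbb{T}\to\Gm$, which extends to a morphism $X\to\mathbb{P}^{1}$ after a toric blow-up of $X$ (legitimate, as it changes neither side of the conjecture). For $\boldsymbol{t}\in\mathbb{T}(\overline{\mathbb{K}})^{2}$ the divisor $\div(t_{i}^{\ast}f_{i})$ is the fibre of $p$ over $-\alpha_{0}^{(i)}/(\alpha_{1}^{(i)}\chi^{m_{0}}(t_{i}))$, and these two fibres coincide exactly when $\chi^{m_{0}}(\varpi(\boldsymbol t))=\kappa$ for the fixed scalar $\kappa=\alpha_{0}^{(2)}\alpha_{1}^{(1)}/(\alpha_{0}^{(1)}\alpha_{1}^{(2)})\in\mathbb{K}^{\times}$. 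Since $(\varpi(\boldsymbol{\omega}_{\ell}))_{\ell}$ is a strict sequence of torsion points it eventually avoids $\{\chi^{m_{0}}=\kappa\}$, which either contains no torsion point (if $\kappa$ is not a root of unity) or is a proper torsion coset, hence is contained in a proper algebraic subgroup; so for $\ell$ large $Z_{\mathbb{T}}(\boldsymbol{\omega}_{\ell}^{\ast}\boldsymbol{f})=0$ (well-defined by \cref{cor: proper intersection for strict sequences}) and the left-hand side of the conjecture vanishes. For the right-hand side, \cref{thm: BPS main} presents it as $\h_{\overline{D}_{0},\dots,\overline{D}_{n-2},\overline{D}^{\Ron}_{f_{1}},\overline{D}^{\Ron}_{f_{2}}}(\mathbb{T}_{\overline{\mathbb{K}}})$; since $f_{i}=p^{\ast}\phi_{i}$ for the binomials $\phi_{i}=\alpha_{0}^{(i)}+\alpha_{1}^{(i)}y$ on $\Gm$ and the Ronkin function depends only on $\chi^{m_{0}}$, one has $\overline{D}^{\Ron}_{f_{i}}=p^{\ast}\overline{D}^{\Ron}_{\phi_{i}}$, and the arithmetic projection formula reduces this quantity to a multiple of $\h_{\overline{D}^{\Ron}_{\phi_{1}},\overline{D}^{\Ron}_{\phi_{2}}}(\mathbb{P}^{1}_{\overline{\mathbb{K}}})$. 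The heart of the matter, and the step I expect to be the main obstacle, is then the vanishing $\h_{\overline{D}^{\Ron}_{\phi_{1}},\overline{D}^{\Ron}_{\phi_{2}}}(\mathbb{P}^{1}_{\overline{\mathbb{K}}})=0$ for two binomials in one variable; I would prove it via the arithmetic B\'ezout theorem, which writes this height as $\h_{\overline{D}^{\Ron}_{\phi_{2}}}(Z_{\Gm}(\phi_{1}))+\sum_{v}n_{v}\int_{\mathbb{P}^{1,\an}_{v}}\log\|s_{\phi_{1}}\|_{\Ron,v}\,c_{1}(\overline{D}^{\Ron}_{\phi_{2},v})$, and then verify that the two summands cancel, using the decomposition $\log\|s_{\phi_{i}}\|_{\Ron,v}=\log|\phi_{i}|_{v}+\rho_{\phi_{i},v}\circ\val_{v}$ of \eqref{eq: decomposition of Ronkin}, the explicit form of the toric Monge--Amp\`ere measure $c_{1}(\overline{D}^{\Ron}_{\phi_{2},v})$ as a point mass on the skeleton of $\mathbb{P}^{1,\an}_{v}$, and the product formula applied to the coefficients $\alpha_{j}^{(i)}\in\mathbb{K}^{\times}$. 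Putting the steps together, both sides of \cref{conj: main conjecture} equal $0$ in all cases, which proves the lemma.
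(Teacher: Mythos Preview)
Your Step~1 is correct and in fact sharper than the paper's argument: you identify $\Upsilon$ (when nonempty) as a coset of $\bigcap_{m\in\supp(f)}\ker(\chi^{m-m_\ast})$, which gives $\codim(\Upsilon)=\dim\NP(f)$ exactly, whereas the paper only proves the inequality $\codim(\Upsilon)\ge\dim(\Lambda)$ by exhibiting an auxiliary containment. Step~2 and the left-hand-side part of Step~3 are also fine; your fiber picture for the binomial LHS matches the paper's more algebraic formulation.

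The genuine gap is the vanishing $\h_{\overline{D}^{\Ron}_{\phi_{1}},\overline{D}^{\Ron}_{\phi_{2}}}(\mathbb{P}^{1}_{\overline{\mathbb{K}}})=0$, which you rightly flag as ``the main obstacle'' but which your sketch does not establish. Take $\phi_{1}=y-\alpha$ and $\phi_{2}=y-\beta$. In your arithmetic B\'ezout decomposition the first summand is
\[
\h_{\overline{D}^{\Ron}_{\phi_{2}}}(\{\alpha\})=-\sum_{v}n_{v}\,\rho_{\phi_{2},v}(-\log|\alpha|_{v})=\sum_{v}n_{v}\log\max(|\alpha|_{v},|\beta|_{v}).
\]
For the second summand, the Monge--Amp\`ere measure $c_{1}(\overline{D}^{\Ron}_{\phi_{2},v})$ is the unit mass on the fiber $\val_{v}^{-1}(-\log|\beta|_{v})$, and there the decomposition~\eqref{eq: decomposition of Ronkin} gives
\[
\log\|s_{\phi_{1}}\|_{\Ron,v}=\log\max(|\alpha|_{v},|\beta|_{v})+\min\big(-\log|\alpha|_{v},-\log|\beta|_{v}\big)=0,
\]
so the integral term vanishes rather than cancelling the first. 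Hence
\[
\h_{\overline{D}^{\Ron}_{\phi_{1}},\overline{D}^{\Ron}_{\phi_{2}}}(\mathbb{P}^{1}_{\overline{\mathbb{K}}})=\sum_{v}n_{v}\log\max(|\alpha|_{v},|\beta|_{v})=\h_{\mathrm{Weil}}\big((\alpha:\beta)\big),
\]
which is strictly positive whenever $\alpha/\beta$ is not a root of unity. Since your projection-formula step is correct (with multiplier $\deg_{D_{0},\dots,D_{n-2}}(\text{fiber of }p)$), this nonvanishing propagates back to the right-hand side of the conjecture. The paper handles this point in a single sentence invoking unspecified ``properties of mixed integrals,'' so the obstacle you anticipated is real; but the specific route you propose does not overcome it, and a different argument for the right-hand-side vanishing is required.
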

\begin{proof}
It is clear that $f$ and $g$ must have the same support up to translation, as otherwise $\Upsilon$ is by definition the empty set.
We can then restrict to the case in which $\supp(g)=\supp(f)+\{m_0\}$ for some~$m_0\in M$, and denote by $\Lambda$ the linear span of the set $\supp(g)-\supp(g)=\{m-m^\prime: m,m^\prime\in\supp(g)\}$ in~$M_{\mathbb{R}}$.
We claim that
\begin{equation}\label{eq: dimension of Upsilon}
\codim(\Upsilon)\geq \dim(\Lambda),
\end{equation}
which will be enough to prove the first statement because of the absolute irreducibility of $f$ and~$g$.

To show \eqref{eq: dimension of Upsilon} notice that, denoting~$r=\dim(\Lambda)$, there exist $m,m_1,\ldots,m_r\in\supp(g)$ such that the lattice points $m_1-m,\ldots,m_r-m$ are linearly independent elements of~$M$.
We can then find a basis $e_1,\ldots,e_n$ of~$M$, and $\lambda_1,\ldots,\lambda_r\in\mathbb{Z}$ for which
\[
\lambda_i e_i=\sum_{j=1}^na_{ij}(m_j-m)
\]
for all $i=1,\ldots,r$, with each $a_{ij}\in\mathbb{Z}$.
We then have that $\Upsilon$ is contained in the zero set
\[
V_{\mathbb{T}}(\chi^{\lambda_ie_i}-\gamma_i:  i=1,\ldots,r)
\]
for an appropriate choice of~$\gamma_1,\ldots,\gamma_r\in\mathbb{K}$.
Under the identification between $\mathbb{T}$ and $\Gm^n$ given by the choice of the basis $\{e_1,\ldots,e_n\}$, the $\overline{\mathbb{K}}$-points of this last set agree with
\[
\big\{t\in (\overline{\mathbb{K}}^\times)^n: t_1^{\lambda_1}=\gamma_1,\ldots,t_r^{\lambda_r}=\gamma_r\big\},
\]
and then it has codimension precisely~$r$.

Let us now prove the second statement.
When $f$ and $g$ are monomials, \cref{prop: conjecture for monomials} ensures that the conjecture holds.
Suppose then that $f$ and $g$ are binomials with the same support up to translation.
After multiplying each of them by a suitable monomial, \cref{prop: conjecture for monomials} and \cref{rem: additivity of conjecture wrt products} allow to reduce the proof to the case in which $f=\chi^m-\alpha_0$ and~$g=\chi^m-\beta_0$.
For all $\omega\in\mathbb{T}(\overline{\mathbb{K}})$ the Laurent polynomials $\omega_1^*f,\omega_2^*g$ meet properly if and only if
\[
\alpha_0\chi^m(\omega_2)\neq\beta_0\chi^m(\omega_1),
\]
in which case~$Z_{\mathbb{T}}(\omega_1^*f,\omega_2^*g)=0$.  Therefore,
the left hand side of \cref{conj: main conjecture} vanishes for $f$
and~$g$.  The right hand one is also zero, as can be shown by noticing
that the $v$-adic Ronkin functions of $f$ and $g$ are explicitly
computable piecewise affine functions \cite[Example~2.12]{Gualdi}, and
by the properties of mixed integrals.
\end{proof}

We are finally ready to prove the main result of the article, that is the following special case of \cref{conj: main conjecture}.

\begin{theorem}[\cref{thm: main theorem}]\label{thm: main theorem in section}
Let $f,g$ be two nonzero Laurent polynomials in~$\mathbb{K}[M]$.
Then, for all quasi-strict sequence $(\bm{\omega}_\ell)_\ell$ of torsion points in~$\mathbb{T}(\overline{\mathbb{K}})^2$ we have that
\[
\lim_{\ell\to\infty}\h_{\overline{D}_0,\ldots,\overline{D}_{n-2}}(Z_{\mathbb{T}}(\bm{\omega}_{\ell,1}^*f,\bm{\omega}_{\ell,2}^*g))
=
\sum_{v\in\mathfrak{M}}n_v\MI_M(\vartheta_{\overline{D}_0,v},\ldots,\vartheta_{\overline{D}_{n-2},v},\rho_{f,v}^\vee,\rho_{g,v}^\vee).
\]
\end{theorem}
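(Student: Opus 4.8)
The plan is to assemble the results of the previous sections in the order they were developed. First I would invoke \cref{prop: reduction steps} for $k=2$ to reduce to the setting of \cref{sec:an-auxil-funct-1}: $f$ and $g$ are absolutely irreducible with $0\in\supp(f)\cap\supp(g)$ and coefficients in the ring of integers of $\mathbb{K}$; the toric variety $X$ is smooth and projective with fan compatible with $\NP(f)$ and $\NP(g)$; the divisors $D_0,\ldots,D_{n-2}$ are very ample with smooth metrics at the Archimedean places and algebraic ones at the non-Archimedean places; and the sequence has the form $\bm{\omega}_\ell=(1,\omega_\ell)$ with $(\omega_\ell)_\ell$ a strict sequence of torsion points of $\mathbb{T}(\overline{\mathbb{K}})$. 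Inside this setting I would separate off an exceptional case: if the closed subset $\Upsilon\subset\mathbb{T}$ of \eqref{eq: set of bad intersection} is nonempty of codimension at most one, then by \cref{lem: dimension of Upsilon} the polynomials $f$ and $g$ are monomials or binomials with the same support up to translation, and \cref{conj: main conjecture} — hence the theorem — already holds for them. So I may assume that $\Upsilon$ is empty or of codimension at least two. In either case the Zariski closure of $\Upsilon_v^{\an}\cap\mathbb{S}_v$ in $\mathbb{T}_v$ lies inside $\Upsilon_v$, hence is a finite union of subvarieties of codimension at least two; thus $\Upsilon_v$ is essentially atoral for every Archimedean place $v$, which is the extra hypothesis needed later.

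Next I would apply the recursive formula. Since $(\bm{\omega}_\ell)_\ell$ is quasi-strict it eventually avoids the bad locus $H$ by Laurent's theorem, so for $\ell$ large \cref{prop: induction in general strategy} gives
\[
\h_{\overline{D}_0,\ldots,\overline{D}_{n-2}}(Z_{\mathbb{T}}(\bm{\omega}_{\ell,1}^*f,\bm{\omega}_{\ell,2}^*g))
=
\h_{\overline{D}_0,\ldots,\overline{D}_{n-2},\overline{D}^{\Ron}_{g}}(Z_{\mathbb{T}}(f))
+
\sum_{v\in\mathfrak{M}}\frac{n_v}{\#O(\omega_\ell)}\sum_{\eta\in O(\omega_\ell)_v}I_v(\eta),
\]
where I have used that $\bm{\omega}_{\ell,1}=1$, so $Z_{\mathbb{T}}(\bm{\omega}_{\ell,1}^*f)=Z_{\mathbb{T}}(f)$, and that $O((1,\omega_\ell))_v=\{1\}\times O(\omega_\ell)_v$, along which the function $I_v$ of \cref{sec:an-appr-crefc} restricts to the function $I_v$ studied in \cref{sec: local log equidistribution,sec: adelic equidistribution}. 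The first summand does not depend on $\ell$, and \cref{thm: Gualdi thesis main} applied to $\overline{D}_0,\ldots,\overline{D}_{n-2},\overline{D}^{\Ron}_{g}$ — whose last roof function is $\rho_{g,v}^\vee$ — identifies it with $\sum_{v\in\mathfrak{M}}n_v\MI_M(\vartheta_{\overline{D}_0,v},\ldots,\vartheta_{\overline{D}_{n-2},v},\rho_{f,v}^\vee,\rho_{g,v}^\vee)$, which is exactly the limit claimed in the statement. So everything reduces to showing that the second summand tends to zero.

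Then I would split the sum over places as $\sum_{v\in\mathfrak{S}}+\sum_{v\in\mathfrak{M}\setminus\mathfrak{S}}$, with $\mathfrak{S}$ the finite set of \cref{lemma: S is finite}. For $v\in\mathfrak{M}\setminus\mathfrak{S}$ the adelic vanishing \cref{thm: adelic equidistribution} gives $\sum_{v\notin\mathfrak{S}}\frac{n_v}{\#O(\omega_\ell)}\sum_{\eta\in O(\omega_\ell)_v}I_v(\eta)\to0$ directly. For each of the finitely many $v\in\mathfrak{S}$ — which are either Archimedean, where $\Upsilon_v$ is essentially atoral by the first step, or non-Archimedean, where no extra hypothesis is needed — \cref{thm: local equidistribution for main theorem} gives $\frac{1}{\#O(\omega_\ell)}\sum_{\eta\in O(\omega_\ell)_v}I_v(\eta)\to0$, and summing over the finite set $\mathfrak{S}$ preserves this. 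Adding the two contributions shows that the second summand vanishes asymptotically, which completes the proof.

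The technical heart of the argument is of course already contained in \cref{sec:an-auxil-funct-1,sec: local log equidistribution,sec: adelic equidistribution}, so this final assembly is essentially bookkeeping. The one place that requires genuine care is the preliminary separation of the monomial/binomial case: it is precisely there that $\Upsilon_v$ can be a non-torsion coset of codimension one and therefore fail to be essentially atoral, so that \cref{thm: local equidistribution for main theorem} would not be available; once the polynomials are absolutely irreducible and $\Upsilon$ has codimension at least two (or is empty), every step goes through.
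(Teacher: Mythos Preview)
Your proposal is correct and follows essentially the same route as the paper's own proof: reduce via \cref{prop: reduction steps}, dispose of the monomial/binomial case with \cref{lem: dimension of Upsilon} so that $\Upsilon_v$ is essentially atoral at Archimedean places, apply \cref{prop: induction in general strategy} and \cref{thm: Gualdi thesis main} to isolate the error term, then split it over $\mathfrak{S}$ and $\mathfrak{M}\setminus\mathfrak{S}$ and kill the two pieces with \cref{thm: local equidistribution for main theorem} and \cref{thm: adelic equidistribution} respectively. Your remark that the codimension-one case is precisely where essential atorality can fail is also exactly the point.
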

\begin{proof}
Thanks to \cref{prop: reduction steps} it is enough to consider the case in which $f,g$ are
absolutely irreducible Laurent polynomials with coefficients in~$O_{\mathbb{K}}$, and whose Newton polytopes contain the lattice point~$0\in M$.
By the same result, we can also assume that $X$ is smooth and projective, with fan compatible with the Newton polytopes of $f$ and~$g$, that $\overline{D}_0,\ldots,\overline{D}_{n-2}$ are very ample toric divisors on $X$ equipped with smooth toric metrics at the Archimedean places and algebraic toric metrics at the non-Archimedean ones, and that $(\bm{\omega}_\ell)_\ell$ is of the form $(1,\omega_{\ell})_\ell$ for a strict sequence $(\omega_\ell)_\ell$ of torsion points in~$\mathbb{T}(\overline{\mathbb{K}})$.

Under these assumptions, the Laurent polynomials $f$ and $g$ define nef toric divisors $D_f$ and $D_g$ on~$X$, and sections $s_f\in\Gamma(X,\mathcal{O}(D_f))$ and $s_g\in\Gamma(X,\mathcal{O}(D_g))$, respectively.

When the closed subset $\Upsilon\subset\mathbb{T}$ from \eqref{eq: set of bad intersection} has codimension at most~$1$, the validity of the conjecture follows from \cref{lem: dimension of Upsilon}.
Therefore, we can assume for the remaining of the proof that $\Upsilon$ has codimension at least $2$ in~$\mathbb{T}$, and in particular the base change $\Upsilon_v$ is an essentially atoral subset of~$\mathbb{T}_v$ for all Archimedean place $v$ of~$\mathbb{K}$.

We can now apply the strategy outlined in \cref{sec:an-appr-crefc}.
Since the sequence $(1,\omega_\ell)_{\ell}$ is quasi-strict, by \cref{cor: proper intersection for strict sequences} we have that for $\ell$ sufficiently large the global sections $s_f,\omega_\ell^*s_g$ meet properly and with no components outside of the principal open subset of~$X$.
Therefore, for such $\ell$, and under our assumptions, \cref{prop: induction in general strategy} is written as
\begin{equation}\label{eq: recursive equation in main proof}
\h_{\overline{D}_0,\ldots,\overline{D}_{n-2}}(Z_{\mathbb{T}}(f,\omega_\ell^*g))
=
\h_{\overline{D}_0,\ldots,\overline{D}_{n-2},\overline{D}^{\Ron}_{g}}(Z_{\mathbb{T}}(f))
+
\sum_{v\in\mathfrak{M}}\frac{n_{v}}{\# O(\omega_\ell)}\sum_{\eta\in O(\omega_\ell)_{v}} I_{v}(\eta),
\end{equation}
where for each place $v$ the function $I_v\colon\mathbb{T}(\mathbb{C}_v)\rightarrow\mathbb{R}\cup\{-\infty\}$ is defined by
\[
I_v(t)
=
\int_{X_{v}^{\an}}\log\|t^*s_{g}\|_{\Ron,v}\ \chern(\overline{D}_{0,v})\wedge\ldots\wedge\chern(\overline{D}_{n-2,v})\wedge
\delta_{Z_X(s_f)_v^{\an}}.
\]
Consider now the subset $\mathfrak{S}$ of $\mathfrak{M}$ as in beginning of \cref{sec: adelic equidistribution}.
Then, combining \cref{thm: Gualdi thesis main} and \eqref{eq: recursive equation in main proof} gives
\begin{equation*}
\begin{split}
\h_{\overline{D}_0,\ldots,\overline{D}_{n-2}}(Z_{\mathbb{T}}(f,\omega_\ell^*g))
=&
\sum_{v\in\mathfrak{M}}n_v\MI_M\big(\vartheta_{0,v},\ldots,\vartheta_{n-2,v},\rho_{f,v}^\vee,\rho_{g,v}^\vee\big)
\\&+
\sum_{v\in\mathfrak{S}}\frac{n_{v}}{\# O(\omega_\ell)}\sum_{\eta\in O(\omega_\ell)_{v}} I_{v}(\eta)
\\&+
\sum_{v\in\mathfrak{M}\setminus\mathfrak{S}}\frac{n_{v}}{\# O(\omega_\ell)}\sum_{\eta\in O(\omega_\ell)_{v}} I_{v}(\eta).
\end{split}
\end{equation*}
Since $\mathfrak{S}$ is finite by \cref{lemma: S is finite}, the second sum in the right-hand-side converges to $0$ as $\ell\to\infty$ because of \cref{thm: local equidistribution for main theorem}.
The third sum also converges to $0$ as $\ell\to\infty$ because of the adelic equidistribution result in \cref{thm: adelic equidistribution}.

Therefore, the expression is convergent, and the limit is the desired one.
\end{proof}

As an application, we can affirmatively answer \cite[Conjecture~11.8]{GualdiSombra}.

\begin{example}
Let $d_1,d_2\geq1$ and consider the Fermat polynomials
\[
f=1+x_1^{d_1}+x_2^{d_1}\and g=1+x_1^{d_2}+x_2^{d_2},
\]
seen as bivariate Laurent polynomials in~$\mathbb{Q}[x_1^{\pm1},x_2^{\pm1}]$.
Considering the canonically metrized hyperplane divisor on $\mathbb{P}^2_{\mathbb{Q}}$, and applying \cref{thm: main theorem} we deduce that the limit naive height of the intersection between the twists of $f$ and $g$ is computed by
\[
\lim_{\ell\to\infty}\h(Z_{\mathbb{T}}(\bm{\omega}_{\ell,1}^*f,\bm{\omega}_{\ell,2}^*g))
=
\sum_{v\in\mathfrak{M}}\MI_{\mathbb{Z}^2}(0_{\Delta},\rho_{f,v}^\vee,\rho_{g,v}^\vee),
\]
where $0_\Delta$ denotes the indicator function of the standard simplex $\Delta$ of~$\mathbb{R}^2$.
In the same line of what was done in \cite{GualdiSombra} for~$d_1=d_2=1$, it would be interesting to explore whether this limit can be expressed it in terms of special values of some relevant function.
\end{example}

\subsection{A consequences for strict sequences of sets}

We conclude the article by showing how \cref{thm: main theorem} can be
adapted to compute the asymptotic behaviour of average heights over
certain growing sets of torsion points.

To explain this, recall from \cite[Definition~6.4]{GualdiSombra} that a sequence $(E_\ell)_\ell$ of nonempty finite subsets of $\mathbb{T}(\overline{\mathbb{K}})^k$ is said to be \emph{strict} if for every proper algebraic subgroup $G\subset\mathbb{T}^k$ we have that
\[
\lim_{\ell\to\infty}\frac{\#(E_\ell\cap G(\overline{\mathbb{K}}))}{\#E_\ell}=0.
\]

This following can be seen as a generalization of \cref{thm: main theorem in section}, from which it actually follows.

\begin{proposition}
Let $f,g$ be two nonzero Laurent polynomials in~$\mathbb{K}[M]$, and denote by $H$ the set of $t\in\mathbb{T}(\overline{\mathbb{K}})^2$ for which $t_1^*f,t_2^*g$ do not meet properly.
Let also $(E_\ell)_{\ell}$ be a strict sequence of nonempty finite subsets of torsion points in~$\mathbb{T}(\overline{\mathbb{K}})^2$.
Then
\begin{multline*}
\lim_{\ell\to\infty}\frac{1}{\#E_\ell}\sum_{\omega\in E_\ell\setminus H}
\h_{\overline{D}_0,\ldots,\overline{D}_{n-2}}(Z_{\mathbb{T}}(\omega_1^*f,\omega_2^*g))
\\=
\sum_{v\in\mathfrak{M}}n_v\MI_M(\vartheta_{\overline{D}_0,v},\ldots,\vartheta_{\overline{D}_{n-2},v},\rho_{f,v}^\vee,\rho_{g,v}^\vee).
\end{multline*}
Moreover, if we denote by $L$ the previous limit, for all $\varepsilon>0$ we have that
\[
\lim_{\ell\to\infty}\frac{\#\{\omega\in E_{\ell}\setminus H :  |\h_{\overline{D}_0,\ldots,\overline{D}_{n-2}}(Z_{\mathbb{T}}(\omega_1^*f,\omega_2^*g))-L|<\varepsilon \}}{\#E_\ell}=1.
\]
\end{proposition}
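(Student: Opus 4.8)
The plan is to deduce this statement from \cref{thm: main theorem in section} by a soft argument that turns averages over a strict sequence of finite sets into limits along strict sequences of points. Write $\phi(\boldsymbol{\omega})=\h_{\overline{D}_{0},\ldots,\overline{D}_{n-2}}(Z_{\mathbb{T}}(\omega_{1}^{*}f,\omega_{2}^{*}g))$ for $\boldsymbol{\omega}\in\mathbb{T}(\overline{\mathbb{K}})^{2}\setminus H$. First I would record two preliminary facts. Since the coefficients of $\omega_{1}^{*}f$ and $\omega_{2}^{*}g$ differ from those of $f$ and $g$ by roots of unity, all the twisted sections have the same $v$-adic arithmetic size as $f$ and $g$ at every place; hence, by the arithmetic Bézout theorem together with the standard lower bound for the height of an effective cycle with respect to semipositive metrized divisors \cite{BPS}, the function $\phi$ is bounded on $\mathbb{T}(\overline{\mathbb{K}})^{2}\setminus H$, say $|\phi|\le B$. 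Secondly, by Laurent's theorem on the toric Manin--Mumford conjecture \cite[Theorem~7.4.7]{BG} the torsion points of the proper closed subset $H$ are contained in a finite union of proper torsion cosets of $\mathbb{T}^{2}$, and each proper torsion coset lies in a proper algebraic subgroup; thus $H(\overline{\mathbb{K}})_{\mathrm{tors}}$ lies in a finite union of proper algebraic subgroups of $\mathbb{T}^{2}$.

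The core of the argument is the following lemma, which I would state and prove first: if $\psi\colon\mathbb{T}(\overline{\mathbb{K}})^{2}\setminus H\to\mathbb{R}$ is bounded and $\psi(\omega_{\ell})\to L'$ for every strict sequence $(\omega_{\ell})_{\ell}$ of torsion points of $\mathbb{T}(\overline{\mathbb{K}})^{2}$, then $\frac{1}{\#E_{\ell}}\sum_{\boldsymbol{\omega}\in E_{\ell}\setminus H}\psi(\boldsymbol{\omega})\to L'$ for every strict sequence $(E_{\ell})_{\ell}$ of finite sets of torsion points. For the proof, fix $\varepsilon>0$. Enumerating the (countably many) proper algebraic subgroups of $\mathbb{T}^{2}$ and using the hypothesis on $\psi$ via the usual "otherwise extract a strict sequence violating the hypothesis" argument, I would produce finitely many proper algebraic subgroups $G_{1},\ldots,G_{r}$ such that $H(\overline{\mathbb{K}})_{\mathrm{tors}}\subseteq\bigcup_{i}G_{i}(\overline{\mathbb{K}})$ and $|\psi(\boldsymbol{\omega})-L'|<\varepsilon$ for every torsion $\boldsymbol{\omega}\notin\bigcup_{i}G_{i}(\overline{\mathbb{K}})$. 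Splitting $E_{\ell}\setminus H$ into the part outside $\bigcup_{i}G_{i}$ (on which $|\psi-L'|<\varepsilon$) and the part inside it (of cardinality at most $\sum_{i}\#(E_{\ell}\cap G_{i})$, on which $|\psi-L'|\le B+|L'|$), and using that $\#(E_{\ell}\cap G_{i})/\#E_{\ell}\to0$ by strictness of $(E_{\ell})_{\ell}$, one obtains $\limsup_{\ell}\big|\frac{1}{\#E_{\ell}}\sum_{\boldsymbol{\omega}\in E_{\ell}\setminus H}\psi(\boldsymbol{\omega})-L'\big|\le\varepsilon$, and the lemma follows since $\varepsilon$ is arbitrary.

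The first assertion of the proposition is then the lemma applied to $\psi=\phi$ and $L'=L$, since $\phi(\boldsymbol{\omega}_{\ell})\to L$ along strict sequences of torsion points by \cref{thm: main theorem in section} (strict sequences being in particular quasi-strict). For the concentration statement I would apply the lemma a second time, to $\psi=|\phi-L|$ and $L'=0$ (still bounded, still tending to $0$ along strict sequences of torsion points by the same theorem), getting $\frac{1}{\#E_{\ell}}\sum_{\boldsymbol{\omega}\in E_{\ell}\setminus H}|\phi(\boldsymbol{\omega})-L|\to0$; a Chebyshev-type inequality yields $\#\{\boldsymbol{\omega}\in E_{\ell}\setminus H:|\phi(\boldsymbol{\omega})-L|\ge\varepsilon\}/\#E_{\ell}\to0$, and combining with $\#(E_{\ell}\cap H)/\#E_{\ell}\to0$ (a consequence of the second preliminary fact and strictness) gives the claimed limit~$1$. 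The only inputs that are not purely formal are the boundedness of $\phi$ and the appeal to \cref{thm: main theorem in section} inside the lemma; I expect the main delicate point to be the extraction of the finite family $G_{1},\ldots,G_{r}$, i.e.\ making precise the countability/diagonal argument that upgrades "convergence along every strict sequence of points" to "uniform smallness outside finitely many proper subgroups", but this is standard and poses no real obstacle.
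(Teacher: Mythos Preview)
Your proposal is correct and follows essentially the same route as the paper: establish that the height function is bounded on torsion points, then deduce both conclusions from \cref{thm: main theorem in section} via a transfer principle from strict sequences of points to strict sequences of finite sets (which the paper simply cites as \cite[Lemma~6.7]{GualdiSombra}, while you reprove it by the countable-subgroup extraction argument). The only notable difference is in the justification of the lower bound for $\phi$: the paper obtains it by choosing a finite family of auxiliary global sections of the $\mathcal{O}(D_i)$ so that some family meets each twisted cycle properly, rather than invoking a general lower bound for heights of effective cycles.
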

\begin{proof}
Take the set of torsion points $\mathbb{T}(\overline{\mathbb{K}})^{\tors}\subset\mathbb{T}(\overline{\mathbb{K}})$ and consider the function $\varphi\colon\mathbb{T}(\overline{\mathbb{K}})^{\tors}\to\mathbb{R}$ defined by
\begin{equation*}
\varphi(\omega)=
\begin{cases}
\h_{\overline{D}_0,\ldots,\overline{D}_{n-2}}(Z_{\mathbb{T}}(\omega_1^*f,\omega_2^*g))&\text{ if }\omega\notin H,
\\0&\text{ otherwise}.
\end{cases}
\end{equation*}
This function is bounded above because of \cite[Proposition~6.4.1]{Guathesis}, see also \cite{MartinezSombra}.
By considering a finite number of families $s_{0,i},\ldots,s_{n-2,i}$ of global sections of $\mathscr{O}(D_0),\ldots,\mathscr{O}(D_{n-2})$ respectively, such that for each $t\in\mathbb{T}(\overline{\mathbb{K}})^2$ there is $i$ for which $s_{0,i},\ldots,s_{n-2,i},t_1^*f,t_2^*g$ meet properly,  we can also show that $\varphi$ is bounded below.

Then, the statement follows readily from \cite[Lemma~6.7]{GualdiSombra} and \cref{thm: main theorem in section}.
\end{proof}

As the sequence $(\tau(\ell))_\ell$ of the full sets of $\ell$-torsion points in $\mathbb{T}(\overline{\mathbb{K}})^2$ is strict \cite[Example~6.5]{GualdiSombra}, we obtain the following particular case of \cite[Conjecture~6.4.4]{Guathesis}.

\begin{corollary}\label{cor: confirm Gualdi in codimension 2}
For two nonzero Laurent polynomials $f,g$ in~$\mathbb{K}[M]$ we have that
\begin{multline*}
\lim_{\ell\to\infty}\frac{1}{\ell^2}\sum_{\omega\in \tau(\ell)\setminus H}
\h_{\overline{D}_0,\ldots,\overline{D}_{n-2}}(Z_{\mathbb{T}}(\omega_1^*f,\omega_2^*g))
\\=
\sum_{v\in\mathfrak{M}}n_v\MI_M(\vartheta_{\overline{D}_0,v},\ldots,\vartheta_{\overline{D}_{n-2},v},\rho_{f,v}^\vee,\rho_{g,v}^\vee),
\end{multline*}
where $H$ denotes the set of $t\in\mathbb{T}(\overline{\mathbb{K}})^2$ such that $t_1^*f,t_2^*g$ do not meet properly.
\end{corollary}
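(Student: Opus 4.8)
The plan is to deduce \cref{cor: confirm Gualdi in codimension 2} as an immediate consequence of the preceding proposition, once we verify that the hypotheses are met. The main content of the corollary is already packaged into the proposition, so the work here is purely to check that the sequence $(\tau(\ell))_\ell$ of full $\ell$-torsion sets of $\mathbb{T}(\overline{\mathbb{K}})^2$ fits into the framework of that statement, and to compute the normalization $\#E_\ell = \ell^2$.

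First I would recall that $\tau(\ell) = \mathbb{T}[\ell](\overline{\mathbb{K}})^2$, where $\mathbb{T}[\ell]$ denotes the $\ell$-torsion subgroup scheme. Since $\mathbb{T} \simeq \Gm^n$ after a choice of splitting and $\mathbb{T}[\ell] \simeq (\mu_\ell)^n$, we have $\#\mathbb{T}[\ell](\overline{\mathbb{K}}) = \ell^n$, hence $\#\tau(\ell) = \ell^{2n}$. Wait — the corollary states $\frac{1}{\ell^2}$, so the intended convention must be that $\tau(\ell)$ denotes the set of $\ell$-torsion points in $\mathbb{T}(\overline{\mathbb{K}})^2$ \emph{normalized so that its cardinality is $\ell^2$}, or more precisely one should read the $\frac{1}{\ell^2}$ as $\frac{1}{\#\tau(\ell)}$ up to the constant implicit in the referenced \cite[Example~6.5]{GualdiSombra}; I would simply cite that example for the identification $\#\tau(\ell)$ with $\ell^2$ in the relevant normalization (or restate the corollary with $\frac{1}{\#\tau(\ell)}$). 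Then the key remaining point is strictness: by \cite[Example~6.5]{GualdiSombra} the sequence $(\tau(\ell))_\ell$ is a strict sequence of finite subsets of $\mathbb{T}(\overline{\mathbb{K}})^2$ in the sense of \cite[Definition~6.4]{GualdiSombra}, i.e. for every proper algebraic subgroup $G \subset \mathbb{T}^2$ the proportion $\#(\tau(\ell) \cap G(\overline{\mathbb{K}}))/\#\tau(\ell)$ tends to $0$.

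With these two facts in hand, I would apply the preceding proposition directly to the strict sequence $(\tau(\ell))_\ell$: it gives
\[
\lim_{\ell\to\infty}\frac{1}{\#\tau(\ell)}\sum_{\omega\in \tau(\ell)\setminus H}
\h_{\overline{D}_0,\ldots,\overline{D}_{n-2}}(Z_{\mathbb{T}}(\omega_1^*f,\omega_2^*g))
=
\sum_{v\in\mathfrak{M}}n_v\MI_M(\vartheta_{\overline{D}_0,v},\ldots,\vartheta_{\overline{D}_{n-2},v},\rho_{f,v}^\vee,\rho_{g,v}^\vee),
\]
and substituting $\#\tau(\ell) = \ell^2$ (in the convention of \cite[Example~6.5]{GualdiSombra}) yields the claimed formula. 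The set $H$ here is exactly the one appearing in the proposition, namely the set of $t \in \mathbb{T}(\overline{\mathbb{K}})^2$ for which $t_1^*f, t_2^*g$ fail to meet properly, so no further identification is needed.

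The only genuinely delicate point — and the one I expect to require the most care in the write-up — is the bookkeeping around $\#\tau(\ell)$ and the precise meaning of $\tau(\ell)$: one must be consistent about whether $\tau(\ell)$ is indexed so that $\#\tau(\ell) = \ell^2$ matches the $\frac{1}{\ell^2}$ in the statement, which forces a particular reading (for instance $\tau(\ell)$ being the diagonal torsion of order $\ell$, or a normalized count). Everything else is a direct invocation of the proposition together with the cited strictness example, so the proof is essentially two lines modulo that normalization check.
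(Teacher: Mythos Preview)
Your approach is correct and matches the paper's: the corollary is obtained by applying the preceding proposition to the strict sequence $(\tau(\ell))_\ell$, with strictness supplied by \cite[Example~6.5]{GualdiSombra}. The paper's proof is literally the one-line observation preceding the corollary, so there is nothing more to do beyond citing that example and invoking the proposition.

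Your hesitation about the normalization is the only place where you drift. You correctly compute that the full $\ell$-torsion of $\mathbb{T}(\overline{\mathbb{K}})^2$ has $\ell^{2n}$ elements, not $\ell^2$, and this does not match the $\frac{1}{\ell^2}$ in the statement unless $n=1$. This is not a gap in your reasoning; rather, the symbol $\tau(\ell)$ is being used with the meaning fixed in \cite[Example~6.5]{GualdiSombra}, and the $\frac{1}{\ell^2}$ should simply be read as $\frac{1}{\#\tau(\ell)}$ in line with the proposition. Do not try to reconcile this by inventing a ``diagonal torsion'' interpretation or a special normalization---just cite the reference for the definition and strictness of $\tau(\ell)$ and apply the proposition verbatim. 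The mathematical content of your argument is complete; trim the speculation about what $\tau(\ell)$ might mean and replace it with a direct citation.
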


\appendix
\section{Proof of the reductions}\label{sec: appendix A}

This appendix is dedicated to the proof of \cref{prop: reduction steps} about the reduction steps that one can make when proving the general form of \cref{conj: main conjecture}.
They concern assumptions on different objects playing a role in the statement: the family of Laurent polynomials, the ambient toric variety, the sequence of torsion points and the collection of metrized divisors.
We show each of the corresponding reductions in a dedicated subsection.

\subsection{Proof of \cref{item: reduction of Laurent polynomials} in \cref{prop: reduction steps}}

We start by proving that the conjecture is verified in a trivial situation.

\begin{proposition}\label{prop: conjecture for monomials}
\cref{conj: main conjecture} holds true when one of the Laurent polynomial of the family $(f_1,\ldots,f_k)$ is a monomial.
\end{proposition}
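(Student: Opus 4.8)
The plan is to verify, assuming for concreteness that $f_k=\alpha\chi^{m}$ with $\alpha\in\mathbb{K}^{\times}$ and $m\in M$ (the $f_i$ playing symmetric roles in the statement), that both sides of the equality in \cref{conj: main conjecture} are equal to $0$.

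First I would treat the left-hand side. For every $\bm{t}\in\mathbb{T}(\overline{\mathbb{K}})^{k}$ the twist $t_k^{*}f_k=\alpha\,\chi^{m}(t_k)\,\chi^{m}$ is again a monomial, hence a unit in $\overline{\mathbb{K}}[M]$, so that $V_{\mathbb{T}}(t_k^{*}f_k)=\emptyset$ and $\div(t_k^{*}f_k)\cdot\mathbb{T}_{\overline{\mathbb{K}}}=0$. For a quasi-strict sequence $(\bm{\omega}_\ell)_\ell$ of torsion points the cycle $Z_{\mathbb{T}}(\bm{\omega}_\ell^{*}\bm{f})$ is well defined for $\ell$ sufficiently large by \cref{cor: proper intersection for strict sequences}, and since it is an intersection product having the vanishing factor $\div(\omega_{\ell,k}^{*}f_k)=0$, it is the zero cycle. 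Hence $\h_{\overline{D}_0,\ldots,\overline{D}_{n-k}}(Z_{\mathbb{T}}(\bm{\omega}_\ell^{*}\bm{f}))=0$ for all large $\ell$, and the left-hand limit is $0$.

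Next I would treat the right-hand side. Since $f_k$ is a monomial, its Newton polytope $\NP(f_k)=\{m\}$ is a single lattice point, so the $v$-adic Ronkin function $\rho_{f_k,v}$ is an affine function on $N_{\mathbb{R}}$ and its Legendre--Fenchel dual $\rho_{f_k,v}^{\vee}$ is the constant function on the $0$-dimensional polytope $\NP(f_k)$ with some value $c_v$; from the explicit description of the Ronkin function of a monomial (see \cite[Example~2.12]{Gualdi}), $c_v$ equals $\log|\alpha|_v$ up to a fixed sign, so that $c_v=0$ for all but finitely many $v$ and $\sum_{v\in\mathfrak{M}}n_v c_v=0$ by the product formula applied to $\alpha\in\mathbb{K}^{\times}$. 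It then remains to see that the mixed integral is linear in this last, degenerate, argument: using the basic properties of $\MI_M$ recalled in \cite[Section~2.7]{BPS} — its additivity with respect to sup-convolution and its reading as a mixed volume of the regions below the graphs, for which "a constant over a point" contributes only a vertical segment — one obtains
\[
\MI_M\big(\vartheta_{\overline{D}_0,v},\ldots,\vartheta_{\overline{D}_{n-k},v},\rho_{f_1,v}^{\vee},\ldots,\rho_{f_{k-1},v}^{\vee},\rho_{f_k,v}^{\vee}\big)=c_v\cdot\MV_M\big(\Delta_{D_0},\ldots,\Delta_{D_{n-k}},\NP(f_1),\ldots,\NP(f_{k-1})\big),
\]
where the mixed volume on the right is independent of $v$. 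Multiplying by $n_v$ and summing over $\mathfrak{M}$, the right-hand side of \cref{conj: main conjecture} becomes this fixed quantity times $\sum_{v\in\mathfrak{M}}n_v c_v=0$, as desired.

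The main (and essentially the only) obstacle will be extracting the displayed linearity/degeneration formula for $\MI_M$ cleanly from \cite{BPS} and fixing the sign and normalization conventions attached to the Ronkin function of a monomial; once these are in place the rest of the argument is formal.
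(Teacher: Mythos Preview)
Your proposal is correct and follows essentially the same approach as the paper: both verify that each side of the conjectural equality vanishes, the left by observing that a monomial twist has empty zero locus in the torus, and the right by computing $\rho_{f_k,v}^\vee$ as the constant $\log|\alpha|_v$ on the one-point polytope $\{m\}$, reducing the mixed integral to $\log|\alpha|_v$ times a fixed mixed volume, and invoking the product formula. The only difference is bibliographic precision: where you gesture at \cite[Section~2.7]{BPS} for the degeneration formula for $\MI_M$, the paper pins it down via \cite[Corollary~1.10 and Proposition~1.3]{Gualdi} and \cite[Proposition~1.1.13]{Guathesis}, and it gets the exact value (no sign ambiguity) of $\rho_{f_k,v}^\vee$ from \cite[Proposition~2.9 and Example~2.11]{Gualdi} together with \cite[Proposition~2.3.3]{BPS}.
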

\begin{proof}
Assume, without loss of generality, that~$f_1=\alpha\chi^m$ for some $\alpha\in\mathbb{K}$ and~$m\in M$.
We can show that under this hypothesis both sides in \cref{conj: main conjecture} are equal to zero.

The vanishing of the left hand side is immediate from the fact that
$Z_{\mathbb{T}}(\bm{t}^*\bm{f})=0$ for all admissible choice
of~$\bm{t}\in\mathbb{T}(\overline{\mathbb{K}})^k$.  Concerning the
right hand side, notice that in virtue of \cite[Proposition~2.9 and
Example~2.11]{Gualdi} and \cite[Proposition~2.3.3]{BPS} we have that
\[
\rho_{f_1,v}^\vee=\iota_{\{m\}}+\log|\alpha|_v
\]
for all $v\in\mathfrak{M}$, where $\iota_{\{m\}}$ is the function taking the value $0$ on $\{m\}$, and~$-\infty$ elsewhere.
Applying \cite[Corollary~1.10 and Proposition~1.3]{Gualdi} and \cite[Proposition~1.1.13]{Guathesis}, we obtain the equality
\begin{multline*}
\MI_M(\vartheta_{0,v},\ldots,\vartheta_{n-k,v},\rho_{f_1,v}^\vee,\ldots,\rho_{f_{k},v}^\vee)
\\=\log|\alpha|_v\cdot\MV_M(\Delta_{D_0},\ldots,\Delta_{D_{n-k}},\NP(f_2),\ldots,\NP(f_{k}))
\end{multline*}
for all~$v\in\mathfrak{M}$.
Therefore, the right hand side of \cref{conj: main conjecture} also vanishes, in virtue of the product formula on~$\mathbb{K}$.
\end{proof}

Another useful observation is that the statement of the conjecture behaves well under multiplication of the involved Laurent polynomials.

\begin{remark}\label{rem: additivity of conjecture wrt products}
Let $f,g,f_2,\ldots,f_k\in\mathbb{K}[M]$ be nonzero Laurent polynomials.
Assume that \cref{conj: main conjecture} holds for the family $(f,f_2,\ldots,f_k)$ and for the family~($g,f_2,\ldots,f_k)$.
Then it also holds for~$(f\cdot g,f_2,\ldots,f_k)$.

Indeed, it is enough to show that for the family $\bm{h}=(f\cdot g,f_2,\ldots,f_k)$ both sides of the equality in \cref{conj: main conjecture} coincide with the sum of the corresponding side for $\bm{f}=(f,f_2,\ldots,f_k)$ and~$\bm{g}=(g,f_2,\ldots,f_k)$.
First, the definition of the twist implies that whenever $\bm{t}=(t_1,\ldots,t_k)\in\mathbb{T}(\overline{\mathbb{K}})^k$ is such that $t_1^*(fg), t_2^*f_2,\ldots,t_k^*f_k$ meet properly, then also $t_1^*f, t_2^*f_2,\ldots,t_k^*f_k$ meet properly and $t_1^*g, t_2^*f_2,\ldots,t_k^*f_k$ meet properly, and moreover 
\[
Z_{\mathbb{T}}(\bm{t}^*\bm{h})=Z_{\mathbb{T}}(\bm{t}^*\bm{f})+Z_{\mathbb{T}}(\bm{t}^*\bm{g}).
\]

The linearity of the height function yields then, by passage to the limit, the claimed additivity for the left hand side of \cref{conj: main conjecture}.
The one for the right hand side follows from \cite[Proposition~2.9]{Gualdi}, \cite[Theorem~16.4]{R} and the multilinearity of mixed integrals with respect to sup-convolution.
\end{remark}

As a result of \cref{prop: conjecture for monomials} and \cref{rem:
  additivity of conjecture wrt products}, when proving \cref{conj:
  main conjecture} (or special cases of it), one can safely replace
any Laurent polynomial by its product with a nonzero monomial.  In
particular, up to multiplying by a monomial with suitable support and
with coefficient of high enough valuations, we can assume that for
each $i=1,\ldots,k$ the Laurent polynomial $f_i$ lies in
$O_{\mathbb{K}}[M]$ and that its support contains the lattice
point~$0\in M$.  This proves the first part of \cref{item: reduction
  of Laurent polynomials} in \cref{prop: reduction steps}.

For the second part, we need the following observation about the choice of the base field over which the Laurent polynomials are defined.

\begin{remark}\label{rem: equivalence of conjecture under field extension}
The validity of \cref{conj: main conjecture} for the family $(f_1,\ldots,f_k)$ over $\mathbb{K}$ is equivalent to its validity over any finite field extension $\mathbb{L}$ of~$\mathbb{K}$, as both sides in the statement of the conjecture are not affected by the change of base field.
Indeed, on the one hand, the height of a cycle does not depend on the choice of the number field over which it is defined.
On the other hand, for each place $w$ of $\mathbb{L}$ dividing a given place~$v\in\mathfrak{M}$, the $w$-adic roof function of the extension of $\overline{D}$ to $\mathbb{L}$ agrees with its $v$-adic roof function, and $\rho_{f_i,w}=\rho_{f_i,v}$ for all~$i\in\{1,\ldots,k\}$.
These facts, combined with the equality~$\sum_{w\mid v}n_w=n_v$, yield the invariance of the right hand side.
\end{remark}

Now, we can see each Laurent polynomial $f_i$ as an element of the unique factorization domain~$\overline{\mathbb{K}}[M]$, and take
\[
f_i=\prod_{j=1}^{r_i}f_{ij}
\]
to be its factorization into irreducible elements of~$\overline{\mathbb{K}}[M]$.
Denote by $\mathbb{L}\subset\overline{\mathbb{K}}$ the smallest field containing both $\mathbb{K}$ and the coefficients of the Laurent polynomials~$f_{ij}$ for all $i\in\{1,\ldots,k\}$ and~$j\in\{1,\ldots,r_i\}$.
It is a finite field extension of $\mathbb{K}$.
By using \cref{rem: equivalence of conjecture under field extension} and by repeatedly applying \cref{rem: additivity of conjecture wrt products} over~$\mathbb{L}$, we can reduce the proof of \cref{conj: main conjecture} for the family $(f_1,\ldots,f_k)$ to the proof of its validity for each of the family $(f_{1j_1},\ldots,f_{kj_k})$.
Since each of these consists of absolutely irreducible Laurent polynomials, the second part of \cref{item: reduction of Laurent polynomials} in \cref{prop: reduction steps} follows.

\subsection{Proof of \cref{item: reduction of toric variety} in \cref{prop: reduction steps}}

Let $\Sigma$ denote the fan of the toric variety~$X$.
After subdividing its cones, we can refine $\Sigma$ to a fan that is compatible with the Newton polytopes of the Laurent polynomials~$f_1,\ldots,f_k$.
It is possible to further refine the obtained fan in $N_{\mathbb{R}}$ to the fan~$\Sigma^\prime$ of a projective toric variety \cite[Theorem~6.1.18]{CLS} whose cones are generated by part of a basis of the lattice~$N$ \cite[Section~2.6]{Fulton_toric}.
As a result, by construction and by \cite[Section~2.1]{Fulton_toric} the toric variety $X^\prime$ associated with $\Sigma^\prime$ is a smooth projective compactification of $\mathbb{T}$ and its fan is compatible with the Newton polytopes of~$f_1,\ldots,f_k$.

We want to show that the validity of \cref{conj: main conjecture} for $X^\prime$ implies the one for~$X$.

To do so, consider the toric morphism $\phi\colon X^\prime\to X$ constructed from the identity map on $N$ as in \cite[Section~1.4]{Fulton_toric}.
It is a regular morphism whose restriction to the torus $\mathbb{T}$ agrees with the identity.

The pullbacks $\phi^*\overline{D}_0,\ldots,\phi^*\overline{D}_{n-k}$ are semipositive toric metrized divisors on~$X^\prime$, and thanks to \cite[Proposition~4.8.10]{BPS} their $v$-adic roof functions are the same as the ones of $\overline{D}_0,\ldots,\overline{D}_{n-k}$ at all place $v$ of~$\mathbb{K}$.

On the other hand, the arithmetic projection formula ensures that for all $\bm{t}=(t_1,\ldots,t_k)\in\mathbb{T}(\overline{\mathbb{K}})^k$ for which $t_1^*f_1,\ldots,t_k^*f_k$ meet properly
\[
\h_{\phi^*\overline{D}_0,\ldots,\phi^*\overline{D}_{n-k}}(Z_{\mathbb{T}}(\bm{t}^*\bm{f}))
=
\h_{\overline{D}_0,\ldots,\overline{D}_{n-k}}(Z_{\mathbb{T}}\big(\bm{t}^*\bm{f})),
\]
since $\phi_*\overline{Z_{\mathbb{T}}\big(\bm{t}^*\bm{f})}^{X^\prime}=\overline{Z_{\mathbb{T}}\big(\bm{t}^*\bm{f})}^{X}$ because the cycle has no components outside of~$\mathbb{T}$, and the restriction of $\phi$ to this torus is the identity map.

\cref{item: reduction of toric variety} in \cref{prop: reduction steps} follows then from the above considerations.

\subsection{Proof of \cref{item: reduction of geometric divisors} in \cref{prop: reduction steps}}

The following observation on the well behaviour of \cref{conj: main
  conjecture} under linear combination of semipositive toric metrized
divisors comes in handy.

\begin{remark}\label{rem: multilinearity of conjecture sum of metrized divisors}
Let $\overline{D}_0,\ldots,\overline{D}_{n-k},\overline{E}$ be  semipositive toric metrized divisors, and let~$i\in\{0,\ldots,n-k\}$.
If \cref{conj: main conjecture} holds for the families $\overline{D}_0,\ldots,\overline{D}_{n-k}$ and $\overline{D}_0,\ldots,\overline{D}_{i-1},\allowbreak\overline{E},\overline{D}_{i+1},\ldots,\overline{D}_{n-k}$, then it also holds for the family
\[
\overline{D}_0,\ldots,\overline{D}_{i-1},a\overline{D}_i+b\overline{E},\overline{D}_{i+1},\ldots,\overline{D}_{n-k}
\]
for all choice of~$a,b\in\mathbb{Z}$ for which $a\overline{D}_i+b\overline{E}$ is semipositive.
In fact, both sides of the equality in \cref{conj: main conjecture} are multilinear in the choice of the metrized divisors.

To show this, thanks to the symmetry of the height and of the mixed integral operator, it is enough to show that linearity holds in the first argument; we can then assume~$i=0$.
Whenever $\bm{t}=(t_1,\ldots,t_k)\in\mathbb{T}(\overline{\mathbb{K}})^k$ is such that $t_1^*f_1,\ldots,t_k^*f_k$ meet properly, we have
\[
\h_{a\overline{D}_0+b\overline{E},\overline{D}_1,\ldots,\overline{D}_{n-k}}(Z_{\mathbb{T}}(\bm{t}^*\bm{f}))
=
a\h_{\overline{D}_0,\overline{D}_1,\ldots,\overline{D}_{n-k}}(Z_{\mathbb{T}}(\bm{t}^*\bm{f}))
+
b\h_{\overline{E},\overline{D}_1,\ldots,\overline{D}_{n-k}}(Z_{\mathbb{T}}(\bm{t}^*\bm{f})),
\]
which implies the linearity of the left hand side in \cref{conj: main conjecture}.
On the other hand, and assuming that~$a,b\geq0$, \cite[Propositions~4.3.14, 2.3.1 and~2.3.3]{BPS} ensure that
\[
\vartheta_{a\overline{D}_0+b\overline{E},v}=(\vartheta_{\overline{D}_0,v}\cdot a)\boxplus(\vartheta_{\overline{E},v}\cdot b)
\]
 for all place $v$ of $\mathbb{K}$, with $\boxplus$ denoting the sup-convolution binary operation between concave functions, and $\cdot$ the right scalar multiplication.
This equality, plugged in the right hand side of \cref{conj: main conjecture}, yields its linearity because of the analogous properties of the mixed integral operator with respect to the sup-convolution of its arguments.

In the case in which $a\geq0$ and~$b<0$, the same conclusion is similarly reached after noticing that
\[
\vartheta_{\overline{D}_0,v}\cdot a=\vartheta_{a\overline{D}_0+b\overline{E}+|b|\overline{E},v}=\vartheta_{a\overline{D}_0+b\overline{E},v}\boxplus(\vartheta_{\overline{E},v}\cdot |b|).
\]
\end{remark}

Now, assume that \cref{conj: main conjecture} is valid for families of semipositive toric metrized divisors with very ample underlying divisors, and let us show that we can prove it in general.

Consider an arbitrary family $\overline{D}_0,\ldots,\overline{D}_{n-k}$ of semipositive toric metrized divisors.
Because of \cref{item: reduction of toric variety} in \cref{prop: reduction steps}, we can assume that $X$ is projective, and therefore consider a very ample toric divisor $A$ on~$X$.
Since every nef toric divisor is globally generated, and because of \cite[Exercise~7.5(d) at page~169]{Har}, we have that $D_i+A$ is also very ample for every~$0\leq i\leq n-k$.
Moreover, by ampleness, we can equip $A$ with a semipositive toric metric, and call $\overline{A}$ the corresponding semipositive toric metrized divisor.

By applying \cref{rem: multilinearity of conjecture sum of metrized divisors}, the validity of \cref{conj: main conjecture} for the family $\overline{D}_0,\ldots,\overline{D}_{n-k}$ is implied by the one of the analogous statement for the two families
\[
\overline{D}_0+\overline{A},\overline{D}_1,\ldots,\overline{D}_{n-k}\and\overline{A},\overline{D}_1,\ldots,\overline{D}_{n-k}.
\]

Reasoning in the same manner on the subsequent entries of these new families, we can inductively reduce the validity of \cref{conj: main conjecture} for $\overline{D}_0,\ldots,\overline{D}_{n-k}$ to the one for families whose members are either $\overline{A}$ or of the form~$\overline{D}_i+\overline{A}$, for~$0\leq i\leq n-k$.
As these all have very ample underlying divisors, \cref{item: reduction of geometric divisors} in \cref{prop: reduction steps} is proved.

\subsection{Proof of \cref{item: reduction of metrized divisors} in \cref{prop: reduction steps}}

We aim to show that the validity of \cref{conj: main conjecture} is preserved under uniform limits of the metrics.
We start by proving a lemma on the behaviour of mixed integrals with respect to approximations of concave functions, which can be of independent interest.

\begin{lemma}\label{lem: convergence in mixed integral}
Let $\mu$ be a Haar measure on a real vector space $V$ of dimension~$n$.
For all~$i\in\{0,\ldots,n\}$, let $(g_{i,j})_j$ be a sequence of concave functions having a closed convex body $Q_i$ of $V$ as common domain, and which converges uniformly to a concave function~$g_i$.
Then
\[
\lim_{j\to\infty}\MI_\mu(g_{0,j},\ldots,g_{n,j})
=
\MI_\mu(g_0,\ldots,g_n).
\]
\end{lemma}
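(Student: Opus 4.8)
The plan is to recall the integral-geometric formula that expresses a mixed integral $\MI_\mu(g_0,\dots,g_n)$ as an ordinary integral over a polytope or simplex of a symmetric polynomial in the $g_i$'s, and then invoke dominated (or even uniform) convergence. Concretely, from \cite[Definition~2.7.16 and Proposition~2.7.17]{BPS} one has, after fixing a standard simplex $\Delta^n\subset\mathbb{R}^n$ with its barycentric coordinates and writing $Q=Q_0+\dots+Q_n$ for the Minkowski sum, an identity of the form
\[
\MI_\mu(g_0,\dots,g_n)=\int_{Q}\big(\text{something built from the }g_i\big)\,d\mu
\]
where the integrand is a finite alternating sum of terms $(g_{i_0}\boxplus\dots\boxplus g_{i_r})$ evaluated via sup-convolution; equivalently, and more robustly for our purposes, one uses the Cayley trick presenting $\MI_\mu(g_0,\dots,g_n)$ as $(n+1)!$ times the integral of the concave function $\Theta(x_0,\dots,x_n,\lambda)=\sum \lambda_i g_i$-type roof over the convex hull of the graphs, i.e.\ as the volume of the region under a single concave function on a polytope of dimension $n$. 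The key structural point is that the mixed integral is, up to the explicit combinatorial constant, a finite $\mathbb{Z}$-linear combination of quantities each of which is $\int$ of a sup-convolution of a subfamily of the $g_i$'s over a fixed convex body depending only on the $Q_i$.

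First I would reduce to showing, for any subset $S\subseteq\{0,\dots,n\}$, that if $g_{i,j}\to g_i$ uniformly on $Q_i$ then the sup-convolution $\boxplus_{i\in S}g_{i,j}$ converges uniformly on the fixed convex body $\sum_{i\in S}Q_i$ to $\boxplus_{i\in S}g_i$. This is elementary: if $\|g_{i,j}-g_i\|_\infty\le\varepsilon$ for all $i\in S$, then directly from the definition $(\boxplus_{i\in S}h_i)(x)=\sup\{\sum h_i(x_i):\sum x_i=x\}$ one gets $\|\boxplus_{i\in S}g_{i,j}-\boxplus_{i\in S}g_i\|_\infty\le \#S\cdot\varepsilon$, since replacing each $g_{i,j}$ by $g_i$ in the supremum changes it by at most $\#S\cdot\varepsilon$ at every point. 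Second, uniform convergence of the integrands on a fixed body of finite $\mu$-measure (the $Q_i$ are compact, hence so are all their Minkowski sums) gives convergence of the integrals $\int_{\sum_{i\in S}Q_i}\boxplus_{i\in S}g_{i,j}\,d\mu\to\int_{\sum_{i\in S}Q_i}\boxplus_{i\in S}g_i\,d\mu$. Third, summing the finitely many such terms with their fixed integer coefficients from the combinatorial expansion of $\MI_\mu$ yields the claimed convergence.

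An alternative, essentially equivalent route avoids sup-convolutions altogether: use the fact (\cite[Proposition~2.7.17]{BPS}) that $\MI_\mu(g_0,\dots,g_n)$ equals an integral over the standard $n$-simplex of the function $\theta\mapsto\vol_{\text{under}}$ of a parametrized family, or simply that the mixed integral is continuous for uniform convergence as a consequence of the ``$n+1$ points in the simplex'' integral representation; in either presentation the integrand at each point of the domain is a continuous function of the $(n+1)$-tuple $(g_0(\cdot),\dots,g_n(\cdot))$ in the sup-norm, and the domain has finite measure, so uniform convergence passes through the integral. I would pick whichever of these the paper's conventions make cleanest, citing the relevant statement from \cite{BPS}.

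The main obstacle is purely bookkeeping: making sure the chosen integral representation of $\MI_\mu$ is quoted in a form where the domain of integration is genuinely \emph{fixed} (independent of $j$) — here it is, because the domains $Q_i$ are assumed to be common to the whole sequence, which is exactly the hypothesis that makes the argument work — and that the integrand depends continuously, uniformly in the base point, on the data $g_{i,j}$. Once the representation is in place, the estimate $\|\boxplus_{i\in S}g_{i,j}-\boxplus_{i\in S}g_i\|_\infty\le\#S\cdot\max_{i\in S}\|g_{i,j}-g_i\|_\infty$ together with finiteness of $\mu$ on the (compact) Minkowski sums does everything; there is no analytic subtlety, only the need to be careful that the finitely many coefficients in the expansion of the mixed integral do not depend on $j$.
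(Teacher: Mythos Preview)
Your proposal is correct and follows essentially the same route as the paper: expand $\MI_\mu$ as the alternating sum of integrals of sup-convolutions over the fixed Minkowski sums $\sum_{i\in S}Q_i$, observe that sup-convolution preserves uniform convergence, and conclude by integrating over bodies of finite measure. Your explicit bound $\|\boxplus_{i\in S}g_{i,j}-\boxplus_{i\in S}g_i\|_\infty\le \#S\cdot\max_{i\in S}\|g_{i,j}-g_i\|_\infty$ is in fact more detailed than the paper, which simply asserts that sup-convolution preserves uniform limits; the digressions on the Cayley trick and the simplex representation are unnecessary but harmless.
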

\begin{proof}
By definition, the mixed integral $\MI_\mu(g_{0,j},\ldots,g_{n,j})$ is an alternating sum of integrals of the form
\[
\int_{Q_{i_1}+\ldots+Q_{i_k}}(g_{i_1,j}\boxplus\ldots\boxplus g_{i_k,j})\ d\mu,
\]
with $\{i_1,\ldots,i_k\}\subseteq\{0,\ldots,n\}$ and $\boxplus$ denoting the sup-convolution between concave functions.
One can show that this last operation preserves uniform limits, and therefore the sup-convolution of any subset of the~$g_{i,j}$ converges uniformly to the sup-convolution of their respective limits.
As a result, since the domain $Q_{i_1}+\ldots+Q_{i_k}$ has finite $\mu$-measure, each of the above integrals converges to
\[
\int_{Q_{i_1}+\ldots+Q_{i_k}}(g_{i_1}\boxplus\ldots\boxplus g_{i_k})\ d\mu,
\]
which is enough to conclude.
\end{proof}

Assume now that \cref{conj: main conjecture} holds for every choice of semipositive adelic metrized toric divisors whose metrics are smooth at Archimedean places and algebraic at non-Archimedean ones.
We want to show that it holds true for an arbitrary family $\overline{D}_0,\ldots,\overline{D}_{n-k}$ of semipositive toric metrized divisors.
As usual, we refer to \cite{BPS} for the terminology employed in the following.

By definition, the $v$-adic metric of each of the $\overline{D}_0,\ldots,\overline{D}_{n-k}$ is the canonical one for all $v$ outside of a finite set $\mathfrak{P}\subset\mathfrak{M}$ containing the Archimedean places.
Moreover, for each $i\in\{0,\ldots,n-k\}$ there is a sequence of semipositive toric metrized divisors
\[
\overline{D}_{i,j}=\big(D_i,(\|\cdot\|_{i,j,v})_{v\in\mathfrak{M}}\big)
\]
such that the metric $\|\cdot\|_{i,j,v}$ is semipositive smooth if $v$ Archimedean, it is associated with a semipositive algebraic model if $v$ is non-Archimedean, it is the canonical one if~$v\notin\mathfrak{P}$, and
\[
\lim_{j\to\infty}\dist(\|\cdot\|_{i,j,v},\|\cdot\|_{i,v})=0
\]
for all~$v$.
Because of \cite[Proposition~4.3.14(3)]{BPS} and the fact that uniform convergence of concave functions is preserved under Legendre--Fenchel duality, the sequence $(\vartheta_{\overline{D}_{i,j},v})_j$ of continuous concave functions on $\Delta_{D_i}$ converges uniformly to~$\vartheta_{\overline{D}_i,v}$, for all~$i\in\{0,\ldots,n-k\}$.

Let now $(\bm{\omega}_\ell)_\ell$ be a quasi-strict sequence of torsion points in~$\mathbb{T}(\overline{\mathbb{K}})^k$.
In proving \cref{conj: main conjecture} for this sequence, after approximating each $\overline{D}_i$ by $\overline{D}_{i,j}$ we are faced with a double limit, one over $\ell$ and the other over~$j$.
It will be crucial to be able to switch them, for which reason we need to prove that one of the convergence is uniform.

To do so, notice that for $\ell$ suitably large, the $(n-k)$-cycle $Z_{\mathbb{T}}(\bm{\omega}_\ell^*\bm{f})$ is well-defined thanks to the first part of \cref{cor: proper intersection for strict sequences}, and we can choose rational sections $s_i$ of~$\mathcal{O}(D_i)$, possibly depending on~$\ell$, such that $s_0,\ldots,s_{n-k}$ meet its closure $Z_X(\bm{\omega}_\ell^*s_{\bm{f}})$ in $X$ properly.
By the definition of the global height as sum of local contributions,
\begin{multline*}
\left|\h_{\overline{D}_{0,j},\ldots,\overline{D}_{n-k,j}}(Z_{\mathbb{T}}(\bm{\omega}_\ell^*\bm{f}))-\h_{\overline{D}_{0},\ldots,\overline{D}_{n-k}}(Z_{\mathbb{T}}(\bm{\omega}_\ell^*\bm{f}))\right|
\\\leq
\sum_{v\in\mathfrak{P}} n_v\Big|\h_{\overline{D}_{0,j,v},\ldots,\overline{D}_{n-k,j,v}}(Z_X(\bm{\omega}_\ell^*s_{\bm{f}});s_0,\ldots,s_{n-k})
\\-\h_{\overline{D}_{0,v},\ldots,\overline{D}_{n-k,v}}(Z_X(\bm{\omega}_\ell^*s_{\bm{f}});s_0,\ldots,s_{n-k})\Big|.
\end{multline*}

Each $v$-adic summand on the right hand side of the previous inequality can be upper bounded by the product between $n_v$ and the sum
\begin{multline*}
\sum_{r=0}^{n-k}\Big|\h_{\overline{D}_{0,v},\ldots,\overline{D}_{r-1,v},\overline{D}_{r,j,v},\ldots,\overline{D}_{n-k,j,v}}(Z_X(\bm{\omega}_\ell^*s_{\bm{f}});s_0,\ldots,s_{n-k})
\\-\h_{\overline{D}_{0,v},\ldots,\overline{D}_{r,v},\overline{D}_{r+1,j,v},\ldots,\overline{D}_{n-k,j,v}}(Z_X(\bm{\omega}_\ell^*s_{\bm{f}});s_0,\ldots,s_{n-k})\Big|.
\end{multline*}

Applying \cite[Theorem~1.4.17(4)]{BPS}, each of this new summands agrees with
\begin{multline*}
\bigg|\int_{X_v^{\an}}\log\frac{\|s_r\|_{r,j,v}}{\|s_r\|_{r,v}}\ \chern(\overline{D}_{0,v})\wedge\dots \wedge\chern(\overline{D}_{r-1,v}) \\ \wedge\chern(\overline{D}_{r+1,j,v})\wedge \dots\wedge\chern(\overline{D}_{n-k,j,v})\wedge\delta_{Z_X(\bm{\omega}_\ell^*s_{\bm{f}})}\bigg|.
\end{multline*}
The notion of distance between metrics and the knowledge of the total volume of the involved Monge--Amp\`ere measure, together with the second part of \cref{cor: proper intersection for strict sequences}, allow to upper bound the previous expression by
\[
\dist(\|\cdot\|_{r,j,v},\|\cdot\|_{r,v})\cdot\MV_M(\Delta_{D_0},\ldots,\Delta_{D_{r-1}},\Delta_{D_{r+1}},\ldots,\Delta_{D_{n-k}},\NP(f_1),\ldots,\NP(f_{k})).
\]

Putting all these bounds together we finally obtain a constant $c\geq0$ only depending on the polytopes $\Delta_{D_0},\ldots,\Delta_{D_{n-k}}$ and $\NP(f_1),\ldots,\NP(f_k)$ for which
\[
\left|\h_{\overline{D}_{0,j},\ldots,\overline{D}_{n-k,j}}(Z_{\mathbb{T}}(\bm{\omega}_\ell^*\bm{f}))-\h_{\overline{D}_{0},\ldots,\overline{D}_{n-k}}(Z_{\mathbb{T}}(\bm{\omega}_\ell^*\bm{f}))\right|
\leq
c\cdot\sum_{v\in\mathfrak{P}}n_v\sum_{r=0}^{n-k}\dist(\|\cdot\|_{r,j,v},\|\cdot\|_{r,v}).
\]

Therefore, the convergence
\[
\lim_{j\to\infty}\h_{\overline{D}_{0,j},\ldots,\overline{D}_{n-k,j}}(Z_{\mathbb{T}}(\bm{\omega}_\ell^*\bm{f}))=\h_{\overline{D}_{0},\ldots,\overline{D}_{n-k}}(Z_{\mathbb{T}}(\bm{\omega}_\ell^*\bm{f}))
\]
is uniform in the choice of~$\ell$.
Moreover, thanks to the assumptions, for all fixed $j$ the limit
\[
\lim_{\ell\to\infty}\h_{\overline{D}_{0,j},\ldots,\overline{D}_{n-k,j}}(Z_{\mathbb{T}}(\bm{\omega}_\ell^*\bm{f}))
\]
exists and agrees with the corresponding sum of mixed integrals.
Therefore, the classical Moore--Osgood theorem allows to exchange limits, and we obtain
\begin{equation*}
\begin{split}
\lim_{\ell\to\infty}\h_{\overline{D}_{0},\ldots,\overline{D}_{n-k}}(Z_{\mathbb{T}}(\bm{\omega}_\ell^*\bm{f}))
&=
\lim_{\ell\to\infty}\lim_{j\to\infty}\h_{\overline{D}_{0,j},\ldots,\overline{D}_{n-k,j}}(Z_{\mathbb{T}}(\bm{\omega}_\ell^*\bm{f}))
\\&
=\lim_{j\to\infty}\lim_{\ell\to\infty}\h_{\overline{D}_{0,j},\ldots,\overline{D}_{n-k,j}}(Z_{\mathbb{T}}(\bm{\omega}_\ell^*\bm{f}))
\\&
=
\lim_{j\to\infty}\sum_{v\in\mathfrak{M}}n_v\MI_M(\vartheta_{\overline{D}_{0,j},v},\ldots,\vartheta_{\overline{D}_{n-k,j},v},\rho_{f_1,v}^\vee,\ldots,\rho_{f_k,v}^\vee)
\\&
=
\sum_{v\in\mathfrak{M}}n_v\MI_M(\vartheta_{\overline{D}_{0},v},\ldots,\vartheta_{\overline{D}_{n-k},v},\rho_{f_1,v}^\vee,\ldots,\rho_{f_k,v}^\vee),
\end{split}
\end{equation*}
where we have used \cref{lem: convergence in mixed integral} in the last equality.
This completes the proof of \cref{item: reduction of metrized divisors} in \cref{prop: reduction steps}.

\subsection{Proof of \cref{item: reduction of sequence of torsion points} in \cref{prop: reduction steps}}

First of all, notice that in the setting of \cref{conj: main conjecture} the height of~$Z_{\mathbb{T}}(\bm{\omega}_\ell^*\bm{f})$, whenever defined, only depends on the class of $\bm{\omega}_\ell\in\mathbb{T}(\overline{\mathbb{K}})^k$ in the quotient of $\mathbb{T}^k$ by the image of the torsion points of $\mathbb{T}$ under the diagonal embedding.

Indeed, let $\omega$ be a torsion point in~$\mathbb{T}(\overline{\mathbb{K}})$, and consider $\bm{t}=(t_1,\ldots,t_k)\in\mathbb{T}(\overline{\mathbb{K}})^k$ for which $t_1^*f_1,\ldots,t_k^*f_k$ meet properly.
The translation-by-$\omega$ map $\tau_\omega\colon X\to X$ restricts to an isomorphism of the torus~$\mathbb{T}$.
Hence, since by definition $(\tau_\omega\mid_{\mathbb{T}})^*(t_i^*f_i)=(\omega t_i)^*f_i$ for all~$i\in\{1,\ldots,k\}$ as regular functions on~$\mathbb{T}$, we deduce that $(\omega t_1)^*f_1,\ldots,\allowbreak(\omega t_k)^*f_k$ also meet properly in~$\mathbb{T}$.
Moreover, with the notation~$\omega\bm{t}=(\omega t_1,\ldots,\omega t_k)$, the projection formula implies that
\[
(\tau_\omega)_*\overline{Z_{\mathbb{T}}((\omega\bm{t})^*\bm{f})}
=
\overline{(\tau_\omega\mid_{\mathbb{T}})_*Z_{\mathbb{T}}((\omega\bm{t})^*\bm{f})}
=
\overline{Z_\mathbb{T}(\bm{t}^*\bm{f})}.
\]
The invariance of toric metrics under translation by torsion points and the projection formula for heights, see for instance \cite[Theorem~1.5.11(2)]{BPS}, yield the equality
\begin{multline*}
\h_{\overline{D}_0,\ldots,\overline{D}_{n-k}}(Z_{\mathbb{T}}((\omega\bm{t})^*\bm{f}))
=
\h_{\tau_\omega^*\overline{D}_0,\ldots,\tau_\omega^*\overline{D}_{n-k}}(Z_{\mathbb{T}}((\omega\bm{t})^*\bm{f}))
\\=
\h_{\overline{D}_0,\ldots,\overline{D}_{n-k}}((\tau_\omega)_*\overline{Z_{\mathbb{T}}((\omega\bm{t})^*\bm{f})})
=
\h_{\overline{D}_0,\ldots,\overline{D}_{n-k}}(Z_{\mathbb{T}}(\bm{t}^*\bm{f})),
\end{multline*}
proving the claimed invariance.

Now, consider a quasi-strict sequence~$(\bm{\omega}_\ell)_\ell$.
As seen, we can replace each $\bm{\omega}_\ell$ by
\[
\omega_{\ell,1}^{-1}.\bm{\omega}_\ell=(1,\omega_{\ell,1}^{-1}\omega_{\ell,2},\ldots,\omega_{\ell,1}^{-1}\omega_{\ell,k})
\]
without affecting the value of the height of the corresponding
intersection cycle.  Because of the isomorphism in~\eqref{eq:7} and
the quasi-strictness of $(\bm{\omega}_\ell)_\ell$, the sequence
$((\omega_{\ell,1}^{-1}\omega_{\ell,2},\ldots,\omega_{\ell,1}^{-1}\omega_{\ell,k}))_\ell$
is strict in~$\mathbb{T}(\overline{\mathbb{K}})^{k-1}$.  The validity
of \cref{conj: main conjecture} for
$(\omega_{\ell,1}^{-1} \bm{\omega}_\ell)_\ell$ implies the one
for~$(\bm{\omega}_\ell)_\ell$, thus concluding the proof of
\cref{item: reduction of sequence of torsion points} in \cref{prop:
  reduction steps}.

\providecommand{\bysame}{\leavevmode\hbox to3em{\hrulefill}\thinspace}
\providecommand{\MR}{\relax\ifhmode\unskip\space\fi MR }
\providecommand{\MRhref}[2]{%
  \href{http://www.ams.org/mathscinet-getitem?mr=#1}{#2}
}
\providecommand{\href}[2]{#2}


\begin{thebibliography}{{Gua}18a}

\bibitem[BE21]{BE21}
S.~Boucksom and D.~Eriksson, \emph{Spaces of norms, determinant of cohomology
  and {F}ekete points in non-{A}rchimedean geometry}, Adv. Math. \textbf{378}
  (2021), Paper No. 107501, 124 pp.

\bibitem[Ber75]{Bernst}
D.~N. Bernstein, \emph{The number of roots of a system of equations},
  Funkcional. Anal. i Prilo\v zen. \textbf{9} (1975), no.~3, 1--4.

\bibitem[Ber90]{Ber}
V.~G. Berkovich, \emph{Spectral theory and analytic geometry over
  non-{A}rchimedean fields}, Math. Surveys Monogr., vol.~33, Amer. Math. Soc.,
  1990.

\bibitem[BGR84]{BGR}
S.~Bosch, U.~G{\"u}ntzer, and R.~Remmert, \emph{Non-{Archimedean} {A}nalysis.
  {A} systematic approach to rigid analytic geometry}, Grundlehren Math. Wiss.,
  vol. 261, Springer, 1984.

\bibitem[Bil97]{Bilu}
Y.~Bilu, \emph{Limit distribution of small points on algebraic tori}, Duke
  Math. J. \textbf{89} (1997), no.~3, 465--476.

\bibitem[Bos69]{Bosch_69}
S.~Bosch, \emph{Orthonormalbasen in der nichtarchimedischen
  {F}unktionentheorie}, Manuscripta Math. \textbf{1} (1969), 35--57.

\bibitem[Bos14]{Bosch_lectures}
\bysame, \emph{Lectures on formal and rigid geometry}, Lect. Notes Math., vol.
  2105, Springer, 2014.

\bibitem[Bou81]{Bou81}
N.~Bourbaki, \emph{\'{E}l\'ements de math\'ematique. {A}lg\`ebre. {C}hapitres 4
  \`a{} 7}, Masson, 1981.

\bibitem[BPS14]{BPS}
J.~I. {Burgos Gil}, P.~Philippon, and M.~Sombra, \emph{Arithmetic geometry of
  toric varieties. {M}etrics, measures and heights}, Ast\'erisque, vol. 360,
  Soc. Math. France, 2014.

\bibitem[CD12]{CLD}
A.~{Chambert-Loir} and A.~{Ducros}, \emph{Formes diff\'erentielles r\'eelles et
  courants sur les espaces de {B}erkovich}, e-print \texttt{arXiv:1204.6277},
  2012.

\bibitem[{Cha}06]{C-L}
A.~{Chambert-Loir}, \emph{Mesures et \'equidistribution sur les espaces de
  {B}erkovich}, J. Reine Angew. Math. \textbf{595} (2006), 215--235.

\bibitem[CLS11]{CLS}
D.~A. Cox, J.~B. Little, and H.~K. Schenck, \emph{Toric varieties}, Grad. Stud.
  Math., vol. 124, Amer. Math. Soc., 2011.

\bibitem[CT09]{CLT}
A.~{Chambert-Loir} and A.~Thuillier, \emph{Mesures de {M}ahler et
  \'equidistribution logarithmique}, Ann. Inst. Fourier (Grenoble) \textbf{59}
  (2009), no.~3, 977--1014.

\bibitem[DH24]{DH}
V.~Dimitrov and P.~Habegger, \emph{Galois orbits of torsion points near atoral
  sets}, Algebra Number Theory \textbf{18} (2024), no.~11, 1945--2001.

\bibitem[DHS24]{DHS}
P.~Destic, N.~Hultberg, and M.~Szachniewicz, \emph{Continuity of heights in
  families and complete intersections in toric varieties}, e-print
  \texttt{arXiv:2412.15988}, 2024.

\bibitem[DS15]{DS:pfsr}
C.~D'Andrea and M.~Sombra, \emph{A {P}oisson formula for the sparse resultant},
  Proc. Lond. Math. Soc. (3) \textbf{110} (2015), no.~4, 932--964.

\bibitem[Ful93]{Fulton_toric}
W.~Fulton, \emph{Introduction to toric varieties}, Ann. of Math. Stud., vol.
  131, Princeton Univ. Press, 1993.

\bibitem[Ful98]{Fulton_Intersection}
\bysame, \emph{Intersection theory}, second ed., Ergeb. Math. Grenzgeb. (3),
  vol.~2, Springer-Verlag, 1998.

\bibitem[GS25]{GualdiSombra}
R.~Gualdi and M.~Sombra, \emph{Limit heights and special values of the
  {R}iemann zeta function}, J. Exp. Math. \textbf{1} (2025), 322--374.

\bibitem[Gua18a]{Guathesis}
R.~Gualdi, \emph{Height of cycles in toric varieties}, Ph.D. thesis, U.
  Bordeaux and U. Barcelona, 2018.

\bibitem[Gua18b]{Gualdi}
\bysame, \emph{Heights of hypersurfaces in toric varieties}, Algebra Number
  Theory \textbf{12} (2018), no.~10, 2403--2443.

\bibitem[Gub98]{G0}
W.~Gubler, \emph{Local heights of subvarieties over non-{A}rchimedean fields},
  J. Reine Angew. Math. \textbf{498} (1998), 61--113.

\bibitem[Gub07]{G2}
\bysame, \emph{Tropical varieties for non-{A}rchimedean analytic spaces},
  Invent. Math. \textbf{169} (2007), no.~2, 321--376.

\bibitem[Har77]{Har}
R.~Hartshorne, \emph{Algebraic geometry}, Grad. Texts in Math., vol.~52,
  Springer-Verlag, 1977.

\bibitem[Lan02]{Langbook}
S.~Lang, \emph{Algebra}, third ed., Grad. Texts in Math., vol. 211,
  Springer-Verlag, 2002.

\bibitem[Lau84]{Laurent}
M.~Laurent, \emph{\'{E}quations diophantiennes exponentielles}, Invent. Math.
  \textbf{78} (1984), 299--327.

\bibitem[{Lin}24]{Lin}
C.~{Lin}, \emph{Galois orbits of torsion points over polytopes near atoral
  sets}, e-print \texttt{arXiv:2412.11156}, 2024.

\bibitem[Mai00]{Maillot}
V.~Maillot, \emph{G\'eom\'etrie d'{A}rakelov des vari\'et\'es toriques et
  fibr\'es en droites int\'egrables}, M\'em. Soc. Math. Fr. (N.S.) (2000),
  no.~80, vi+129.

\bibitem[MS19]{MartinezSombra}
C.~Mart\'{\i}nez and M~Sombra, \emph{An arithmetic
  {B}ern\v{s}tein-{K}u\v{s}nirenko inequality}, Math. Z. \textbf{291} (2019),
  no.~3-4, 1211--1244.

\bibitem[Neu99]{Neukirch:ant}
J.~Neukirch, \emph{Algebraic number theory}, Grundlehren Math. Wiss., vol. 322,
  Springer-Verlag, 1999.

\bibitem[{Ost}19]{Ostr2}
A.~{Ostrowski}, \emph{{Zur arithmetischen Theorie der algebraischen
  Gr\"o{\ss}en}}, G\"ott. Nachr. \textbf{1919} (1919), 279--298.

\bibitem[Poi13]{Poineau_angelique}
J.~Poineau, \emph{Les espaces de {B}erkovich sont ang\'{e}liques}, Bull. Soc.
  Math. France \textbf{141} (2013), no.~2, 267--297.

\bibitem[Roc70]{R}
R.~T. Rockafellar, \emph{Convex analysis}, Princeton Math. Ser., vol.~28,
  Princeton Univ. Press, 1970.

\bibitem[Sch26]{Schefer}
G.~Schefer, \emph{{$p$}-adic equidistribution and an application to
  {$S$}-units}, J. Lond. Math. Soc. (2) \textbf{113} (2026), Paper No. e70495,
  31 pp.

\bibitem[{Ser}79]{Serre:local}
J-P. {Serre}, \emph{{Local fields. Translated from the French by Marvin Jay
  Greenberg}}, vol.~67, Springer, New York, NY, 1979 (English).

\bibitem[Sto66]{Stoll:mhm}
W.~Stoll, \emph{The multiplicity of a holomorphic map}, Invent. Math.
  \textbf{2} (1966), 15--58.

\bibitem[Sto67]{Stoll}
\bysame, \emph{The continuity of the fiber integral}, Math. Z. \textbf{95}
  (1967), 87--138.

\bibitem[TV96]{Tate_Voloch}
J.~Tate and J.~F. Voloch, \emph{Linear forms in {$p$}-adic roots of unity},
  Internat. Math. Res. Notices (1996), no.~12, 589--601.

\end{thebibliography}
\end{document}